\documentclass[reqno,openright,dvips,11pt,myheadings,twoside]{amsart}

\usepackage{ifthen}
\usepackage{amssymb}

\usepackage[pagebackref=true, colorlinks=true, citecolor=blue]{hyperref}

\usepackage[usenames,dvipsnames]{pstricks}
\usepackage{epsfig}
\usepackage{pst-grad} 
\usepackage{pst-plot} 

%
%
\newcounter{HaveBBM} \setcounter{HaveBBM}{0}

\ifthenelse{\value{HaveBBM}=1}{
    \usepackage{bbm}
    }{}


%
%

%
\newcounter{dtlForSubmission} \setcounter{dtlForSubmission}{0}
\newcounter{dtlMarginComments} \setcounter{dtlMarginComments}{1}
\newcounter{dtlSomeDetail} \setcounter{dtlSomeDetail}{2}
\newcounter{dtlFullDetails} \setcounter{dtlFullDetails}{3}

%
%

\newcounter{DetailLevel} \setcounter{DetailLevel}{\value{dtlForSubmission}}






\newcommand{\DetailMarginNote}[1]{
    \ifthenelse{\value{DetailLevel}=\value{dtlMarginComments} \or \value{DetailLevel}>\value{dtlMarginComments}}
        {{\small #1}}{}
    }

\newcommand{\DetailSome}[1]{
    \ifthenelse{\value{DetailLevel}=\value{dtlSomeDetail} \or \value{DetailLevel}>\value{dtlSomeDetail}}
        {{\small \textbf{Detailed compile only}: #1}}{}
    }

\newcommand{\DetailFull}[1]{
    \ifthenelse{\value{DetailLevel}=\value{dtlFullDetails} \or \value{DetailLevel}>\value{dtlFullDetails}}
        {{\small \textbf{Detailed compile only}: #1}}{}
    }

\newcommand{\NotDetailSome}[1]{
    \ifthenelse{\value{DetailLevel}=\value{dtlSomeDetail} \or \value{DetailLevel}>\value{dtlSomeDetail}}
        {}{#1}
    }

\newcommand{\NotDetailFull}[1]{
    \ifthenelse{\value{DetailLevel}=\value{dtlFullDetails} \or \value{DetailLevel}>\value{dtlFullDetails}}
        {}{#1}
    }

\newcommand{\DetailSomeElse}[2]{
    \ifthenelse{\value{DetailLevel}=\value{dtlSomeDetail} \or \value{DetailLevel}>\value{dtlSomeDetail}}
        {{\small \textbf{Detailed compile only}: #1}}{#2}
    }

\newcommand{\DetailFullElse}[2]{
    \ifthenelse{\value{DetailLevel}=\value{dtlFullDetails} \or \value{DetailLevel}>\value{dtlFullDetails}}
        {{\small \textbf{Detailed compile only}: #1}}{#2}
    }

%
%
\newcommand{\DetailSomeInline}[1]{
    \ifthenelse{\value{DetailLevel}=\value{dtlSomeDetail} \or \value{DetailLevel}>\value{dtlSomeDetail}}
        {{\small #1}}{}
    }

\newcommand{\DetailFullInline}[1]{
    \ifthenelse{\value{DetailLevel}=\value{dtlFullDetails} \or \value{DetailLevel}>\value{dtlFullDetails}}
        {{\small #1}}{}
    }

\newcommand{\DetailSomeElseInline}[2]{
    \ifthenelse{\value{DetailLevel}=\value{dtlSomeDetail} \or \value{DetailLevel}>\value{dtlSomeDetail}}
        {{\small #1}}{#2}
    }

\newcommand{\DetailFullElseInline}[2]{
    \ifthenelse{\value{DetailLevel}=\value{dtlFullD	etails} \or \value{DetailLevel}>\value{dtlFullDetails}}
        {{\small #1}}{#2}
    }

\newcommand{\ExplainDetailLevel}{
    Detail level is
    \ifthenelse{\value{DetailLevel}=\value{dtlForSubmission}}
        {0: for submission}
        {\ifthenelse{\value{DetailLevel}=\value{dtlMarginComments}}
            {1: as for submission but with margin comments}
            {\ifthenelse{\value{DetailLevel}=\value{dtlSomeDetail}}
                {2: some proofs not intended for submission}
               {\ifthenelse{\value{DetailLevel}=\value{dtlFullDetails}}
                   {3: full details}
                   {invalid}
                }
            }
        }
    }

%
%

%
%
\newcounter{DoAdditionalConstraint} \setcounter{DoAdditionalConstraint}{0}

\newcommand{\AdditionalConstraint}[2]{
    \ifthenelse{\value{DoAdditionalConstraint}=1}
        {{\small #1}}{#2}
    }

\newtheorem{theorem}{Theorem}[section]
\newtheorem{prop}[theorem]{Proposition}
\newtheorem{lemma}[theorem]{Lemma}
\newtheorem{cor}[theorem]{Corollary}

\theoremstyle{definition}
\newtheorem{definition}[theorem]{Definition}

\newtheorem{remark}[theorem]{Remark}

\numberwithin{equation}{section}

\newcommand{\abs}[1]{\left\vert#1\right\vert}

%
%
\newcommand{\skipline}{\vspace{11pt}}


%
%
\newcommand{\BB}[1]{\ensuremath{\mathbb{#1}}}

\newcommand{\R}{\ensuremath{\BB{R}}}
\newcommand{\Z}{\ensuremath{\BB{Z}}}
\newcommand{\iny}{\ensuremath{\infty}}
\newcommand{\grad}{\ensuremath{\nabla}}
\ifthenelse{\value{HaveBBM}=1}{
    \newcommand{\CharFunc}{\ensuremath{\mathbbm{1}}}
    }{
    \newcommand{\CharFunc}{\ensuremath{\mathbf{1}}}
    }
\DeclareMathOperator{\dv}{div} %
\DeclareMathOperator{\RE}{Re} %
\newcommand{\prt}{\ensuremath{\partial}}
\newcommand{\brac}[1]{\ensuremath{\left[ #1 \right]}}
\newcommand{\pr}[1]{\ensuremath{\left( #1 \right) }}
\newcommand{\set}[1]{\ensuremath{\left\{ #1 \right\}}}
\newcommand{\smallset}[1]{\ensuremath{\{ #1 \}}}
\newcommand{\norm}[1]{\ensuremath{\left\Vert #1 \right\Vert}}
\newcommand{\smallnorm}[1]{\ensuremath{\Vert #1 \Vert}}

\newcommand{\refS}[1]{Section~\ref{S:#1}}

\newcommand{\refT}[1]{Theorem~\ref{T:#1}}
\newcommand{\refTAnd}[2]{Theorems~\ref{T:#1} and \ref{T:#2}}
\newcommand{\refP}[1]{Proposition~\ref{P:#1}}
\newcommand{\refL}[1]{Lemma~\ref{L:#1}}

\newcommand{\refD}[1]{Definition~\ref{D:#1}}
\newcommand{\refDAnd}[2]{Definitions~\ref{D:#1} and \ref{D:#2}}
\newcommand{\refC}[1]{Corollary~\ref{C:#1}}
\newcommand{\refE}[1]{(\ref{e:#1})}
\newcommand{\refEAnd}[2]{(\ref{e:#1}, \ref{e:#2})}

\newcommand{\refR}[1]{Remark~(\ref{R:#1})}
\newcommand{\eps}{\ensuremath{\epsilon}}
\newcommand{\Cal}[1]{\ensuremath{\mathcal{#1}}}
\newcommand{\al}{\ensuremath{\alpha}}
\newcommand{\la}{\ensuremath{\lambda}}
\newcommand{\pdx}[2]{\frac{\prt #1}{\prt #2}}
\newcommand{\diff}[2]{\frac{ d#1}{d#2}}
\newcommand{\diffn}[3]{\frac{ d^#3#1}{d#2^#3}}
\newcommand{\ol}{\overline}
\newcommand{\smallabs}[1]{\ensuremath{\vert #1 \vert}}
\newcommand{\Y}{\ensuremath{\BB{Y}}}

\newcommand{\ReallyConvex}{convex }
\newcommand{\ReallyConvexPeriod}{convex}

\begin{document}

\raggedbottom

\numberwithin{equation}{section}

%
%
\newcommand{\MarginNote}[1]{
    \marginpar{
        \begin{flushleft}
            \footnotesize #1
        \end{flushleft}
        }
    }

%
%
\newcommand{\NoteToSelf}[1]{
    }

\newcommand{\Detail}[1]{
    \MarginNote{Detail}
    \skipline
    \hspace{+0.25in}\fbox{\parbox{4.25in}{\small #1}}
    \skipline
    }

\newcommand{\Todo}[1]{
    \skipline \noindent \textbf{TODO}:
    #1
    \skipline
    }

\newcommand{\Comment}[1] {
    \skipline
    \hspace{+0.25in}\fbox{\parbox{4.25in}{\small \textbf{Comment}: #1}}
    \skipline
    }

%
%

\newcommand{\IntTR}
    {\int_{t_0}^{t_1} \int_{\R^d}}

\newcommand{\IntAll}
    {\int_{-\iny}^\iny}

\newcommand{\Schwartz}
    {\ensuremath \Cal{S}}

\newcommand{\SchwartzR}
    {\ensuremath \Schwartz (\R)}

\newcommand{\SchwartzRd}
    {\ensuremath \Schwartz (\R^d)}

\newcommand{\SchwartzDual}
    {\ensuremath \Cal{S}'}

\newcommand{\SchwartzRDual}
    {\ensuremath \Schwartz' (\R)}

\newcommand{\SchwartzRdDual}
    {\ensuremath \Schwartz' (\R^d)}

\newcommand{\HSNorm}[1]
    {\norm{#1}_{H^s(\R^2)}}

\newcommand{\HSNormA}[2]
    {\norm{#1}_{H^{#2}(\R^2)}}

\newcommand{\Holder}
    {H\"{o}lder }

\newcommand{\Holders}
    {H\"{o}lder's }

\newcommand{\Holderian}
    {H\"{o}lderian }

\newcommand{\HolderRNorm}[1]
    {\widetilde{\Vert}{#1}\Vert_r}

\newcommand{\LInfNorm}[1]
    {\norm{#1}_{L^\iny(\Omega)}}

\newcommand{\SmallLInfNorm}[1]
    {\smallnorm{#1}_{L^\iny}}

\newcommand{\LOneNorm}[1]
    {\norm{#1}_{L^1}}

\newcommand{\SmallLOneNorm}[1]
    {\smallnorm{#1}_{L^1}}

\newcommand{\LTwoNorm}[1]
    {\norm{#1}_{L^2(\Omega)}}

\newcommand{\SmallLTwoNorm}[1]
    {\smallnorm{#1}_{L^2}}

\newcommand{\LpNorm}[2]
    {\norm{#1}_{L^{#2}(\R^2)}}

\newcommand{\SmallLpNorm}[2]
    {\smallnorm{#1}_{L^{#2}}}

\newcommand{\lOneNorm}[1]
    {\norm{#1}_{l^1}}

\newcommand{\lTwoNorm}[1]
    {\norm{#1}_{l^2}}

\newcommand{\MsrNorm}[1]
    {\norm{#1}_{\Cal{M}}}

\newcommand{\wh}{\widehat}

\newcommand{\FTF}
    {\Cal{F}}

\newcommand{\FTR}
    {\Cal{F}^{-1}}

\newcommand{\InvLaplacian}
    {\ensuremath{\widetilde{\Delta}^{-1}}}

\newcommand{\EqDef}
    {\stackrel{\text{\tiny def}}{=}}


\newcommand{\Ignore}[1] {}

\newcommand{\Obsolete}[1] {}

\newcommand{\OverAgressiveButWorthKeeping}[1] {}

%
%

%
%

\title
    [The flow for {E}uler equations]
    {On the flow map for 2{D} Euler equations with unbounded vorticity}

\author{James P. Kelliher}
\address{Department of Mathematics, University of California, Riverside, 900 University Ave.,
Riverside, CA 92521}
\email{kelliher@math.ucr.edu}

\subjclass{Primary 35Q31, 76B03, 39B12} 
\date{} 


\keywords{Fluid mechanics, Euler equations, Iterative semigroups}

\begin{abstract}

In Part I, we construct a class of examples of initial velocities for which the unique solution
to the Euler equations in the plane has an associated flow map that lies in no
\Holder space of positive exponent for any positive time.
In Part II, we explore inverse problems that arise in attempting to construct an example of an initial velocity producing an arbitrarily poor modulus of continuity of the flow map.
\end{abstract}

\date{7 July 2011 (compiled on \today)}

\maketitle

\Ignore{ 
\begin{small}
    \begin{flushright}
        Compiled on \textit{\textbf{\today}}
    \end{flushright}
\end{small}
} 

\DetailMarginNote{
    \begin{small}
        \begin{flushright}
            Compiled on \textit{\textbf{\today}}

            \ExplainDetailLevel

        \end{flushright}
    \end{small}
    }

\tableofcontents

\newpage

%
%
\section*{Overview}

\noindent In \cite{Y1995}, V. I. Yudovich described what is still the weakest class of initial velocities for which solutions to the 2D Euler equations are known to be well-posed. (This extended his bounded vorticity result of \cite{Y1963}.) Any solution with initial velocity in this class, which we will call $\Y$, has a unique velocity field and flow map that are continuous  in space and time with explicit upper bounds on their spatial moduli of continuity (MOC). The velocity field is both Osgood- and Dini-continuous (\refEAnd{muOsgood}{Smu}).


In this paper we initiate the investigation of the fundamental question of how ``bad'' the MOC of the flow can be:

\begin{quote}
	\textbf{Fundamental question}: Given any strictly increasing concave MOC, $f$, and positive time,
	$t$, does there exist
	an initial velocity in the class $\Y$ for which any MOC of the flow map at time $t$ is at
	least as large as $f$ on some nonempty open interval, $(0, a)$?	
\end{quote}

Past studies of properties of the flow map tend to ask the opposite question: ``How smooth is the flow map?'' (A notable exception is an example by Bahouri and Chemin in \cite{BC1994}, upon which we build in Part I.) In particular, \cite{Sueur2010} is an important recent study of the smoothness of flow maps for initial velocities in $\Y$.

If the answer to our fundamental question is ``yes,'' it would support the idea that the class $\Y$ is near the edge of uniqueness for solutions to the Euler equations. What is meant by this unavoidably imprecise statement is that, although uniqueness of a solution to the Euler equations does not necessarily require the uniqueness (or even existence) of a (classical) flow (for instance, see \cite{CK2006}), the two ideas seem to be closely entwined. This is evident in the observations of Yudovich in \cite{Y1995} as well as in the approach of Vishik in \cite{V1999}, which relies on properties of the flow.

\Ignore{ 
Also, the classical proof of existence of a flow for a vector field (which goes back at least as far as the ideas of Osgood in \cite{O1898}) also yields uniqueness. Hence, if one is near the edge of having uniqueness of the flow one is presumably near the edge of having uniqueness of the vector field.
} 

On the other hand, if the answer to this question is ``no,'' then it means that there is some hope of extending $\Y$ to obtain a larger class of initial velocities for which both existence and uniqueness in 2D can be proven. And if the answer is ``no'' there still remains the question of characterizing those MOC that \textit{can} be achieved.

To answer our fundamental question ``yes'' we have little choice but to specially construct an initial velocity for which we can obtain a lower bound on the MOC of its flow that is greater than a given $f$.
To answer our fundamental question ``no'' we have little choice but to show that the upper bound on a MOC of the flow that results from the classical theory cannot be arbitrarily large because of some underlying property of the Euler equations.

These two opposing approaches involve very different kinds of techniques, the former more closely tied to classical fluid mechanics the latter more closely tied to the theory of functional equations. In Part I we explore the first approach while in Part II we explore the second approach, in each case obtaining partial answers, but leaving the final answer unresolved.

\Ignore{ 
Yudovich showed in \cite{Y1963} that there exists a unique solution
to the Euler equations in a bounded domain of the plane with bounded initial
vorticity. Also, there exists a unique continuous flow, and this flow lies in
the \Holder space of exponent $e^{-Ct}$ for all positive time $t$. Yudovich's
result is easily modified to apply to solutions in the whole plane. Bahouri and
Chemin in \cite{BC1994} showed that this regularity of the flow was in a sense
optimal by constructing an example for which the flow lies in no \Holder space
of exponent higher than $e^{-t}$.

In \cite{Y1995} Yudovich extended his uniqueness result to a certain
class $\Y$ of unbounded vorticities and showed that there exists as
well a unique flow. There is an upper bound on the modulus of
continuity (MOC) of this flow that depends on how unbounded the initial vorticity is.
} 

This paper is organized as follows:

\textbf{Part I}: Yudovich showed in \cite{Y1963} that for bounded initial vorticity the flow map lies in
the \Holder space of exponent $e^{-Ct}$ for all positive time $t$. Bahouri and
Chemin in \cite{BC1994} showed that this regularity of the flow was in a sense
optimal by constructing an example for which the flow lies in no \Holder space
of exponent higher than $e^{-t}$. We extend the example of Bahouri and Chemin in \cite{BC1994}
to a class of initial vorticities in $\Y$ having a point singularity. We show
that for some such initial vorticities the flow lies in no \Holder space of
positive exponent for any positive time (\refC{MainResultPartI}). In \refS{RemarksPartI} we indicate a possible approach to extending this result to obtain still poorer MOC, a subject of future work.

\textbf{Part II}: A MOC of the flow map can be derived in terms of a MOC of the vector field, as long as the vector field's MOC satisfies an Osgood condition (see \refE{muOsgood}). We examine the inverse problem: given a MOC of the flow, obtain a MOC of a vector field from which it can be derived. We do this first for a general flow and vector field, in a manner that has application beyond solutions to the Euler equations, then specialize to solutions to the Euler equations with Yudovich velocity, where there are further restrictions on both MOC.

We will show that if the MOC, $\Gamma(t, x)$, of the flow map is concave for all $t > 0$ then one can find a necessarily concave MOC, $\mu$, of the velocity field that yields an upper bound on the MOC of the flow map at least as large as $\Gamma(t_0, x)$ at any fixed time $t_0 > 0$ (\refT{ConcaveCIG}, \refR{AnswerToQuestion1}). We identify additional constraints (\refE{YudoCond}) required on the MOC, $\mu$, however, and it is left as an open problem whether such constraints can be satisfied.
We show that $\mu$ is Dini-continuous in \refS{Dini} and explore a useful implication of this property.
Given the constraints on $\mu$ identified in \refE{YudoCond}, we show in \refS{InvertingEulerToGetVectorField} how the $L^p$-norms of the Yudovich vorticity field can be recovered from $\mu$ (\refE{alLegendre} and \refT{Invertmu}). Finally, in \refS{Recoveringomega0}, we discuss how to obtain a vorticity field having, asymptotically in large $p$,  the given $L^p$-norms.

\Ignore{ 
It would be useful to know
whether this upper bound is achieved for certain initial vorticities; that is, whether one can find examples
for which the upper bound is also a lower bound (to within a constant factor).
The hope is that this might cast
some light on how near to the ``edge of uniqueness'' the class $\Y$ has
brought the Euler equations.

We give some partial information by 
} 

\bigskip
\begin{center}
\textbf{Part I: Yudovich velocities}
\phantomsection   
\addcontentsline{toc}{chapter}{Part I: Yudovich velocities}
\end{center}

\section{Introduction}\label{S:YudoTheorem}

\noindent 	The Euler equations describe the flow of an incompressible, constant-density, zero-viscosity fluid in a stationary frame, with the effects of temperature and other physical factors ignored. Letting $v$ be the velocity field and $p$ the pressure field for the fluid, we can write these equations in dimensionless form as
\begin{align*}
    \begin{matrix}
        \left\{
            \begin{array}{rll}
                \prt_t v + v \cdot \grad v + \grad p &= f
                	&  \text{ on } [0, T] \times \Omega, \\
                    \dv v &= 0 &  \text{ on } [0, T] \times \Omega, \\
		v & = v^0 & \text{ on } \set{0} \times \Omega.
            \end{array}
            \right.
    \end{matrix}
\end{align*}
Here, $f$ is the external force, $v^0$ the initial velocity, $T > 0$, and $\Omega \subseteq \R^d$, $d \ge 2$, the domain in which the fluid lies. When $\Omega$ is not all of $\R^d$, we impose the \textit{no-penetration} boundary conditions,
$
	v \cdot \mathbf{n} = 0 \text{ on } \prt \Omega,
$
where $\mathbf{n}$ is the outward unit normal to the boundary.

In this paper neither the external force nor the effect of the boundary will play an important role, though the dimension will. Thus, we will assume that $\Omega = \R^2$, which means that we can also write the Euler equations in their vorticity formulation and can allow $T$ to be arbitrarily large:
\begin{align*}
    \begin{matrix}
        \left\{
            \begin{array}{rll}
                \prt_t \omega+ v \cdot \grad \omega &= 0
                	&  \text{ on } [0, T] \times \Omega, \\
                    v &= K * \omega &  \text{ on } [0, T] \times \Omega, \\
		\omega & = \omega^0 & \text{ on } \set{0} \times \Omega.
            \end{array}
            \right.
    \end{matrix}
\end{align*}
Here, $\omega = \omega(v) = \prt_1 v^2 - \prt_2 v^1$ is the \textit{vorticity} (scalar curl) of the velocity with $\omega^0 = \omega(v^0)$ and $K$ is the Biot-Savart kernel,
\begin{align}\label{e:K}
	 K(x) = \frac{1}{2 \pi} \frac{x^\perp}{\abs{x}^2},
\end{align}
where $x^\perp = (-x_2, x_1)$. (We are following the common convention of giving $\omega$ a dual meaning both as a function and as a variable.)

Suppose that $\omega$ is a scalar field lying in $L^p$ for all $p$ in $[p_0, \iny)$ for some $p_0$ in $[1, 2)$. Let
\begin{align}\label{e:thetaal}
	\theta(p) = \norm{\omega}_{L^p}, \quad \al(\eps) = \eps^{-1} \theta(\eps^{-1}),
\end{align}
and define
\begin{align}\label{e:mu}
	\mu(x)
		&= \inf \set{x^{1 - 2 \eps} \al(\eps) \colon \eps \text{ in } (0, 1/2]}.
\end{align}
A classical result of measure theory is that $p \log \theta(p)$ is convex; this fact will play an important role in Part II.

\begin{definition}\label{D:YudovichVorticity}
We say that $\omega$ is a \textit{Yudovich vorticity} if it is compactly supported (this is not essential, but will simplify our presentation) that satisfies the Osgood condition,
\begin{align}\label{e:muOsgood}
	\int_0^1 \frac{dx}{\mu(x)} = \iny.
\end{align}
\end{definition}
Examples of Yudovich vorticities are
\begin{align}\label{e:YudovichExamples}
    \theta_0(p) = 1,
    \theta_1(p) = \log p, \dots,
    \theta_m(p) = \log p \cdot \log^2 p \cdots \log^m p,
\end{align}
where $\log^m$ is $\log$ composed with itself $m$ times. These examples are described in \cite{Y1995} (see also \cite{K2003}.) Roughly speaking,
the $L^p$--norm of a Yudovich vorticity can grow in $p$ only slightly faster
than $\log p$. Such growth in the $L^p$--norms arises,
for example, from a point singularity of the type $\log \log (1/\abs{x})$.

We define the class, $\Y$, of Yudovich velocities to be
\begin{align*}
	\Y = \set{K * \omega \colon \omega \text{ is a Yudovich vorticity}}.
\end{align*}
A Yudovich velocity will always lie in a space, $E_m$, as defined in \cite{C1998}: Let $\sigma$ be a
\textit{stationary vector field}, meaning that $\sigma$ is of the form
\begin{align}\label{e:Stationary}
    \sigma = \pr{-\frac{x_2}{r^2} \int_0^r \rho g(\rho) \, d \rho,
                \;\frac{x_1}{r^2} \int_0^r \rho g(\rho) \, d \rho}
\end{align}
for some $g$ in $C_C^\iny(\R)$ with $\int_{\R^2} g = 1$.
For any real number $m$, a vector $v$
belongs to $E_m$ if it is divergence-free and can be written in the form $v =
m \sigma + v'$, where $v'$ is in $L^2(\R^2)$. 
$E_m$ is an affine space; having fixed the origin,
$m \sigma$, in $E_m$, we can define a norm by $\norm{m \sigma + v'}_{E_m} =
\LTwoNorm{v'}$. Convergence in $E_m$ is equivalent to convergence in the
$L^2$--norm to a vector in $E_m$.

\Ignore{ 
We will further restrict our initial velocities to have vorticities that are
``only slightly unbounded'' in a sense we now make precise.

\begin{definition}\label{D:Admissible}
    Let $\theta: [p_0, \iny) \to \R^+$ for some $p_0$ in $[1, 2)$ with
    $\theta$ being $C^\iny$ on $(p_0, \iny)$ and $p \log \theta(p)$
    convex. We say that
    $\theta$ is \textit{admissible} if the function $\beta_M:(0, \iny) \to
    [0, \iny)$ defined, for some $M > 0$, by\footnote{The definition of
    $\beta$ in \refE{Beta} differs from that in \cite{K2003} in that it
    directly incorporates the factor of $p$ that appears in the
    Calder\'on-Zygmund inequality;
    in \cite{K2003} this factor is
    included in the equivalent of \refE{betaOsgood}.}
    \begin{align}\label{e:Beta}
        \beta_M(x)
            := C \inf \set{M^\eps x^{1-\eps} \eps^{-1} \theta(\eps^{-1}):
                \eps \text{ in } (0, 1/p_0]},
    \end{align}
    where $C$ is a fixed absolute constant, satisfies the Osgood condition,
    \begin{align}\label{e:betaOsgood}
        \int_0^1 \frac{dx}{\beta_M(x)}
            = \iny.
    \end{align}
\end{definition}

\begin{remark}
	We will not need the requirement that $\theta$ be $C^\iny$ with $p \log \theta(p)$ convex
	until Part II. This restriction appears because of \refL{logThetaConvex}.
\end{remark}

Because
\begin{align}\label{e:BetaCOV}
    \begin{split}
        \beta_M(x)
        &= C M \frac{x}{M} \inf \set{(x/M) ^{-\eps} \eps^{-1} \theta(\eps^{-1}):
                    \eps \text{ in } (0, 1/p_0]} \\
        &= M \beta_1(x/M),
    \end{split}
\end{align}
this definition is independent of the value of $M$.

\begin{lemma}\label{L:betaConcave}
The function $\beta_M$ is
strictly increasing and continuous with $\lim_{x \to 0^+} \beta_M(x)
= 0$ and $\beta_M(x)/x$ strictly decreases 
with $\lim_{x \to \iny} \beta_M(x)/x = 0$. Also, the function $\beta_M$ is concave.
\end{lemma}
\begin{proof}
Because $\beta_M(x) = M \beta_1(x/M)$ it suffices to assume that $M = 1$. 
By \refE{Beta}, since $x^{1 -\eps}$ strictly increases, $\beta_1$ is a
strictly increasing continuous function and $\lim_{x \to 0^+} \beta_1(x)
= 0$. By \refE{BetaCOV}, since $x^{-\eps}$ strictly decreases to zero, $\beta_1(x)/x$ is a strictly decreasing continuous function with $\lim_{x \to \iny} \beta_1(x)/x = 0$.

Since the function $x^{1 - \eps}$ is concave, for any $\la$ in $[0, 1]$ and $x, y$ in $(0, \iny)$,
\begin{align*}
	\beta_1(\la x &+ (1 - \la) y)
		= C \inf_{\eps \in A} \set{(\la x + (1 - \la) y)^{1-\eps} \eps^{-1} \theta(\eps^{-1})} \\
		&\ge C \inf_{\eps \in A} \set{(\la x^{1 - \eps} + (1 - \la) y^{1-\eps}) \eps^{-1} \theta(\eps^{-1})} \\
		&\ge C \la \inf_{\eps \in A} \set{x^{1 - \eps} \eps^{-1} \theta(\eps^{-1})}
			+ C (1 - \la) \inf_{\eps \in A} \set{y^{1 - \eps} \eps^{-1} \theta(1/\eps)} \\
		&= \la \beta_1(x) + (1 - \la) \beta_1(y),
\end{align*}
where $A = (0, 1/p_0]$. It follows that $\beta_1$ is concave.
\end{proof}

In fact, the function $\beta_M$has all of the properties that the function $\mu$ has in \refT{muConcave}, the proof being essentially the same as the proof of that lemma. We will not need any of these properties, however, until Part II.

\begin{definition}\label{D:YudovichVorticity}
    We say that a vector field $v$ has \textit{Yudovich vorticity} if the function
    $\theta: [p_0, \iny) \to \R^+$ defined by $\theta(p) = \norm{\omega(v)}_{L^p}$
    is admissible, where $p_0$ lies in $[1, 2)$.
\end{definition}

\Obsolete{
    \begin{remark}\label{R:p0}
        The proof of the existence of a solution to the Euler equations requires
        that $p_0$ be less than 2. The proof of uniqueness requires only that $p_0$
        be finite.
    \end{remark}
    }

Examples of admissible bounds on vorticity are
\begin{align}\label{e:YudovichExamples}
    \theta_0(p) = 1,
    \theta_1(p) = \log p, \dots,
    \theta_m(p) = \log p \cdot \log^2 p \cdots \log^m p,
\end{align}
where $\log^m$ is $\log$ composed with itself $m$ times. These admissible
bounds are described in \cite{Y1995} (see also \cite{K2003}.) Roughly speaking,
the $L^p$--norm of a Yudovich vorticity can grow in $p$ only slightly faster
than $\log p$ and still be admissible. Such growth in the $L^p$--norms arises,
for example, from a point singularity of the type $\log \log (1/\abs{x})$ (see
\refL{lnLpnorm}).
} 

Given $\theta$ as above, we define the function space,
\begin{align}\label{e:Ytheta}
    \Y_\theta &= \set{v \in E_m: \norm{\omega(v)}_{L^p}
                \le C \theta(p) \text{ for all } p
                    \text{ in } [p_0, \iny)},
\end{align}
for some constant $C$. We define the norm on $\Y_\theta$ to be
\begin{align}\label{e:YNorm}
    \smallnorm{v}_{\Y_\theta}
        = \norm{v}_{E_m}
            + \sup_{p \in [p_0, \iny)} \norm{\omega(v)}_{L^p}/\theta(p).
\end{align}

\Ignore{ 
We also define the space
\begin{align*}
    \Y &= \set{v \in Y_\theta: \theta \text{ is admissible}},
\end{align*}
but place no norm on this space.
} 

\begin{definition}\label{D:MOC}
We say that a continuous function $f \colon [0, \iny) \to [0, \iny)$ with $f(0) = 0$ is a modulus of continuity (MOC). When we say that a MOC, $f$, is $C^k$, $k \ge 0$, we mean that it is continuous on $[0, \iny)$ and $C^k$ on $(0, \iny)$.
\end{definition}
\noindent A real-valued function or vector field, $v$, on a normed linear space, $X$, admits $f$ as a MOC if $\abs{v(x) - v(y)} \le f(\norm{x - y}_X)$ for all $x, y$ in $X$.

In \refD{MOC} we do not require $f$ to be concave: the MOC we will work with in Part I will each have that property (see \refT{muConcave}), but we will not need this until Part II.

The final thing we must do before stating Yudovich's theorem is to define what
we mean by a weak solution to the Euler equations.

\begin{definition}[Weak Euler Solutions]\label{D:WeakSolutionE}
    Given an initial velocity $v^0$ in $\Y_\theta$, $v$
    in $L^\iny([0, T]; \Y_\theta)$ is a weak solution to the Euler
    equations (without forcing) if $v(0) = v^0$ and
    \begin{align*}
        \mathbf{(E)}
        \qquad\diff{}{t} \int_\Omega v \cdot \varphi
            + \int_\Omega (v \cdot \grad v) \cdot \varphi
            = 0
    \end{align*}
    for all divergence-free $\varphi$ in $(H^1(\R^2))^2$.
\end{definition}

Our form of the statement of Yudovich's theorem is a generalization of the
statement of Theorem 5.1.1 of \cite{C1998} from bounded to unbounded vorticity.

\begin{theorem}[Yudovich's Theorem for Unbounded Vorticity]\label{T:YudovichTheorem}
    \textbf{First part}:
    For any $v^0$ in $\Y$ there exists a unique weak solution $v$ of ($E$).
    Moreover, $v$ is in $C(\R; E_m) \cap L_{loc}^\iny(\R; L^\iny(\R^2))$ and
    \begin{align}\label{e:ConservationOfVorticity}
        \LpNorm{\omega(t)}{p} = \SmallLpNorm{\omega^0}{p}
            \text{ for all } p_0 \le p < \iny.
    \end{align}

    \noindent \textbf{Second part}:
    The vector field has a unique continuous flow. More precisely,
    if $v^0$ is in $\Y_\theta$ then there exists
    a unique mapping $\psi$, continuous from $\R \times \R^2$ to
    $\R^2$, such that
    \begin{align*}
        \psi(t, x) = x + \int_0^t v(s, \psi(s, x)) \, ds.
    \end{align*}
    Let $\Gamma_t \colon [0, \iny) \to [0, \iny)$ be defined by
    $\Gamma_t(0) = 0$ and for $s > 0$ by
    \begin{align}\label{e:MOCFlow}
        \int_{s}^{\Gamma_t(s)}  \frac{dr}{\mu (r)} = t.
    \end{align}
    Then $\delta \mapsto \Gamma_t(\delta)$ is an upper bound on the
    MOC of the flow at time $t > 0$; that is,
    for all $x$ and $y$ in $\R^2$
    \begin{align}\label{e:psiMOC}
        \abs{\psi(t, x) - \psi(t, y)}
            \le \Gamma_t(\abs{x - y}).
    \end{align}
    Also, for all $x$ and $y$ in $\R^2$
    \begin{align}\label{e:vMuBound}
        \abs{v(t, x) - v(t, y)}
            \le \mu(\abs{x - y}).
    \end{align}
\end{theorem}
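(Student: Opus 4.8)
The plan is to run the classical Yudovich scheme in four moves: the \emph{a priori} estimates (including the velocity modulus \refE{vMuBound}), existence by regularization, uniqueness by an Osgood energy estimate, and then the flow together with its modulus. Two facts carry the a priori part. First, in two dimensions the vorticity of a smooth solution is transported by its incompressible flow, so $\LpNorm{\omega(t)}{p} = \SmallLpNorm{\omega^0}{p}$ for all $p \in [p_0,\infty)$, which is \refE{ConservationOfVorticity}. Second, the Biot--Savart law $v = K*\omega$ turns an $L^1 \cap L^p$ bound on $\omega$, with $p > 2$, into an $L^\infty$ bound on $v$ and, more precisely, into the modulus $\mu$ of \refE{mu}: I would write $v(x) - v(y) = \int_{\R^2}(K(x-z) - K(y-z))\,\omega(z)\,dz$, split the integral over $\abs{x-z} \le 2\abs{x-y}$ and over its complement, bound the near piece using $\abs{K(x-z)} \le C\abs{x-z}^{-1}$ and \Holders inequality against $\SmallLpNorm{\omega}{1/\eps} = \theta(\eps^{-1})$, and the far piece using $\abs{K(x-z) - K(y-z)} \le C\abs{x-y}\,\abs{x-z}^{-2}$ and \Holder again (here the compact support of $\omega$ is used); after tracking the $\eps$-dependence of the constants — the far piece is where the factor $\eps^{-1}$ arises — both pieces are $\le C\abs{x-y}^{1-2\eps}\,\eps^{-1}\theta(\eps^{-1}) = C\abs{x-y}^{1-2\eps}\al(\eps)$, and taking the infimum over $\eps \in (0,1/2]$ gives $C\mu(\abs{x-y})$. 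This establishes \refE{vMuBound} for any vorticity field with the stated $L^p$ profile.

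\textbf{Existence.} I would mollify the data, $\omega^0_n := \rho_n * \omega^0$, set $v^0_n := K*\omega^0_n \in E_m$, and invoke the classical well-posedness theory for smooth two-dimensional Euler flows to get global smooth solutions $v_n$ with vorticities $\omega_n$. The estimates above bound $v_n$ in $L^\iny_{loc}(\R; L^\iny(\R^2))$ and give $v_n(t,\cdot)$ the common modulus $\mu$, uniformly in $n$; the equation bounds $\prt_t v_n$ in a negative Sobolev norm; so Arzel\`{a}--Ascoli together with a diagonal argument extract a subsequence converging locally uniformly on $\R \times \R^2$ to some $v$, with $\omega_n \rightharpoonup \omega = \omega(v)$. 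Weak convergence of the vorticities suffices to pass to the limit in ($E$), since the only nonlinearity is $\dv(v \otimes v)$, a product of one strongly and one weakly convergent factor. The limit lies in $C(\R; E_m) \cap L^\iny_{loc}(\R; L^\iny(\R^2))$, still satisfies \refE{vMuBound}, and satisfies \refE{ConservationOfVorticity} by weak lower semicontinuity of norms together with the same argument run backward in time.

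\textbf{Uniqueness.} If $v_1, v_2$ both solve ($E$) with the same data, I would set $w := v_1 - v_2$, which is divergence-free and in $L^\iny_{loc}(\R; L^2)$. Testing the difference of the equations against $w$ and annihilating the transport term via $\dv v_2 = 0$ gives $\frac{1}{2}\diff{}{t}\norm{w}_{L^2}^2 = -\int_{\R^2}(w \cdot \grad v_1)\cdot w \le \norm{\grad v_1}_{L^p}\,\norm{w}_{L^{2p/(p-1)}}^2$. The Calder\'on--Zygmund inequality gives $\norm{\grad v_1}_{L^p} \le Cp\,\theta(p)$, and interpolation between $L^2$ and $L^\iny$ gives $\norm{w}_{L^{2p/(p-1)}}^2 \le C\norm{w}_{L^2}^{2 - 2/p}$; hence, with $y := \norm{w}_{L^2}^2$ and $\eps := 1/p$, one gets $y' \le C\,\al(\eps)\,y^{1-\eps}$ for every admissible $\eps$, i.e.\ $y' \le C\,\Phi(y)$ with $\Phi(y) := \inf_\eps \al(\eps)\,y^{1-\eps}$. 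The Osgood condition \refE{muOsgood} is exactly what makes $\int_0 dy/\Phi(y) = \iny$, so Osgood's lemma forces $y \equiv 0$.

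\textbf{The flow, the bound \refE{psiMOC}, and the main obstacle.} Since the limit $v$ lies in $L^\iny_{loc}(\R; L^\iny(\R^2))$ and carries the spatial modulus $\mu$, which is a continuous increasing function vanishing at $0$ and satisfies \refE{muOsgood}, the ODE $\diff{\psi}{t} = v(t, \psi)$, $\psi(0, x) = x$, has a unique solution for each $x$ by the Osgood existence and uniqueness theorem, and the standard continuous-dependence estimate makes $\psi$ jointly continuous on $\R \times \R^2$. For the modulus I would fix $x, y$, put $\delta(t) := \abs{\psi(t, x) - \psi(t, y)}$, and note that $\delta$ is locally Lipschitz with $\delta'(t) \le \abs{v(t, \psi(t,x)) - v(t, \psi(t,y))} \le \mu(\delta(t))$ for a.e.\ $t$, by \refE{vMuBound}; dividing by $\mu(\delta)$ and integrating yields $\int_{\delta(0)}^{\delta(t)} dr/\mu(r) \le t$, and since $\Gamma_t$ is defined in \refE{MOCFlow} by $\int_s^{\Gamma_t(s)} dr/\mu(r) = t$ and is increasing in $s$, this gives $\delta(t) \le \Gamma_t(\abs{x-y})$, i.e.\ \refE{psiMOC}. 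The genuinely delicate step is uniqueness: the $L^p$ energy interpolation must be run uniformly in $p$ so that the scalar inequality for $y$ inherits precisely a modulus built from $\al$ — equivalently $\mu$ — whose Osgood property is \refE{muOsgood}; and, in tandem, every constant in the existence argument must be kept independent of $n$ and of $p$ so that the a priori estimates pass to the limit. Once \refE{vMuBound} is in hand, the flow statements are a soft consequence.
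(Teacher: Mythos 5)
Your proposal is correct and follows the standard Yudovich scheme that the paper itself cites rather than proves: existence by regularization and compactness following Majda–Bertozzi, uniqueness by the $L^2$ energy estimate and Osgood's lemma following Yudovich, and the flow bound by an Osgood differential inequality for $\delta(t) = \abs{\psi(t,x)-\psi(t,y)}$. Two details are worth flagging. First, after the interpolation step the inequality is really $y' \le C\,\al(\eps)\,\SmallLInfNorm{w}^{2\eps}\,y^{1-\eps}$; the factor $\SmallLInfNorm{w}^{2\eps}$ is bounded uniformly in $\eps \in (0,1/2]$ because $v_1, v_2 \in L^\iny_{loc}(\R;L^\iny)$, but that needs to be said before absorbing it into $C$. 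Second, the resulting $\Phi(y) := \inf_\eps \al(\eps)\,y^{1-\eps}$ equals $\sqrt{y}\,\mu(\sqrt{y})$, and the substitution $u = \sqrt{y}$ shows $\int_0 dy/\Phi(y) = 2\int_0 du/\mu(u)$, so the Osgood conditions for $\Phi$ and $\mu$ are indeed equivalent — you assert this but should record the change of variables, since it is the precise link between your scalar inequality and \refE{muOsgood}.
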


\begin{remark}
As we shall see in \refT{muConcave}, $\mu$ is concave, giving it sublinear growth; hence, $\int_1^\iny \mu(r)^{-1} \, dr = \iny$. This makes $\Gamma_t(s)$ well-defined by \refE{MOCFlow} for all $s > 0$. Because $\mu$ is Osgood, $\Gamma_t(s)$ decreases to 0 as $s \to 0^+$.
\end{remark}

Existence in the first part of Yudovich's theorem can be established, for
instance, by modifying the approach on p. 311-319 of \cite{MB2002}, which
establishes existence under the assumption of bounded vorticity; the uniqueness
argument is given by Yudovich in \cite{Y1995}. The second part is Theorem 2 of
\cite{Y1995}, the bound on the MOC of the flow following from
working out the details of Yudovich's proof (see Sections 5.2 through 5.4 of
\cite{K2005Thesis}).

\begin{remark}
More properly, \refEAnd{MOCFlow}{vMuBound} should be
\begin{align*}
	\int_{2s}^{\Gamma_t(2s)}  \frac{dr}{\mu (r)} = \frac{C t}{2} \text{ and }
	\abs{v(t, x) - v(t, y)}
            \le C \mu(\abs{x - y}/2),
\end{align*}
where $C$ is an absolute constant. We use the simpler forms in \refEAnd{MOCFlow}{vMuBound}, however, because they only result in changes in insignificant constants.
\end{remark}

%
%
\section{Square-symmetric vorticity}\label{S:SquareSymmetric}

\noindent Ignoring for the moment the Euler equations, we will assume that the
vorticity has certain symmetries, and from these symmetries deduce some useful
properties of the divergence-free velocity having the given vorticity. In
\refS{ExampleOverTime}, we will then consider a solution to the
Euler equations whose initial vorticity possesses these symmetries.

For convenience, we number the quadrants in the plane $Q_1$ through $Q_4$,
starting with
\[
    Q_1 = \set{(x_1, x_2): x_1 \ge 0, x_2 \ge 0}
\]
and moving counterclockwise through the quadrants.

\begin{definition}\label{D:SBQ}
    We say that a Yudovich vorticity (vorticity as in
    \refD{YudovichVorticity}) is \textit{symmetric by quadrant}, or SBQ, if
    $\omega$ is compactly supported and
            $\omega(x) = \omega(x_1, x_2)$ is odd in $x_1$ and $x_2$;
            that is, $\omega(-x_1, x_2) = - \omega(x_1, x_2)$ for $x_1 \ne 0$ and
            $\omega(x_1, -x_2) = -\omega(x_1, x_2)$ for $x_2 \ne 0$---so also
            $\omega(-x) = \omega(x)$ when $x$ lies on neither axis.
\end{definition}

\begin{lemma}\label{L:SBQ}
    Let $\omega$ be SBQ. Then there exists a unique vector field $v$ in
    $E_0 \cap \Y$ with $\omega(v) = \omega$, and $v$ satisfies
    the following:
    \begin{enumerate}
        \item
            $v_2(x_1, 0) = 0$ for all $x_1$ in $\R$;

        \item
            $v_1(0, x_2) = 0$ for all $x_2$ in $\R$;

        \item
            $v(0, 0) = 0$.
    \end{enumerate}
    If, in addition, $\omega \ge 0$ in $Q_1$, then
    \begin{itemize}
        \item[(4)]
            $v_1(x_1, 0) \ge 0$ for all $x_1 \ge 0$.
    \end{itemize}
\end{lemma}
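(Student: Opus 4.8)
The plan is to produce $v$ explicitly as the Biot--Savart integral $v = K * \omega$, with $K$ as in \refE{K}, and to extract all four conclusions from the behaviour of $K$ under reflection across the coordinate axes. First I would check that $v = K*\omega$ genuinely lies in $E_0 \cap \Y$ and has vorticity $\omega$. Since $\omega$ is a Yudovich vorticity (\refD{YudovichVorticity}) it is compactly supported and lies in $L^p$ for all $p$ in $[p_0, \iny)$, in particular in some $L^p$ with $p > 2$, so $v$ is a bounded, continuous, divergence-free field with $\omega(v) = \omega$. Because $\omega$ is odd in each variable, $\int_{\R^2} \omega = 0$, which kills the leading far-field term of $v$ and forces dipole-type decay $\abs{v(x)} = O(\abs{x}^{-2})$; hence $v \in L^2(\R^2)$ and therefore $v \in E_0$, while membership in $\Y$ is immediate from the definition of that class. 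For uniqueness, if $v$ and $\tilde v$ both lie in $E_0$ with $\omega(v) = \omega(\tilde v) = \omega$, then $w := v - \tilde v$ is a divergence-free, curl-free field in $L^2(\R^2)$; taking Fourier transforms gives $\xi \cdot \wh w(\xi) = \xi^\perp \cdot \wh w(\xi) = 0$ for a.e.\ $\xi$, so $\wh w = 0$ and $w = 0$. Thus $v = K * \omega$ is the unique such field.

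Conclusions (1)--(3) are parity arguments on $v(x) = \int_{\R^2} K(x-y)\,\omega(y)\,dy$. For (1), at $x = (x_1, 0)$ the second component of the kernel, $y \mapsto (x_1 - y_1)/\pr{(x_1 - y_1)^2 + y_2^2}$, is even in $y_2$, whereas $\omega$ is odd in $y_2$, so the $y_2$-integral vanishes and $v_2(x_1, 0) = 0$. Statement (2) is the mirror image: at $x = (0, x_2)$ the first component of $K$, namely $y \mapsto -(x_2 - y_2)/\pr{y_1^2 + (x_2 - y_2)^2}$, is even in $y_1$ against $\omega$ odd in $y_1$, so $v_1(0, x_2) = 0$. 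Then (3) is (1) evaluated at $x_1 = 0$ together with (2) evaluated at $x_2 = 0$.

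For (4), at $x = (x_1, 0)$ with $x_1 \ge 0$ one has
\begin{align*}
	v_1(x_1, 0) = \frac{1}{2\pi}\int_{\R^2} \frac{y_2}{(x_1 - y_1)^2 + y_2^2}\,\omega(y)\,dy .
\end{align*}
I would fold this integral first over the half-planes $y_2 > 0$ and $y_2 < 0$ and then over $y_1 > 0$ and $y_1 < 0$; using that $\omega$ is odd in $y_2$ and odd in $y_1$ (so that $\omega \ge 0$ on $Q_1$ propagates consistently), the four contributions combine to
\begin{align*}
	v_1(x_1, 0) = \frac{1}{\pi}\int_{Q_1} \pr{\frac{1}{(x_1 - y_1)^2 + y_2^2} - \frac{1}{(x_1 + y_1)^2 + y_2^2}}\, y_2\,\omega(y)\,dy .
\end{align*}
On $Q_1$ we have $y_2 \ge 0$, $\omega(y) \ge 0$, and $(x_1 - y_1)^2 \le (x_1 + y_1)^2$ whenever $x_1, y_1 \ge 0$, so the parenthesized kernel is nonnegative; hence $v_1(x_1, 0) \ge 0$.

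I do not anticipate a real obstacle: every integral manipulation is legitimate since $\omega$ is compactly supported and $K \in L^1_{loc}$, so Fubini applies throughout, and the Fourier argument for uniqueness is standard. The only places requiring care are the sign bookkeeping in the fold for (4) — making sure the two reflected pieces assemble with a definite sign — and the $L^2$-decay estimate used to land $v$ in $E_0$ rather than merely in some $E_m$, which is exactly where the oddness hypothesis (via $\int \omega = 0$) enters.
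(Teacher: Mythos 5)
Your proposal is correct and follows essentially the same route as the paper: express $v$ by the Biot--Savart law, use $\int\omega = 0$ (from oddness) to place $v$ in $E_0$, and read off (1)--(4) from parity of the kernel against the SBQ symmetry after folding the integral into $Q_1$. The paper cites Chemin for the existence/uniqueness step and proves the full oddness relations $v_2(x_1,-x_2) = -v_2(x_1,x_2)$, $v_1(-x_1,x_2)=-v_1(x_1,x_2)$ rather than killing the integrals pointwise, but these are minor organizational differences; your folded expression for $v_1(x_1,0)$ and the resulting sign argument coincide with \refEAnd{v1Int}{f1f2}.
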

\begin{proof}
    Let $p$ be in $[1, 2)$ and let
    $q > 2p/(2 - p)$. By Proposition 3.1.1 p. 44 of \cite{C1998}, for
    any vorticity $\omega$ in $L^p$ there exists a unique divergence-free
    vector field $v$ in $L^p + L^q$ whose
    curl is $\omega$, with $v$ being given by the Biot-Savart law,
    \begin{align}\label{e:BSLaw}
        v = K*\omega.
    \end{align}
    Here, $K$ is the Biot-Savart kernel of \refE{K},
    which decays like $C/\abs{x}$ with a singularity of order $C/\abs{x}$
    at the origin.

    Because $\omega$ is compactly supported and lies in
    $L^2(\R^2)$, $\omega$ is in $L^p(\R^2)$, and \refE{BSLaw} gives our
    velocity $v$, unique in all the spaces $L^p + L^q$. Also, because
    $\int_{\R^2} \omega = 0$, $v$ is in $(L^2)^2 = E_0$ (see the comment
    following Definition 1.3.3 of \cite{C1998}, for instance).

    Then
    \begin{align*}
        v_1(x_1, 0)
        &= \frac{1}{2 \pi} \int_{\R^2}
                \frac{y_2}{\abs{x - y}^2}
                \omega(y) \, dy
            = \frac{1}{2 \pi} \int_{\R^2}
                \frac{y_2}{\pr{x_1 - y_1}^2 + y_2^2}
                \omega(y) \, dy \\
        &= \frac{1}{2 \pi} \sum_{j=1}^4
                \int_{Q_j} \frac{y_2}{\pr{x_1 - y_1}^2 + y_2^2}
                \omega(y) \, dy.
    \end{align*}

    Making the changes of variables, $u = (-y_1, y_2)$, $u = -y$, and $u =
    (y_1, -y_2)$ on $Q_2$, $Q_3$, and $Q_4$, respectively, in all cases the
    determinant of the Jacobian is $\pm 1$, and we obtain
    \begin{align*}
        v_1(x_1, 0)
        &= \frac{1}{2 \pi} \left[
                \int_{Q_1} \frac{y_2}{\pr{x_1 - y_1}^2 + y_2^2}
                \omega(y) \, dy \right.
            - \int_{Q_1} \frac{u_2}{\pr{x_1 + u_1}^2 + u_2^2}
                \omega(u) \, du \\
        &\quad+\left. \int_{Q_1} \frac{u_2}{\pr{x_1 + u_1}^2 + u_2^2}
                \omega(u) \, du
            - \int_{Q_1} \frac{u_2}{\pr{x_1 - u_1}^2 + u_2^2}
                \omega(u) \, du \right]
    \end{align*}
    or
    \begin{align}\label{e:v1Int}
        v_1(x_1, 0)
            = \frac{1}{\pi} \int_{Q_1}
                \pr{f_1(x_1, y) - f_2(x_1, y)} \omega(y) \, dy,
    \end{align}
    where
    \begin{align}\label{e:f1f2}
        f_1(x_1, y)
            = \frac{y_2}{\pr{x_1 - y_1}^2 + y_2^2},
            \quad
        f_2(x_1, y)
            = \frac{y_2}{\pr{x_1 + y_1}^2 + y_2^2}.
    \end{align}

    It follows from $(x_1 - y_1)^2 + y_2^2 \le (x_1 + y_1)^2 + y_2^2$ on
    $Q_1$ that $f_1(x_1, y) > f_2(x_1, y)$ for all $x_1, y_1 > 0$. Conclusion
    (4) then follows from \refE{v1Int}.

    By the Biot-Savart law of \refE{BSLaw},
    \begin{align*}
        v_2(x_1, -x_2)
        &= (K_2*\omega)(x_1, -x_2) \\
        &= \int_{\R^2} K_2(x_1 - y_1, -x_2 - y_2) \omega(y_1, y_2) \, dy \\
        &= \int_{\R^2} K_2(x_1 - y_1, x_2 + y_2) \omega(y_1, -y_2) \, dy  \\
        &= \int_{\R^2} K_2(x_1 - y_1, x_2 - (-y_2)) \omega(y_1, -y_2) \, dy  \\
        &= -v_2( x_1, x_2).
    \end{align*}
    Here we used $K_2(x_1, -x_2) = - K_2(x_1, x_2)$ and the symmetry of $\omega$. A
    similar calculation shows that $v_1( -x_1, x_2) = -v_1( x_1, x_2)$. Thus, the
    velocity along the $x$-axis is directed along the $x$-axis and the velocity
    along the $y$-axis is directed along the $y$-axis, so the axes are preserved by
    the flow. In particular, the origin is fixed. This gives conclusions
    (1)-(3).
\end{proof}

\refL{BCBound} is Proposition 2.1 of \cite{BC1994} (see also Proposition 5.3.1 of \cite{C1998}).

\begin{lemma}\label{L:BCBound}
    Let $\omega$ be SBQ with
    \begin{align}\label{e:omegaOnASquare1}
        \omega = 2 \pi \CharFunc_{[0, 1] \times [0, 1]}
    \end{align}
    in $Q_1$. Then there exists a constant $C > 0$ such that
    \begin{align}\label{e:v1SquareBound1}
        v_1(x_1, 0) \ge 2 x_1 \log (1/x_1)
    \end{align}
    for all $x_1$ in $(0, C]$.
\end{lemma}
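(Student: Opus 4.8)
The plan is to compute $v_1(x_1,0)$ explicitly from the representation \refE{v1Int}--\refE{f1f2} of \refL{SBQ}, then extract the logarithmic lower bound by discarding everything but a well-chosen annular piece of the integral. Since $\omega = 2\pi$ on $[0,1]^2$ in $Q_1$, \refE{v1Int} gives $v_1(x_1,0) = 2\int_0^1\int_0^1 (f_1(x_1,y) - f_2(x_1,y))\,dy_1\,dy_2$, and combining the two fractions in \refE{f1f2} over a common denominator, using $(x_1+y_1)^2 - (x_1-y_1)^2 = 4x_1y_1$, yields
\[
f_1(x_1,y) - f_2(x_1,y) = \frac{4 x_1 y_1 y_2}{\bigl((x_1 - y_1)^2 + y_2^2\bigr)\bigl((x_1 + y_1)^2 + y_2^2\bigr)},
\]
hence
\[
v_1(x_1,0) = 8 x_1 \int_0^1\!\!\int_0^1 \frac{y_1 y_2\,dy_1\,dy_2}{\bigl((x_1 - y_1)^2 + y_2^2\bigr)\bigl((x_1 + y_1)^2 + y_2^2\bigr)}.
\]

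Next I would bound the integral below by its contribution from the rectangle $R_K = [K x_1,1]\times[0,1]$, for a fixed constant $K$ to be chosen; this is legitimate since the integrand is nonnegative throughout $Q_1$. On $R_K$ one has $x_1 \le y_1/K$, so $(x_1\pm y_1)^2 + y_2^2 \le (x_1 + y_1)^2 + y_2^2 \le (1 + 1/K)^2(y_1^2 + y_2^2)$, and therefore the product of the two denominators is at most $(1 + 1/K)^4(y_1^2 + y_2^2)^2$. This gives
\[
v_1(x_1,0) \ge \frac{8 x_1}{(1 + 1/K)^4}\int_{K x_1}^1\!\!\int_0^1 \frac{y_1 y_2\,dy_2\,dy_1}{(y_1^2 + y_2^2)^2}.
\]
The inner integral evaluates (via the substitution $u = y_1^2 + y_2^2$) to $\frac{1}{2y_1} - \frac{y_1}{2(y_1^2+1)}$, and integrating in $y_1$ over $[K x_1,1]$ produces $\frac12\log\frac{1}{K x_1} - \frac14\log\frac{2}{K^2 x_1^2 + 1}$, which is at least $\frac12\log\frac{1}{x_1} - \frac12\log K - \frac14\log 2$.

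Finally I would fix $K$ so that $\frac{4}{(1+1/K)^4} > 2$ — for instance $K = 6$, since $(7/6)^4 < 2$ — so that the preceding estimates combine to $v_1(x_1,0) \ge (2 + \eta)\,x_1\log\frac{1}{x_1} - c\, x_1$ for some $\eta > 0$ and some constant $c$. This is at least $2 x_1\log(1/x_1)$ as soon as $\log(1/x_1) \ge c/\eta$, i.e. once $x_1 \le C$ for a suitable $C \in (0,1)$. I do not expect a genuine obstacle here: the only delicate point is arranging the numerical constants so that the coefficient of $x_1\log(1/x_1)$ survives the loss incurred by the crude denominator bound — which is precisely why the annular region is cut off at $K x_1$ rather than at $x_1$, and why the target constant $2$ sits safely below the true leading coefficient $4$ of $v_1(x_1,0)$.
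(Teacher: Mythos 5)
Your argument is correct, and it is a genuinely different route from the one the paper relies on. The paper simply cites Proposition~2.1 of Bahouri--Chemin (equivalently Proposition~5.3.1 of Chemin's book), whose proof — visible in the paper's commented-out detailed version of the generalized \refL{BC21Generalized} — \emph{exactly} integrates the double integral, decomposes the result as $-4x_1\log x_1 + \phi_1(x_1) + \psi_1(x_1)$ with $\phi_1,\psi_1$ smooth and vanishing at $0$, and then reads off the bound. You instead keep the integral intact, combine $f_1 - f_2$ into the single expression $4x_1 y_1 y_2/\bigl((x_1-y_1)^2+y_2^2\bigr)\bigl((x_1+y_1)^2+y_2^2\bigr)$, throw away the sliver $y_1 < Kx_1$, and crudely bound each denominator factor by $(1+1/K)^2(y_1^2+y_2^2)$ — which is what lets the tail integral $\int_{Kx_1}^1 y_1^{-1}\,dy_1$ produce $\log(1/x_1)$ with an explicit coefficient $4/(1+1/K)^4$. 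The computation checks out: the inner $y_2$ integral, the logarithmic $y_1$ integral, and the choice $K=6$ giving $(7/6)^4 < 2$ are all correct, and the condition $x_1 \le 1/K$ needed for the annulus to be nonempty is absorbed into the final choice of $C$. What the Bahouri--Chemin approach buys is the sharp leading coefficient $4$, which the paper actually exploits downstream (the constant $C_\la = (1-\la)/\pi$ in \refL{GeneralOmegaStatic} traces back to it); your estimate deliberately sacrifices that sharpness, which is harmless for proving this lemma's stated bound with coefficient $2$, and in exchange you get a shorter, self-contained proof that avoids exact antiderivatives and the $\phi_r,\psi_r$ bookkeeping. Note, though, that if you wanted to reprove the scaled version \refL{BC21Generalized} directly by your method rather than by rescaling, you would need to track how the additive error $c\,x_1$ degrades as $r\to 0$, which is precisely the delicate point the paper's scaling argument (and the explicit $\phi_r,\psi_r$ decomposition) is designed to handle.
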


The following lemma is a slight generalization of \refL{BCBound} that will give us our key inequality.

\begin{lemma}\label{L:BC21Generalized}
    Let $\omega$ be SBQ with
    \begin{align}\label{e:omegaOnASquare}
        \omega = 2 \pi \CharFunc_{[0, r] \times [0, r]}
    \end{align}
    on $Q_1$ for some $r$ in $(0, 1)$. Then for any $\la$ in $(0, 1)$ there exists a right neighborhood
    of the origin, $\Cal{N}$, such that
    \begin{align}\label{e:v1SquareBound}
        v_1(x_1, 0) \ge 2(1 - \la) x_1 \log (1/x_1)
    \end{align}
    for all $x_1$ in $(0, r^{1/\la}] \cap \Cal{N}$.
\end{lemma}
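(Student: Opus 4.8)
The plan is to reduce \refL{BC21Generalized} to \refL{BCBound} by a rescaling argument. Suppose $\omega$ is SBQ with $\omega = 2\pi \CharFunc_{[0,r]\times[0,r]}$ on $Q_1$. The idea is that the velocity field generated by a vorticity supported on a square of side $r$ is, after rescaling space by a factor of $r$, exactly the velocity field generated by the unit-square vorticity of \refL{BCBound}. Concretely, by the Biot-Savart law \refE{BSLaw} and the scaling $K(rx) = r^{-1}K(x)$ of the Biot-Savart kernel \refE{K}, if $v$ is the divergence-free field with $\omega(v) = \omega$ and $\tilde v$ is the field with $\omega(\tilde v) = \tilde\omega$ where $\tilde\omega = 2\pi\CharFunc_{[0,1]\times[0,1]}$ on $Q_1$ (and SBQ), then I expect $v(x) = r\,\tilde v(x/r)$. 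One should check this directly from the integral $v = K*\omega$ with the change of variables $y \mapsto ry$: the factor $r^2$ from the Jacobian and the $r^{-1}$ from $K$ combine with the $r^{-1}$ in the argument $x/r - y$ to give the stated identity, and the SBQ symmetry is preserved under isotropic scaling. Alternatively, one can rescale inside \refE{v1Int}.

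Next, apply \refL{BCBound} to $\tilde v$: there is an absolute constant $C > 0$ with $\tilde v_1(z,0) \ge 2z\log(1/z)$ for all $z \in (0,C]$. Setting $z = x_1/r$ gives, for $x_1 \in (0, rC]$,
\begin{align*}
	v_1(x_1, 0) = r\,\tilde v_1(x_1/r, 0) \ge r \cdot \frac{2 x_1}{r}\log(r/x_1) = 2 x_1 \log(r/x_1) = 2 x_1\brac{\log(1/x_1) + \log r}.
\end{align*}
Since $r < 1$, the term $\log r$ is negative, so this is weaker than the desired bound and must be absorbed. Here is where the parameter $\la$ enters: we want $2x_1[\log(1/x_1) + \log r] \ge 2(1-\la)x_1\log(1/x_1)$, i.e. $\la\log(1/x_1) \ge -\log r = \log(1/r)$, i.e. $\log(1/x_1) \ge \frac1\la \log(1/r)$, i.e. $x_1 \le r^{1/\la}$. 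So on the interval $(0, r^{1/\la}]$ the correction is absorbed, and intersecting with $(0, rC]$ — which we may call (part of) the right neighborhood $\Cal{N}$ of the origin — gives \refE{v1SquareBound} on $(0, r^{1/\la}] \cap \Cal{N}$, as claimed.

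The only real subtlety is verifying the scaling identity $v(x) = r\,\tilde v(x/r)$ cleanly, including that both fields are the \emph{unique} ones singled out by \refL{SBQ} (so that the scaled field is genuinely the one to which \refL{BCBound} applies); this is routine once one tracks the powers of $r$, since uniqueness in $E_0 \cap \Y$ from \refL{SBQ} pins down the field and scaling a field in $L^p + L^q$ stays in $L^p + L^q$. One should also confirm that $\tilde\omega$ as constructed is an admissible Yudovich vorticity — but it is bounded and compactly supported, hence $\theta$ is a constant and \refE{muOsgood} holds trivially, so this is immediate. I do not anticipate this step being hard; the bookkeeping with $\log r$ and the choice of exponent $1/\la$ is the part that actually uses the hypothesis $r \in (0,1)$ and produces the stated range.
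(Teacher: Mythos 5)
Your proposal is correct and takes the same rescaling route as the paper: write the side-$r$ vorticity as a rescaled unit-square vorticity, use $v^r(x) = r\,v^1(x/r)$, apply \refL{BCBound} to get $v^r_1(x_1,0) \ge 2x_1[\log(1/x_1) + \log r]$ for $x_1 \le rC$, and absorb $\log r$ by restricting to $x_1 \le r^{1/\la}$. The one thing you leave loose, and the paper nails down, is that the neighborhood $\Cal{N}$ can be chosen \emph{independent} of $r$: your $\Cal{N} = (0, rC]$ shrinks with $r$, whereas the paper observes that $r^{1/\la} \le rC$ iff $r \le C^{\la/(1-\la)}$, which lets it take $\Cal{N} = (0, C^{1/(1-\la)})$ depending only on $\la$. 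This is not cosmetic: the $r$-independence is exactly what is asserted in the remark following the lemma and is what \refL{GeneralOmegaStatic} needs, since there the lemma is applied with $r = s$ running over an integration variable and the bound must hold on a fixed neighborhood uniformly in $s$. So you should add the short algebraic step identifying the $r$-free $\Cal{N}$ to make the result usable downstream.
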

\begin{remark}
    The neighborhood $\Cal{N}$ depends only upon $\la$; in particular, it
    is independent of $r$.
\end{remark}
\begin{proof}
The result follows from scaling the result in \refL{BCBound}. Indeed, if we write $\omega^r(x)$ for the function $\omega$ defined by \refE{omegaOnASquare} then $\omega^1$ is the function defined by \refE{omegaOnASquare1} and
$
	\omega^r(\cdot) = \omega^1(\cdot/r).
$
Letting $v^r = K*\omega^r$ we see that
\Ignore{ 
\begin{align*}
	v^r(r x)
		&= \int_{\R^2} K(r x - y) \omega^1(y/r) \, dy
		= r^2 \int_{\R^2} K(r x - rz) \omega^1(z) \, dz \\
		&= r^2 \int_{\R^2} \frac{1}{r} K(x - z) \omega^1(z) \, dz 
		= r v^1(x).
\end{align*}
} 
$v^r(x) = r v^1(x/r)$, since then $\omega(v^r(x)) = r (1/r) \omega(v^1) (x/r) = \omega^1(x/r) = \omega^r(x)$ and $v^r(x)$ is divergence-free. It follows from \refL{BCBound} that for all $x_1$ such that $x_1/r$ lies in $[0, C]$,
\begin{align*}
	v^r_1(x_1, 0)
		&= r v^1_1(x_1/r, 0)
		> r 2 (x_1/r) \log(1/(x_1/r)) \\
		&= 2 x_1 \brac{\log (1/x_1) + \log r}.
\end{align*}
Thus, if $x_1^\la \le r$ then $\log r \ge \la \log x_1 = - \la \log (1/x_1)$ so
\begin{align*}
	v^r_1(x_1, 0)
		&> 2 x_1 (1 - \la) \log (1/x_1).
\end{align*}
Thus, \refE{v1SquareBound} holds for all $x_1$ in $[0, r^{1/\la}] \cap [0, rC]$. But $r^{1/\la} \le rC$ if and only if $r \le C^{\la/(1 - \la)}$, which gives us the right neighborhood, $\Cal{N} = (0, C^{1/(1 - \la)})$.
\end{proof}

\DetailSome { 
The following is a direct proof of \refL{BC21Generalized}, as I did it in my thesis, before I thought about rescaling the Bahouri and Chemin result.
\begin{proof}
    \refE{v1Int} gives
    \begin{align*}
        &v_1(x_1, 0)
            = 2 \pi \frac{1}{\pi} \int_{[0, r] \times [0, r]}
                \pr{f_1(x_1, y) - f_2(x_1, y)} \, dy \\
        &\qquad= 2 \int_0^r \int_0^r
                \frac{y_2}{\pr{x_1 - y_1}^2 + y_2^2} \, dy_2 \, dy_1
            - 2 \int_0^r \int_0^r
                \frac{y_2}{\pr{x_1 + y_1}^2 + y_2^2} \, dy_2 \, dy_1.
    \end{align*}
    Both of the inner integrals above are of the form
    \begin{align*}
        \int_0^r \frac{y_2}{a^2 + y_2^2} \, dy_2
        &= \frac{1}{2} \int_0^{r^2} \frac{du}{a^2 + u} \, du
            = \frac{1}{2} \brac{\log (a^2 + u)}_0^{r^2} \\
        &= \frac{1}{2} \log(a^2 + r^2) - \frac{1}{2} \log(a^2),
    \end{align*}
    where $a$ is $x_1 - y_1$ and $x_1 + y_1$ in the two integrals. This
    gives
    \begin{align}\label{e:v}
        \begin{split}
            v_1(x_1, 0)
            &= 2 \int_0^r
                    \left[
                        \frac{1}{2} \log((x_1 - y_1)^2 + r^2) - \frac{1}{2}\log((x_1 - y_1)^2)
                        \right.
                        \\
                &\qquad\qquad\left.- \frac{1}{2} \log((x_1 + y_1)^2 + r^2)
                            + \frac{1}{2}\log((x_1 + y_1)^2)
                        \right]
                    \, dy_1 \\
            &= \widetilde{v}_1(x_1, 0) + \ol{v}_1(x_1, 0),
        \end{split}
    \end{align}
    where
    \begin{align}\label{e:vv}
        \begin{split}
            \widetilde{v}_1(x_1, 0)
            &= \int_0^r \log((x_1 + y_1)^2) \, d y_1
                - \int_0^r \log((x_1 - y_1)^2) \, d y_1, \\
                \ol{v}_1(x_1, 0)
            &= \psi_r(x_1)
                = \int_0^r \log\frac{(x_1 - y_1)^2 + r^2}{(x_1 + y_1)^2 + r^2}
                    \, d y_1.
        \end{split}
    \end{align}

    But,
    \begin{align*}
        \int_0^r \log((x_1 - y_1)^2) \, dy_1
            = 2(r - x_1) \log \abs{r - x_1} + 2 x_1 \log x_1 - 2r,
    \end{align*}
    which follows by adding the integral from 0 to
    $x_1$ to the integral from $x_1$ to $r$. Also,
    \begin{align*}
        \int_0^r \log((x_1 + y_1)^2) \, dy_1
            = 2(r + x_1) \log(r + x_1) - 2 x_1 \log x_1 - 2r,
    \end{align*}
    so
    \begin{align*}
        \widetilde{v}_1(x_1, 0)
        &= -4 x_1 \log x_1
                + 2(r + x_1) \log(r + x_1) - 2(r - x_1) \log \abs{r - x_1} \\
        &= 4 x_1 \log (1/x_1) + \phi_r(x_1),
    \end{align*}
    where
    \begin{align}\label{e:phiTheFlow}
        \phi_r(x_1)
            = 2(r + x_1) \log(r + x_1) - 2(r - x_1) \log \abs{r - x_1}.
    \end{align}

    (So far, our analysis is much as in the proof of Proposition Proposition 2.1 of
    \cite{BC1994}, the significant difference being that for Bahouri and Chemin $r
    = 1$. This allows them to conclude that $v_1(x_1, 0) \ge -2 x_1 \log x_1$ in
    some right-neighborhood of the origin, but we will find that for an arbitrary
    $r$ in $(0, 1)$, the size of this neighborhood shrinks with $r$. This will be
    of importance in \refL{GeneralOmegaStatic} where we consider an initial
    vorticity that is a sum of vorticities like those in \refL{BC21Generalized}.
    We seek both to refine Bahouri's and Chemin's bound slightly and to establish a
    lower bound on the size of the right-neighborhood.)

    By \refE{v}, \refE{vv}, and \refE{phiTheFlow},
    \begin{align}\label{e:vAsSum}
        v_1(x_1, 0)
            = - 4 x_1 \log x_1 + \phi_r(x_1) + \psi_r(x_1).
    \end{align}
    Suppose we can show for $x_1$ lying in $(0, 1)$ but also bounded by
    some function of $r$, that
    \begin{align}\label{e:phipsiBound}
        \phi_r(x_1) + \psi_r(x_1) \ge -C x_1 \log (1/x_1),
    \end{align}
    where $0 < C < 4$. This will insure that
    \begin{align}\label{e:vBound}
        v_1(x_1, 0) \ge C_1 x_1 \log (1/x_1)
    \end{align}
    for the given range of $x_1$, where $C_1 = 4 - C$ is a positive constant.

    We will first show that $\psi_r(x_1)$ is negligible compared to the
    other two terms in \refE{vAsSum}. Then we will show that $\phi_r(x_1)$,
    while comparable in magnitude to $- 4 x_1 \log x_1$, can be made
    sufficiently smaller than it so that \refE{phipsiBound} will hold, as
    long as we restrict $x_1$ so that $x_1 < r^\la$ for some $\la$ in $(0,
    1)$.

    We have,
    \begin{align*}
        \pdx{}{r} \psi_r(x_1)
        &= -2 \tan^{-1}\pr{\frac{x_1 - r}{r}}
            -2 \tan^{-1}\pr{\frac{x_1 + r}{r}}
            + 4 \tan^{-1}\pr{\frac{x_1}{r}} \\
        &\qquad
                + \log \pr{\frac{x_1^2 + 2 r^2 - 2 x_1 r}
                            {x_1^2 + 2 r^2 + 2 x_1 r}} \\
        &= -2 \tan^{-1}\pr{w - 1}
            -2 \tan^{-1}\pr{w + 1}
            + 4 \tan^{-1}\pr{w} \\
        &\qquad
                + \log \pr{\frac{w^2 + 2  - 2 w}
                            {w^2 + 2 + 2 w}} \\
        & = : h(w),
    \end{align*}
    where $w = x_1/r < 1$, since $ 0 \le x_1 < r$. Also,
    \begin{align*}
        h'(w)
            = 4 \frac
                {w^2(w^2 - 4)}
                {(w^2 + 2 - 2w)(w^2 + 2 + 2w)(1 + w^2)},
    \end{align*}
    which is is negative for $w$ in $(0, 2)$, is zero at $w = 2$, and is
    positive for $w$ in $(2, \iny)$. This means that $h(w)$ decreases from
    a value of $0$ at $0$ to a negative value at $w = 2$, then
    monotonically increases. However, it is also true that $\lim_{w \to
    \iny} h(w) = 0$, which means that $h(w) < 0$ for all $w > 0$.

    What we have shown is that that for all $x_1, r > 0$,
    \begin{align*}
        \pdx{}{r} \psi_r(x_1)
            = h(w) < 0,
    \end{align*}
    so for a fixed value of $x_1$, $\psi_r(x_1)$ is a decreasing function
    of $r$. Restricting $r$ to lie in the range $(0, 1]$, we can therefore
    bound $\psi_r(x_1)$ from below by $\psi_1(x_1)$. But $\psi_1(x_1)$ is a
    smooth function equal to zero at the origin, so
    \begin{align*}
        \psi_r(x_1)
            \ge \psi_1(x_1)
            \ge -C_3 x_1
    \end{align*}
    for some positive constant $C_3$ in a fixed neighborhood of the the
    origin that is independent of the value of $r$. This means that by
    restricting $x_1$ to a possibly smaller neighborhood of the origin, we
    can insure that the contribution of $\psi_r(x_1)$ to the left-hand side
    of \refE{phipsiBound} is as small as required. Hence, we can replace
    \refE{phipsiBound} by the requirement that
    \begin{align}\label{e:phiBound}
        \phi_r(x_1) \ge -C x_1 \log (1/x_1)
    \end{align}
    for all $x_1 < r$, where $0 < C < 4$, and from this \refE{vBound} will follow.

    We have,
    \begin{align*}
    &\frac{\phi_{r}(x_1)}{2}
            = (r + x_1) \log(r + x_1) - (r - x_1) \log(r - x_1) \\
        &\qquad = r(1 + w) \log(r(1 + w)) - r(1 - w) \log(r(1 - w)) \\
        &\qquad= r(1 + w) \log r - r(1 - w) \log r
                + r(1 + w) \log(1 + w)  \\
        &\qquad\qquad - r(1 - w) \log( 1 - w) \\
        &\qquad= r \log r + rw \log r - r \log r + rw \log r \\
        &\qquad\qquad + r \brac{\log(1 + w) - \log(1 - w)}
                + rw \brac{\log(1 + w) + \log(1 - w)} \\
        &\qquad= 2rw \log r + r \log \pr{\frac{1+w}{1 - w}}
                + rw \log(1 - w^2).
    \end{align*}

    Since
    \begin{align*}
        r \log &\pr{\frac{1+w}{1 - w}} + rw \log(1 - w^2) \\
        &\ge r \log \pr{\frac{1+w}{1 - w}} + r \log(1 - w^2)
            = r \log \pr{\frac{1+w}{1 - w} \, (1 + w)(1 - w)} \\
        &= 2 r \log (1 + w) \ge 0,
    \end{align*}
    it follows that
    \begin{align*}
        \phi_{r}(x_1)
            &\ge 4rw \log r
            = 4x_1 \log r.
    \end{align*}

    We now add the restriction that
    \begin{align*}
        x_1^\la \le r
    \end{align*}
    for some $\la$ in $(0, 1)$. Then
    \begin{align*}
        \phi_{r}(x_1)
            \ge 4 x_1 \log r
            \ge 4 x_1 \log x_1^\la
            = -4 \la x_1 \log (1/x_1),
    \end{align*}
    which is \refE{phiBound} with $C = 4 \la$. It follows from \refE{vBound} that
    \begin{align}\label{e:v01BoundFrom2Symmetries}
        v_1(x_1, 0) \ge (4 - 4 \la)x_1 \log (1/x_1) - C_3 x_1.
    \end{align}
    \refE{v1SquareBound} then follows, since
    choosing $\la' < \la$ allows us to ignore the term $-C_3
    x_1$.
\end{proof}
} 

\begin{definition}\label{D:SquareSymmetric}
    We say that $\omega$ is square-symmetric if $\omega$ is SBQ
    and $\omega(x_1, x_2) = \omega(\max\smallset{x_1, x_2}, 0)$
    on $Q_1$.
\end{definition}

Being square-symmetric means that a vorticity is SBQ and is constant in
absolute value along the boundary of any square centered at the origin.

\begin{lemma}\label{L:GeneralOmegaStatic}
    Assume that $\omega$ is square-symmetric, finite except possibly at the
    origin, and $\omega(x_1, 0)$ is non-increasing for $x_1 > 0$.
    Then for any $\la$ in $(0, 1)$
    \begin{align}\label{e:vFinalBound}
        v_1(x_1, 0)
            \ge C_\la \omega(x_1^\la, 0) x_1 \log (1/x_1)
    \end{align}
    for all $x_1$ in the neighborhood $\Cal{N}$ of \refL{BC21Generalized}, where $C_\la = (1 - \la)/\pi$.
\end{lemma}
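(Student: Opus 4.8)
The plan is to represent the square-symmetric vorticity $\omega$ as a superposition of the elementary ``square'' vorticities of \refL{BC21Generalized}, bound each piece from below by that lemma, and integrate, discarding the pieces pointing the wrong way by means of the positivity statement \refL{SBQ}(4). Throughout, write $g(s) = \omega(s,0)$ for $s > 0$: since $\omega$ is square-symmetric it is SBQ, hence compactly supported, so $g$ is eventually $0$, and being non-increasing $g \ge 0$ on all of $(0, \iny)$.

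First I would reduce to the case that $g$ is supported in $[0,1]$. Split $\omega = \omega' + \omega''$, where on $Q_1$ we set $\omega' = \omega \CharFunc_{[0,1]^2}$ and $\omega'' = \omega - \omega'$, each extended oddly to the remaining quadrants. Then $\omega'$ is again square-symmetric (its profile is $g \CharFunc_{[0,1]}$, still non-increasing and finite off the origin, and its $L^p$ norms are dominated by those of $\omega$, so it is still a Yudovich vorticity) and $\omega''$ is square-symmetric with $\omega'' \ge 0$ on $Q_1$; writing $v' = K*\omega'$, linearity of the Biot--Savart law together with \refL{SBQ}(4) gives $v_1(x_1,0) \ge v_1'(x_1,0)$. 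Taking the constant $C$ of \refL{BCBound} to satisfy $C < 1$, so that $\Cal{N} = (0, C^{1/(1-\la)}) \subseteq (0, C) \subseteq (0,1)$, one has $\omega'(x_1^\la,0) = \omega(x_1^\la,0)$ for $x_1 \in \Cal{N}$, so it is enough to prove \refE{vFinalBound} for $\omega'$. From here on assume $g$ is supported in $[0,1]$.

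Next I would decompose. For $t > 0$ set $\rho(t) = \sup\set{s > 0 \colon g(s) > t}$, a non-increasing function with $0 \le \rho(t) \le 1$, and let $\omega_t$ be the SBQ (indeed Yudovich, being bounded and compactly supported) vorticity equal to $\CharFunc_{[0,\rho(t)]^2}$ on $Q_1$, with $v_t = K * \omega_t$. The layer-cake identity gives $\omega = \int_0^\iny \omega_t \, dt$, and Tonelli's theorem — legitimate because the integrands are nonnegative and $\int_{\R^2} \smallabs{K_1((x_1,0) - y)}\, \smallabs{\omega(y)}\, dy < \iny$ (compact support, $\omega$ in every $L^p$) — justifies
\begin{align*}
	v_1(x_1,0) = \int_{\R^2} K_1((x_1,0) - y)\, \omega(y)\, dy = \int_0^\iny (v_t)_1(x_1,0)\, dt .
\end{align*}
Since each $\omega_t \ge 0$ on $Q_1$, \refL{SBQ}(4) gives $(v_t)_1(x_1,0) \ge 0$ for every $t$ and every $x_1 \ge 0$, so I may drop the part of the integral over $t \ge g(x_1^\la)$. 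For $t < g(x_1^\la)$ one has $x_1^\la \in \set{s \colon g(s) > t}$, hence $\rho(t) \ge x_1^\la$, i.e. $x_1 \le \rho(t)^{1/\la}$; applying \refL{BC21Generalized} with $r = \rho(t)$ when $\rho(t) \in (0,1)$, or \refL{BCBound} when $\rho(t) = 1$ (using $\Cal{N} \subseteq (0,C]$), and in either case dividing by $2\pi$, yields $(v_t)_1(x_1,0) \ge \frac{1-\la}{\pi}\, x_1 \log(1/x_1)$. Integrating over $t \in (0, g(x_1^\la))$ then gives $v_1(x_1,0) \ge \frac{1-\la}{\pi}\, g(x_1^\la)\, x_1 \log(1/x_1) = C_\la\, \omega(x_1^\la, 0)\, x_1 \log(1/x_1)$, which is \refE{vFinalBound}.

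The hard part will be the bookkeeping rather than any estimate: checking that $\omega'$, $\omega''$, and $\omega_t$ are all square-symmetric (hence SBQ) Yudovich vorticities so that \refL{SBQ} and \refL{BC21Generalized} apply, and above all justifying the interchange $v_1(x_1,0) = \int_0^\iny (v_t)_1(x_1,0)\, dt$ via Tonelli, together with the careful treatment of the single boundary value $\rho(t) = 1$. Once these are in place the conclusion is immediate from \refL{SBQ}, \refL{BCBound}, and \refL{BC21Generalized}.
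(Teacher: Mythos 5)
Your proof is correct and rests on exactly the same three ingredients as the paper's: write $\omega$ as a superposition of indicator functions of squares, use linearity of Biot–Savart together with \refL{SBQ}(4) to discard the contribution of the small squares, and apply \refL{BC21Generalized} to the remaining ones. The only real difference is the parametrization of the superposition. The paper writes $\omega = 2\pi\int_0^1 \al(s)\, \CharFunc_{[0,s]^2}\, ds$ on $Q_1$, so $\al(s) = -\tfrac{1}{2\pi}\partial_s\omega(s,0)$, and then integrates $\al(s) V(s)\, ds$ over $s \ge x_1^\la$; this forces $\al$ to be treated as a measure in general, an awkwardness the paper itself flags (\refR{omeg0Smooth}). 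You instead use the layer-cake representation $\omega = \int_0^\iny \CharFunc_{\{\omega > t\}}\, dt$, which is the dual parametrization (the change of variables $s = \rho(t)$ turns one integral into the other) and works for any non-increasing profile without differentiability or measure-theoretic caveats. Your explicit reduction to support in $[0,1]^2$ and the separate handling of $\rho(t)=1$ are also bookkeeping details the paper leaves implicit. So: same route, but your version closes a small technical gap the paper acknowledges and leaves open.
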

\begin{proof}
    We can write $\omega$ on $Q_1$ as
    \begin{align}\label{e:omega0Integral}
        \omega(x) = 2 \pi \int_0^1 \al(s)
            \CharFunc_{[0, s] \times [0, s]}(x) \, ds,
    \end{align}
    for some measurable, nonnegative function $\al \colon (0, 1) \to [0, \iny)$.
    This means that
    \begin{align}\label{e:omega0x1}
        \omega(x_1, 0) = 2 \pi \int_{x_1}^1 \al(s) \, ds.
    \end{align}

    Let $V(s)$ be the value of $v_1(x_1, 0)$ that results from assuming that
    $\omega$ is given by \refE{omegaOnASquare}. By \refL{SBQ}, $V(s) > 0$.
    Then because the Biot-Savart law of \refE{BSLaw} is linear, and using
    \refL{BC21Generalized}, for all $x_1$ in the neighborhood $\Cal{N}$,
    \begin{align*}
        v_1(x_1, 0)
           &= \int_0^1 \al(s) V(s) \, ds \\
           &= \int_0^{x_1^\la} \al(s) V(s) \, ds
             + \int_{x_1^\la}^1 \al(s) V(s) \, ds \\
           &\ge \int_{x_1^\la}^1 \al(s) V(s) \, ds \\
           &\ge 2 \pi
                \pr{\int_{x_1^\la}^1 \al(s) \, ds}
                    \frac{2 (1 - \la)}{2 \pi} x_1 \log (1/x_1) \\
           &= \frac{1 - \la}{\pi} \omega(x_1^\la, 0) x_1 \log (1/x_1).
    \end{align*}
In the final inequality, $V(s)$ is bounded as in \refL{BC21Generalized} because $x_1^\la \le s$ in the integrand.
\end{proof}

\begin{remark}\label{R:omeg0Smooth}
    Properly speaking, we must allow the function $\al$ of \refE{omega0Integral} to
    be a distribution since, for instance, to obtain $\omega$ of
    \refL{BC21Generalized}, we would need $\al = \delta_r$. We could avoid this
    complication, however, by assuming that $\omega$ is strictly decreasing and
    that $\omega(x_1, 0)$ is sufficiently smooth as a function of $x_1 > 0$.
\end{remark}

%
%
\section{Evolution of square-symmetric vorticity}\label{S:ExampleOverTime}

\noindent We now assume that our initial vorticity is square-symmetric, and
consider what happens to the solution to ($E$) over time.

\begin{theorem}\label{T:UpperBoundAllTime}
    Assume that $\omega^0$ is square-symmetric, finite except possibly at the
    origin, and $\omega^0(x_1, 0)$ is nonnegative and non-increasing for $x_1 > 0$.
    Then for any $\la$ in $(0, 1)$,
    \begin{align}\label{e:vFinalBoundOverTime}
        v_1(t, x_1, 0)
            \ge C_\al \omega^0(\Gamma_t(2^{\la/2} x_1^\la), 0) x_1 \log(1/x_1)
            := L(t, x_1)
    \end{align}
    for all $x_1$ in the neighborhood $\Cal{N}$ of
    \refL{BC21Generalized} and all time $t \ge 0$, where $\Gamma_t$ is defined as in
    \refT{YudovichTheorem}. The constant $C_\la = (1 - \la)/\pi$.

    Further, let $x_1(t)$ be the minimal solution to
    \begin{align}\label{e:dx1t}
        \diff{x_1(t)}{t}
            = L(t, x_1), \quad x_1(0) = a 
    \end{align}
    with $a > 0$ in $\Cal{N}$, and $(0, t_a)$ being the time of existence.
    Then $\psi^1(t, a, 0) \ge x_1(t)$ for all $t$ in $[0, t_a)$.
\end{theorem}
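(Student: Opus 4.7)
My approach is to reduce the time-dependent estimate to the static one in \refL{GeneralOmegaStatic} by exploiting two facts: vorticity is transported along Euler trajectories, and the SBQ structure is preserved by the flow. Applying \refL{SBQ} to $\omega(t,\cdot)$ shows that $\omega(t,\cdot)$ remains SBQ, that the coordinate axes and the origin are invariant under $\psi_t$, and that $v_1 \ge 0$ on the positive $x$-axis. Using the decomposition \refE{omega0Integral}, namely $\omega^0 = 2\pi \int_0^\iny \al(s) \CharFunc_{[0,s]^2} \, ds$ on $Q_1$, conservation of vorticity along the flow yields $\omega(t,\cdot) = 2\pi \int_0^\iny \al(s) \CharFunc_{\psi_t([0,s]^2)} \, ds$ on $Q_1$.

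The key geometric step, and the main difficulty, is to show $\psi_t([0, s]^2) \supseteq [0, r_{t,s}]^2$ with $r_{t, s}$ comparable to $\Gamma_t^{-1}(s)$. I would apply the MOC bound \refE{psiMOC} to the inverse flow $\psi_t^{-1}$ (whose MOC is again $\Gamma_t$ by time-reversibility of Euler), comparing $\psi_t^{-1}(z)$ to $\psi_t^{-1}(z_1, 0)$ and $\psi_t^{-1}(0, z_2)$, which both lie on the axes by axis-invariance. This yields the coordinate-wise bound $|(\psi_t^{-1}(z))_j| \le \Gamma_t(|z_j|)$, so $\psi_t^{-1}([0,r]^2) \subseteq [0, \Gamma_t(r)]^2$, giving the desired inclusion; the constant $2^{\la/2}$ in the statement reflects the geometric book-keeping when the inscribed square is fed back into \refL{BC21Generalized} through the constraint $x_1 \le r_{t,s}^{1/\la}$.

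Because the SBQ Biot-Savart kernel $f_1 - f_2$ in \refE{f1f2} is nonnegative on $Q_1$, pointwise inequalities between SBQ vorticities transfer to lower bounds on $v_1(\cdot, 0)$. Hence $v_1(t, x_1, 0) \ge \tilde v_1(x_1, 0)$, where $\tilde v$ is generated by the square-symmetric vorticity $\tilde\omega$ whose axial values are $\tilde\omega(x_1, 0) = \omega^0(\Gamma_t(x_1), 0)$ (up to the $2^{\la/2}$ factor above). Since $\Gamma_t$ is increasing and $\omega^0(\cdot, 0)$ is non-increasing, $\tilde\omega$ satisfies the hypotheses of \refL{GeneralOmegaStatic}, which then delivers \refE{vFinalBoundOverTime}.

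For the second part, axis-invariance from \refL{SBQ} gives $\psi(t, a, 0) = (\psi^1(t, a, 0), 0)$, so the flow equation becomes
\[
\diff{\psi^1(t, a, 0)}{t} = v_1(t, \psi^1(t, a, 0), 0) \ge L(t, \psi^1(t, a, 0)),
\]
and the positivity from conclusion (4) of \refL{SBQ} keeps the trajectory inside a right neighborhood of $0$. A Chaplygin/Kamke-type comparison for continuous ODEs then forces $\psi^1(t, a, 0) \ge x_1(t)$ on $[0, t_a)$, the use of the \emph{minimal} solution being essential since $L(t, \cdot)$ need not be Lipschitz at $0$ (both $x_1 \log(1/x_1)$ and a potentially singular $\omega^0$ at the origin preclude standard uniqueness-based comparison).
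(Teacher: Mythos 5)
Your argument is correct and takes a genuinely different route from the paper's proof, which goes through a direct pointwise comparison of vorticities rather than transport of level sets. The paper bounds $|\psi^{-1}(t,x)|$ using the Euclidean modulus-of-continuity estimate $|\psi^{-1}(t,x) - \psi^{-1}(t,0)| \le \Gamma_t(|x|)$ together with the fixed origin, then uses that $\omega^0$ is square-symmetric and non-increasing to conclude $\omega(t,x) = \omega^0(\psi^{-1}(t,x)) \ge \omega^0(\Gamma_t(|x|),0) \ge \omega^0(\Gamma_t(\sqrt{2}\max\{x_1,x_2\}),0) =: \ol{\omega}(t,x)$, and finally applies \refL{GeneralOmegaStatic} to the square-symmetric $\ol{\omega}$. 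You instead transport the indicator decomposition \refE{omega0Integral} along the flow and inscribe a square $[0,\Gamma_t^{-1}(s)]^2 \subseteq \psi_t([0,s]^2)$ via the coordinate-wise bound $|(\psi_t^{-1}(z))_j| \le \Gamma_t(|z_j|)$, which you correctly derive by comparing $\psi_t^{-1}(z)$ with $\psi_t^{-1}(z_1,0)$ and $\psi_t^{-1}(0,z_2)$ using axis-invariance. That coordinate-wise estimate is actually sharper than the Euclidean one: it removes the $\sqrt 2$ and yields the square-symmetric lower bound $\tilde\omega(x_1,0)=\omega^0(\Gamma_t(x_1),0)$, so after \refL{GeneralOmegaStatic} you get $v_1(t,x_1,0)\ge C_\la\,\omega^0(\Gamma_t(x_1^\la),0)\,x_1\log(1/x_1)$, which dominates the stated $L(t,x_1)$ since $\omega^0(\cdot,0)$ is non-increasing and $\Gamma_t$ increasing. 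Your aside attributing the $2^{\la/2}$ to ``geometric book-keeping'' from the $x_1 \le r_{t,s}^{1/\la}$ constraint is not quite right---in the paper the constant comes from the Euclidean step, not from \refL{BC21Generalized}---but since your method proves a stronger inequality, this does not affect correctness. The comparison-ODE closing step is the same as the paper's appeal to the classical minimal-solution and Chaplygin comparison results.
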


\begin{remark}\label{R:RemUpperBoundAllTime}
In our applications of \refT{UpperBoundAllTime} in the next two sections, $L$ will be Osgood continuous in space, so that a unique (and explicit) solution to \refE{dx1t} exists for all time. Hence, there will be no need to determine a minimal solution and we will have $t_a = \iny$.
\end{remark}

\begin{proof}[Proof of \refT{UpperBoundAllTime}]
    Since $\omega^0(x_1, x_2) = -\omega^0(x_1, -x_2)$, if $\omega(t, x_1, x_2)$ is
    a solution to ($E$) then $-\omega(t, x_1, -x_2)$ is also a solution. But the
    solution to ($E$) is unique by \refT{YudovichTheorem},
    so we conclude that $\omega(t, x_1, x_2) =
    -\omega(t, x_1, -x_2)$. Similarly, $\omega(t, x_1, x_2) = -\omega(t, -x_1,
    x_2)$, and we see that $\omega$ is SBQ. By \refL{SBQ}, then, it follows that
    the flow transports vorticity in $Q_k$, $k = 1, \dots, 4$, only within $Q_k$,
    because of the direction of $v$ along the axes for all $t \ge 0$. Therefore,
    $\omega(t)$ is also nonnegative in $Q_1$ for all time.

    Our approach then will be to produce a point-by-point lower bound
    $\ol{\omega}(t)$ on $\omega(t)$ that satisfies all the requirements of
    \refL{GeneralOmegaStatic}. In particular, it is SBQ, so $\omega(t) -
    \ol{\omega}(t)$ is SBQ and nonnegative in $Q_1$. It follows from \refL{SBQ}
    that $v_1(t, x_1, 0) - \ol{v}_1(t, x_1, 0) \ge 0$ for all $t \ge 0$,
    where $\omega(\ol{v}(t)) =
    \ol{\omega}(t)$. Thus, the lower bound on $\ol{v}_1(t, x_1, 0)$ coming from
    \refL{GeneralOmegaStatic} will also be a lower bound on $v_1(t, x_1, 0)$.
    We now determine $\ol{\omega}(t)$.

    Because conclusion (3) of \refL{SBQ} holds for all time, $\omega$ being SBQ for
    all time, the origin is fixed by the flow; that is $\psi(t, 0) = \psi^{-1}(t,
    0) = 0$ for all $t$. Also, the Euler equations are time reversible, and the
    function $\Gamma_t$ of \refE{MOCFlow} depends only upon the Lebesgue norms of
    the vorticity, which are preserved by the flow; therefore, $\Gamma_t$ is a
    bound on the modulus of continuity of $\psi^{-1}(t, \cdot)$ as well. Thus,
    \begin{align*}
        \abs{\psi^{-1}(t, x)}
            = \abs{\psi^{-1}(t, x) - \psi^{-1}(t, 0)}
            \le \Gamma_t(\abs{x}).
    \end{align*}

    In $Q_1$, the value of $\omega(t, x)$, then, is bounded below by using the
    minimum value of $\omega^0$ on the circle of radius $\Gamma_t(\abs{x})$
    centered at the origin, since this is the furthest away from the origin that
    $\psi^{-1}(t, x)$ can lie, and $\omega^0$ decreases with the distance from the
    origin. That is,
    \begin{align*}
        \omega(t, x)
           &= \omega^0(\psi^{-1}(t, x))
            \ge \omega^0(\Gamma_t(\abs{x}), 0)
    \end{align*}
    because $\omega^0$ is square-symmetric.

    Since $\sqrt{2} \max\smallset{x_1, x_2} \ge \abs{x}$, $\Gamma_t$ is
    nondecreasing, and $\omega_0$ is nonincreasing on $Q_1$,
    $\omega^0(\Gamma_t(\sqrt{2} \max\smallset{x_1, x_2}), 0)
    \le \omega^0(\Gamma_t(\abs{x}), 0)$ on $Q_1$. Letting
    \begin{align*}
        \ol{\omega}(t, x_1, x_2)
            = \omega^0(\Gamma_t(\sqrt{2} \max\smallset{x_1, x_2}), 0)
    \end{align*}
    we see that $\ol{\omega}$ is square-symmetric, and on $Q_1$,
    $\ol{\omega}(t, x) \le \omega(t, x)$,
    so $\ol{\omega}$ is our desired lower bound on $\omega$.

    Then from \refE{vFinalBound},
    \begin{align*}
        v^1(x_1, 0)
           &\ge C_\la \ol{\omega}(t, x_1^\la, 0) x_1 \log (1/x_1)
                \\
           &= C_\la  \omega^0(\Gamma_t((\sqrt{2} \max\smallset{x_1, 0})^\la), 0)
                     x_1 \log (1/x_1) \\
           &= C_\la   \omega^0(\Gamma_t(2^{\la/2} x_1^\la), 0) x_1 \log
                    (1/x_1).	
    \end{align*}
    
    That the minimal solution exists to \refE{dx1t} on $[0, t_a)$ for some
    $t_a > 0$ and the inequality, $\psi^1(t, a, 0) \ge x_1(t)$ for all $t$ in $[0, t_a)$, are classical
    results;
    see, for instance, Theorems 2.1 and 4.2 Chapter III of \cite{Hartman2002}.
\end{proof}

\section{Bounded vorticity}\label{S:BoundedVorticity}

\noindent We now apply \refT{UpperBoundAllTime} to the first in the sequence of
Yudovich's vorticity bounds in \refE{YudovichExamples} in which we have bounded
vorticity. We assume that $\omega$ is square-symmetric with $\omega^0 =
\CharFunc_{[0, 1/2] \times [0, 1/2]}$ in $Q_1$ so that
$\smallnorm{\omega^0}_{L^1 \cap L^\iny} = 1$. We have,
\begin{align*}
        \mu(r)
        &= \inf \set{r^{1 - 2 \eps}/\eps: \eps \text{ in } (0, 1/2]}
            = \inf \set{g(\eps): \eps \text{ in } (0, 1/2]},
\end{align*}
where $g(\eps) = r^{1-2 \eps}/\eps$. Then
\begin{align*}
    g'(\eps)
        = C \frac{r^{1 - 2 \eps}(2 \eps \log(1/r) - 1)}{\eps^2},
\end{align*}
which is zero when $\eps = \eps_0 := 1/(2 \log(1/r))$ if $r < 1$ and $\eps_0 <
1/2$, and never zero otherwise. But
\begin{align*}
    \eps_0 < 1/2
    &\iff \frac{1}{2 \log(1/r)} < 1/2
        \iff \log(1/r) > 1 \\
    &\iff \frac{1}{r} > e
        \iff r < e^{-1},
\end{align*}
so the condition $r < 1$ is redundant.

Assume that $r < e^{-1}$. Then $g(\eps)$ approaches infinity as $\eps$
approaches either zero or infinity; hence, $\eps_0$ minimizes $g$. Thus,
\begin{align*}
    \mu(r)
    &=r^{1-2 \eps_0}/\eps_0
        = 2 r (1/r)^{2 \eps_0} \log(1/r) \\
    &= 2 r e^{2 \log(1/r)\eps_0} \log(1/r)
        = -2 e r \log(r).
\end{align*}
Then from \refE{MOCFlow},
\begin{align*}
    \int_{x_1}^{\Gamma_t(x_1)} &\frac{dr}{\mu(r)}
        = -(2 e)^{-1} \brac{\log (-\log r)}_{x_1}^{\Gamma_t(x_1)}
        = t \\
    &\implies \log (-\log (x_1) - \log (-\log (\Gamma_t(x_1))) = 2 e t \\
    &\implies \Gamma_t(x_1) =  x_1 {e^{-2et}}
\end{align*}
as long as $\Gamma_t(x_1) < e^{-1}$.

Thus, \refT{UpperBoundAllTime} gives
\begin{align*}
	v^1(t, x_1, 0)
		&\ge C_\la  \omega^0(2^{\la/2} x_1^{\la e^{-2et}}, 0) x_1 \log (1/x_1) \\	
		&\ge C_\la  x_1 \log (1/x_1)
\end{align*}
as long as $2^{\la/2} x_1^{\la e^{-2et}} < {1/2}$.

Solving $d x_1(t)/dt = C_\la  x_1 \log (1/x_1)$ with $x_1(0) = a$ gives
\begin{align*}
    \psi^1(t, a, 0) \ge x_1(t) = a^{\exp(-C_\la  t)},
\end{align*}
which applies for sufficiently small $a$.

Since $\psi(t, 0, 0) = 0$,
\begin{align*}
    \frac{\abs{\psi(t, a, 0) - \psi(t, 0, 0)}}{{a^\al}}
        = \frac{\abs{\psi^1(t, a, 0)}}{{a^\al}}
        \ge a^{\exp(- C_\la  t) - \al},
\end{align*}
which is infinite for any $\al > e^{-C_\la t}$. This shows that the flow
can be in no \Holder space $C_\la ^\al$ for $\al > e^{-C_\la t}$, reproducing, up to a constant, the result of \cite{BC1994} (or see Theorem 5.3.1 of \cite{C1998}.)

\Obsolete{ 
    \skipline \noindent \textbf{$\mathbf{\text{log} \, \text{log} (1/\abs{x})}$
    point singularity:} The second example bound in Yudovich's sequence is
    $\theta(p) = \log p$, which is produced asymptotically when $\omega^0$ has a
    point singularity like $\log \log (1/\abs{x})$. (This is by \refL{lnLpnorm} and
    the comment following it.) Thus, we assume that $\omega$ is square-symmetric
    with $\omega^0(x_1, 0) = \log \log (1/x_1)$ for $x_1 > 0$.

    Using an observation of Yudovich's in \cite{Y1995}, if $\beta_M$ is the
    function of \refD{Admissible} associated with the admissible function
    $\theta_m$ of \refE{YudovichExamples}, then letting $\eps_0 = -1/\log r$ for $r
    < e^{-p_0}$,
    \begin{align*}
        \beta_M(r)
            &\le C (M^{\eps_0} r^{1-\eps_0}/\eps_0) \theta_m(1/\eps_0) \\
            &= -C M^{\eps_0} r r^{1/\log r} \log r \theta_m(\log (1/r)) \\
            &= C M^{\eps_0} r \theta_{m + 1}(1/r)
            \le C r \theta_{m + 1}(1/r),
    \end{align*}
    since $M^{\eps_0} \le \max\smallset{1, M^{1/p_0}}$. (In fact, the reverse
    inequality---with a different constant---can also be proven.)

    For our choice of $\omega^0$, for which $m = 1$,
    \begin{align*}
        \beta_1(r)
            \le C r \theta_2(1/r)
            = C r \log (1/r) \log \log (1/r).
    \end{align*}
    Then, if we define the upper bound on the modulus of continuity of the flow by
    \refE{MOCFlowLoose} instead of \refE{MOCFlow}, we have
    \begin{align*}
        -C &\brac{\log \log (-\log r)}_{s}^{\Gamma_t(s)}
            = \int_{s}^{\Gamma_t(s)}
                \frac{dr}{C r \log (1/r) \log \log (1/r)} \\
        &\qquad\le \int_{s}^{\Gamma_t(s)} \frac{dr}{\beta_1(r)}
            = t \\
        &\implies \log (-\log (\Gamma_t(s))) \ge e^{-Ct} \log (-\log s).
    \end{align*}

    From \refE{vFinalBoundOverTime}, for $x_1 > 0$,
    \begin{align*}
        v^1(t, x_1, 0)
        &\ge - \frac{2(1 - \la')}{\pi} \log (-\log (\Gamma_t(x_1)^\la)) x_1 \log
                x_1 \\
        &= - C \log (-\la \log (\Gamma_t(x_1))) x_1 \log
                x_1 \\
        &= - C \log \la \log x_1 \log x_1 - C \log (\log(- \Gamma_t(x_1))) x_1 \log x_1 \\
        &\ge - C \log (\log(- \Gamma_t(x_1))) x_1 \log x_1 \\
        &\ge - Ce^{-Ct} \log (-\log x_1) x_1 \log x_1
    \end{align*}
    as long as $x_1$ is sufficiently small that the second-to-last inequality
    holds.

    Solving for
    \begin{align*}
        \diff{x_1(t)}{t}
            = - Ce^{-Ct} \log (-\log x_1) x_1 \log x_1
    \end{align*}
    with $x_1(0) = a$, we get
    \begin{align*}
        \log \log (-\log x_1(t))
            = \log \log (-\log a)
            + C \pr{e^{-Ct} - 1},
    \end{align*}
    so
    \begin{align*}
        \psi^1(t, a, 0)
        &\ge x_1(t)
            = \exp\pr{-(-\log a)^{\exp\pr{C (e^{-Ct} - 1)}}} \\
        &= e^{-(-\log a)^\gamma},
    \end{align*}
    where $\gamma = \exp\pr{C (e^{-Ct} - 1)}$.

    Observe that $\gamma < 1$ for all $t > 0$. Thus, for any $\al$ in $(0,
    1)$ and all $t > 0$,
    \begin{align*}
        \norm{\psi - \text{Id}}_{C^\al}
        &\ge \lim_{a \to 0^+} \frac{\psi^1(t, a, 0) - \psi^1(t, 0, 0)}
                    {a^\al}
            \ge \lim_{a \to 0^+} \frac{x_1(t)}{a^\al} \\
        &= \lim_{a \to 0^+} \frac{e^{-(-\log a)^\gamma}}
                    {e^{(-\log a) \al}}
            = \lim_{u \to \iny} \frac{e^{-u^\gamma}}{e^{-\al u}}
            = \lim_{u \to \iny} e^{\al u -u^\gamma}
            = \iny.
    \end{align*}
    We conclude that the flow lies in no \Holder space of positive exponent
    for all positive time, a result that we state explicitly as a corollary
    of \refT{UpperBoundAllTime}.

    \begin{cor}
        There exists initial velocities satisfying the conditions of
        \refT{YudovichTheorem} for which the unique solution to ($E$)
        has an associated flow lying, for all positive time, in no \Holder
        space of positive exponent.
    \end{cor}
    } 

\section{Yudovich's higher examples}\label{S:YudoExamples}

\noindent Assume that $m \ge 2$ and let
$\omega^0$ have the symmetry described in \refT{UpperBoundAllTime} with
\begin{align} \label{e:omega0ForBound}
	\omega^0(x_1, 0)
		= \log^2 (1/x_1) \cdots \log^m (1/x_1)
		= \theta_m(1/x_1)/\log(1/x_1),
\end{align}
for $x_1$ in $(0, 1/\exp^m(0))$, and $\omega^0$ equal to zero elsewhere
in the first quadrant. Then by \refL{lnLpnorm},
\begin{align*}
    \theta(p) = \SmallLpNorm{\omega^0}{p}
    	\le C \log p \cdots \log^{m-1} p
	= \theta_{m - 1}(p)
\end{align*}
for all $p$ larger than some $p^*$, with $\theta_{m - 1}$ given by \refE{YudovichExamples}.

Adapting an observation of Yudovich's in \cite{Y1995}, if $\mu$ is the
function of \refE{mu} associated with the admissible function $\theta$,
then letting $\eps_0 =1/ 2 \log (1/r)$ for $r < e^{-p^*}$,
\begin{align*}
    \mu(r)
        &\le (r^{1-2 \eps_0}/\eps_0) \theta(1/\eps_0)
         = -C r r^{1/\log r} \log r \theta(\log (1/r)) \\
        &= C r \log(1/r) \log^2(1/r) \cdots \log^m(1/r) \\
        &= C r \theta_m(1/r).
\end{align*}

Then, if we define the upper bound on the modulus of continuity of the flow by
\refE{MOCFlow}, we have
\begin{align*}
    -C &\brac{\log^{m + 1}(1/r)}_{s}^{\Gamma_t(s)}
        = \int_{s}^{\Gamma_t(s)}
            \frac{dr}{C r \theta_m(1/r)} \\
       &\qquad\le \int_{s}^{\Gamma_t(s)} \frac{dr}{\mu(r)}
        = t \\
       &\implies -C \log^{m + 1}(1/\Gamma_t(s))
            \le -C \log^{m + 1}(1/s) + t \\
       &\implies \log^{m + 1}(1/\Gamma_t(s))
            \ge \log^{m + 1}(1/s) - C t \\
       &\implies \log^m(1/\Gamma_t(s))
            \ge e^{-Ct} \log^m(1/s).
\end{align*}

Using this bound, we have, from \refE{vFinalBoundOverTime},
\begin{align}\label{e:v1BoundYudo}
    \begin{split}
        v^1(t, &x_1, 0)
            \ge C_\la  \omega^0(\Gamma_t(2^{\la/2} x_1^\la), 0) x_1 \log
                        (1/x_1) \\
            &= C_\la  \log^2(1/\Gamma_t(2^{\la/2} x_1^\la)) \cdots  \log^m(1/\Gamma_t(2^{\la/2} x_1^\la)) x_1 \log
                        (1/x_1) \\
             &\ge C_\la e^{-Ct} \log^2(1/\Gamma_t(2^{\la/2} x_1^\la)) \cdots \log^m(1/2^{\la/2} x_1^\la)
             				x_1 \log (1/x_1) \\
             &\ge C_\la' e^{-Ct} \log^2(1/\Gamma_t(2^{\la/2} x_1^\la)) \cdots \log^m(1/x_1) x_1 \log (1/x_1),
    \end{split}
\end{align}
as long as $x_1 > 0$ is sufficiently small, where $C_\al'$ depends on $\la$. (When $m > 2$, the argument $1/\Gamma_t(2^{\la/2} x_1^\la)$ appears in each of the $\log^2, \dots, \log^{m - 1}$ factors above, but not in the $\log$ factor. When $m = 2$ it appears in none of the factors.)

Specializing to the case $m = 2$ and combining the previous two inequalities, the explicit dependence of the bound in \refE{v1BoundYudo} on $\Gamma_t$ disappears, and we have
\begin{align*}
    v^1(t, x_1, 0)
       &\ge C_\la' e^{-Ct} \log^2(1/x_1) x_1 \log (1/x_1)
        = C_\la' e^{-Ct} x_1 \theta_2(1/x_1).
\end{align*}
Solving for
\begin{align}\label{e:ODEm2}
    \diff{x_1(t)}{t}
        = C_\la' e^{-Ct} x_1 \theta_2(1/x_1)
\end{align}
with $x_1(0) = a$, we get
\begin{align*}
    \log^3(1/x_1(t))
        = \log^3(1/a)
          + C_\la' \pr{e^{-Ct} - 1},
\end{align*}
so
\begin{align*}
    \psi^1(t, a, 0)
       &\ge x_1(t)
        = \exp\pr{-(-\log a)^{\exp\pr{C_\la' (e^{-Ct} - 1)}}} \\
       &= e^{-(-\log a)^\gamma},
\end{align*}
where $\gamma = \exp\pr{C_\la' (e^{-Ct} - 1)}$.

Observe that $\gamma < 1$ for all $t > 0$. Thus, for any $\al$ in $(0, 1)$ and
all $t > 0$,
\begin{align}\label{e:psiCalpha}
    \begin{split}
        \norm{\psi}_{C^\al}
        &\ge \lim_{a \to 0^+} \frac{\psi^1(t, a, 0) - \psi^1(t, 0, 0)}
                    {a^\al}
            \ge \lim_{a \to 0^+} \frac{x_1(t)}{a^\al} \\
        &= \lim_{a \to 0^+} \frac{e^{-(-\log a)^\gamma}}
                    {e^{-(-\log a) \al}}
            = \lim_{u \to \iny} \frac{e^{-u^\gamma}}{e^{-\al u}}
            = \lim_{u \to \iny} e^{\al u -u^\gamma}
            = \iny.
    \end{split}
\end{align}
We conclude that the flow lies in no \Holder space of positive exponent for all
positive time, a result that we state explicitly as a corollary of \refT{UpperBoundAllTime}.

\begin{cor}\label{C:MainResultPartI}
    There exists initial velocities satisfying the conditions of
    \refT{YudovichTheorem} for which the unique solution to ($E$)
    has an associated flow lying, for all positive time, in no \Holder
    space of positive exponent.
\end{cor}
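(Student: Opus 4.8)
The corollary collects the work carried out in \refS{YudoExamples}, so the plan is to assemble those pieces into a self-contained deduction, making sure that the hypotheses of \refT{YudovichTheorem} really are satisfied by the example. First I would fix $m = 2$ and take $\omega^0$ to be the square-symmetric vorticity of \refT{UpperBoundAllTime} with $\omega^0(x_1,0) = \log^2(1/x_1)$ for $x_1$ in $(0,1/e)$ and $\omega^0$ equal to zero elsewhere in $Q_1$; note that $\omega^0(x_1,0)$ is then nonnegative, non-increasing, and finite away from the origin, with a $\log\log(1/\abs{x})$-type point singularity there. The first genuine task is to check that $v^0 = K*\omega^0$ lies in $\Y$: by \refL{lnLpnorm} the $L^p$-norms of $\omega^0$ grow no faster than $\theta_1(p) = \log p$, which is one of the admissible bounds listed in \refE{YudovichExamples}, and the computation in \refS{YudoExamples} (with $\eps_0 = 1/(2\log(1/r))$) then gives $\mu(r) \le C r\,\theta_2(1/r) = C r\log(1/r)\log\log(1/r)$, whose reciprocal is non-integrable at $0$, so $\mu$ satisfies the Osgood condition \refE{muOsgood}. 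Hence $\omega^0$ is a Yudovich vorticity and \refT{YudovichTheorem} applies, producing the unique weak solution and its continuous flow $\psi$.

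Next I would run the chain of estimates. Applying \refT{UpperBoundAllTime} yields, for $x_1$ in the neighborhood $\Cal{N}$ of \refL{BC21Generalized} and all $t \ge 0$,
\[
	v^1(t,x_1,0) \ge C_\la\,\omega^0\!\left(\Gamma_t(2^{\la/2} x_1^\la),0\right) x_1 \log(1/x_1).
\]
Using $\mu(r) \le C r\,\theta_2(1/r)$ together with \refE{MOCFlow} and integrating $dr/(C r\,\theta_2(1/r))$ gives $\log^{3}(1/\Gamma_t(s)) \ge \log^{3}(1/s) - Ct$, hence $\log^{2}(1/\Gamma_t(s)) \ge e^{-Ct}\log^{2}(1/s)$. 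Substituting this into the velocity bound and using $\log^2\!\big(1/(2^{\la/2}x_1^\la)\big) \ge \tfrac12\log^2(1/x_1)$ for small $x_1$, the explicit dependence on $\Gamma_t$ cancels for $m = 2$, leaving
\[
	v^1(t,x_1,0) \ge C_\la'\, e^{-Ct}\, \log^2(1/x_1)\, x_1 \log(1/x_1) = C_\la'\, e^{-Ct}\, x_1\,\theta_2(1/x_1)
\]
for all sufficiently small $x_1 > 0$.

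Finally I would pass from the velocity bound to a flow bound via the second part of \refT{UpperBoundAllTime}. Since $L$ here is Osgood-continuous in space (\refR{RemUpperBoundAllTime}), the comparison ODE $\dot{x}_1 = C_\la' e^{-Ct} x_1\,\theta_2(1/x_1)$, $x_1(0) = a$, has a unique global solution, and separating variables gives $\log^{3}(1/x_1(t)) = \log^{3}(1/a) + C_\la'(e^{-Ct} - 1)$, so that
\[
	\psi^1(t,a,0) \ge x_1(t) = \exp\!\left(-(-\log a)^{\gamma}\right), \qquad \gamma = \exp\!\left(C_\la'(e^{-Ct}-1)\right).
\]
For every $t > 0$ one has $\gamma < 1$, and since $\psi(t,0,0) = 0$, writing $u = -\log a$,
\[
	\frac{\abs{\psi(t,a,0) - \psi(t,0,0)}}{a^\al} \ge \frac{x_1(t)}{a^\al} = e^{\al u - u^{\gamma}} \to \iny \quad \text{as } a \to 0^+
\]
for every $\al \in (0,1)$, exactly as in \refE{psiCalpha}. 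Hence $\psi(t,\cdot)$ lies in no \Holder space of positive exponent at any positive time.

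The main obstacle is conceptual rather than computational: the key observation is that choosing $m = 2$ (rather than the bounded case treated in \refS{BoundedVorticity}) makes the $\Gamma_t$-dependence in the velocity lower bound self-cancel, so the comparison ODE one must solve has the iterated-logarithmic form whose solution $x_1(t) \sim \exp(-(\log(1/a))^{\gamma})$ with $\gamma < 1$ decays to $0$ more slowly than any power $a^\al$. Everything else — admissibility of $\omega^0$, the rescaling in \refL{BC21Generalized}, the transport/symmetry argument behind \refT{UpperBoundAllTime}, and the final limit — is either already established or routine, the only point requiring care being that the successive smallness restrictions on $x_1$ (membership in $\Cal{N}$ and the various ``sufficiently small'' conditions) have a common nonempty right-neighborhood of the origin on which all the inequalities hold simultaneously.
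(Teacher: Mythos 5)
Your proposal reproduces the paper's own proof: it is exactly the argument of \refS{YudoExamples} specialized to $m = 2$ (with the admissibility check via \refL{lnLpnorm}, the Osgood estimate on $\mu$, the $\Gamma_t$-bound from \refE{MOCFlow}, the self-cancellation at $m = 2$ giving \refE{ODEm2}, and the final difference-quotient limit as in \refE{psiCalpha}), assembled into a single deduction. Nothing is missing and no genuinely different route is taken.
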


\Ignore{ 

We used the following lemma above:

\begin{lemma}\label{L:logmInequality}
    Let $m$ be a positive integer. Then for sufficiently small positive
    $x$,
    \begin{align*}
        \log^m(1/x) \ge (1/2) \log^m(1/x^2).
    \end{align*}
\end{lemma}
\begin{proof}
    The proof is by induction. For $x < 1$,
    \begin{align*}
        \log(1/x^2)
            = 2 \log(1/x),
    \end{align*}
    which establishes the inequality (in fact, equality) for $m = 1$.
    So assume the inequality holds for all positive integers up to $m$. Then
    \begin{align*}
        \log^{m + 1}(1/x^2)
           &= \log \log^{m - 1} (1/x^2)
            \le \log(2 \log^{m - 1}(1/x)) \\
           &= \log 2 + \log^m(1/x)
            \le 2 \log^m(1/x)
    \end{align*}
    as long as $x$ is sufficiently small that $\log 2 \le \log^m(1/x)$.
\end{proof}
} 

We used  \refL{lnLpnorm} above, and \refL{loglogpIneq} is used in its proof.

\begin{lemma}\label{L:lnLpnorm}
	Let $m \ge 2$ and let $\omega^0$ have the symmetry described in \refT{UpperBoundAllTime} with
    \begin{align*}
        \omega^0(x_1, 0)
            = \log^2 (1/x_1) \cdots \log^m (1/x_1)
            = \theta_m(1/x_1)/\log(1/x_1),
    \end{align*}
    for $x_1$ in $(0, 1/\exp^m(0))$, and $\omega^0$ equal to zero elsewhere
    in the first quadrant. Then
    \begin{align*}
        \SmallLpNorm{\omega^0}{p}
            \sim \log p \cdots \log^{m-1} p
            = \theta_{m - 1}(p)
    \end{align*}
    for large $p$.
\end{lemma}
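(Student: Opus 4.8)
The plan is to estimate the $L^p$ norm of $\omega^0$ directly from its definition as a square-symmetric function whose value along the positive $x_1$-axis is $\theta_m(1/x_1)/\log(1/x_1) = \log^2(1/x_1)\cdots\log^m(1/x_1)$, supported on the square of side $\exp^{-m}(0)$ about the origin. Because $\omega^0$ is square-symmetric, the superlevel set $\{|\omega^0| > s\}$ is (four copies of) a square centered at the origin, so the $L^p$ norm reduces to a one-dimensional integral in the radial-box variable. Concretely, using $\omega^0$ constant in sup-norm along $\partial([-t,t]^2)$,
\begin{align*}
    \smallnorm{\omega^0}{p}^p
        = c \int_0^{\exp^{-m}(0)} \big(\log^2(1/t)\cdots\log^m(1/t)\big)^p \, t \, dt
\end{align*}
for an absolute constant $c$ coming from the four quadrants and the two-dimensional area element $t\,dt$ (up to a bounded factor from the difference between square annuli and circular annuli — here I would just note $\int_{[-t,t]^2} \sim t\,dt$ up to constants, which does not affect the asymptotics).

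The substitution that linearizes everything is $t = e^{-u}$, so $t\,dt = e^{-2u}\,du$ and $\log^k(1/t)$ becomes $\log^{k-1} u$; the integral becomes, up to constants,
\begin{align*}
    \int_{\exp^{m-1}(0)}^\infty \big(u \log u \cdots \log^{m-2} u\big)^p \, e^{-2u} \, du
        = \int \exp\!\big(p \log(u\log u \cdots \log^{m-2} u) - 2u\big) \, du.
\end{align*}
This is a Laplace-type integral: the exponent is maximized near where its $u$-derivative vanishes, i.e. where $p \cdot \tfrac{d}{du}\log(u\log u\cdots) = 2$, which gives $u_\ast \sim p/2$ to leading order (the derivative of $\log(u\log u\cdots\log^{m-2}u)$ is $\sim 1/u$). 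A standard Laplace/saddle-point estimate — or more elementarily, splitting the integral at $u = p$ and bounding the two pieces by monotonicity, together with the observation that $\int e^{-u}\,du$ over a unit-length window contributes only a bounded factor — then yields
\begin{align*}
    \smallnorm{\omega^0}{p}^p
        \sim \exp\!\big(p \log(u_\ast \log u_\ast \cdots \log^{m-2} u_\ast) - 2u_\ast\big),
\end{align*}
so that $\smallnorm{\omega^0}{p} \sim u_\ast \log u_\ast \cdots \log^{m-2} u_\ast \cdot e^{-2u_\ast/p}$. Since $u_\ast \sim p/2$, the factor $e^{-2u_\ast/p}$ is bounded, $\log^{k} u_\ast \sim \log^{k} p$, and $u_\ast \sim p$ up to a constant, giving $\smallnorm{\omega^0}{p} \sim p \log p \cdots \log^{m-2} p$. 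But that is $\theta_{m-1}(p) = \log p \cdots \log^{m-1} p$ composed one slot off — wait: $\theta_{m-1}(p) = \log p \cdot \log^2 p \cdots \log^{m-1} p$, which has leading factor $\log p$, not $p$. So I need to recheck the exponent of $u$: in fact $\theta_m(1/x_1)/\log(1/x_1)$ has its \emph{first} factor equal to $\log^2(1/x_1)$, i.e. after substitution the integrand's base is $\log u \cdot \log^2 u \cdots \log^{m-1}u$ — there is no bare factor of $u$. Redoing the saddle point with base $\log u \cdots \log^{m-1}u$: its log-derivative is $\sim 1/(u\log u)$, so $u_\ast$ solves $p/(u_\ast \log u_\ast) \sim 2$, giving $u_\ast \sim p/(2\log p)$, hence $\log u_\ast \sim \log p$ and the leading value is $\log u_\ast \cdots \log^{m-1} u_\ast \sim \log p \cdot \log^2 p \cdots \log^{m-1} p = \theta_{m-1}(p)$, as claimed.

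The clean way to present this, avoiding a heavy saddle-point apparatus, is: (i) change variables to $u=e^{-u}$ and reduce to $\int_{A}^\infty F(u)^p e^{-2u}\,du$ with $F(u) = \log u \cdots \log^{m-1} u$; (ii) for the lower bound, restrict the integral to $u \in [u_\ast, u_\ast+1]$ where $u_\ast = u_\ast(p)$ is chosen so $pF'(u)/F(u) \le 2$ beyond it, bounding $F(u)^p \ge F(u_\ast)^p$ and $e^{-2u} \ge e^{-2u_\ast-2}$ there; (iii) for the upper bound, split at $u_\ast$, use $F(u)^p e^{-2u} \le F(u_\ast)^p e^{-2u_\ast}\cdot(\text{geometric tail})$ on $(u_\ast,\infty)$ by checking $\tfrac{d}{du}(p\log F - 2u) \le -1$ there, and bound the contribution of $(A,u_\ast)$ by $u_\ast F(u_\ast)^p e^{-2A}$, which is $e^{o(p)}F(u_\ast)^p e^{-2u_\ast}$ since $u_\ast \sim p/\log p \to \infty$. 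Taking $p$-th roots and using $F(u_\ast(p)) \sim \theta_{m-1}(p)$ together with $e^{-2u_\ast/p} = e^{o(1)} \to 1$ finishes the proof. I expect the main obstacle to be the bookkeeping in locating $u_\ast(p)$ precisely enough — pinning down $u_\ast \sim p/(2\log p)$ and verifying that the logarithmic corrections $\log^k u_\ast$ agree with $\log^k p$ asymptotically, plus handling the $m=2$ base case where $F(u) = \log u$ and the formula degenerates slightly — all of which is elementary but requires care with iterated logarithms. \refL{loglogpIneq}, cited in the statement as being used in the proof, presumably supplies exactly the inequality $\log^k u_\ast \sim \log^k p$ (or $\log\log p$-type comparison) needed at this step, so I would invoke it there.
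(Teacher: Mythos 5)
Your reduction to the one-dimensional Laplace integral $\int e^{-2u} F(u)^p\,du$ with $F(u)=\log u\cdots\log^{m-1}u$ via $u=\log(1/x_1)$ is exactly what the paper does, and your corrected identification of the integrand (no bare factor of $u$) is right. From that point on, however, you and the paper take genuinely different routes. You locate the saddle point $u_\ast$ solving $pF'(u_\ast)/F(u_\ast)=2$, get $u_\ast\sim p/(2\log p)$, and propose a Laplace-type split at $u_\ast$. The paper instead splits bluntly at $u=p$ — well past the true maximum — and uses only the monotonicity of $F$: for the lower bound, $F(u)\ge F(p)=\theta_{m-1}(p)$ on $[p,\infty)$ so $\int_p^\infty e^{-2u}F(u)^p\,du\ge \theta_{m-1}(p)^p\,e^{-2p}/2$; for the upper bound, $F(u)\le F(p)$ on $[0,p]$, and on $[p,\infty)$ the growth of $F$ is controlled by \refL{loglogpIneq}, which states $\log(xp)\cdots\log^{m-1}(xp)\le(\log p\cdots\log^{m-1}p)\,e^{x-1}$ for $x\ge 1$ — precisely the estimate needed to show $\int_p^\infty e^{-2u}F(u)^p\,du\le C\,\theta_{m-1}(p)^p$. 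This avoids any need to pin down $u_\ast$ or to track how $\log^k u_\ast$ compares to $\log^k p$, which is the bulk of the bookkeeping in your version. So your guess at the role of \refL{loglogpIneq} is slightly off: it is not an asymptotic comparison $\log^k u_\ast\sim\log^k p$, but a pointwise growth bound that substitutes for the entire saddle-point apparatus.

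One concrete slip in your sketch: you cannot verify $\frac{d}{du}\big(p\log F(u)-2u\big)\le -1$ on all of $(u_\ast,\infty)$, since that derivative is $0$ at $u=u_\ast$ by definition and only decreases to $-2$ as $u\to\infty$. You would need to separate out a transitional window, say $(u_\ast,2u_\ast)$, where you bound the integrand by its (maximal) value at $u_\ast$ times the interval length, and reserve the geometric-tail argument for $u\ge 2u_\ast$ where the derivative is genuinely bounded away from zero. This is fixable, and the extra factor of $u_\ast=e^{o(p)}$ washes out after taking $p$-th roots as you note, but as written the claim is false. The paper's choice of split point sidesteps this entirely, which is what makes it the cleaner argument.
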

\begin{proof}
    Because of the symmetry of $\omega^0$,
    \begin{align}\label{e:OrigIntegration}
        \begin{split}
            \SmallLpNorm{\omega^0}{p}^p
            &= 4 \int_0^{1/\exp^m(0)}
                    2 \int_0^{x_1} (\omega^0(x_1, 0))^p \, dx_2 \, dx_1 \\
            &= 8 \int_0^{1/\exp^m(0)}
                        x_1 \brac{\log^2 (1/x_1) \cdots \log^m (1/x_1)}^p \, dx_1.
        \end{split}
    \end{align}
    Making the change of variables, $u = \log(1/x_1) = -\log x_1$, it
    follows that $x_1 = e^{-u}$ and $du = -(1/x_1) \, dx_1$ so $dx_1 =
    -e^{-u} \, du$. Thus,
    \begin{align*}
        \SmallLpNorm{\omega^0}{p}^p
           &= 8 \int_\iny^{\exp^{m - 1}(0)} e^{-u} \brac{\log u \cdots \log^{m-1} u}^p
                (-e^{-u}) \, du \\
           &= 8 \int_{\exp^{m - 1}(0)}^\iny e^{-2u} \brac{\log u \cdots \log^{m-1} u}^p
                \, du \\
           &\ge 8 \int_p^\iny e^{-2u} \brac{\log p \cdots \log^{m-1} p}^p
                \, du \\
	&= 4 e^{-2p}  \brac{\log p \cdots \log^{m-1} p}^p,
    \end{align*}
    the inequality holding for all sufficiently large $p$.
    Asymptotically, then, $\SmallLpNorm{\omega^0}{p} \ge e^{-2} \log p \cdots \log^{m-1} p$.
    
    For the upper upper bound on $\SmallLpNorm{\omega^0}{p}$, we use \refL{loglogpIneq}
    to obtain, for all sufficiently large $p$,
    \begin{align*}
        \SmallLpNorm{\omega^0}{p}^p
           &\le 8 \pr{\int_0^p  + \int_p^\iny}
                e^{-2u} \brac{\log u \cdots \log^{m-1} u}^p \, du \\
           &\le 8 \int_0^p
                e^{-2u} \brac{\log p \cdots \log^{m-1} p}^p \, du \\
              &\qquad\qquad+ 8 \int_p^\iny
                    e^{-2u} \brac{\log p \cdots \log^{m-1} p \, e^{\frac{u}{p} - 1}}^p \, du \\
           &\le 8 \pr{\log p \cdots \log^{m-1} p}^p
                \brac{\int_0^p e^{-2u} \, du
                    + e^{-p}\int_p^\iny e^{-u} \, du} \\
           &= 8 \pr{\frac{1 + e^{-2p}}{2}} \pr{\log p \cdots \log^{m-1} p}^p.
    \end{align*}
    Thus, asymptotically, $\SmallLpNorm{\omega^0}{p}
    \le \log p \cdots \log^{m-1} p$, completing the proof.
\end{proof}

\Ignore{ 
\begin{remark}
Essentially the same proof gives the same result for radial or other symmetries
(except that $e^{-2}$ might become a different constant), as well as for any
spatial dimension. In dimension $n$ with radial symmetry, for example, the
factor of $x_1$ in the original integration in \refE{OrigIntegration} would
become $r^n$ and the factor of $8$ would become the surface area of the unit
sphere. The $r^n$ factor would become $e^{-nxp}$ (even for $n = 1$) in
\refE{LpNorm}, which does not materially affect the rest of the argument.
\end{remark}
} 

\begin{lemma}\label{L:loglogpIneq}
    Let $m$ be a positive integer. Then for sufficiently large $p$,
    \begin{align}\label{e:LogpInequality}
        \log (xp) \cdots \log^{m-1} (xp)
            \le (\log p \cdots \log^{m - 1} p) e^{x - 1}
    \end{align}
    for all $x \ge 1$.
\end{lemma}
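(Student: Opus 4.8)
The plan is to take logarithms of both sides of \refE{LogpInequality}: this turns the multiplicative factor $e^{x-1}$ into an additive term $x-1$ and converts the nested products of iterated logarithms into simple sums. Since for $q \ge 1$ and $p$ large enough that $\log^k p \ge 1$ for all $1 \le k \le m$ one has $\log\pr{\prod_{k=1}^{m-1}\log^k q} = \sum_{k=1}^{m-1}\log^{k+1}q = \sum_{j=2}^m \log^j q$, and every factor in sight is positive when $x \ge 1$ (because $\log^k(xp) \ge \log^k p > 0$), \refE{LogpInequality} is equivalent to
\begin{align*}
	\sum_{j=2}^m \pr{\log^j(xp) - \log^j p} \le x - 1 .
\end{align*}
So the whole lemma reduces to proving this additive inequality for all $x \ge 1$, once $p$ is large. (When $m = 1$ both sides of \refE{LogpInequality} equal $1$, and when $x = 1$ both sides are equal, so these degenerate cases are automatic.)

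Next I would bound each term $\log^j(xp) - \log^j p \ge 0$. For $j \ge 2$, writing $\log^j q = \log\pr{\log^{j-1}q}$ and using $\log(1+t)\le t$ for $t \ge 0$,
\begin{align*}
	\log^j(xp) - \log^j p
		= \log\pr{1 + \frac{\log^{j-1}(xp) - \log^{j-1}p}{\log^{j-1}p}}
		\le \frac{\log^{j-1}(xp) - \log^{j-1}p}{\log^{j-1}p}.
\end{align*}
Since $\log^1(xp) - \log^1 p = \log x$, iterating this bound downward from $j$ to $1$ gives, for every $j \ge 1$,
\begin{align*}
	\log^j(xp) - \log^j p \le \frac{\log x}{\log p \cdot \log^2 p \cdots \log^{j-1}p},
\end{align*}
the denominator being an empty product (hence $1$) when $j = 1$. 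Note this is a short induction on $j$, not on $m$.

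Finally I would sum over $j = 2, \dots, m$. Because $\log^k p \ge 1$ for $p$ large, each denominator is at least $\log p$, so
\begin{align*}
	\sum_{j=2}^m \pr{\log^j(xp) - \log^j p}
		\le \pr{\log x}\sum_{j=2}^m \frac{1}{\log p \cdots \log^{j-1}p}
		\le \pr{\log x}\frac{m-1}{\log p}.
\end{align*}
Combining this with the elementary inequality $\log x \le x - 1$ (valid for all $x \ge 1$), the right-hand side is at most $(x-1)\frac{m-1}{\log p} \le x-1$ as soon as $\log p \ge m-1$. This establishes the reduced inequality, hence \refE{LogpInequality}, for all $p$ beyond a threshold depending only on $m$.

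The only real work is bookkeeping: choosing $p$ large enough that every iterated logarithm $\log^k p$ ($1 \le k \le m$) exceeds $1$ — so that taking logarithms is legitimate and the products are well defined — and large enough that the tail sum $\sum_{j\ge2}1/(\log p \cdots \log^{j-1}p)$ is genuinely $\le 1$. There is no conceptual obstacle: once one has the idea of passing to logarithms to linearize the $e^{x-1}$ factor, the estimate $\log(1+t)\le t$ together with $\log x \le x-1$ does everything.
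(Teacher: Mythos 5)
Your proof is correct but takes a different route from the paper's after the common first step of taking logarithms. The paper fixes $m = 3$ (stating that other $m$ are entirely analogous), notes that the logged inequality $f(x) \le g(x)$ holds with equality at $x = 1$, and then closes by differentiating: $f'(x) = \frac{1}{x\log(xp)} + \frac{1}{x\log(xp)\log\log(xp)} \le 1 = g'(x)$ for $x \ge 1$ and $p \ge e^e$. That is a mean-value argument — equality at the left endpoint plus a derivative comparison. You instead avoid calculus altogether: iterating $\log(1+t)\le t$ yields the explicit pointwise bound $\log^j(xp) - \log^j p \le \frac{\log x}{\log p\cdots\log^{j-1}p}$, and summing over $j$ and invoking $\log x \le x-1$ finishes it. Both approaches are elementary, but yours is written uniformly in $m$ rather than for a representative case, and it makes the large-$p$ threshold concrete ($\log^k p \ge 1$ for $k \le m$ together with $\log p \ge m-1$), whereas the paper's derivative bound $f' \le 1$ is slightly slicker but leaves the generalization to the reader. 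The trade-off is roughly: the paper's derivative comparison is shorter once logs are taken; your telescoping estimate is more self-contained and quantitative.
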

\begin{proof}
    We prove this for $m = 3$, the proof for other values of $m$ being
    entirely analogous. First, by taking the logarithm of both sides of
    \refE{LogpInequality}, that equation holds if and only if
    \begin{align*}
        f(x) &:= \log \log (xp) + \log \log \log (xp) \\
            &\le g(x) := \log(\log p \log \log p) + x - 1.
    \end{align*}
    Because equality holds for $x = 1$, our result will follow if we
    can show that $f' \le g'$ for all $x \ge 1$ and sufficiently large
    $p$. This is, in fact, true, since
    \begin{align*}
        f'
        		= \frac{1}{x \log(xp)}
        		+ \frac{1}{x \log(xp) \log \log (xp)} \le 1 = g'
    \end{align*}
    for all $x \ge 1$ and $p \ge e^e$.
\end{proof}

\section{Remarks}\label{S:RemarksPartI}

\noindent It is natural to try to extend the analysis of \refS{YudoExamples} to Yudovich initial vorticities for $m > 2$. But this is, in fact, quite difficult, because when $m > 2$ the explicit dependence of the bound in \refE{v1BoundYudo} on $\Gamma_t$ remains, so we must also bound $\log^k(1/\Gamma_t(s))$ for
$k = 2, \dots, m - 1$. Doing so makes the analog of
\refE{ODEm2} no longer exactly integrable. It is clear that one obtains a worse bound on the modulus of continuity (MOC) than for $m = 2$, but it is not at all clear what happens as we take $m$ to infinity.

\Ignore{ 
Another approach to is to assume that $\omega^0$ is SBQ, but is constant along
vertical line segments (in each quadrant), though supported in a square. This
would still allow the inner integral in the first expression for $v_1(x_1, 0)$\MarginNote{Remember that you no longer give a direct proof of this.}in the proof of \refL{BC21Generalized} to be calculated exactly. One would then
need to choose the dependence of $\omega^0$ on the $x_1$ coordinate carefully
to produce the equivalent of \refE{vFinalBoundOverTime}. It is not at all clear
that this approach has any advantages, however.
} 

We chose to give the initial vorticity $SBQ$ symmetry because such symmetry works well with the symmetry of the Biot-Savart law to produce a lower bound on the velocity along the $x$- or $y$-axes. Having made this choice, the rest of our choices concerning the vorticity were inevitable, up to interchanging the roles of $x$ and $y$ or changing the sign of the vorticity. Because the initial vorticity is $SBQ$, the function $f = f_1 -
f_2$, where $f_1$ and $f_2$ are defined in \refE{f1f2}, controls the bound on the velocity, and
$f$ is continuous except for a singularity at $y = (x_1, 0)$, where
it goes to positive infinity (for $x_1 > 0$). Thus, whatever lower bound we derive
on $v_1(x_1, 0)$, it will increase the fastest at a singularity of
$\smallabs{\omega(t)}$ that lies along the $x$-axis and this effect is most pronounced when $\omega$ is of a single sign in $Q1$ (this follows from
\refE{v1Int}). The lower bound on the MOC of the flow
then follows from allowing a point $a = (a_1, 0)$ to approach the singularity
and looking at how large the appropriate difference quotient becomes, as in
\refE{psiCalpha}. But to do this, we need control on the position of the
singularity of $\smallabs{\omega(t)}$, and, when assuming $SBQ$, the origin is
the one point at which we have the most control---the singularity doesn't move
at all.

Thus, the assumption of $SBQ$ naturally leads us to assume a point singularity
at the origin. Then, because it appears that we can only bound from below the
effect on $v_1(x_1, 0)$ of the vorticity outside of the square on which a point
lies (actually, an even larger square because of the exponent $\la$
in \refL{GeneralOmegaStatic}), we are naturally led to the assumption that
$\smallabs{\omega^0}$ decreases with the distance from the origin, which leads
to \refL{GeneralOmegaStatic}.

A possible way to around these difficulties is to maintain symmetry by quadrant of the initial vorticity, but to drop the constraint of square symmetry, pinching or cutting out the singularity as in Figure 1. We also require that $\abs{\omega^0}$ be a decreasing function of $\abs{x_1}$ alone.

\medskip

\begin{center}
\scalebox{1} 
{
\begin{pspicture}(0,-1.05)(9.08,1.05)
\psline[linewidth=0.02cm,linestyle=dotted,dotsep=0.16cm](0.5,0.03)(6.6,-0.01)
\usefont{T1}{ptm}{m}{n}
\rput(6.16,0.175){+}
\usefont{T1}{ptm}{m}{n}
\rput(0.94,-0.205){+}
\usefont{T1}{ptm}{m}{n}
\rput(0.96,0.255){-}
\usefont{T1}{ptm}{m}{n}
\rput(6.16,-0.205){-}
\usefont{T1}{ptm}{m}{n}
\rput(3.57,0.375){0}
\usefont{T1}{ptm}{m}{n}
\rput(3.57,-0.405){0}
\pscustom[linewidth=0.02]
{
\newpath
\moveto(0.68,0.55)
\lineto(1.12,0.39)
\curveto(1.34,0.31)(1.765,0.2)(1.97,0.17)
\curveto(2.175,0.14)(2.595,0.085)(2.81,0.06)
\curveto(3.025,0.035)(3.32,0.015)(3.56,0.03)
}
\pscustom[linewidth=0.02]
{
\newpath
\moveto(6.5,0.55)
\lineto(6.06,0.39)
\curveto(5.84,0.31)(5.415,0.2)(5.21,0.17)
\curveto(5.005,0.14)(4.585,0.085)(4.37,0.06)
\curveto(4.155,0.035)(3.86,0.015)(3.62,0.03)
}
\pscustom[linewidth=0.02]
{
\newpath
\moveto(0.68,-0.53)
\lineto(1.12,-0.37)
\curveto(1.34,-0.29)(1.765,-0.18)(1.97,-0.15)
\curveto(2.175,-0.12)(2.595,-0.065)(2.81,-0.04)
\curveto(3.025,-0.015)(3.32,0.0050)(3.56,-0.01)
}
\pscustom[linewidth=0.02]
{
\newpath
\moveto(6.5,-0.53)
\lineto(6.06,-0.37)
\curveto(5.84,-0.29)(5.415,-0.18)(5.21,-0.15)
\curveto(5.005,-0.12)(4.585,-0.065)(4.37,-0.04)
\curveto(4.155,-0.015)(3.86,0.0050)(3.62,-0.01)
}
\usefont{T1}{ptm}{m}{n}
\rput(7.18,0.855){$x_2 = \gamma(x_1)$}
\usefont{T1}{ptm}{m}{n}
\rput(7.31,-0.825){$x_2 = - \gamma(x_1)$}
\psline[linewidth=0.02cm](0.68,0.57)(0.68,-0.55)
\psline[linewidth=0.02cm](6.5,0.55)(6.5,-0.51)
\usefont{T1}{ptm}{m}{n}
\rput(6.92,-0.505){$+r$}
\usefont{T1}{ptm}{m}{n}
\rput(0.3,-0.525){$-r$}
\end{pspicture} 
}

\end{center}

\begin{center}
\textit{Figure 1}
\end{center}

\medskip

The support of the initial vorticity is now $\Omega_0$, the region lying between the curves $x_2 = \pm \gamma(x_1)$ and the vertical lines $x_1 = \pm r$. We require that $\gamma$ be even and that $\gamma'(0) = 0$ (else we gain little over our existing example). One can show that \refEAnd{vFinalBound}{vFinalBoundOverTime} continue to hold, though now the neighborhood on which they hold shrinks rapidly as we make $\gamma$ pinch more tightly (that is, vanish more quickly as the origin is approached). But by pinching the initial vorticity this way we can sharpen the singularity of $\omega^0$ at the origin while leaving its $L^p$-norms and so $\Gamma_t$ unchanged. This then would increase the lower bound in \refE{vFinalBoundOverTime}---if, that is, we can insure that the geometry of the support of the vorticity is not changed too radically over a short time. This is the subject of a future work.


\bigskip
\begin{center}
\textbf{Part II: Inverse problems}
\phantomsection   
\addcontentsline{toc}{chapter}{Part II: Inverse problems}
\end{center}

%
%
\section{Introduction}\label{S:IntroPartII}

\noindent 
The various mappings in Part I can be described schematically as (see \refS{YudoTheorem} for definitions)
\begin{align}
	&v^0 \mapsto v(t, x) \mapsto \psi(t, x), \label{e:Maps1} \\
	&v^0 \mapsto \omega^0 \mapsto \theta(p) \mapsto \mu(x) \mapsto \Gamma_t(x). \label{e:Maps2}
\end{align}
That is, the initial velocity gives the velocity at all time which gives the flow at all time, and the initial velocity also gives the initial vorticity, whose $L^p$-norms, $\theta(p)$, give the MOC of the the velocity field which gives the MOC of the flow. The Osgood condition on $\mu$ insures that both mappings in \refE{Maps1} and the last mapping in \refE{Maps2} are well-defined.

The mappings in \refE{Maps1} are trivial to invert: $v(t, x) = \prt_t \psi(t, \psi^{-1}(t, x))$ and $v^0 = v(0, \cdot)$. The first mapping in \refE{Maps2} is easily inverted as well using the Biot-Savart law, \refE{BSLaw}. The remaining three mappings in \refE{Maps2}, which are the topic of this part of the paper, are another matter.

Our interest in inverting $\theta(p) \mapsto \mu(x)$ and $\mu(x) \mapsto \Gamma_t(x)$ stems from an attempt to answer the question, ``Can the upper bound on the MOC of the flow map given by \refE{MOCFlow} be arbitrarily poor?'' More precisely, we have the following two questions:

\begin{quote}
	\textbf{Question 1}: Given a fixed time $t_0 > 0$ and any (concave) MOC, $f$, does there exist
	a (concave) MOC, $\mu$, such that the associated $\Gamma_{t_0} \ge f$, at least near the origin?
\end{quote}
\begin{quote}
	\textbf{Question 2}: If we obtain $\mu$ from $f$ as in Question 1, can we find a function $\theta$
	that inverts the map, $\theta(p) \mapsto \mu(x)$?
\end{quote}
\Ignore{ 
\begin{quote}
	\textbf{Question 3}: If we obtain $\theta$ from $\mu$, can we find
	a Yudovich vector field that inverts the map, $\omega^0 \mapsto \theta(p)$?
\end{quote}
} 

If we place no restrictions on the MOC, $\mu$, other than the almost minimal ones that $\mu$ is $C^1$ and strictly increasing then we can answer Question 1 affirmatively fairly easily (see \refT{InverseMOC}). In this form, Question 1 is equivalent to some results in functional equations from the early 1960s due to Kordylewski and Kuczma (\cite{KK1960, KK1962}) and Choczewski (\cite{Choczewski1963}).

We will find, however, in  \refS{InvertingEulerMOC} that $\mu$ must be concave (and have further properties as well). Adding only the fundamental constraint that $\mu$ be concave (a common constraint for flows associated with transport equations) we give an affirmative answer to Question 1 in \refR{AnswerToQuestion1}, but only after a fairly lengthy digression into results, due primarily to Zdun \cite{Zdun1979} and Targo{\'n}ski and Zdun \cite{TargZdun1987}, on iteration (sub)groups. In \refS{BasicMOC}, we give some of these results along with proofs in a form more directly applicable to our purposes. In \refS{GammaSingleTime} we consider Question 1, extending in a small way the results of Zdun and using them to answer Question 1. 

After characterizing the required properties of $\mu$ in \refS{InvertingEulerMOC}, and describing some useful implications of these properties in \refS{Dini}, we give a complete answer to Question 2 in \refT{Invertmu}.

Remaining open is whether Question 1 can be answered affirmatively when the additional properties of $\mu$ given in the three equivalent conditions of \refE{YudoCond} are required to hold.

Finally, in \refS{Recoveringomega0}, we give one approach to inverting (approximately) the map, $\omega^0 \mapsto \theta(p)$. This approach yields an additional constraint on the third derivative of $\mu$; it is unclear, however, whether this constraint is an artifact of the method of inversion or whether it, or some similar constraint, is an essential requirement.

\bigskip

\Ignore{ 
``Can one find a Yudovich initial velocity such that the resulting MOC of the flow is arbitrarily poor?'' More precisely, given a MOC, $f$, can one find a $v^0$ in $\Y$ such that $\Gamma_t \ge f$ for some $t > 0$? This would require us to invert the map, $\mu(x) \mapsto \Gamma_t(x)$ given $\Gamma_t$ only at a single time, hence we should expect, and will find, a lack of uniqueness in the inverse. (Inverting this map given $\Gamma_t$ for all $t > 0$ is quite simple by virtue of \refT{InverseMOCAllTime}, the key fact being given in \refT{MOCsAreFlows}.) Inverting the map,  $\theta \mapsto \mu$, also has it difficulties, and is the subject of \refS{InvertingEulerToGetVectorField}.

In both cases, we will find that we can (usually) invert the last two mappings in \refE{Maps2}. We will not, however, be able to do so in a way that insures that certain properties of the resulting inverses required of solutions to the Euler equations (namely, $\mu$ being concave and $\log \theta$ being convex) are assured. Obtaining such properties will remain an open problem.
} 

In what follows, we will have need to distinguish among the following three degrees of concavity (or, similarly, convexity):
\begin{definition}\label{D:Concavity}
	Assume that $f$ is a twice differentiable function on some open interval of $\R$. Then we
	say that
	\begin{enumerate}
		\item
			$f$ is \textit{concave} if $f'' \le 0$;
			
		\item
			$f$ is \textit{strictly concave} if $f'$ is strictly decreasing;
			
		\item
			$f$ is \textit{strongly strictly concave} if $f'' < 0$.
	\end{enumerate}
\end{definition}
Observe that $(3) \implies (2) \implies (1)$.

%
%
\section{Properties of the MOC of the flow map}\label{S:BasicMOC}

\noindent The relation between the (time-independent) MOC of a vector field and of its flow is given in the following, classical lemma (which was used in the proof of \refT{YudovichTheorem}):

\begin{lemma}\label{L:ClassicalFlow}
Let $\Omega$ be a domain in $\R^n$ and $v$ be a vector field on $\Omega$ that admits a MOC, $\mu$, that satisfies the Osgood condition, \refE{muOsgood}.
\Ignore{ 
\begin{align}\label{e:muOsgood}
	\int_0^1 \frac{ds}{\mu(s)} = \iny.
\end{align}
} 
Then $v$ has a unique associated flow, $\psi$, continuous from $\R \times \Omega$ to $\R^n$, such that
    \begin{align}\label{e:psiFlow}
        \psi(t, x) = x + \int_0^t v(\psi(s, x)) \, ds
    \end{align}
for all $x$ in $\Omega$. For $t \ge 0$ define $\Gamma_t \colon [0, \iny) \to [0, \iny)$ by $\Gamma_t(0) = 0$ and for $x > 0$ by
\begin{align}\label{e:mufGamma}
        I_t(x)
		:= \int_x^{\Gamma_t(x)}  \frac{dr}{\mu (r)} = t
\end{align}
for all $t \ge 0$.
Then $\Gamma_t$ is a modulus of continuity for the flow, $\psi$, in the sense of \refE{psiMOC}.
\end{lemma}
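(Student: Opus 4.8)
The statement has two parts: existence and uniqueness of the flow $\psi$ solving \refE{psiFlow}, and the bound \refE{psiMOC}. For the first part the plan is simply to invoke the classical Osgood theory: since $v$ admits the MOC $\mu$ it is continuous, so for each $x$ in $\Omega$ equation \refE{psiFlow} has a local solution by Peano's theorem, and the Osgood condition \refE{muOsgood} forces this solution to be unique (see, e.g., Chapter~III of \cite{Hartman2002}). Joint continuity in $(t,x)$ and the semigroup property then follow in the usual way, and global existence on $\R \times \Omega$ uses that $v$ is bounded on the relevant set, as holds in all our applications. Nothing here is new.

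The content is the bound \refE{psiMOC}, which I would prove by a comparison (Bihari--Osgood) argument matched to the definition \refE{mufGamma} of $\Gamma_t$. Fix $x, y$ in $\Omega$, put $a = \abs{x - y}$, and let $\rho(t) = \abs{\psi(t, x) - \psi(t, y)}$, a nonnegative Lipschitz function of $t$ with $\rho(0) = a$. Subtracting two copies of \refE{psiFlow} and using that $\mu$ is a nondecreasing MOC of $v$ (as it is in all our uses) gives
\begin{align*}
	\rho(t) \le a + \int_0^t \mu(\rho(s)) \, ds.
\end{align*}
Let $R$ solve the corresponding equation with equality, i.e.\ $\diff{R}{t} = \mu(R)$, $R(0) = a$, on its maximal interval of existence. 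I would establish the comparison $\rho(t) \le R(t)$ by perturbation: for $\eps > 0$ let $R_\eps$ solve $\diff{R_\eps}{t} = \mu(R_\eps)$ with $R_\eps(0) = a + \eps$; since $\mu$ is nondecreasing, $\rho$ and $R_\eps$ cannot cross (at a hypothetical first contact time the upper Dini derivative of $\rho$ is at most $\mu(\rho) = \mu(R_\eps) = \diff{R_\eps}{t}$, contradicting the strictly positive initial gap), so $\rho < R_\eps$ throughout, and letting $\eps \to 0^+$ gives $\rho \le R$. (If $a = 0$ then $\psi(\cdot, x) = \psi(\cdot, y)$ by uniqueness and there is nothing to prove.)

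It remains to identify $R$ with $\Gamma_t(a)$. Since $\mu > 0$ on $(0, \iny)$, separating variables in $\diff{R}{t} = \mu(R)$ yields $\int_a^{R(t)} \frac{dr}{\mu(r)} = t$; comparing with \refE{mufGamma}, and using that $z \mapsto \int_a^z \frac{dr}{\mu(r)}$ is a strictly increasing homeomorphism of $[a, \iny)$ onto $[0, \int_a^\iny \frac{dr}{\mu(r)})$, we get $R(t) = \Gamma_t(a)$. In particular $\Gamma_t(a)$ is defined exactly for those $t$ for which $R$ has not escaped to $+\iny$, so the two sides of \refE{psiMOC} are simultaneously meaningful. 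Combining with the previous paragraph, $\abs{\psi(t, x) - \psi(t, y)} = \rho(t) \le \Gamma_t(a) = \Gamma_t(\abs{x - y})$, which is \refE{psiMOC} for $t \ge 0$; for $t < 0$ one repeats the argument with the time-reversed field $-v$, which has the same MOC $\mu$. I expect the only delicate point to be the comparison step: because $\rho$ is merely Lipschitz one must argue with Dini derivatives (or the $\eps$-perturbation above) rather than differentiate $\rho$ directly, and one must keep track of the maximal interval so that $\Gamma_t(a)$ is well defined there; the remaining manipulations are routine.
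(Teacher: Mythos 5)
Your argument is the standard Osgood comparison proof, which is exactly what the paper invokes by citing Theorem~5.2.1 of \cite{C1998}; the paper gives no independent proof, so your sketch is simply a correct fleshing-out of the same classical argument (and your observation that one needs $\mu$ nondecreasing, though not stated in the hypothesis, matches the implicit hypothesis in Chemin's lemma as well).
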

\begin{proof}
The proof is classical. See, for instance, Theorem 5.2.1 of \cite{C1998}. (Chemin's theorem is stated for log-Lipschitz vector fields, but the proof applies for any vector field having a MOC satisfying Osgood's condition.)
\end{proof}

\Ignore{ 
The MOC, $\Gamma$, of the flow map is defined by
\begin{align}\label{e:mufGamma}
	I_t(h)
		:= \int_h^{\Gamma_t(h)} \frac{ds}{\mu(s)} = t
		\text{ for all } t, h > 0.
\end{align}
} 
We will alternately write $\Gamma(t, x)$ and $\Gamma_t(x)$.

\begin{theorem}\label{T:MOCsAreFlows}
	Assume  that $\mu$ is a MOC with $\mu > 0$ on $(0, \iny)$ and that it satisfies the Osgood
	condition, \refE{muOsgood}. Then \refE{mufGamma} uniquely defines 
	$\Gamma \colon [0, \iny)^2 \to [0, \iny)$ with $\Gamma_0 = \textit{identity map}$
	and, for all $t > 0$, $\Gamma_t$ strictly increasing, $\Gamma_t(0) = 0$, and $\Gamma_t(x) > x$
	for all $x > 0$.
	Also, $\Gamma$ is continuously differentiable.
	
	 If $\mu$ is strictly increasing on $(0, a)$ for some $0 < a \le \iny$ then $\Gamma_t' > 1$
	 on $(0, \Gamma_t^{-1}(a))$ for all $t > 0$.
	
	Moreover, viewing $\mu$ as a $1$-vector (velocity) field on $[0, \iny)$, $\Gamma$ satisfies the
	transport equation,
	\begin{align}\label{e:transport}
	\left\{
		\begin{array}{rl}
			\prt_t \Gamma(t, x) - \mu(x) \prt_x \Gamma(t, x)	 = 0,
				& (t, x) \text{ in } [0, \iny) \times [0, \iny), \\
			\Gamma(0, x) = x, & x \text{ in } [0, \iny).
		\end{array}
	\right.
	\end{align}
	Also viewing each $\Gamma_t$, $t \ge 0$, as $1$-vector fields on $[0, \iny)$,
	$\Gamma$ is the flow associated with the velocity field, $\mu$. That is, $\Gamma$ is
	its own flow:
	\begin{align}\label{e:flow}
	\left\{
		\begin{array}{rl}
			\prt_t \Gamma(t, x) = \mu(\Gamma(t, x)),
				& (t, x) \text{ in } [0, \iny) \times [0, \iny), \\
			\Gamma(0, x) = x, & x \text{ in } [0, \iny).
		\end{array}
	\right.
	\end{align}
	Finally, the following identity holds for all $(t, x)$ in $[0, \iny) \times [0, \iny)$:
	\begin{align}\label{e:muFunc}
		\mu(\Gamma_t(x)) = \Gamma_t'(x) \mu(x).
	\end{align}
\end{theorem}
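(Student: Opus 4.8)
The plan is to treat the defining relation \refE{mufGamma} as an implicitly defined function of two variables and to extract all the stated properties by elementary calculus, exploiting the fact that the antiderivative $F(x) := \int_1^x \frac{dr}{\mu(r)}$ is a strictly increasing $C^1$ diffeomorphism of $(0,\iny)$ onto an interval of $\R$. First I would observe that \refE{mufGamma} is equivalent to $F(\Gamma_t(x)) = F(x) + t$, so that
\begin{align*}
	\Gamma_t(x) = F^{-1}(F(x) + t).
\end{align*}
From the Osgood condition at $0$, $F(x) \to -\iny$ as $x \to 0^+$, and from the concave (hence sublinear) growth discussed in the remark after \refT{YudovichTheorem}, $F(x) \to +\iny$ as $x \to \iny$; thus $F^{-1}$ is defined on all of $\R$ and $\Gamma_t(x)$ is well-defined for every $x > 0$, $t \ge 0$, with $\Gamma_0 = \Id$. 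Since $F$ and $F^{-1}$ are $C^1$ with nonvanishing derivative, the composite formula shows $\Gamma$ is $C^1$ on $(0,\iny) \times [0,\iny)$; strict monotonicity of $\Gamma_t$ in $x$ follows since it is a composition of increasing maps, and $\Gamma_t(x) > x$ for $t > 0$ follows from $F(\Gamma_t(x)) = F(x) + t > F(x)$ and $F$ increasing. The limit $\Gamma_t(x) \to 0$ as $x \to 0^+$ (so that setting $\Gamma_t(0) = 0$ makes $\Gamma$ continuous up to the boundary) comes from $F(x) + t \to -\iny$ and $F^{-1}(-\iny) = 0$.

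Next I would differentiate $F(\Gamma_t(x)) = F(x) + t$ in $x$ and in $t$ using $F'(r) = 1/\mu(r)$. Differentiating in $x$ gives $\frac{\Gamma_t'(x)}{\mu(\Gamma_t(x))} = \frac{1}{\mu(x)}$, which is exactly the identity \refE{muFunc}: $\mu(\Gamma_t(x)) = \Gamma_t'(x)\,\mu(x)$. Differentiating in $t$ gives $\frac{\prt_t \Gamma(t,x)}{\mu(\Gamma_t(x))} = 1$, i.e. $\prt_t \Gamma(t,x) = \mu(\Gamma(t,x))$, which is the flow equation \refE{flow}; combining these two identities yields $\prt_t \Gamma(t,x) = \mu(\Gamma_t(x)) = \Gamma_t'(x)\mu(x) = \mu(x)\prt_x \Gamma(t,x)$, which is the transport equation \refE{transport}. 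The initial conditions in both systems are immediate from $\Gamma_0 = \Id$. For the claim that $\Gamma_t' > 1$ on $(0, \Gamma_t^{-1}(a))$ when $\mu$ is strictly increasing on $(0,a)$: if $x < \Gamma_t^{-1}(a)$ then $\Gamma_t(x) < a$, and since also $\Gamma_t(x) > x$ and $\mu$ is strictly increasing on $(0,a)$ we get $\mu(\Gamma_t(x)) > \mu(x)$, so \refE{muFunc} forces $\Gamma_t'(x) = \mu(\Gamma_t(x))/\mu(x) > 1$.

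There is no serious obstacle here; the theorem is essentially a packaging of the substitution $u = F(x)$, which linearizes the whole problem. The one point deserving care is the justification that $F^{-1}$ is defined on all of $\R$ and that $\Gamma$ extends continuously (and is in fact $C^1$) up to $\{x = 0\}$ and $\{t = 0\}$ — this uses the Osgood condition at $0$ for the left end and the sublinear growth of $\mu$ for the right end, both of which are available from the hypotheses and the remark following \refT{YudovichTheorem}. I would also remark that uniqueness of $\Gamma$ satisfying \refE{mufGamma} is automatic, since for each fixed $x > 0$ and $t$ the equation $F(y) = F(x) + t$ has a unique solution $y$ by strict monotonicity of $F$.
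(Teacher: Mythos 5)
Your proposal is correct and is essentially the paper's own argument: both rest on differentiating the defining relation $\int_x^{\Gamma_t(x)} dr/\mu(r) = t$ in $x$ to get \refE{muFunc} and in $t$ to get \refE{flow}, then combining them for \refE{transport}. The only cosmetic difference is that you first make the implicit function explicit via $\Gamma_t = F^{-1}(F(x)+t)$ with $F(x) = \int_1^x dr/\mu(r)$; this linearization makes the claims in the first paragraph (which the paper dispatches with ``follow immediately'') genuinely immediate, and your closing observation that the whole theorem is a repackaging of the substitution $u = F(x)$ is the right way to view it.
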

\begin{proof}
The conclusions in the first paragraph of this lemma, with the exception of the last sentence, follow immediately from \refE{mufGamma}.

Taking the derivative with respect to $x$ of \refE{mufGamma},
\begin{align*}
	I_t'(x)
		= \frac{\Gamma_t'(x)}{\mu(\Gamma_t(x))} - \frac{1}{\mu(x)}
		= 0
\end{align*}
for all $t, x > 0$. This gives \refE{muFunc} and shows that $\Gamma$ is continuously differentiable in space and that if $\mu$ is strictly increasing on $(0, a)$ then $\Gamma_t' > 1$ on $(0, \Gamma_t^{-1}(a))$ for all $t > 0$.

Taking the derivative of $I_t(x)$ with respect to $t$, we have
\begin{align*}
	\prt_t I_t(x)
		= \frac{\prt_t \Gamma_t(x)}{\mu(\Gamma_t(x))}
		= 1
\end{align*}
so $\mu(\Gamma_t(x)) = \prt_t \Gamma_t(x)$ for all $t, x > 0$, which is \refE{flow}. It follows, then, that $\Gamma$ is continuously differentiable in time and that $\prt_t \Gamma_t(x) = \Gamma_t'(x) \mu(x)$, or,
\begin{align}\label{e:muh}
	\mu(x)
		= \frac{\prt_t \Gamma_t(x)}{\Gamma_t'(x)},
\end{align}
an equality that hold for all $x$ and is independent of $t \ge 0$. This is \refE{transport}.
\end{proof}

Moreover, we have the following simple but important lemma:

\begin{lemma}\label{L:muGammaConcave}
Let $\mu$ be as in \refT{MOCsAreFlows}.
The following are equivalent:
\begin{enumerate}
	\item
		The MOC, $\mu = \prt_t \Gamma_t|_{t = 0}$, is differentiable and concave on $(0, a)$.
		
	\item
		$\Gamma_t$ is twice differentiable on $(0, \iny)$ and concave on $(0, \Gamma_t^{-1}(a))$
		for all $t > 0$.
		
	\item
		$\Gamma_t$ is twice differentiable on $(0, \iny)$ and concave on $(0, \Gamma_t^{-1}(a))$
		for all $t$ in $(0, \delta)$ for some $\delta > 0$.
\end{enumerate}
Here, $0 < a \le \iny$. Furthermore, strong strict concavity of $\Gamma_t$ on $(0, \Gamma_t^{-1}(a))$ for all $t > 0$ implies strict concavity of $\mu$ on $(0, a)$.
\end{lemma}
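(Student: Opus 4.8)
The engine of the proof is the pointwise identity \refE{muFunc}, $\mu(\Gamma_t(x)) = \Gamma_t'(x)\,\mu(x)$, combined with the facts recorded in \refT{MOCsAreFlows} that $\Gamma$ is $C^1$, that $\Gamma_t' > 0$, and that $\Gamma_t(x) > x$ for $x > 0$, together with the standing hypothesis $\mu > 0$ on $(0,\iny)$. The plan is to prove the cycle $(1)\Rightarrow(2)\Rightarrow(3)\Rightarrow(1)$ and then the closing assertion; the one genuinely delicate point is promoting ``$\mu$ concave'' to ``$\mu$ differentiable'' inside $(3)\Rightarrow(1)$.

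For $(1)\Rightarrow(2)$: if $\mu$ is differentiable, then for $x$ in $(0,\Gamma_t^{-1}(a))$ (so that both $x$ and $\Gamma_t(x)$ lie in $(0,a)$) the function $\Gamma_t'(x) = \mu(\Gamma_t(x))/\mu(x)$ is differentiable in $x$, whence $\Gamma_t$ is twice differentiable there. Differentiating \refE{muFunc} in $x$ and solving for $\Gamma_t''$ gives
\begin{align*}
	\Gamma_t''(x)\,\mu(x) = \Gamma_t'(x)\bigl(\mu'(\Gamma_t(x)) - \mu'(x)\bigr).
\end{align*}
Concavity of $\mu$ means $\mu'$ is non-increasing, and $\Gamma_t(x) > x$, so the bracket is $\le 0$; since $\mu(x) > 0$ and $\Gamma_t'(x) > 0$ this forces $\Gamma_t''(x) \le 0$, i.e. $\Gamma_t$ is concave on $(0,\Gamma_t^{-1}(a))$ for every $t > 0$. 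The implication $(2)\Rightarrow(3)$ is immediate.

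For $(3)\Rightarrow(1)$ I would argue in two stages. First, concavity of $\mu$: from \refE{flow} we have $\Gamma_t(x) - x = \int_0^t \mu(\Gamma_s(x))\,ds$, and continuity of $\mu\circ\Gamma$ yields the expansion $\Gamma_t(x) = x + t\,\mu(x) + o(t)$ as $t \to 0^+$, uniformly for $x$ in compact subsets of $(0,a)$. Inserting this into the concavity inequality $\Gamma_t(\la x + (1-\la)y) \ge \la\Gamma_t(x) + (1-\la)\Gamma_t(y)$, cancelling the linear terms, dividing by $t > 0$, and letting $t \to 0^+$ gives the concavity inequality for $\mu$ on $(0,a)$. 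Second, differentiability of $\mu$: a concave $\mu$ has one-sided derivatives $\mu_+'$ and $\mu_-'$ at every point of $(0,a)$, and the jump $j := \mu_-' - \mu_+' \ge 0$ vanishes off a countable set. Equating, then subtracting, the left-hand and right-hand $x$-derivatives of the two sides of \refE{muFunc} (the left side being honestly differentiable, since $\Gamma_t$ is twice differentiable) yields $\Gamma_t'(x)\bigl(j(x) - j(\Gamma_t(x))\bigr) = 0$, so $j$ is constant along every orbit $s \mapsto \Gamma_s(x)$, hence locally constant, hence constant on $(0,a)$; as $j$ vanishes somewhere the constant is $0$, so $\mu$ is differentiable. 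This second stage is the step I expect to be the main obstacle: the first-order expansion only delivers concavity, and the one-sided-derivative bookkeeping is the one place the argument is not a direct differentiation of \refE{muFunc}.

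Finally, suppose every $\Gamma_t$ is strongly strictly concave on $(0,\Gamma_t^{-1}(a))$. Then in particular each $\Gamma_t$ is concave there, so by $(2)\Rightarrow(1)$ (the already-proved cycle) $\mu$ is differentiable and concave on $(0,a)$ and the displayed identity for $\Gamma_t''$ holds. Given $x_1 < x_2$ in $(0,a)$, the map $t \mapsto \Gamma_t(x_1)$ is continuous, equals $x_1$ at $t = 0$, and tends to $\iny$ (by the Osgood condition \refE{muOsgood} together with the sublinear growth forced by concavity of $\mu$), so $\Gamma_{t^*}(x_1) = x_2$ for some $t^* > 0$; since $x_2 < a$ we may use $\Gamma_{t^*}''(x_1) < 0$ in $\Gamma_{t^*}''(x_1)\mu(x_1) = \Gamma_{t^*}'(x_1)\bigl(\mu'(x_2) - \mu'(x_1)\bigr)$, together with $\mu(x_1) > 0$ and $\Gamma_{t^*}'(x_1) > 0$, to conclude $\mu'(x_2) < \mu'(x_1)$. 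Hence $\mu'$ is strictly decreasing on $(0,a)$, i.e. $\mu$ is strictly concave there.
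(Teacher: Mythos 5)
Your proof is correct, and for $(3)\Rightarrow(1)$ it takes a genuinely more self-contained route than the paper. The paper begins by differentiating \refE{muFunc} in $x$ to obtain $\Gamma_t''(h)=\bigl(\mu'(\Gamma_t(h))-\mu'(h)\bigr)\Gamma_t'(h)/\mu(h)$ and then reads off $(3)\Rightarrow(1)$ from the sign of this expression; but forming $\mu'$ already presumes the differentiability that $(1)$ asserts, and the remark immediately following the lemma concedes the point, crediting the version requiring only continuous $\mu$ to Zdun. Your two-stage argument supplies what is missing: passing $t\to 0^+$ directly in the concavity inequality for $\Gamma_t$ gives concavity of $\mu$ without ever forming $\mu'$, and the one-sided-derivative bookkeeping plus the orbit argument (the jump $j=\mu_-'-\mu_+'$ is flow-invariant via \refE{muFunc}, vanishes off a countable set by concavity, and orbits sweep out intervals, so $j\equiv 0$) then forces differentiability---your instinct that this stage is the real obstacle matches exactly what the paper itself defers to Zdun. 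One small slip in the write-up: neither side of \refE{muFunc} is ``honestly differentiable'' a priori; the left side is $\mu\circ\Gamma_t$, no better off than $\mu$ itself, and what actually makes the computation go is that one-sided difference quotients factor cleanly through the strictly increasing, twice-differentiable $\Gamma_t$, which is precisely what you compute. The $(1)\Rightarrow(2)$ direction and the strict-concavity addendum agree with the paper, you simply spelling out what the paper leaves as ``a small modification.''
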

\begin{proof}
That $\mu = \prt_t \Gamma_t|_{t = 0}$ follows from \refE{muh}, since $\Gamma_0(x) = x$.
Taking the derivative of \refE{muFunc} with respect to $x$ gives
\begin{align*}
	\begin{split}
	\mu'(\Gamma_t(x))
		&= \frac{\pr{\mu(\Gamma_t(x))}'}{\Gamma_t'(x)}
		= \frac{\Gamma_t'(x) \mu'(x) + \Gamma_t''(x) \mu(x)}{\Gamma_t'(x)} \\
		&= \mu'(x) + \Gamma_t''(x) \frac{\mu(x)}{\Gamma_t'(x)},
	\end{split}
\end{align*}
or,
\begin{align}\label{e:GammatConcave}
	\Gamma_t''(h)
		= \pr{\mu'(\Gamma_t(h)) - \mu'(h)} \frac{\Gamma_t'(h)}{\mu(h)}.
\end{align}
Since $\Gamma_t(h) > h$ for all $t, h > 0$, $\Gamma_t''(h)$ and $\mu'(\Gamma_t(h)) - \mu'(h)$ have the same sign for all $t, h > 0$.

Suppose that (3) holds and let $h$ lie in $(0, a)$. Then for all sufficiently small $t > 0$, $h$ lies in $(0, \Gamma_t^{-1}(a))$ which shows by \refE{GammatConcave} that $\mu'(\Gamma_t(h)) \le \mu'(h)$. But $\Gamma_t(h)$ decreases to $h$ as $t \to 0^+$, so $\mu'$ is increasing at $h$. Hence, $\mu'$ is increasing for all $h$ in $(0, a)$ meaning that $\mu$ is concave on $(0, a)$. That is, $(3) \implies (1)$.

Now assume that $(1)$ holds. This shows directly from \refE{GammatConcave} that $\Gamma_t''(h)$ is concave; that is, $(1) \implies (2)$. That  $(2) \implies (3)$ is immediate.

The statement involving strong strict concavity is a small modification of the argument above.
\end{proof}

\begin{remark}
Other than the implication involving strict concavity, one need only assume in \refL{muGammaConcave} that $\mu$ is continuous and that $\Gamma_t$ is continuously differentiable, as is shown in the results of Zdun \cite{Zdun1979} that we discuss in the next section.
\end{remark}

It follows from \refE{flow} that $\prt_t \Gamma_t(x) > 0$, so $\Gamma_t(x)$ is an increasing function of $t$ for fixed $x > 0$; that is, the MOC of the flow gets worse with time.
This observation leads to the following:

\begin{lemma}\label{L:tGammatIncreasing}
If $\mu$ is (strictly) increasing then the map, $\Gamma(\cdot, x)$ is a (strictly) increasing \ReallyConvex function for all $x > 0$ and $\prt_x \Gamma(\cdot, x)$ is (strictly) increasing.
\end{lemma}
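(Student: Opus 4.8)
The plan is to derive all three assertions directly from the two identities established in \refT{MOCsAreFlows}---the flow equation \refE{flow}, $\prt_t\Gamma(t,x)=\mu(\Gamma(t,x))$, and the identity \refE{muFunc}, $\mu(\Gamma_t(x))=\Gamma_t'(x)\mu(x)$---together with the elementary observation recorded just before the lemma that, for each fixed $x>0$, the function $t\mapsto\Gamma_t(x)$ is strictly increasing (since $\prt_t\Gamma(t,x)=\mu(\Gamma(t,x))>0$, because $\Gamma(t,x)\ge x>0$ and $\mu>0$ on $(0,\iny)$). No differentiability of $\mu$ is needed, only continuity and monotonicity, consistent with the Remark following \refL{muGammaConcave}.

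First I would fix $x>0$ and put $y(t):=\Gamma(t,x)$, so that $y\in C^1([0,\iny))$ by \refT{MOCsAreFlows} and $y'(t)=\mu(y(t))$ by \refE{flow}. Since $y$ is strictly increasing and $\mu$ is increasing, the composite $t\mapsto\mu(y(t))=y'(t)$ is increasing, i.e.\ $y'$ is nondecreasing; hence $y=\Gamma(\cdot,x)$ is convex, and it is strictly increasing because $y'=\mu(y)>0$. If $\mu$ is strictly increasing, then the strict monotonicity of $y$ makes $t\mapsto\mu(y(t))=y'(t)$ strictly increasing, so $\Gamma(\cdot,x)$ is strictly convex.

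For the last assertion I would again fix $x>0$ and use \refE{muFunc} in the form $\prt_x\Gamma(t,x)=\Gamma_t'(x)=\mu(\Gamma_t(x))/\mu(x)$. Here $\mu(x)$ is a positive constant independent of $t$, while $t\mapsto\mu(\Gamma_t(x))$ is the composition of the (strictly) increasing map $t\mapsto\Gamma_t(x)$ with the (strictly) increasing function $\mu$, hence is (strictly) increasing; therefore $\prt_x\Gamma(\cdot,x)$ is (strictly) increasing.

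There is no substantive obstacle: everything reduces to ``a composition of increasing functions is increasing'' and ``a $C^1$ function with nondecreasing derivative is convex.'' The only mild care points are (i) routing the convexity argument through \refE{flow} and the $\prt_x$-monotonicity argument through \refE{muFunc}, so that one never has to assume $\mu$ differentiable, and (ii) bookkeeping the strict versus non-strict cases so that the parenthetical ``(strictly)'' propagates uniformly through all three conclusions.
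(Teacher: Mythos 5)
Your proof is correct and takes essentially the same route as the paper: the convexity of $t\mapsto\Gamma_t(x)$ via \refE{flow} together with the monotonicity of $t\mapsto\Gamma_t(x)$ and of $\mu$, and the monotonicity of $\Gamma_t'(x)$ via the identity $\Gamma_t'(x)=\mu(\Gamma_t(x))/\mu(x)$ (the paper divides $\prt_t\Gamma_t(x)$ by $\mu(x)$ using \refE{transport}, which is the same identity in a different guise). Your careful propagation of the ``(strictly)'' parenthetical through all three conclusions is, if anything, slightly cleaner than the paper's, which asserts strict monotonicity of $\Gamma_t'(x)$ even in the non-strict case.
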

\begin{proof}
From \refE{flow},
$
	\prt_t \Gamma_t(x) = \mu(\Gamma_t(x)),
$
and as observed above, $\Gamma_t(x)$ increases with $t$. Since $\mu$ also increases by assumption, it follows that $\prt_t \Gamma_t(x)$ increases with $t$; that is, $t \mapsto \Gamma_t(x)$ is \ReallyConvexPeriod. From \refE{transport}, 
$
	\prt_t \Gamma_t(x) = \mu(x) \Gamma_t'(x),
$
and we conclude that $t \mapsto \Gamma_t'(x)$ is strictly increasing.
\end{proof}

In the context of iteration (semi)groups, all of the results in this section, with the possible exception of \refL{tGammatIncreasing}, are known, in broader generality, and are due primarily to Zdun \cite{Zdun1979} and Targo{\'n}ski and Zdun \cite{TargZdun1987} (also see Section 3.3 of \cite{Targ1981}, which summarizes many of the key results in \cite{Zdun1979}).  We make use of these connections, along with some of the deeper results in \cite{Zdun1979, TargZdun1987}, in the next section.

\Ignore{ 

These depictions of $\Gamma$ above leads us to the following definition:

\begin{definition}\label{D:MOCFlowFamily}
	Let $\Gamma \colon [0, \iny) \times [0, \iny) \to [0, \iny)$ and write $\Gamma(t, x)$ alternately
	as $\Gamma_t(x)$. Assume that
	$\Gamma$ is continuously differentiable on $(0, \iny)^2$, $\Gamma_0$ is the identity, 
	and for each $t > 0$, $\Gamma_t$ is a strictly increasing MOC with
	$\Gamma_t(x) > x$ for all $x > 0$.
	Assume that for all $t, x > 0$, $\prt_t \Gamma_t(x) > 0$ and that
	$\mu(x) := \prt_t \Gamma_t(x) / \Gamma_t'(x)$ is a function only
	of $x$ with $\mu(\Gamma_t(x)) = \Gamma_t'(x) \mu(x)$.
	Then we say that $\Gamma$ is an \textit{acceptable family of moduli
	of continuity (MOC) for a flow}.
\end{definition}

In \refD{MOCFlowFamily}, we do not need to include the requirement that $\mu = \prt_t \Gamma_t/\Gamma_t'$ satisfies the Osgood condition, since this condition automatically follows once we know that \refE{mufGamma} holds. Also, \refT{MOCsAreFlows} through \refL{tGammatIncreasing} continue to hold for $\Gamma$ an acceptable family of MOC for a flow.

Unsurprisingly, since being an acceptable family of MOC includes such strong properties, it is easy to obtain $\mu$ from such a family and show that \refE{mufGamma} is satisfied.

\begin{theorem}\label{T:InverseMOCAllTime}
	Suppose that $\Gamma$ is an acceptable family of MOC for a flow. Then defining $\mu$
	as in \refE{muh} for any $t = t_0 > 0$, $\mu$ is a MOC with $\mu(h) > 0$ for all $h > 0$,
	$\mu$ satisfies \refE{mufGamma}, and $\mu$ satisfies the Osgood condition, \refE{muOsgood}.
\end{theorem}
\begin{proof}
By \refD{MOCFlowFamily}, $\Gamma$ is continuously differentiable with $\Gamma_t' > 0$ so $\mu$ as given by \refE{muh} is continuous with $\mu(0) = 0$ and $\mu(h) > 0$ for all $h > 0$.

To show that \refE{mufGamma} holds, observe that
\begin{align*}
	I_t(h)
		= \int_h^{\Gamma_t(h)} \frac{ds}{\mu(s)} 
\end{align*}
satisfies,
\begin{align*}
	\prt_t I_t(h)
		= \frac{\prt_t \Gamma_t(h)}{\mu(\Gamma_t(h))}
		= \frac{\mu(h) \Gamma_t'(h)}{\mu(h) \Gamma_t'(h)}
		= 1,
\end{align*}
since $\prt_t \Gamma_t(h) = \mu(h) \Gamma_t'(h)$, $\mu(\Gamma_t(h)) = \mu(h) \Gamma_t'(h)$, and $\mu(h) > 0$ for all $h > 0$.

We conclude that $I_t(h) = t + C$. But $I_0(h) = h$ for all $h$ so $C = 0$, showing that \refE{mufGamma} holds. The Osgood condition on $\mu$ then follows from \refE{mufGamma}.
\Ignore{ 

Now suppose that $\Gamma_t$ is concave for $t$ in $(0, \delta)$ for some $\delta > 0$. Then $\mu(\Gamma_t(h)) = \Gamma_t'(h) \mu(h)$ for all $t, h > 0$ gives \refE{mupGammat}.
But $\Gamma_t'(h) > 0$ and $\Gamma_t''(h) < 0$ so $\mu'(\Gamma_t(h)) \le \mu'(h)$ for all $h, t > 0$. Since $\Gamma_t(h) > h$ and $\Gamma_t(h) \to h$ as $h \to 0$, it follows that $\mu'(h)$ is decreasing. That is, $\mu$ is concave.
} 
\end{proof}

\Ignore{ 
In our construction of $\mu$ in the proof of \refT{InverseMOC}, $\mu(f(h)) = f'(h) \mu(h)$ on $(0, h_0)$, so, using the relationship in \refE{fConcave}, we have $\mu'(f(h)) < \mu'(h)$ on $(0, h_0])$. Thus, $\mu'$ comes close, in a sense, to being convcave.
} 

\Ignore{ 

Suppose that $\mu$ is a strictly increasing, concave MOC satisfying the Osgood condition, \refE{muOsgood}. Fix $t_0 > 0$ and let $\Gamma_{t_0}$ be an acceptable MOC for a flow as in \refD{AcceptableMOC}. Fix $h > 0$ and consider the initial value problem,
\begin{align}\label{e:ODE}
	\left\{
		\begin{array}{rl}
			\prt_t \Gamma_t(h) = \mu(\Gamma_t(h))	& t \text{ in } (0, \iny), \\
			\Gamma_t = \Gamma_{t_0}, & \text{ for } t = t_0.
		\end{array}
	\right.
\end{align}
Then for all $x, y > 0$,
\begin{align*}
	\abs{\mu(x) - \mu(y)}
		= \abs{\int_x^y \mu'(s) \, ds}
		\le \int_0^{\abs{x - y}} \mu'(s) \, ds
		= \mu(\abs{x - y}).
\end{align*}
That is, $\mu$ has itself as a MOC (this required both $\mu$ increasing and $\mu$ concave). Since $\mu$ satisfies the Osgood condition, by \refL{ClassicalFlow}, \refE{ODE} has a unique solution.
} 

%
%
\Ignore{ 
Another simple but useful observation is the following:

\begin{lemma}\label{L:muConcaveIncreasing}
Let $\mu$ be as in \refT{MOCsAreFlows}.
If $\mu$ is (strictly) concave then $\mu$ must be (strictly) increasing.
\end{lemma}
\begin{proof}
This follows easily if $\mu$ is continuously differentiable from the (strictly) decreasing nature of $\mu$.
If $\mu$ is only assumed to be concave then we can make a similar, though more involved, argument using difference quotients, which decrease in a manner similar to the derivative (see, for example, Chapter 4 Exercise 23 of \cite{RudinPrinciples}).
\end{proof}

We show in \refT{fConcaveGivesUniqueness} that a MOC, $f$, for a flow at time $t_0 > 0$ results in a unique family, $\Gamma$, of acceptable MOC for a flow via \refE{mufGamma} if the MOC, $\mu$, for the vector field is concave. It is an open question how close the concavity of $\mu$ comes to being a necessary condition to insure uniqueness.

\begin{theorem}\label{T:fConcaveGivesUniqueness}
Let $\Gamma$ be an acceptable family of MOC for a flow and let $f := \Gamma_{t_0}$
for some $t_0 > 0$.
Suppose that some MOC, $\mu$, (perhaps given by \refT{InverseMOC}) satisfies \refE{muf} for $t = t_0$. Use this $\mu$ to define $\ol{\Gamma}_t$ for all $t > 0$ by \refE{mufGamma} (observe that $\ol{\Gamma}_{t_0} = f$). If $\mu$ is concave on all of $[0, h_0)$ for some $h_0 > 0$ then $\Gamma = \ol{\Gamma}$.
\Ignore{ 
and, moreover, for all $s, t$ in $[0, \iny)$, $ t > s$,
\begin{align}\label{e:GammaMOCInTime}
	0 <
		\Gamma_t(h) - \Gamma_s(h)
		< (t - s) \mu(\Gamma_t(h)).
\end{align}
} 
\end{theorem}
\begin{proof}
Assume first that $\mu$ is concave on all of $[0, \iny)$ so that by \refL{muConcaveIncreasing} it is also increasing. Then, for any $x, y > 0$,
\begin{align*}
	\abs{\mu(x) - \mu(y)}
		= \abs{\int_x^y \mu'(s) \, ds}
		\le \int_0^{\abs{x - y}} \mu'(s) \, ds
		= \mu(\abs{x - y}),
\end{align*}
where the inequality holds because $\mu'$ is decreasing but nonnegative, $\mu$ being concave but increasing. That is, $\mu$ has itself as a MOC.

Consider the initial value problem,
\begin{align}\label{e:ODE}
	\left\{
		\begin{array}{rl}
			\prt_t \widetilde{\Gamma}_t(h) = \mu(\widetilde{\Gamma}_t(h)),	& (t, h) \text{ in } (0, \iny)
				\times[0, \iny), \\
			\widetilde{\Gamma}(0, x) = x, & x \text{ in } [0, \iny).
		\end{array}
	\right.
\end{align}
This is the differential form of \refE{psiFlow}, so because $\mu$ satisfies the Osgood condition and is its own MOC, by \refL{ClassicalFlow} applied with $n = 1$, \refE{ODE} has a unique solution. But by \refT{MOCsAreFlows}, $\ol{\Gamma}$ is also a solution of \refE{flow}, as is $\Gamma$ by \refD{MOCFlowFamily}. Therefore,  $\Gamma = \ol{\Gamma} = \widetilde{\Gamma}$.


Now assume only that $\mu$ is concave on $[0, h_0)$ for some $h_0 > 0$. Reasoning as in the proof of \refL{muConcaveIncreasing}, it follows that $\mu$ must be increasing on some possibly smaller interval, which we relabel as $[0, h_0)$. Choose any $z > h_0$ and let $M$ be the maximum value of $\mu'$ on $[h_0, z]$. Then for all $x, y$ in $[0, z]$ with $\abs{x - y} < h_0$, it is easy to see that
\begin{align*}
	\abs{\mu(x) - \mu(y)}
		&\le \mu (\abs{x - y}) + M \abs{x - y} \\
		&\le \ol{\mu}(\abs{x - y})
		:= 2 \max \set{\mu\abs{x - y}, M \abs{x - y}}.
\end{align*}
This defines a MOC, $\ol{\mu}$, for $\mu$ on $[0, h_0]$, which we extend arbitrarily to all of $(0, z)$. It is easy to verify that $\ol{\mu}$, defined on $[0, h_0]$, satisfies the Osgood condition in the form $\int_0^{h_0} \ol{\mu}(x)^{-1} \, dx = \iny$, and this is enough to apply \refL{ClassicalFlow} to the $1$-vector field, $\mu$, to obtain a unique solution to \refE{ODE} for all time for $h$ in $\Omega =  (0, z)$. (Note that the MOC need only apply near the origin in the application of \refL{ClassicalFlow}.) Since $z$ was arbitrary, this gives a unique solution for all $(t, h)$ in $[0, \iny) \times [0, \iny)$.
\end{proof}
} 

\begin{theorem}\label{T:fConcaveGivesUniqueness}
Let $\Gamma$ be an acceptable family of MOC for a flow and let $f := \Gamma_{t_0}$
for some $t_0 > 0$.
Suppose that some MOC, $\mu$, (perhaps given by \refT{InverseMOC}) satisfies \refE{muf} for $t = t_0$. Use this $\mu$ to define $\ol{\Gamma}_t$ for all $t > 0$ by \refE{mufGamma} (observe that $\ol{\Gamma}_{t_0} = f$). If $\mu$ is strictly increasing and concave then $\Gamma = \ol{\Gamma}$.
\end{theorem}
\begin{proof}
For any $x, y > 0$,
\begin{align*}
	\abs{\mu(x) - \mu(y)}
		= \abs{\int_x^y \mu'(s) \, ds}
		\le \int_0^{\abs{x - y}} \mu'(s) \, ds
		= \mu(\abs{x - y}),
\end{align*}
where the inequality holds because $\mu'$ is decreasing but nonnegative, $\mu$ being concave but increasing. That is, $\mu$ has itself as a MOC.

Consider the initial value problem,
\begin{align}\label{e:ODE}
	\left\{
		\begin{array}{rl}
			\prt_t \widetilde{\Gamma}_t(h) = \mu(\widetilde{\Gamma}_t(h)),	& (t, h) \text{ in } (0, \iny)
				\times[0, \iny), \\
			\widetilde{\Gamma}(0, x) = x, & x \text{ in } [0, \iny).
		\end{array}
	\right.
\end{align}
This is the differential form of \refE{psiFlow}, so because $\mu$ satisfies the Osgood condition and is its own MOC, by \refL{ClassicalFlow} applied with $n = 1$, \refE{ODE} has a unique solution. But by \refT{MOCsAreFlows}, $\ol{\Gamma}$ is also a solution of \refE{flow}, as is $\Gamma$ by \refD{MOCFlowFamily}. Therefore,  $\Gamma = \ol{\Gamma} = \widetilde{\Gamma}$.
\end{proof}

It turns out that an acceptable family of MOC is the same thing as a differentiable \textit{iteration group} (see \refD{CIG}, below). In fact, the standard interpretation of iteration (to use Targonski's terminology in \cite{Targ1981}) is as a dynamical system, or a flow map when considering continuous time. Our point of view is just slightly different, as we view it in terms of the moduli of continuity of the flow maps. As in the proof of \refT{fConcaveGivesUniqueness}, these two interpretations can coincide when the MOC of the vector field is concave and the flow is one-dimensional, but otherwise they are not quite the same.

In the context of iteration (semi)groups, all of the results in this section, with the possible exception of \refL{tGammatIncreasing}, are known, in broader generality, and are due primarily to Zdun \cite{Zdun1979} (or see Section 3.3 of \cite{Targ1981}, which summarizes some of the results in \cite{Zdun1979}).  We make use of these connections, along with some of the deeper results in \cite{Zdun1979}, in the next section.

} 

%
%
\section{Inverting the MOC of the flow map}\label{S:GammaSingleTime}

\noindent In \refS{BasicMOC} we characterized the properties of the MOC, $\Gamma$, of the flow map, in particular as regards concavity properties of $\Gamma_t$, and found that it is easy to obtain the corresponding MOC, $\mu$, of the vector field from \refE{muh}. But this formula requires that we know $\Gamma_t$ for all $t$ in a neighborhood of the origin. In this section we attempt to answer Question 1 of \refS{IntroPartII}, in which we only know $\Gamma_{t_0}$ at one time, $t_0 > 0$. In this case, we do not expect to obtain a unique $\mu$ and so do not expect to obtain a unique $\Gamma$.
We are especially interested in determining whether we can find a $\mu$ that is concave.

Our starting point will be \refT{InverseMOC}, in which we construct a $\mu$ that is strictly increasing, but only on some interval $[0, a)$. This is adequate for our uses, but complicates all subsequent arguments because we have to keep track of the interval on which various functions are guaranteed to be strictly increasing or concave: this is the purpose of introducing \refDAnd{iotaV}{AcceptableMOC}. The essential meaning of the theorems are easier to grasp, however, if one ignores any statements involving $\iota$, $V$, or $J$, and just imagines that $\mu$ is strictly increasing on all of $[0, \iny)$. In any case, these definitions are required along the way, but not in the statement \refT{ConcaveCIG}, the main result of this section.

\begin{definition}\label{D:iotaV}
	For any function, $f$, on $[0, \iny)$ we define
	\begin{align*}
		\iota(f) &= \sup \set{a \in [0, \iny] \colon f \text{ is strictly increasing on } [0, a)}, \\
		V(f) &= \sup \set{a \in [0, \iny] \colon f \text{ is concave on } (0, a)}.
	\end{align*}

\end{definition}

\begin{definition}\label{D:AcceptableMOC}
	Let $f$ be a $C^1$ MOC and let $a = \iota(f(x) - x)$.
	We say that a MOC, $f$, such that $f(x) > x$ for all $x$ in $(0, \iny)$ is \textit{acceptable}
	or \textit{globally acceptable}
	if $a = \iny$ and is \textit{locally acceptable}
	if $a > 0$. We define $J(f) = a$.
\end{definition}

\begin{remark}\label{R:MOCFlow}
	It follows from \refD{AcceptableMOC} that $f' > 1$ on $(0, J(f))$.
	Also, $f$ concave is compatible with $f$ being acceptable.
\end{remark}


\Ignore{ 
In \refT{InverseMOC}, we invert \refE{muf} non-uniquely for a fixed time $t > 0$ (see \refR{InverseMOC1}). The inversion only holds for $h$ up to an arbitrary fixed $h_0 > 0$, but that is all that is important, because it is the germs of $\mu$ and of $f$ at the origin that are relevant to the fluid flow.
} 

We now answer Question 1 of \refS{IntroPartII} affirmatively.

\begin{theorem}\label{T:InverseMOC}
	Fix $t_0 > 0$.
	Given any $f$ that is a globally acceptable MOC
	there exists a continuous MOC, $\mu$, satisfying the Osgood condition,
	\refE{muOsgood}, with $\mu > 0$ on $(0, \iny)$, such that
	\begin{align}\label{e:muf}
        		I(x)
			:= \int_x^{f(x)}  \frac{dr}{\mu (r)}
			= t_0
	\end{align}
	 for all $x > 0$.
	 \Ignore{ 
	 If $\lim_{x \to \iny} x/f(x) = 0$ then
	 $
		\lim_{x \to \iny} \mu(x)/x
			= 0.
	$
	} 
	If $f$ is $C^k$, $k \ge 1$, then $\mu$ can be chosen to be $C^{k - 1}$.
\end{theorem}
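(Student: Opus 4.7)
The plan is to specify $\mu$ on a single fundamental interval and propagate it by the functional equation $\mu(f(x)) = f'(x)\mu(x)$. This equation is precisely equivalent to $I'(x) \equiv 0$, since differentiation of $I(x) = \int_x^{f(x)} dr/\mu(r)$ yields $I'(x) = f'(x)/\mu(f(x)) - 1/\mu(x)$; thus once we arrange $I(x_0) = t_0$ at a single point $x_0$, the identity \refE{muf} will hold for every $x > 0$.

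Fix any $x_0 > 0$ and set $x_n := f^n(x_0)$ for $n \in \Z$. Because $f$ is globally acceptable, \refR{MOCFlow} gives $f' > 1$ on $(0,\iny)$, so $f$ is a strictly increasing $C^k$ bijection of $(0,\iny)$ onto itself; since $f(y) > y$ for $y > 0$, the iterates satisfy $x_n \to \iny$ and $x_{-n} \to 0$ as $n \to \iny$, so $\{[x_n, x_{n+1}]\}_{n\in\Z}$ partitions $(0,\iny)$. On the fundamental interval $[x_0, x_1]$, in the merely continuous case I would take $\mu$ to be the linear function $\mu(x) = A + B(x - x_0)$ with $\mu(x_0) = A$ and $\mu(x_1) = f'(x_0) A$, so that $B = A(f'(x_0)-1)/(x_1 - x_0) > 0$; a direct computation gives $\int_{x_0}^{x_1} dr/\mu(r) = (x_1 - x_0)\log f'(x_0)/[A(f'(x_0)-1)]$, so the choice
\begin{align*}
    A = \frac{(x_1 - x_0)\log f'(x_0)}{t_0\,(f'(x_0) - 1)}
\end{align*}
realizes $I(x_0) = t_0$. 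Extending by $\mu(x) := f'(f^{-1}(x))\,\mu(f^{-1}(x))$ successively on $[x_n, x_{n+1}]$ for $n \ge 1$ and by $\mu(x) := \mu(f(x))/f'(x)$ for $n \le -1$ gives a continuous positive $\mu$ on $(0, \iny)$; continuity at each $x_n$ is automatic from the boundary relation $\mu(x_1) = f'(x_0)\mu(x_0)$ and the chain rule. The Osgood condition is then a direct consequence of \refE{muf} applied $n$ times: $\int_{x_{-n}}^{x_0} dr/\mu(r) = n t_0 \to \iny$ as $x_{-n} \to 0^+$.

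The main obstacle is the $C^{k-1}$ conclusion, because the linear ansatz will not in general propagate smoothly past the junction $x_1$. By repeatedly differentiating $\mu(f(x)) = f'(x)\mu(x)$ at $x = x_0$ one obtains, for each $j \in \{1,\dots,k-1\}$, an explicit expression
\begin{align*}
    \mu^{(j)}(x_1) = P_j\bigl(\mu(x_0), \mu'(x_0), \dots, \mu^{(j)}(x_0);\, f'(x_0), \dots, f^{(j+1)}(x_0)\bigr)
\end{align*}
which is linear in the $\mu^{(i)}(x_0)$ and which the left-hand $j$th derivative of $\mu$ at $x_1$ must match if the extension is to be $C^{k-1}$ across $x_1$. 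Together with the basic matching $\mu(x_1) = f'(x_0)\mu(x_0)$ and the single integral normalization, this is a system of $k+1$ linear conditions on the $2k$ endpoint Taylor data $\mu^{(j)}(x_0), \mu^{(j)}(x_1)$ for $j = 0,\dots,k-1$. Replacing the linear ansatz above by a Hermite interpolant, or by a polynomial of degree $\ge 2k$, provides more than enough free parameters to satisfy all of these conditions while keeping $\mu$ strictly positive on $[x_0, x_1]$; the chain rule combined with the $C^k$ regularity of $f$ then propagates the matching, and hence the $C^{k-1}$ regularity of $\mu$, to every junction point $x_n$.
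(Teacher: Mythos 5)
Your construction is the same as the paper's: define $\mu$ on a fundamental interval $[x_0, f(x_0)]$ subject to the boundary relation $\mu(f(x_0)) = f'(x_0)\mu(x_0)$ and the normalization $\int_{x_0}^{f(x_0)} dr/\mu = t_0$, propagate by the functional equation $\mu(f(x)) = f'(x)\mu(x)$, verify $I'\equiv 0$, and get Osgood by telescoping — which is exactly the paper's proof with $a = x_0$. The only differences are cosmetic (the paper leaves the initial piece abstract, with a feasibility interval for $\mu(a)$ that your explicit linear $A$ automatically lies in since $\log u/(u-1) \in (1/u, 1)$ for $u>1$, and secures $C^{k-1}$ matching by a small perturbation near the endpoints rather than your Hermite interpolant); in both writeups the check that positivity survives the smoothness adjustment is left implicit, and note your integral constraint is linear in $1/\mu$, not $\mu$, though a scaling argument fixes this.
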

\begin{proof}
	Choose $a > 0$ arbitrarily, then choose a smooth $\mu$, strictly
	increasing on the interval
	$[a, f(a)]$ such that the following two conditions are satisfied:
	\begin{align}
		\int_{a}^{f(a)} \frac{ds}{\mu(s)} &= t_0, \label{e:Cond1} \\
		\mu(f(a)) &= f'(a) \mu(a). \label{e:Cond2}
	\end{align}
	It is easy to see that we can find a function $\mu$
	on $[a, f(a)]$ that satisfies \refEAnd{Cond1}{Cond2}
	if and only if we choose $\mu(a)$ so that
	\begin{align}\label{e:fDiffReq}
		\frac{f(a) - a}{f'(a) t_0}
			< \mu(a)
			< \frac{f(a) - a}{t_0}.
	\end{align}
	Since $f'(a) > 1$ this is always possible.
	\Ignore{ 
	Then for \refE{Cond3} to hold we must have
	\begin{align*}
		1
			&= \int_{a}^{f(a)} \frac{ds}{\mu(s)}
			< \int_{a}^{f(a)} \frac{f'(s)}{\mu(f(a))} \, ds
			= \frac{f(f(a)) - f(a)}{\mu(f(a))} \\
			&= \frac{f(f(a)) - f(a)}{f'(a) \mu(a)}
			\le \frac{f(f(a)) - f(a)}{f'(a)}\frac{f'(a)}{f(a) - a} \\
			&= \frac{f(f(a)) - f(a)}{f(a) - a}.
	\end{align*}
	But by \refR{MOCFlow}, $f' > 1$, so by the mean value theorem,
	the final ratio above is always greater than 1; that is,
	this inequality does not represent a restriction.
	
	Also,
	\begin{align*}
		&\frac{f(f(a)) - f(a)}{f'(a) \mu(a)}
			\ge \frac{f(f(a)) - f(a)}{f'(a)}\frac{1}{f(a) - a} \\
			&\qquad
			= \frac{f(f(a)) - f(a)}{f(a) - a} \frac{1}{f'(a)}.
	\end{align*}
	But $f'$ is strictly decreasing so by the mean value theorem,
	\begin{align*}
		f'(a) > \frac{f(f(a)) - f(a)}{f(a) - a}
	\end{align*}
	} 
	
	Next define $\mu(x)$ for $x$ in the interval $[f^{-1}(a), a]$ by
	\begin{align}\label{e:muExtension}
		\mu(x) = \frac{\mu(f(x))}{f'(x)}
	\end{align}
	and note that if $f$ is concave then $\mu$ is strictly increasing on $[f^{-1}(a), a]$ because
	$\mu(f(x))$ is strictly increasing and $f'$ is
	decreasing on that interval. Also, $\mu$ is continuous, in particular at
	$a$ by \refE{Cond2}.
	
	Suppose that $f$ lies in $C^2((0, \iny))$. Then taking the derivative of \refE{muExtension},
	\begin{align*}
		\mu'(x) = \mu'(f(x)) - \mu(f(x)) \frac{f''(x)}{f'(x)}^2.
	\end{align*}
	Hence, $\mu$ is in $C^1((f^{-1}(a), a)$. To insure that $\mu'$ is continuous at $a$, we simply
	require that $\mu$ be chosen on the interval $[a, f(a)]$ such that
	\begin{align}\label{e:diffContCond}
		\mu'(a) = \mu'(f(a)) - \mu(f(a)) \frac{f''(a)}{f'(a)}^2,
	\end{align}
	$\mu'(a)$ being a right-sided derivative and $\mu'(f(a))$ a left-sided derivative. Equality
	in \refE{diffContCond} can be assured by changing the
	definition of $\mu$ on $[a, f(a)]$ an arbitrarily small amount near the endpoints. Hence,
	we can make \refE{diffContCond} hold under the same conditions \refE{fDiffReq}
	while retaining
	the other properties of $\mu$ already established. But once \refE{diffContCond} holds, the continuity
	of $f$, $f'$, and $f''$ makes $\mu'$ continuous on $[f^{-1}(a), f(a)]$. A straightforward
	extension of this argument shows that if $f$ lies in $C^k((0, \iny))$ then $\mu$ can be chosen to lie 
	in $C^{k - 1}([f^{-1}(a), f(a)])$.

	Extending this definition of $\mu$ inductively to $[f^{-n}(a), f^{-n + 1}(a)]$
	we unambiguously define $\mu$ on all of $(0, f(a)]$ and the resulting $\mu$
	is positive, continuous, satisfies \refE{muExtension}
	for all $x$ in $(0, a]$, is strictly increasing if $f$ is concave, and if $f$ lies in
	$C^k((0, \iny))$ then $\mu$ lies in $C^{k - 1}((0, f(a)])$.

	Next, extend $\mu$ to the interval $[f(a), f(f(a))]$ using
	$
		\mu(f(x)) = f'(x) \mu(x)
	$,
	the complement of \refE{muExtension}, and inductively extend $\mu$ to all of $(0, \iny)$.
	Then $\mu$ satisfies \refE{muExtension} for $x > a$ as well and
	if $f$ lies in $C^k((0, \iny))$ then $\mu$ lies in
	$C^{k - 1}((0, \iny))$. (Even if $f$ is concave,
	extending $\mu$ in this way, does not insure that it is an increasing function for $x > f(a)$,
	for though $\mu$ is increasing on the interval $[a, f(a)]$, $f'$ would be decreasing.)
	
	\Ignore{ 
	and similarly define $\mu$ on the interval $[f(a), f(f(a))]$ by
	\begin{align}\label{e:mufpmu}
		\mu(f(x)) = f'(x) \mu(x)
	\end{align}
	where $x$ ranges over the interval $[a, f(a)]$.
	Extending this definition of $\mu$ inductively in both directions we
	unambiguously define $\mu$ on all of $(0, \iny)$ and the resulting $\mu$
	is positive, continuous, and satisfies \refE{mufpmu} on all of $(0, \iny)$.
	
	Now, for $\mu$ to be strictly increasing in $x$, we must have
	$\mu'(f(x) > 0$ for all $x > 0$.
	But by ]\refE{mufpmu}, this becomes
	\begin{align*}
		\mu'(f(x))
			&= \frac{\mu(f(x))'}{f'(x)}
			= \frac{\brac{f'(x) \mu(x)}'}{f'(x)} \\
			&= \frac{f'(x) \mu'(x) + f''(x) \mu(x)}{f'(x)}
			= \mu'(x) + \frac{f''(x)}{f'(x)} \mu(x).
	\end{align*}
	Since we assumed that $f$ is concave this shows that $\mu'(f(x))
	< \mu'(x)$, which is compatible with $\mu$ being concave. Moreover, to have
	$\mu$ strictly increasing we must have,
	\begin{align*}
		\frac{\mu'(x)}{\mu(x)} > - \frac{f''(x)}{f'(x)}.
	\end{align*}
	
	Writing this inequality as
	\begin{align*}
		\brac{\log \mu(x)}' > - \brac{\log(f'(x))}'
	\end{align*}
	and integrating from $a$ to $x > a$ gives
	\begin{align*}
		\log \mu(x) - \log \mu(a)
			> \log f'(a) - \log f'(x).
	\end{align*}
	Exponentiating both sides gives
	\begin{align*}
		\frac{\mu(x)}{\mu(a)}
			> \frac{f'(a)}{f'(x)},
	\end{align*}
	or,
	\begin{align*}
		\mu(x)
			>  \frac{f'(a) \mu(a)}{f'(x)}
			=  \frac{\mu(f(a))}{f'(x)}.
	\end{align*}
	\textbf{This follows immediately, though, from $\mu(f(x)) > \mu(f(a))$.}
	
	HERE IS A PROBLEM!!!
	} 
	
	Defining $I \colon (0, \iny) \to [0, \iny)$ as in \refE{muf},
	it follows from \refE{muExtension} that
	\begin{align*}
		I'(x)
			= \frac{f'(x)}{\mu(f(x))} - \frac{1}{\mu(x)}
			= 0
	\end{align*}
	for all $x > 0$. Therefore, $I$ is a constant function. But by
	construction,
	$I(a) = t_0$ so $I(x) = t_0$ for all $x > 0$. That is, $\mu$ satisfies
	\refE{muf} for all $x > 0$.
	
	It follows by the absolute continuity of the integral and the fact that
	$f$ is \textit{strictly} increasing that
	\begin{align}\label{e:OsgoodHolds}
		\int_0^{a} \frac{ds}{\mu(s)}
			&= \sum_{n = 0}^\iny
				\int_{f^{-(n + 1)}(a)}^{f^{-n}(a)} \frac{ds}{\mu(s)}
				= \sum_{n = 0}^\iny t_0
				= \iny,
	\end{align}
	so that $\mu$ satisfies the Osgood condition, \refE{muOsgood}, and also that we
	must have
	\begin{align}\label{e:muZeroEqualsZero}
		\lim_{s \to 0^+} \mu(s) = 0,
	\end{align}
	so that we can extend $\mu$ continuously to $[0, \iny)$ by setting
	$\mu(0) = 0$.
\end{proof}

\begin{remark}
After expressing the relation in \refE{muf} in the form $\mu(f(x)) = f'(x) \mu(x)$, we can view our construction of the function $\mu$ as an application of Theorem 2.1 of \cite{Kuczma1968}, which is due to Kordylewski and Kuczma (\cite{KK1960, KK1962}). That $\mu$ is $C^{k-1}$ when $f$ is $C^k$ can be seen as an application of Theorem 4.1 of \cite{Kuczma1968}, which is due to Choczewski (\cite{Choczewski1963}). Also see Theorem 6.2 of \cite{Zdun1979} (quoted in Proposition 3.3.45 of \cite{Targ1981}).
\end{remark}

\DetailSome{
In the proof of \refT{InverseMOC}, there is a great deal of flexibility in the construction of the inverse to \refE{muf}, which prevents the inverse from being unique. The fundamental reason for this lack of uniqueness is that given $\mu_1$ and $\mu_2$ both satisfying \refE{muf}, we have
\begin{align*}
	\int_x^{f(x)} \pr{\frac{1}{\mu_1(s)} - \frac{1}{\mu_2(s)}} \, ds = 0,
\end{align*}
but this is insufficient to conclude that the integrand vanishes. Of course, the integrand must oscillate, changing sign more and more rapidly as $s$ goes to zero, but even $\mu_1$ and $\mu_2$ strictly increasing cannot prevent this from happening.
} 

Since $\mu(f(x)) = f'(x) \mu(x)$, we have
\begin{align}\label{e:mupfx}
	\mu'(f(x)) = \mu'(x) + \mu(x) \frac{f''(x)}{f'(x)}.
\end{align}
If $f$ is concave then $\mu'(f(x)) \le \mu'(x)$. It does not, however, follow that $\mu'(x)$ is a decreasing function of $x$---for this, we need more information, as in \refL{muGammaConcave}.

\Ignore { 
Whether there is a \textit{practicable} way to characterize those concave acceptable MOC, $f$, for which some corresponding MOC, $\mu$, is concave is an interesting open question that we investigate below, showing how it is related to results in the theory of iteration semigroups developed primarily by Zdun. But for our purposes we can sidestep this issue, using the following result on moduli of continuity:
} 

We now show how the problem of inverting the relation in \refE{muf} to obtain $\mu$ from $f$ is related to iteration theory. We start with the following definition, adapted to our setting from \cite{Zdun1979} (see also  \cite{Zdun1979Also} and Section 3.3 of \cite{Targ1981}):

\begin{definition}\label{D:CIG}
A \textit{continuous iteration group of MOC}  (CIG) is a family, $G = (f^t)_{t \in \R}$, of MOC such that
\begin{enumerate}
	\item
		For all $t > 0$, $f^t(x) > x$.
				
	\item
		For all $s, t$ in $\R$, $f^s \circ f^t = f^{s + t}$.
	
	\item
		$f^0$ is the identity.
		
	\item
		As a map from $\R$ to $[0, \iny)$, $t \mapsto f^t(x)$ is continuous for all $x$ in $[0, \iny)$.
\end{enumerate}
Furthermore (refer to \refDAnd{iotaV}{AcceptableMOC} for definitions of $J$ and $V$),
\begin{itemize}
	\item
If, for all $t > 0$, $f^t$ is locally acceptable with $J(f^t) \ge f^{-t}(a)$ for some $0 < a < \iny$ then we say that $G$ is \textit{locally acceptable} and define $J(G)$ to be the supremum of all such $a$. If $a = \iny$ then we say that $G$ is \textit{acceptable} or \textit{globally acceptable}.

\item
If, for all $t > 0$, $V(f^t) \ge f^{-t}(a)$ for some $0 < a < \iny$ then we say that $G$ is \textit{locally concave} and define $V(G)$ to be the supremum of all such $a$. If $a = \iny$ then we say that $G$ is \textit{concave} or \textit{globally concave}.

\item
We say that $G$ is $C^k$, $k \ge 0$, if $f^t$ is $C^k$ for all $t$ in $\R$.

\item
We say that$f$ is \textit{embedded} in $G$ if $f^1 = f$.
\end{itemize}
\end{definition}

\Ignore { 
We can, however, differentiate once more, giving
\begin{align} 
	\mu''(f(x)) f'(x)
		= \mu''(x) + \mu'(x) \frac{f''(x)}{f'(x)}
			+ \mu(x) \frac{f'(x) f'''(x) - (f''(x))^2}{(f'(x))^2},
\end{align}
so
\begin{align}\label{e:mupp}
	 \mu''(x) 
	 	= \mu''(f(x)) f'(x)
			- \mu'(x) \frac{f''(x)}{f'(x)}
			+ \mu(x) \frac{(f''(x))^2 - f'(x) f'''(x)}{(f'(x))^2}.
\end{align}
} 

\Ignore{ 
Now assume that $f$ is concave with $f'(x) f'''(x) - (f''(x))^2 < 0$. Choose $\mu$ on $[h_0, f(h_0)]$ to be concave as well as strictly increasing and to satisfy \refEAnd{Cond1}{Cond2}, which is always possible. Then all three terms on the right-hand side of \refE{mufppfp} are negative so $\mu$ is concave on $[f(h_0), f(f(h_0)]$. We can continue this inductively to extend $\mu$ to be concave on the interval $[h_0, f^n(h_0)]$. There is no guarantee, however, that $\mu$ will continue to be increasing, so we cannot use this approach to extend $\mu$ to be concave indefinitely. \textbf{Also, for bounded vorticity, where $f(x) = x^\al$, $0 < \al < 1$, we have, $f'(x) f'''(x) - (f''(x))^2 = \al^2(1 - \al) x^{2 \al - 4} > 0$ and approaches $+\iny$ as $x \to 0^+$. So it is more reasonable to assume the opposite sign.}

To summarize, in constructing $\mu$, the relation $\mu(f(h)) = f'(h) \mu(h)$ is forced on us. Under this relation, when $f$ is concave the property of being strictly increasing is inherited to the left, and with an additional restriction on $f$ the property of being concave is inherited to the right, though not indefinitely.
} 

\Ignore{ 
Returning to \refE{mupfx}, for $\mu$ to continue to be increasing on the interval, $[f(h_0), \iny)$, we must have $\mu'(x) + \mu(x) f''(x)/f(x) \ge 0$ on $[h_0, \iny)$. Assuming that $f$ is concave and that $\mu(h_0) > 0$, this condition becomes
\begin{align*}
	\frac{\mu'(x)}{\mu(x)}
		&\ge - \frac{f''(x)}{f'(x)}
		\text{ or }
		\pr{\log \mu(x)}'
		\ge - \pr{\log(f'(x)}',
\end{align*}
both sides of this inequality being nonnegative. Integrating gives
\begin{align*}
	\log \mu(x) \ge \log \mu(h_0) + \log f'(h_0) - \log f'(x),
\end{align*}
or,
\begin{align*}
	\mu(x)
		\ge \mu(h_0) \frac{f'(h_0)}{f'(x)}
		= \frac{\mu(f(h_0))}{f'(x)},
\end{align*}
or
\begin{align*}
	\mu(f(x)) \ge \mu(f(h_0)),
\end{align*}
or
\begin{align*}
	\mu(x) \ge \mu(h_0).
\end{align*}
\textbf{This is an obvious necessary condition! It is not sufficient, however, and that's the point.}
} 

\Ignore{ 
We know from \refT{fConcaveGivesUniqueness} and its proof that if $\mu$ is concave then $\Gamma_t$ defined by \refE{mufGamma} is both the flow associated with the $1$-vector field, $\mu$, as well as being a MOC on the flow (that is, on itself).
} 

Let $\mu$ be a MOC and let $\Gamma$ be the corresponding MOC of the flow given by \refT{MOCsAreFlows}. If we let $f = \Gamma_1$ then because the flows $(\Gamma_t)_{t \in \R}$ form a group under composition, letting $f^t = \Gamma_t$, $f$ is embedded in the CIG, $(f^t)_{t \in \R}$.

Now suppose, starting with only with an acceptable MOC, $f$, that we can find a $C^1$ CIG, $G = (f^t)_{t \in \R}$, for $f$ with each $f^t$ also strictly increasing. Then
\begin{align*} 
	\prt_t f^t(x)
		&= \lim_{h \to 0} \frac{f^{t + h}(x) - f^t(x)}{h}
		= \lim_{h \to 0} \frac{f^t(f^h(x)) - f^t(x)}{h} \\
		&= \lim_{h \to 0} \frac{f^t(f^h(x)) - f^t(x)}{f^h(x) - x}
			\lim_{h \to 0} \frac{f^h(x) - f^0(x)}{h} \\
		&= (f^t)'(x) \prt_s f^s(x)|_{s = 0}.
\end{align*}
Hence the function, 
\begin{align*}
	\mu(x)
		:= \frac{\prt_t f^t(x)}{(f^t)'(x)}
		= \prt_s f^s(x)|_{s = 0}
\end{align*}
is well-defined, with the first expression independent of $t$ in $\R$.

Also, arguing as in the proof of Lemma 5.2 part II of \cite{Zdun1979},
\begin{align*}
	\mu(f^t(x))
		= \lim_{h \to 0} \frac{f^h(f^t(x)) - f^t(x)}{h}
		= \lim_{h \to 0} \frac{f^{t + h}(x) - f^t(x)}{h}
		= \prt_t f^t(x)
\end{align*}
and, applying the chain rule,
\begin{align*}
	\mu(f^t(x))
		&= \pdx{f^s(f^t(x))}{s} \Big\vert_{s = 0}
		= \pdx{f^t(f^s(x))}{s} \Big\vert_{s = 0}
		= (f^t)'(f^0(x)) \pdx{f^t(x)}{s}\Big\vert_{s = 0} \\
		&= (f^t)'(x) \mu(x).
\end{align*}

Letting $I(t, x) = \int_x^{f^t(x)} \frac{dr}{\mu(r)}$, we conclude from these two relations for $\mu(f^t(x)$ that $\prt_t I(t, x) = 1$ and $\prt_x I(t, x) = 0$, and from this it follows that for all $x > 0$,
\begin{align*}
	\int_x^{f^t(x)} \frac{dr}{\mu(r)} = t.
\end{align*}
This includes \refE{muf} in the special case, $t_0 = 1$. (In other words, \refE{muf} is satisfied, where it is now convenient to set $t_0 = 1$, with no loss of generality.) In fact, by \refL{muGammaConcave}, we can invert \refE{muf} to obtain $\mu$ from $f$ if and only if there \textit{is} some such CIG, $G = (f^t)_{t \in \R}$, in which case $\Gamma_t = f^t$.
Finally, observe that $\iota(\mu) = J(G)$.

Combining the results of \refS{BasicMOC}, \refT{InverseMOC}, and \cite{Zdun1979} with the observations above, we have:
\begin{theorem}\label{T:Combined}
Suppose that $f$ is a $C^k$, $k \ge 1$, globally concave globally  acceptable MOC. Then there exists a (in fact, an infinite number of) $C^k$ locally acceptable CIG, $G$, embedding $f$
with $J(G)$ arbitrarily large.
Let $G = (f^t)_{t \in \R}$ be any such CIG embedding $f$ with $a = J(G)$. Then $\mu := \prt_t f^t|_{t = 0}$ is $C^{k - 1}$, satisfies the Osgood condition, and $\iota(\mu) = a$.
Moreover, the following are equivalent:
\begin{enumerate}
	\item
		$\mu$ is concave (on $(0, b))$;
		
	\item
		$G$ is locally concave with $V(G) \ge b$;
		
	\item
		for some $\delta > 0$, $f^t$ is concave (on $(0, f^{-t}(b))$) for all $t$
		in $(0, \delta)$ .
\end{enumerate}
\end{theorem}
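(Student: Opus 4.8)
The plan is to splice together three ingredients already in hand: \refT{InverseMOC}, which manufactures the iteration groups; \refT{MOCsAreFlows} together with the computation carried out in the paragraphs immediately preceding the statement, which lets us recognise any such group as the flow-modulus family $\Gamma$ of its own infinitesimal generator $\mu$; and \refL{muGammaConcave}, which is exactly the concavity equivalence we want. Normalise $t_0=1$ throughout, at no cost. For the existence clause, start from the given globally concave, globally acceptable $C^k$ MOC $f$ and apply \refT{InverseMOC} with $t_0=1$, obtaining a $C^{k-1}$ function $\mu>0$ on $(0,\iny)$ satisfying the Osgood condition and $\int_x^{f(x)}\mu(r)^{-1}\,dr=1$. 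Because $f$ is concave, the construction in that proof makes $\mu$ strictly increasing on all of $(0,f(a)]$, where $a>0$ is the free base point, so $\iota(\mu)\ge f(a)$. Feeding this $\mu$ into \refT{MOCsAreFlows} produces $\Gamma$: it is its own flow, so $\Gamma_s\circ\Gamma_t=\Gamma_{s+t}$ (extended to all of $\R$ by inversion), $\Gamma_0=\Id$, $\Gamma_t(x)>x$, and $t\mapsto\Gamma_t(x)$ is continuous, while the identity $\Gamma_t'(x)=\mu(\Gamma_t(x))/\mu(x)$ from \refE{muFunc} plus the $C^{k-1}$ regularity of $\mu$ bootstrap each $\Gamma_t$ to $C^k$ on $(0,\iny)$, so $G:=(\Gamma_t)_{t\in\R}$ is a $C^k$ CIG. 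Uniqueness in \refT{MOCsAreFlows} together with $\int_x^{f(x)}\mu(r)^{-1}\,dr=1$ forces $\Gamma_1=f$, so $f$ is embedded in $G$; and \refT{MOCsAreFlows} also gives $\Gamma_t'>1$ on $(0,\Gamma_t^{-1}(\iota(\mu)))$, i.e.\ $J(\Gamma_t)\ge\Gamma_t^{-1}(\iota(\mu))$, so $G$ is locally acceptable with $J(G)\ge\iota(\mu)\ge f(a)$. Letting $a\to\iny$ makes $J(G)$ arbitrarily large, and the continuum of admissible choices of $\mu$ on the fundamental domain $[a,f(a)]$ subject to \refEAnd{Cond1}{Cond2} (and to \refE{diffContCond} for the regularity) yields infinitely many distinct such $G$.

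Next, take an arbitrary $C^k$ locally acceptable CIG $G=(f^t)_{t\in\R}$ embedding $f$ and put $a=J(G)$. Each $f^t$ is a continuous MOC invertible within the group, hence a strictly increasing homeomorphism of $[0,\iny)$; the group law and the chain rule give $\prt_t f^t(x)=(f^t)'(x)\,\prt_s f^s(x)|_{s=0}$, exactly as in the display preceding the theorem, so $\mu(x):=\prt_t f^t(x)/(f^t)'(x)=\prt_s f^s(x)|_{s=0}$ is well-defined and independent of $t$, satisfies $\mu(f^t(x))=(f^t)'(x)\mu(x)$, and hence $\int_x^{f^t(x)}\mu(r)^{-1}\,dr=t$ for all $x>0$ and all $t$. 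The Osgood condition for $\mu$ follows from this relation by the same fundamental-domain summation as in the proof of \refT{InverseMOC}, and $\iota(\mu)=J(G)=a$ is the observation recorded just before the theorem (one inequality being the $(f^t)'>1$ argument above). That $\mu$ is $C^{k-1}$ is precisely the assertion that the generator of a $C^k$ continuous iteration group is $C^{k-1}$, i.e.\ Zdun's structure theorem (Theorem~6.2 of \cite{Zdun1979}); this is the one step where more than bookkeeping is needed, since the CIG axioms supply only continuity, not differentiability, in $t$. A final appeal to uniqueness in \refT{MOCsAreFlows} identifies this $G$ with the flow-modulus family $\Gamma$ of its own $\mu$, i.e.\ $f^t=\Gamma_t$ for every $t$.

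With $G$ thus identified with the $\Gamma$ of \refT{MOCsAreFlows} attached to $\mu$, conditions (1)--(3) of the theorem become verbatim conditions (1)--(3) of \refL{muGammaConcave} with the concavity parameter $b$ playing the role of that lemma's $a$: ``$G$ locally concave with $V(G)\ge b$'' unwinds, through \refDAnd{iotaV}{AcceptableMOC} and \refD{CIG}, to ``$f^t=\Gamma_t$ is concave on $(0,f^{-t}(b))$ for every $t>0$'', which is condition (2) of \refL{muGammaConcave}, while (1) and (3) coincide word for word. Hence $(1)\Leftrightarrow(2)\Leftrightarrow(3)$ is immediate from \refL{muGammaConcave}, provided one checks that the $\mu$ above meets that lemma's standing hypotheses (positive on $(0,\iny)$, Osgood, differentiable), which were all established en route. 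The genuine obstacle is the $C^{k-1}$ regularity of $\mu$ for an abstract CIG — that is where Zdun's theory is indispensable; beyond that appeal I expect essentially all of the remaining labour to consist of carefully tracking the nested intervals $(0,\iota(\mu))$, $(0,f^{-t}(b))$, $(0,\Gamma_t^{-1}(a))$, and the like, through \refDAnd{iotaV}{AcceptableMOC}.
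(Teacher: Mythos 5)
Your proposal is correct and follows the same route the paper intends: \refT{InverseMOC} manufactures $\mu$, the integral relation (equivalently \refT{MOCsAreFlows}) turns $\mu$ into a one-parameter family that is automatically a CIG embedding $f$, the observations preceding the theorem recover $\mu$ from an abstract CIG together with $\iota(\mu)=J(G)$ and the Osgood property, and \refL{muGammaConcave} delivers the three-way concavity equivalence verbatim. The only visible difference is cosmetic: you bootstrap the $C^k$ regularity of each $\Gamma_t$ in place from the functional identity $\Gamma_t'(x)=\mu(\Gamma_t(x))/\mu(x)$ of \refE{muFunc}, whereas the paper defers this point (\refR{DeferedProof}) and obtains it later from the generating function $h$ of \refE{ftForm} via $\mu=h'\circ h^{-1}$ (\refE{muhph}); both bootstraps carry identical content. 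You are also right to flag Zdun's Theorem~6.2 as the one non-elementary input behind the $C^{k-1}$ regularity of the generator of an abstract $C^k$ CIG — the paper leans on the same source, just less explicitly at this spot.
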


\begin{remark}\label{R:DeferedProof}
That $G$ is $C^k$ in \refT{Combined} is proved below following \refE{muhph}.
\end{remark}

Left open in \refT{Combined} is the question of whether any concave acceptable MOC is embeddable in a concave CIG. We cannot prove this, and indeed it may not be true, but for our purposes, the weaker result in \refT{ConcaveCIG} will suffice (see \refR{AnswerToQuestion1}). Before proceeding to the proof of \refT{ConcaveCIG}, however, let us look at some illustrative examples.

\Ignore{ 
\textbf{At this point bring in the book of Kuczma's, Section 1.7 and see theorems on page 436 and 443, of \cite{Kuczma1990} and the book, \cite{Zdun1979}, by Zdun, if the library ever gets it to me. These give some results for conditions on when one can find an iterative family. It seems that looking at $f^{-1}$ is more appropriate in applying their results. If this works out, replace most of what follows in this section.}
} 

Let $a_m = 1/\exp^{m + 1}(1)$ for for any $m = 0, 1, \ldots$ define $\mu_m \colon [0, a_m) \to [0, \iny)$ with $\mu_m(0) = 0$ and
\begin{align}\label{e:mumDef}
	\mu_m(x)
		&= x \theta_m(1/x) \text{ for } 0 < x < a_m,
\end{align}
where $\theta_m$ is defined in \refE{YudovichExamples}.
It is straightforward to verify that $\mu_m$ is continuous, $C^\iny$ on $(0, a_m)$, concave, and increasing (strictly increasing for $m \ge 1$). Extend $\mu$ arbitrarily to $[0, \iny)$ in such a way as to maintain these properties.

\DetailSome{ 
We now show that $\mu$ as defined satisfies the properties we have claimed for it. It is easy to verify that $\mu_m$ satisfies the Osgood condition and that $\lim_{x \to 0^+} \mu_m(x) = 0$, so setting $\mu_m(0) = 0$ makes $\mu_m$ continuous at the origin. For $m = 0$ we can choose $\mu_0(x) = x$ for all $x > 0$. For $m \ge 1$, each $\theta_m$ is strictly increasing and strictly concave and we we will use this to show that each $\mu_m$ is strictly increasing and strictly concave on the interval $(0, 1/\exp^{m + 1}(1))$.

First we calculate $\theta_m'(p)$. We have,
\begin{align*}
	\theta_m'(p)
		&= (\theta_{m - 1}(p) \log^m p)'
		= \theta_{m - 1}'(p) \log^m p + \theta_{m - 1}(p) (\log^m p)',
\end{align*}
and
\begin{align*}
	(\log^m p)'
		= \frac{1}{p \log p \cdots \log^{m - 1} p}
		= \frac{1}{p \theta_{m - 1}(p)}
\end{align*}
so
\begin{align*}
	\theta_m'(p)
		&= \theta_{m - 1}'(p) \log^m p + \frac{1}{p}.
\end{align*}
Arguing inductively, this leads to
\begin{align}\label{e:thetampg}
	\theta_m'(p)
		&= \frac{1}{p} \gamma_m(p),
\end{align}
where $\gamma_0(p) = 0$, $\gamma_1(p) = 1$, and
\begin{align*}
	\gamma_m(p) = \gamma_{m - 1}(p) \log^m p + 1,
\end{align*}
for $m \ge 1$.

Then for $x$ in $(0, \exp^{- (m + 1)}(1))$,
\begin{align}\label{e:mumPrime}
	\begin{split}
	\mu_m'(x)
		&= \theta_m(1/x) + x (\theta_m(1/x))'
		= \theta_m(1/x) - \frac{1}{x} \theta_m'(1/x) \\
		&= \theta_m(1/x) - \frac{1}{x} x \gamma_m(1/x)
		= \theta_m(1/x) - \gamma_m(1/x)
	\end{split}
\end{align}
and
\begin{align*}
	\mu_m''(x)
		&= -\frac{1}{x^2} \brac{\theta_m'(1/x) - \gamma_m'(1/x)}
		= -\frac{1}{x^2} \brac{x \gamma_m(1/x) - \gamma_m'(1/x)}.
\end{align*}
But from \refE{thetampg}, $\gamma_m'(p) = (p \theta_m'(p))' = \theta_m'(p) + p \theta_m''(p)$ so
\begin{align*}
	\mu_m''(x)
		&= -\frac{1}{x^2} \brac{\theta_m'(1/x) - \theta_m'(1/x) - (1/x) \theta_m''(1/x)}
		= \frac{1}{x^3} \theta_m''(1/x)
		< 0,
\end{align*}
since $\theta_m$ is strictly concave. It follows that $\mu_m$ is concave.

\begin{lemma}\label{L:thetagamapLimit}
	For all $m \ge 1$, $p \ge \exp^k(1)$, $k \ge m + 1$, we have
	\begin{align*}
		\theta_m(p) - \gamma_m(p)
			\ge P_m(k) - 1,
	\end{align*}
	where $P_m(k)$ is a polynomial with $P_m(k) \ge k - m$.
\end{lemma}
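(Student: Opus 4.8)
The plan is to collapse the two recursions into a single affine recursion for the difference and then induct on $m$. Set $D_m(p) := \theta_m(p) - \gamma_m(p)$. From $\theta_m(p) = \theta_{m-1}(p)\log^m p$ (immediate from the product formula in \refE{YudovichExamples}), from the recursion $\gamma_m(p) = \gamma_{m-1}(p)\log^m p + 1$, and from $\theta_0 \equiv 1$, $\gamma_0 \equiv 0$, one gets at once
\begin{align*}
	D_m(p) = D_{m-1}(p)\log^m p - 1, \qquad D_0(p) \equiv 1 .
\end{align*}
So everything reduces to bounding below the iterates of this recursion on the region $p \ge \exp^k(1)$.

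Next I would isolate the only analytic input: the elementary inequality $\exp^j(1) \ge j+1$ for all $j \ge 0$, proved by a one-line induction from $e^x \ge 1+x$. Since each $\log^j$ is increasing on its domain, this yields, for $p \ge \exp^k(1)$ and $0 \le j \le k-1$,
\begin{align*}
	\log^j p \ge \exp^{k-j}(1) \ge k-j+1 ,
\end{align*}
and in particular all the iterated logarithms appearing in $\theta_m(p)$, $\gamma_m(p)$ are positive for $k \ge m+1$, so the recursion above is valid pointwise.

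Then I would prove the lemma by induction on $m \ge 1$. For $m=1$, $D_1(p) = \log p - 1 \ge \exp^{k-1}(1) - 1 \ge k-1$ whenever $k \ge 2$, so $P_1(k) := k$ works ($P_1$ is a polynomial, $P_1(k) = k \ge k-1$, and $D_1(p) \ge P_1(k)-1$). For the step, assume the statement for $m-1$ with a polynomial $P_{m-1}$ satisfying $P_{m-1}(k) \ge k-m+1$ for $k \ge m$. Fix $p \ge \exp^k(1)$ with $k \ge m+1$. Then $D_{m-1}(p) \ge P_{m-1}(k)-1 \ge (k-m+1)-1 = k-m \ge 1 > 0$ and $\log^m p \ge k-m+1 > 0$, so both factors of $D_{m-1}(p)\log^m p$ dominate the corresponding positive quantities; multiplying the two inequalities gives
\begin{align*}
	D_m(p) = D_{m-1}(p)\log^m p - 1 \ge \pr{P_{m-1}(k)-1}(k-m+1) - 1 =: P_m(k)-1 .
\end{align*}
Here $P_m(k) := \pr{P_{m-1}(k)-1}(k-m+1)$ is again a polynomial (indeed $\deg P_m = m$), and $P_m(k) \ge (k-m)(k-m+1) \ge k-m$ because $k-m+1 \ge 1$; this closes the induction.

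The single place the argument could slip is the sign bookkeeping in the inductive step: one must first extract from the inductive hypothesis not just that $D_{m-1}(p) \ge P_{m-1}(k)-1$ but that this lower bound is itself $\ge k-m \ge 1 > 0$, so that termwise multiplication of $D_{m-1}(p) \ge P_{m-1}(k)-1$ by $\log^m p \ge k-m+1$ is legitimate. This is precisely why the hypothesis is stated with the auxiliary bound $P_{m-1}(k) \ge k-m+1$ carried along rather than merely "$P_{m-1}$ is a polynomial." Everything else — checking that $P_m$ is a polynomial and that $P_m(k) \ge k-m$ — is routine.
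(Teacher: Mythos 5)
Your proof is correct and follows essentially the same inductive argument as the paper's: both exploit the multiplicative recursions for $\theta_m$ and $\gamma_m$ and compare iterated logarithms of $p$ against iterated exponentials of $1$ to close the induction on $m$. Your version is tidier in that it isolates the affine recursion $D_m = D_{m-1}\log^m p - 1$ and constructs an explicit degree-$m$ polynomial $P_m$ with the required lower bound, whereas the paper only verifies the final estimate $\theta_m(p) - \gamma_m(p) \ge k-m$ without naming $P_m$; your explicit positivity check before multiplying the two lower bounds also sidesteps a couple of sign-bookkeeping slips in the paper's displayed chain of inequalities.
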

\begin{proof}
Assume that $p \ge \exp^k(1)$.
	For $m =1 $, $\theta_1(p) = \log p$ and $\gamma_1(p) = 1$, so
	\begin{align*}
		\theta_1(p) - \gamma_1(p)
			\ge \theta_1(\exp^k(1)) - \gamma_1(\exp^k(1))
			> \theta_1(e^k) - \gamma_1(e^k)
			= k - 1
	\end{align*}
	and the result holds for $m = 1$.
	Assuming it holds for $m - 1$, we have
	\begin{align*}
		\theta_m(p) - \gamma_m(p)
			&\ge (\theta_{m - 1}(p) - \gamma_{m - 1}(p)) \log^m(p) - 1 \\
			&\ge (P_{m - 1}(k) - 1) \log^m(\exp^k(1)) \\
			&\ge (k - m + 1) \exp^{k - m}(1) - 1.
	\end{align*}
	But for $k \ge m + 1$, $\exp^{k - m}(1) \ge e \ge 2$ so
	\begin{align*}
		\theta_m(p) - \gamma_m(p)
			&\ge 2 (k - m + 1) - 1
			\ge k - m.
	\end{align*}
\end{proof}

It follows from \refL{thetagamapLimit} that $\theta_m(p) - \gamma_m(p) \ge 1$ for all $p \ge \exp^{m + 1}(1)$ so that by \refE{mumPrime} $\mu_m$ is strictly increasing for all $m \ge 1$.
} 

Now define $f_m^t$ by
\begin{align*} 
	\int_x^{f_m^t(x)} \frac{ds}{\mu_m(s)} = t.
\end{align*}
We can exactly integrate this to give, for $0 < x < a_m$,
\begin{align*}
	\log^{m + 1}(1/f_m^t(x)) - \log^{m + 1}(1/x) = -t,
\end{align*}
whose solution is
\begin{align*} 
	f_m^t(x)
		&=  1/ \exp^m(e^{-t} \log^m(1/x)))
		= F_m(e^{-t} F_m^{-1}(x)),
\end{align*}
where $F_m(x) = 1/\exp^m(x)$. Or, we can write,
\begin{align}\label{e:fmEquals}
	f_m^t(x) = h_m(t + h_m^{-1}(x)),
\end{align}
where
\begin{align*}
	h_m(x) = \frac{1}{\exp^{m + 1}(-x)},
\end{align*}
which we note is strictly increasing.

It is easy to verify directly from \refE{fmEquals} that $G_m = (f_m^t)_{t \in \R}$ is a CIG embedding $f = f_m^1$ and it follows from \refT{Combined} that $G_m$ is a concave CIG. In fact, in our setting, any $C^1$ CIG, $G = (f^t)_{t \in \R}$, must be of the form
\begin{align}\label{e:ftForm}
	f^t(x) = h(t + h^{-1}(x))
\end{align}
for some $h \colon (-\iny, \iny) \to (0, \iny)$. (The function $h$ is called the \textit{generating function} of $G$.) This follows from Theorem 7.1 Chapter I of \cite{Zdun1979} (quoted in Theorem 3.3.29 of \cite{Targ1981}), where it is also proven that $h$ must be strictly increasing with $h(-\iny) = 0$, $h(\iny) = \iny$ and that $h$ is nearly unique in the sense that if $f^t(x) = h_j(t + h_j^{-1}(x))$, $j = 1, 2$, then there exists some $a$ in $\R$ such that $h_1(\cdot) = h_2(a + \cdot)$. (In the terminology of \cite{Zdun1979}, $f$ satisfies property $P3^\circ$ with $\ol{a} = 0$, $\ol{b} = +\iny$.)

By Theorem 1.2 Chapter II of \cite{Zdun1979}, $h$ is differentiable on $(-\iny, \iny)$ and $h'$ never vanishes so, in fact, $h$ is \textit{strictly} increasing. Also, by \refT{Combined},
\begin{align}\label{e:muhph}
	\begin{split}
	\mu(x)
		&= \prt_t f^t(x)|_{t = 0}
		= h'(t + h^{-1}(x))|_{t = 0} \\
		&= h'(h^{-1}(x))
		= \frac{1}{(h^{-1}(x))'},
	\end{split}
\end{align}
which is Lemma 4.2 of \cite{Zdun1979}. The last expression for $\mu$ shows that if $\mu$ is $C^{k - 1}$ then $h$ is $C^k$. Since, by \refT{InverseMOC}, $\mu$ can be chosen to be $C^{k -1}$ if $f$ is $C^k$, this completes the proof of \refT{Combined} promised in \refR{DeferedProof}.

(The last expression for $\mu$ in \refE{muhph} also leads to a direct expression for $h^{-1}$ in terms of $\mu$; namely, $h^{-1}(x) = h^{-1}(a) + \int_x^a (\mu(s))^{-1} \, ds$ for any fixed choice of $a > 0$ and assigned value of $h^{-1}(a)$. This in turn leads to the relation in \refE{ftForm} and to the statement regarding the uniqueness of $h$.)

For $f_m$, we have
\begin{align*}
	h_m^{-1}(x) = - \log^{m + 1}(1/x)
\end{align*}
and
\begin{align*}
	h_m'(x)
		= - \frac{\exp^{m + 1}(-x) \cdots \exp(-x) (-1)}{\exp^{m + 1}(-x)^2}
		= \frac{\exp^m(-x) \cdots \exp(-x)}{\exp^{m + 1}(-x)}
\end{align*}
so
\begin{align*}
	\mu_m(x)
		&= h_m'(h_m^{-1}(x))
		= \frac{\log x \cdots \log^m(x)}{1/x}
		= x \log x \cdots \log^m(x) \\
		&= x \theta_m(1/x),
\end{align*}
in agreement with \refE{mumDef}.

In more generality, we have \refT{GenSoFar}. (See \refD{Concavity} for our distinctions among degrees of concavity.) 

\begin{theorem}\label{T:GenSoFar}
Suppose that $f$ is a $C^k$, $k \ge 3$, concave acceptable MOC embedded in a $C^k$ locally acceptable CIG, $G = (f^t)_{t \in \R}$, given by \refT{Combined} with $a = J(G)$. There exists a $C^k$ generating function, $h \colon (-\iny, \iny) \to (0, \iny)$, for $f^t$ as in \refE{ftForm}, with $h(-\iny) = 0$ and $h(\iny) = \iny$. Any such $h$ must be strictly increasing on $\R$ and strongly strictly convex on $(-\iny, b)$, where $b = h^{-1}(a)$. If $f^t(x) = h_j(t + h_j^{-1}(x))$, $j = 1, 2$, then there exists some $c$ in $\R$ such that $h_1(\cdot) = h_2(c + \cdot)$. For any $t, x > 0$,
\begin{align*}
	\int_x^{f^t(x)} \frac{ds}{\mu(s)} = t,
\end{align*}
where $\mu(x) = h'(h^{-1}(x)) = 1/(h^{-1}(x))'$ is $C^{k - 1}$ and strictly increasing on $(0, a)$.

Let $0 < \ol{a} \le a$ and let $\ol{b} = h^{-1}(\ol{a})$. $G$ (and hence $\mu$) is locally concave with $V(G) = \ol{a}$ if and only if $\log h'$ is concave on $(-\iny, \ol{b})$, and $\mu$ is strongly strictly concave on $(0, \ol{a})$ if and only if $\log h'$ is concave on $(-\iny, \ol{b})$. Finally, if $\mu$ is (strongly strictly) concave on $(0, \ol{a})$ then  on $(-\iny, \ol{b})$, $\log h$ is strictly increasing and (strongly strictly) concave with $(\log h')' \le (\log h)'$, strict inequality holding when $\mu$ is strongly strictly concave.
\end{theorem}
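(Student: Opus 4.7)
The plan is to assemble the statements by combining the cited iteration-theory results of Zdun \cite{Zdun1979} with direct differentiation of the fundamental identity $f^t(x) = h(t + h^{-1}(x))$. Existence of $h\colon\R\to(0,\iny)$, strict monotonicity of $h$ on $\R$, the limits $h(-\iny)=0$, $h(\iny)=\iny$, and uniqueness of $h$ up to an additive shift in its argument all come directly from Theorem 7.1 Chapter I and Theorem 1.2 Chapter II of \cite{Zdun1979}, as noted in the discussion preceding the theorem. The $C^k$ regularity is already handled in the text following \refE{muhph} via $\mu = 1/(h^{-1})'$: since $\mu$ is $C^{k-1}$ by \refT{Combined}, $h^{-1}$, and hence $h$, are $C^k$. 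The integral identity for $\mu$ and its strict monotonicity on $(0,a)$ are part of \refT{Combined}.

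For strong strict convexity of $h$ on $(-\iny,b)$, I would differentiate the relation $\mu(h(y)) = h'(y)$ (valid on $(-\iny,b)$ because $h$ carries this interval onto $(0,a)$) to obtain
\[
  h''(y) \;=\; \mu'(h(y))\,h'(y) \;=\; \mu'(h(y))\,\mu(h(y)).
\]
Since $\mu > 0$ on $(0,a)$, the sign of $h''$ matches that of $\mu' \circ h$. The main obstacle in this paragraph is that strict monotonicity of $\mu$ does not a priori force $\mu' > 0$ pointwise. To close the gap, I would revisit the construction of $\mu$ in \refT{InverseMOC}: choose the seed values on $[a,f(a)]$ so that $\mu' > 0$ there, then differentiate $\mu(f(x)) = f'(x)\mu(x)$ to get $\mu'(f(x)) = \mu'(x) + \mu(x)\,f''(x)/f'(x)$, and verify inductively that the concavity of $f$ together with $\mu>0$ lets $\mu'>0$ propagate through the construction. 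This yields $h'' > 0$ throughout $(-\iny,b)$.

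For the concavity equivalences, set $y=h^{-1}(x)$ and differentiate $f^t(x) = h(t+y)$ twice in $x$:
\[
  (f^t)'(x) \;=\; \frac{h'(t+y)}{h'(y)}, \qquad
  (f^t)''(x) \;=\; \frac{(f^t)'(x)}{h'(y)}\,\bigl[(\log h')'(t+y)-(\log h')'(y)\bigr].
\]
Since $h',(f^t)'>0$, the requirement $(f^t)''(x)\le 0$ for all $t>0$ and all $x$ in $(0,h(\ol{b}-t))$ is equivalent to $(\log h')'$ being nonincreasing on $(-\iny,\ol{b})$, i.e.\ $\log h'$ concave there. For $\mu$, differentiate $\mu(h(y))=h'(y)$ once to get $\mu'(h(y)) = h''(y)/h'(y) = (\log h')'(y)$ and once more to get
\[
  \mu''(x) \;=\; \frac{(\log h')''(h^{-1}(x))}{h'(h^{-1}(x))},
\]
from which $\mu'' < 0$ on $(0,\ol{a})$ iff $(\log h')'' < 0$ on $(-\iny,\ol{b})$.

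Finally, for the implications concerning $\log h$: since $(\log h)' = h'/h > 0$ on $\R$, $\log h$ is strictly increasing there. A direct computation gives
\[
  (\log h)''(y) \;=\; (\log h)'(y)\,\bigl[(\log h')'(y)-(\log h)'(y)\bigr],
\]
so, with $(\log h)'>0$, concavity of $\log h$ on $(-\iny,\ol{b})$ is equivalent to $(\log h')' \le (\log h)'$ there. Using the identifications $\mu'(h(y)) = (\log h')'(y)$ and $\mu(h(y))/h(y) = h'(y)/h(y) = (\log h)'(y)$, this reduces to the inequality $\mu'(x) \le \mu(x)/x$ on $(0,\ol{a})$. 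When $\mu$ is concave with $\mu(0)=0$, the supporting-line inequality $0 = \mu(0) \le \mu(x)+\mu'(x)(0-x)$ rearranges exactly to $\mu'(x) \le \mu(x)/x$. When $\mu$ is strongly strictly concave, Taylor's theorem with remainder gives $0 = \mu(x) - x\mu'(x) + \tfrac{1}{2}\mu''(\xi)x^2$ with $\mu''(\xi)<0$, yielding the strict inequality and hence $(\log h)'' < 0$.
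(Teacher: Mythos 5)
Your proposal is correct and follows essentially the same route as the paper: Zdun's structure theorems supply the generating function $h$, its boundary behavior, and its uniqueness up to a shift; the $C^k$ regularity is bootstrapped through $\mu = 1/(h^{-1})'$; and the concavity equivalences come from differentiating $\mu(h(y)) = h'(y)$ twice. Your formula $(\log h)''(y) = (\log h)'(y)\brac{(\log h')'(y) - (\log h)'(y)}$ is algebraically identical to the paper's \refE{loglogpIneq} once one substitutes $\mu'(h) = (\log h')'$ and $\mu(h) = h'$, and the supporting-line and Taylor arguments for $\mu'(x) \le \mu(x)/x$ (with strictness) are a tidy alternative to the mean-value-theorem argument behind \refP{ConcavityImplications}.

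The one place you depart from the paper, and deserve credit for noticing, is the strong strict convexity of $h$ on $(-\iny, b)$. You correctly observe that $h''(y) = \mu'(h(y))\mu(h(y))$ together with $\mu$ merely \emph{strictly increasing} forces only that $h'$ is strictly increasing; it does not give $h'' > 0$ pointwise, since a $C^2$ strictly increasing $\mu$ may have $\mu'$ vanish at isolated points. The paper's proof derives $h'$ increasing from $(f^t)' > 1$, records the biconditional ``$\mu$ strictly increasing iff $h$ strictly convex,'' and at that point quietly identifies strict with strong strict convexity in the sense of \refD{Concavity}. Your proposed repair---seeding $\mu' > 0$ on $[a, f(a)]$ in \refT{InverseMOC} and propagating positivity leftward through $\mu'(x) = \mu'(f(x)) - \mu(x) f''(x)/f'(x) \ge \mu'(f(x)) > 0$ using $f'' \le 0$---does produce a $\mu$ with $\mu' > 0$ on $(0, f(a))$ and hence $h'' > 0$, but only for that particular choice of $\mu$; it does not, as stated, cover ``any such $h$'' arising from an arbitrary CIG furnished by \refT{Combined}. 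You have isolated a genuine loose end: the cleanest resolution is either to build $\mu' > 0$ into the conclusion of \refT{Combined}, or to weaken ``strongly strictly convex'' here to ``strictly convex.''
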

\begin{proof} 
The existence of a generating function $h$ satisfying \refE{ftForm} and possessing the stated properties follows from Theorem 7.1 Chapter I of \cite{Zdun1979}.
By \refT{Combined}, $\mu$ is $C^{k - 1}$, and by the comment following \refE{muhph}, $h$ is $C^k$.

By \refE{ftForm}, $f^t(h(x - t)) = h(x)$, so
\begin{align*}
	(f^t)'(h(x - t)) h'(x - t) &= h'(x).
\end{align*}
But $(f^t)' > 1$ on $f^{-t}(a)$ by \refT{Combined} so we conclude that $h'$ is increasing on $(-\iny, b)$; that is, the condition that $h$ be (non-strictly) convex is already required simply for the CIG to be any strictly increasing CIG embedding $f$, as given by \refT{Combined}. More important, we conclude that $f^t$ is as differentiable as $h$.

Writing \refE{muhph} as
\begin{align}\label{e:muhhp}
	\mu(h(x)) = h'(x)
\end{align}
(which shows that $h$ is strictly increasing on $\R$) we have
\begin{align}\label{e:muphhphpp}
	\mu'(h(x)) h'(x) = h''(x).
\end{align}

Since $h$ is strictly increasing, $\mu$ will be strictly increasing on $(0, \ol{a})$ if and only if $h$ is strictly convex on $(-\iny, \ol{b})$. Taking another derivative gives
\begin{align*}
	\mu'(h(x)) h''(x) + \mu''(h(x)) (h'(x))^2
		= h'''(x)
\end{align*}
so
\begin{align*}
	 \mu''(h(x)) (h'(x))^2
		= h'''(x) - \mu'(h(x)) h''(x)
		= h'''(x) - \frac{(h''(x))^2}{h'(x)},
\end{align*}
or,
\begin{align}\label{e:mupphp}
	\begin{split}
	 \mu''(h(x)) h'(x)
		&= \frac{h'''(x) h'(x) - (h''(x))^2}{h'(x)^2}
		= \pr{\frac{h''(x)}{h'(x)}}' \\
		&= (\log h')''(x).
	\end{split}
\end{align}
Thus, $\mu$ is strictly increasing and (strongly strictly) concave on $(0, \ol{a})$ if and only if $h$ is strictly convex while $\log h'$ is (strongly strictly) concave on $(-\iny, \ol{b})$.

Since $\mu$ is concave on $(0, \ol{a})$, we have $\mu'(x) \le \mu(x)/x$ (with strict inequality if $\mu$ is strongly strictly convex) on $(0, \ol{a})$.
From \refEAnd{muhhp}{muphhphpp},
\begin{align}\label{e:hLogConnection}
	\begin{split}
	\frac{\mu(h(x))}{h(x)}
		&= \frac{h'(x)}{h(x)}
		= (\log h)'(x), \\
	\mu'(h(x))
		&= \frac{h''(x)}{h'(x)}
		= (\log h')'(x),
	\end{split}
\end{align}
so we conclude that $0 < (\log h')' \le (\log h)'$ on $(-\iny, \ol{b})$ with strict inequality when $\mu$ is strongly strictly convex on $(0, \ol{a})$. Differentiating $\refE{hLogConnection}_1$ then substituting $\refE{hLogConnection}_2$ gives
\begin{align}\label{e:loglogpIneq}
	\begin{split}
		(\log h)''(x)
			&= \frac{h(x) \mu'(h(x)) h'(x) - \mu(h(x)) h'(x)}{(h(x))^2} \\
			&= \frac{h'(x)}{(h(x)^2} \brac{\mu'(h(x)) h(x) - \mu(h(x))}
			\le 0,
	\end{split}
\end{align}
again using $\mu'(x) \le \mu(x)/x$ . Thus, if $\mu$ is (strongly strictly) concave on $(0, \ol{a})$ then $\log h$ must be (strongly strictly) concave on $(-\iny, \ol{b})$.
\end{proof} 

\begin{remark}
Much of \refT{GenSoFar} can be obtained assuming only that $(f^t)_{t \in \R}$ is $C^1$, as in Theorem 3.22 of \cite{TargZdun1987}.
\end{remark}

\Ignore{ 
Also, subtracting $\refE{hLogConnection}_1$ from $\refE{hLogConnection}_2$ and differentiating gives
\begin{align*}
	((\log &\, h')'' - (\log h)')'(x) \\
		&= \mu''(h(x)) h'(x)
			- \frac{h'(x)}{(h(x)^2} \brac{\mu'(h(x)) h(x) - \mu(h(x))} \\
		&= \frac{h'(x)}{(h(x)^2} \brac{\mu''(h(x) (h(x))^2 + \mu(h(x)) - \mu'(h(x)) h(x)}
\end{align*}
} 

\Ignore{ 
\begin{remark}
It is important to remember in applying the conditions in \refT{GenSoFar} that it is only the convexity and concavity in a neighborhood of $- \iny$ that is important, since $h(-\iny) = 0$, and it is only a right neighborhood of the origin that matters in assessing the properties of $\mu$. In particular, $h_m$ is convex and strictly increasing only on the interval $(-\iny, 0]$.
\end{remark}
} 

Let $h$, $a$, and $b$ be as in \refT{GenSoFar}. The conclusion in \refT{GenSoFar} that $h(-\iny) = 0$ is equivalent to $\mu$ satisfying the Osgood condition, for a change of variables gives
\begin{align*}
	\int_0^1 \frac{ds}{\mu(s)}
		&= \int_0^1 \frac{ds}{h'(h^{-1}(s))}
		= \int_{h^{-1}(0)}^{h^{-1}(1)} \frac{h'(u)}{h'(u)} \, du
		= h^{-1}(1) - h^{-1}(0),
\end{align*}
which is infinite if and only if $h^{-1}(0) = - \iny$.

\Ignore{ 
(Although \refT{GenSoFar} implies that $(f^t)_{t \in \R}$ is concave if $h$ is convex while $h'$ is concave, this is too restrictive a constraint, for the sequence of examples $h_m$ above fail this condition: $h_m$ and all of its derivatives are convex on $(-\iny, 0]$.)
} 

Since $h' > 0$, we can write $h'$ uniquely in the form
\begin{align}\label{e:gDef}
	h' = e^g
\end{align}
for some function $g \colon \R \to \R$. Then $g = \log h'$ and $h'' = g' e^g$. But $h'' > 0$ on $(-\iny, b)$ by \refT{GenSoFar}, meaning that $g' >0$ on $(-\iny, b)$ and hence $g$ is a strictly increasing function on $(-\iny, b)$.
Again by \refT{GenSoFar}, $G = (f^t)_{t \in \R}$ is locally concave with $0 < V(G) = \ol{a} \le a$ if and only if $g$ is concave on $(-\iny, h^{-1}(\ol{a}))$. Also, $0 = \mu(0) = \mu(h(-\iny)) = h'(-\iny)$ so $g(-\iny) = - \iny$. Thus, we have the following immediate corollary of \refT{GenSoFar}:

\begin{cor}\label{C:ftConvityg}
Let $f$, $G = (f^t)_{t \in \R}$, and $h$ be as in \refT{GenSoFar} with $a = J(G)$. Then $h' = e^g$ for some $C^{k - 1}$ function, $g \colon \R \to \R$, strictly increasing on $(-\iny, h^{-1}(a))$ with $g(-\iny) = -\iny$. Furthermore, $G$ is locally concave with $0 < V(G) = \ol{a} \le a$ if and only if $g$ is concave on $(-\iny, \ol{b})$, where $\ol{b} = h^{-1}(\ol{a})$, and $\mu$ is strongly strictly concave on $(0, \ol{a})$ if and only if $g$ is strongly strictly concave on $(-\iny, \ol{b})$.
\end{cor}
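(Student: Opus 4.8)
The plan is to obtain the corollary directly from \refT{GenSoFar} together with the paragraph preceding its statement, where the representation $h' = e^g$ in \refE{gDef} was introduced; essentially nothing new needs to be proved, only repackaged in terms of $g = \log h'$.

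First I would record the elementary facts about $g$. Since \refT{GenSoFar} guarantees that $h \colon (-\iny, \iny) \to (0, \iny)$ is strictly increasing and $C^k$, its derivative $h'$ is $C^{k-1}$ and strictly positive, so $g := \log h'$ is a well-defined $C^{k-1}$ function from $\R$ to $\R$ and is the unique function with $h' = e^g$. Differentiating gives $h'' = g' e^g$; by \refT{GenSoFar}, $h$ is strongly strictly convex on $(-\iny, b)$ with $b = h^{-1}(a)$, i.e. $h'' > 0$ there, which forces $g' > 0$ on $(-\iny, b) = (-\iny, h^{-1}(a))$ and hence $g$ strictly increasing on that interval. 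For the value at $-\iny$, I would use $h(-\iny) = 0$ (which holds because $\mu$ satisfies the Osgood condition, as noted just before \refE{gDef}) together with $\mu(h(x)) = h'(x)$ from \refE{muhhp} and continuity of the MOC $\mu$ at $0$: as $x \to -\iny$ we get $h'(x) = \mu(h(x)) \to \mu(0) = 0$, so $g(-\iny) = -\iny$.

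For the two equivalences, I would simply substitute $g = \log h'$ into the relevant assertions of \refT{GenSoFar}. That theorem already states that $G$ (and hence $\mu$) is locally concave with $V(G) = \ol{a}$ if and only if $\log h'$ is concave on $(-\iny, \ol{b})$, where $\ol{b} = h^{-1}(\ol{a})$; rewriting $\log h'$ as $g$ gives the first equivalence. For the second, the identity $\mu''(h(x)) h'(x) = (\log h')''(x)$ of \refE{mupphp}, combined with $h' > 0$, shows that $\mu'' < 0$ on $(0, \ol{a})$ if and only if $(\log h')'' < 0$ on $(-\iny, \ol{b})$; that is, $\mu$ is strongly strictly concave on $(0, \ol{a})$ if and only if $g$ is strongly strictly concave on $(-\iny, \ol{b})$. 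One checks that $\ol{b} = h^{-1}(\ol{a})$ is well-defined and lies in $(-\iny, b)$ precisely because $0 < \ol{a} \le a$ and $h$ maps a neighborhood of $-\iny$ bijectively onto a neighborhood of $0$, but this is immediate from the strict monotonicity of $h$ in \refT{GenSoFar}.

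There is no genuine obstacle here: the mathematical content is entirely contained in \refT{GenSoFar} and the discussion surrounding \refE{gDef}, and the corollary is a clean restatement in the variable $g = \log h'$. The only point deserving a moment's attention is the bookkeeping of the intervals $(-\iny, b)$ and $(-\iny, \ol{b})$, which is handled by the strict monotonicity of $h$ as just noted.
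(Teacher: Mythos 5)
Your proposal is correct and follows essentially the same route as the paper, whose ``proof'' of this corollary is simply the paragraph preceding its statement (introducing $g = \log h'$, observing $h'' = g' e^g$ together with $h'' > 0$ on $(-\iny, b)$ from \refT{GenSoFar}, and computing $g(-\iny) = -\iny$ from $h'(-\iny) = \mu(h(-\iny)) = \mu(0) = 0$). Your handling of the strong strict concavity equivalence directly from $\mu''(h(x)) h'(x) = (\log h')''(x)$ in \refE{mupphp} is a small improvement in precision, since the statement of \refT{GenSoFar} appears to contain a typo (``$\log h'$ is concave'' where ``strongly strictly concave'' is meant in the second equivalence), and your derivation avoids leaning on that wording.
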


We are now in a position to prove that given a concave acceptable MOC there always exists a larger concave acceptable MOC that is embeddable in a concave CIG: this is \refT{ConcaveCIG}.

\begin{theorem}\label{T:ConcaveCIG}
	Let $f$ be any $C^k$ globally concave globally acceptable MOC, $k \ge 3$.
	Then for any $a > 0$ there exists a  $C^{k + 1}$ globally concave 
	globally acceptable MOC, $\ol{f}$,
	embedded in a $C^{k + 1}$ globally concave globally acceptable  CIG
	with $\ol{f} > f$
	on $(0, a)$. The associated function $\mu$ is concave and $C^k$ and the generating
	function $h$ is $C^{k + 1}$.
	Furthermore, if $f$ is strongly strictly concave then $\mu$ is strongly strictly concave
	on $(0, a)$.
\end{theorem}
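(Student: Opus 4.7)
The plan is to construct a concave $C^k$ Osgood MOC $\overline{\mu}\colon[0,\infty)\to[0,\infty)$, strictly increasing on $(0,\infty)$, and to take the CIG generated from $\overline{\mu}$ via \refC{ftConvityg}. Given such a $\overline{\mu}$, I would set $\overline{h}^{-1}(x):=\int_{x_0}^x ds/\overline{\mu}(s)$ for some fixed $x_0>0$; the Osgood condition gives $\overline{h}^{-1}(0^+)=-\infty$, while concavity of $\overline{\mu}$ with $\overline{\mu}(0)=0$ forces $\overline{\mu}(s)\le\overline{\mu}(1)s$ for $s\ge 1$ and hence $\overline{h}^{-1}(\infty)=\infty$. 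Let $\overline{h}$ be the inverse, which by the inverse function theorem is $C^{k+1}$ and strictly increasing on $\R$, and define $\overline{f}^t(x):=\overline{h}(t+\overline{h}^{-1}(x))$. This produces a $C^{k+1}$ CIG with associated MOC $\overline{\mu}$, for which $\int_x^{\overline{f}^t(x)} ds/\overline{\mu}(s)=t$. By \refC{ftConvityg}, global concavity of $\overline{\mu}$ makes the CIG globally concave; strict monotonicity of $\overline{\mu}$ and $\overline{f}^t(x)>x$ give $(\overline{f}^t)'(x)=\overline{\mu}(\overline{f}^t(x))/\overline{\mu}(x)>1$, hence global acceptability. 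Writing $\overline{f}:=\overline{f}^1$, the condition $\overline{f}>f$ on $(0,a)$ is then equivalent to $\int_x^{f(x)} ds/\overline{\mu}(s)<1$ for all $x\in(0,a)$.

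To secure this integral bound, I would take $\overline{\mu}:=M\nu$, where $\nu$ is a fixed $C^k$ concave strictly increasing Osgood MOC and $M$ is chosen strictly larger than $\sup_{x\in(0,a)}\int_x^{f(x)}ds/\nu(s)$. When $f(x)/x$ is bounded on $(0,a)$ the linear choice $\nu(x)=x$ suffices; when $\log(1/f(x))/\log(1/x)$ is bounded below by a positive constant on $(0,a)$, the log-Lipschitz choice $\nu(x)=-x\log x$, concavely extended to $[0,\infty)$, suffices; and the iterated-log family $\nu_m(x)=x\theta_m(1/x)$ from \refE{mumDef}---each $C^\infty$, Osgood, strictly increasing, and strongly strictly concave on a neighborhood of the origin---handles progressively slower decays. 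For $f$ whose behavior at the origin escapes this explicit hierarchy, $\nu$ is instead built from an Abel function for $f$, whose existence with appropriate smoothness is provided by the iteration-theoretic results of Zdun \cite{Zdun1979} and Targo\'nski and Zdun \cite{TargZdun1987}.

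For the strong strict concavity clause, when $f$ is strongly strictly concave I would select $\nu$ to also be strongly strictly concave on the relevant range (as the $\nu_m$ all are, by direct computation of $\nu_m''$), so that $\overline{\mu}=M\nu$ inherits the same property on $(0,a)$, and \refC{ftConvityg} transfers it to the CIG and to $\overline{f}$.

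The principal obstacle is the second step: for $f$ whose decay at the origin outpaces every iterated exponential, producing a concave Osgood $\nu$ with finite $\sup_{x\in(0,a)}\int_x^{f(x)}ds/\nu(s)$ requires tailoring $\nu$ to $f$ by iteration-theoretic means, and ensuring that this tailored $\nu$ remains concave (and, when required, strongly strictly so) is the delicate step---precisely the sort of construction that sidesteps the open question of whether $f$ itself can be embedded in a concave CIG.
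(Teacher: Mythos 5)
Your reduction of the problem is sound as far as it goes: building $\ol{h}$ from a concave Osgood $\ol{\mu}$, verifying $\ol{h}^{-1}(0^+)=-\iny$ and $\ol{h}^{-1}(\iny)=\iny$, and observing that $\ol f^1>f$ on $(0,a)$ is equivalent to $\sup_{x\in(0,a)}\int_x^{f(x)}ds/\ol{\mu}(s)<1$ are all correct, and match the machinery in \refT{GenSoFar} and \refC{ftConvityg}. But you then outsource the only hard step---producing a \emph{concave} $C^k$ Osgood $\ol{\mu}$ for which that supremum is finite, for an \emph{arbitrary} globally concave globally acceptable $f$---and, as you say yourself, you do not close it. Multiplying by a large constant $M$ only helps if the supremum is already finite; for a fixed $\nu$ drawn from the $\nu_m$ hierarchy it will generally be $+\iny$ once $f$ decays faster than the corresponding iterated logarithm, and the Abel-function route produces an $h$ whose associated $\mu=h'\circ h^{-1}$ is exactly the $\mu$ of \refT{InverseMOC}, which is \emph{not} known to be concave---that is precisely the open question the theorem is designed to sidestep, so invoking it is circular.

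The paper's proof does exactly what your last paragraph says is delicate, by a concrete device you do not have. Starting from \emph{any} (not necessarily concave) $C^k$ CIG embedding $f$ (via \refT{Combined}), with generating function $h$ and $g=\log h'$, the concavity of $f$ alone yields the lag-one inequalities $g(x)>g(x-1)$ and $g'(x)\le g'(x-1)$ on all of $\R$ (differentiating $\log f'(h(x-1))+g(x-1)=g(x)$). Setting $\ol g(x)=\int_x^{x+1}g$, these give $\ol g'>0$ and $\ol g''\le 0$ globally, so $\ol h=\int_{-\iny}^{\cdot}e^{\ol g}$ generates a \emph{globally concave} CIG, and the sandwich $\ol g(x)>g(x)>\ol g(x-1)$ (using that $g$ is strictly increasing on $(-\iny,h^{-1}(a'))$) gives $\ol h>h>\ol h(\cdot-1)$ on that range, hence $\ol f^2>f$ on $(0,a)$; one then rescales time by $2$. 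The averaging also supplies the extra order of smoothness ($\ol h\in C^{k+1}$). That construction---a unit-interval average of $\log h'$ converting the lag-one concavity of $g$ into genuine concavity of $\ol g$---is the missing idea, and is what makes $\sup_x\int_x^{f(x)}ds/\ol{\mu}(s)$ controllable without ever needing $f$ itself to embed in a concave CIG.
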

\begin{proof}
	Let $G = (f_t)_{t \in \R}$ be any $C^k$ locally acceptable CIG embedding $f$ as given by
	\refT{Combined} and let $h$ be the corresponding $C^k$ generating function
	given by \refT{GenSoFar}. Let $a' = J(G)$ and let $a = h(h^{-1}(a') - 1)$,
	which we note can be made arbitrarily large.
	
By \refE{ftForm}, $f(h(x - 1)) = f^1(h(x - 1)) = h(x)$, so
\begin{align*} 
	f'(h(x - 1)) h'(x - 1) &= h'(x).
\end{align*}
Taking the logarithm of both sides gives
\begin{align}\label{e:logfph}
	\log f'(h(x - 1)) + g(x - 1) = g(x)
\end{align}
since $g = \log h'$. But $f' > 1$ so
\begin{align}\label{e:gIneq}
	g(x) > g(x - 1) \text{ for all } x \text{ in } \R.
\end{align}

Taking the derivative of \refE{logfph} gives
\begin{align*}
	\frac{f''(h(x - 1)) h'(x - 1)}{f'(h(x - 1)} + g'(x - 1) = g'(x).
\end{align*}
But $f'' \le 0$, $f' > 0$, and $h' > 0$ on all of $\R$ so we conclude that
\begin{align}\label{e:gpIneq}
	g'(x) \le g'(x - 1) \text{ for all } x \text{ in } \R
\end{align}
and we note that strict inequality holds if $f$ is strongly strictly concave.

We emphasize that \refEAnd{gIneq}{gpIneq} hold globally for all $x$ in $\R$.

\Ignore{ 
and
\begin{align*}
	f''(h(x - 1)) (h'(x - 1))^2
		+ f'(h(x - 1)) h''(x - 1) = h''(x)
\end{align*}
and thus
\begin{align*}
	f''(h(x - 1)) (&h'(x - 1))^2
		= h''(x) - \frac{h'(x) h''(x - 1)}{h'(x - 1)} \\
		&= h'(x)
			\brac{\frac{h''(x)}{h'(x)} - \frac{h''(x - 1)}{h'(x - 1)}} \\
		&= h'(x) \brac{(\log h')'(x) - (\log h')'(x - 1)}.
\end{align*}
Since $f'' \le 0$ we conclude that 
\begin{align*}
	(\log h')'(x) \le (\log h')'(x - 1).
\end{align*}
But $g = \log h'$, so we can write this inequality as
\begin{align}\label{e:gpIneq}
	g'(x) \le g'(x - 1) \text{ for all } x \text{ in } \R,
\end{align}
and we note that strict inequality holds if $f$ is strongly strictly concave.
} 
		
	For any $x$ in $\R$ let
	\begin{align}\label{e:Ug}
		\ol{g}(x) = \int_x^{x + 1} g(s) \, ds.
	\end{align}
	Since $\ol{g}$ is the mean value of $g$ on $(x, x + 1)$ and $g$ is strictly increasing on
	$(-\iny, h^{-1}(a'))$,
	\begin{align}\label{e:jgj}
		\ol{g}(x) > g(x) > \ol{g}(x - 1) \text{ for all } x \text{ in } I := (-\iny, h^{-1}(a)).
	\end{align}
	By \refE{gIneq}, $\ol{g}'(x) = g(x + 1) - g(x) > 0$ on $\R$ so $\ol{g}$ is
	strictly increasing on $\R$.
	By \refE{gpIneq}, $\ol{g}''(x) = g'(x + 1) - g'(x) \le 0$ on $\R$, so $\ol{g}$ is concave on all of $\R$.
	If $f$ is strongly strictly
	concave then strict inequality holds in \refE{gpIneq} so $\ol{g}$ is strongly strictly concave. In either
	case, we also have $\ol{g}(-\iny) = -\iny$.
	
	Now let
	\begin{align*}
		\ol{h}(x) = \int_{-\iny}^x e^{\ol{g}(s)} \, ds.
	\end{align*}
	It follows from \refE{jgj} that
	\begin{align}\label{e:hInv3Ineq}
		\ol{h}(x) > h(x) > \ol{h}(x - 1) \text{ for all } x \text{ in } I
	\end{align}
	so that also
	\begin{align*}
		h^{-1}(x) < \ol{h}^{-1}(x) + 1
			 \text{ for all } x \text{ in } (0, a) = h(I).
	\end{align*}
	
	Letting
	\begin{align*}
		\ol{f}^t(x) := \ol{h}(t + \ol{h}^{-1}(x))
	\end{align*}
	it follows from \refT{GenSoFar} and \refC{ftConvityg} that $(\ol{f}^t)_{t \in \R}$
	is globally concave and globally acceptable and the
	corresponding function $\ol{\mu}$ is also strictly increasing and concave with
	\begin{align*}
		\int_x^{\ol{f}^tt(x)}  \frac{dr}{\ol{\mu}(r)}
			= t
	\end{align*}
	for all $t, x > 0$.
	Because $\ol{h} > h$ on $I$ and $\ol{h}^{-1}(x) > h^{-1}(x) - 1$ on $(0, a)$, we have
	\begin{align}\label{e:olf2}
		\ol{f}^2(x)
			= \ol{h}(2 + \ol{h}^{-1}(x))
			> h(2 + \ol{h}^{-1}(x))
			> h(1 + h^{-1}(x))
			= f(x)
	\end{align}
	as long as $2 + \ol{h}^{-1}(x) < h^{-1}(a)$ and $x < a$. But by \refE{hInv3Ineq},
	$\ol{h}^{-1} < h^{-1}$ on $(0, a)$ so $2 + \ol{h}^{-1}(x) < h^{-1}(a)$ will hold if
	$2 + h^{-1}(x) < h^{-1}(a)$, which in turn holds if $x < h(2 + h^{-1}(a))$. But $h$ is strictly
	increasing on all of $\R$ by \refT{GenSoFar} so this is a weaker condition than $x < a$, so
	\refE{olf2} holds on $(0, a)$.
	
	If $f$ is strongly strictly concave then so is $\ol{g}$ as observed above and hence $\ol{\mu}$
	is as well by \refC{ftConvityg}.	
	
	Now let $\mu = 2 \ol{\mu}$ and $j^t = \ol{f}^{2t}$. Then
	\begin{align*}
		\int_x^{j^t(x)}  \frac{dr}{\mu(r)}
			= \int_x^{\ol{f}^{2t}(x)}  \frac{dr}{2 \ol{\mu}(r)}
			= \frac{2t}{2}
			= t
	\end{align*}
	so by \refT{Combined}, $(j^t)_{t \in \R}$ is a globally acceptable globally concave CIG embedding
	$j = j^1$ with
	$j = \ol{f}^2 > f$. The smoothness of $\ol{f}$, $(j^t)_{t \in \R}$, and $\mu$ follow from the
	extra level of differentiability given to $\ol{g}$ and hence to $\ol{h}$ by \refE{Ug}.
\end{proof}

\begin{remark}\label{R:AnswerToQuestion1}
	Letting $\Gamma_t = \ol{f}^t$ in \refT{ConcaveCIG} gives an affirmative answer to Question 1
	of \refS{IntroPartII}.
\end{remark}

\Ignore{ 

Finally, we have the following simply corollary:
\begin{cor}\label{C:ConcaveInverseMOC}
	Fix $t_0 > 0$, let $a > 0$ be arbitrarily large,
	and let $f$ be any $C^k$, $k \ge 3$, concave acceptable MOC for a flow.
	There exists an acceptable family, $\Gamma$, of MOC with each $\Gamma_t$, $t \ge 0$,
	$C^k$ and concave,
	and a $C^{k - 1}$ strictly increasing concave MOC, $\mu$, satisfying the
	Osgood condition,
	\refE{muOsgood}, with $\mu > 0$ on $(0, \iny)$, such that
	\begin{align}\label{e:mufConcave}
        		\int_x^{\Gamma_t(x)}  \frac{dr}{\mu(r)}
			= t
	\end{align}
	 for all $x > 0$ with $\Gamma_{t_0} > f$ on $(0, a)$.
	 Furthermore, if $f$ is strongly strictly concave then $\mu$ is strongly strictly concave
	 on $(0, a)$.
\end{cor}
\begin{proof}
Simply apply \refT{ConcaveCIG} (for $t_0 = 1$) and let $\Gamma_t$ be the CIG that results.
\end{proof}
} 

\Ignore { 
\begin{remark}
	In the proof of \refT{ConcaveCIG}, the (not quite unique) generating function
	associated with $\mu$ is
	$\widetilde{h}(x) = \ol{h}(2x)$, since then $\widetilde{h}^{-1}(x) = (1/2) \ol{h}^{-1}(x)$ and
	\begin{align*}
		\widetilde{h}(t + \widetilde{h}^{-1}(x))
			&= \ol{h}(2 (t + (1/2) \ol{h}^{-1}(x)))
			= \ol{h}(2t + \ol{h}^{-1}(x)) \\
			&= \ol{f}^{2t}(x)
			= j^t(x).
	\end{align*}
\end{remark}
} 

\begin{remark}
	If we change the limits of integration in \refE{Ug} to go from $x -1$ to $x$ then we obtain
	a concave CIG embedding a concave function that is less than $f$. This leads to the
	obvious question of whether it is possible to iterate the procedure in the proof of
	\refT{ConcaveCIG}, alternately producing over or underestimates of the previous step,
	to obtain a concave $CIG$ embedding $f$ itself.
\end{remark}

\Ignore{ 
\noindent\textbf{This is a longer version of the remark above; if you see this it means you need
to comment it out before submission:}
\begin{remark}
	The transformations in the proof of \refT{ConcaveCIG} can be viewed as
	\begin{align*}
		h \mapsto g \mapsto \ol{g} \mapsto \ol{h} \mapsto \ol{\mu} \mapsto \mu.
	\end{align*}
	Here we suppress the CIGs, since $(f^t)_{t \in \R}$ is equivalent to $h$, $\ol{\mu}$ is
	equivalent to $(\ol{f})^t_{t \in \R}$, and $\mu$ is equivalent to $(j^t)_{t \in \R}$.
	
	Writing
	\begin{align*}
		\Cal{H} g(x) = \int_{-\iny}^x e^{g(s)} \, ds, \quad
		\Cal{U} h = h' \circ h^{-1}
	\end{align*}
	and
	\begin{align*}
		L g(x) = \int_{x-1}^x g(s) \, ds, \quad U g(x) = \int_x^{x+1} g(s) \, ds
	\end{align*}
	we can write our sequence of transformations as
	\begin{align*}
		h \mapsto \Cal{H}^{-1} h
			\mapsto U \Cal{H}^{-1} h
			\mapsto \Cal{H} U \Cal{H}^{-1} h
			\mapsto \Cal{U}  \Cal{H} U \Cal{H}^{-1} h
			\mapsto 2 \Cal{U}  \Cal{H} U \Cal{H}^{-1} h.
	\end{align*}
	The (not quite unique) $h$ function associated with $\mu = 2 \Cal{U}  \Cal{H} U \Cal{H}^{-1} h$ is
	$\widetilde{h}(x) = \ol{h}(2x)$, since then $\widetilde{h}^{-1}(x) = (1/2) \ol{h}^{-1}(x)$ and
	\begin{align*}
		\widetilde{h}(t + \widetilde{h}^{-1}(x))
			&= \ol{h}(2 (t + (1/2) \ol{h}^{-1}(x)))
			= \ol{h}(2t + \ol{h}^{-1}(x)) \\
			&= \ol{f}^{2t}(x)
			= j^t(x).
	\end{align*}
	Writing this as $\widetilde{h} = \Cal{S} \ol{h}$, where $\Cal{S}$ is the ``scale the argument
	by 2'' transformation, we can write the transformation from the initial generating function, $h$,
	to the final generating function as
	\begin{align*}
		h \mapsto \Cal{S} \Cal{H} U \Cal{H}^{-1} h.
	\end{align*}
	This is a similarity transformation of sorts followed by a rescaling.
	
	If we write $\Cal{F} h$ for the function $h(1 + h^{-1}(x))$ then our transformations start with
	a function $h$ (which generates the CIG, $(f^t)_{t \in \R}$) and returns the generating function,
	$\Cal{S} \Cal{H} U \Cal{H}^{-1} h$, which generates a concave CIG for which
	$\Cal{F} \Cal{S} \Cal{H} U \Cal{H}^{-1} h \ge f$.
	
	If we replace the transformation $U$ with $L$, we obtain a generating function,
	$\Cal{S} \Cal{H} L \Cal{H}^{-1} h$, which generates a concave CIG for which
	$\Cal{F} \Cal{S} \Cal{H} L \Cal{H}^{-1} h \le f$. Or, if we write
	\begin{align*}
		T_U = \Cal{S} \Cal{H} U \Cal{H}^{-1}, \quad
		T_L = \Cal{S} \Cal{H} L \Cal{H}^{-1}
	\end{align*}
	then
	\begin{align*}
		\Cal{F} T_L h \le f \le \Cal{F} T_U h.
	\end{align*}
	
	This leads to the obvious question of whether one can iterate the transformations $T_L$ and
	$T_U$ in such a way as to obtain a concave $CIG$ embedding $f$ itself.
\end{remark}
} 

In \refS{InvertingEulerMOC} we look in detail at the properties of the MOC, $\mu$, that arise when it is a bound on the modulus of continuity of the vector field associated with a solution to the Euler equations. We will find not only that $\mu$ must be concave but also that it must satisfy the additional constraint in $\refE{YudoCond}$.
\AdditionalConstraint {
This will complicate the inversion process we have discussed in this section: these complications are the subject of \refS{muInversionAdditional}.}
{
Whether this additional constraint can be accommodated in \refT{ConcaveCIG} for all $f$ is an open question.
}

\Ignore { 
or,
\begin{align*}
	f''(h(x - 1)) h'(x - 1)
		&= \frac{h''(x) h'(x - 1) - h'(x) h''(x - 1)}{(h'(x - 1))^2} \\
		&= \pr{\frac{h'(x)}{h'(x - 1)}}'
		< 0.
\end{align*}
} 

\Ignore{ 
Fix $M > 0$ arbitrarily and let
\begin{align*}
	\ol{\mu}(r)
		&= \sup \set{\abs{\mu(x) - \mu(y)} \colon x, y \in [0, M], \abs{x - y} \le r}
\end{align*}
define a map from $[0, M]$ to $[0, M]$. Then $\ol{\mu}$ is a MOC that is clearly increasing. To see that it is concave, suppose it is not. Then there exists $a, b$ in $(0, 1)$ and $r \ne s$ in $[0, \iny)$ such that
\begin{align*}
	\ol{\mu}(ar + bs)
		< a \ol{\mu}(r) + b \ol{\mu}(s).
\end{align*}
Now we can write
\begin{align*}
	\ol{\mu}(r) = \sup \set{F(x, y) \colon (x, y) \in K(r)},
\end{align*}
where $F(x, y) = \abs{\mu(x) - \mu(y)}$ is continuous on $[0, M]^2$ and
\begin{align*}
	K(r) = \set{(x, y) \in [0, M]^2 \colon \abs{x - y} \le r}
\end{align*}
is compact. Hence, for any $r$ in $[0, M]$ there exists $x_r, y_r$ in $[0, M]$ such that
\begin{align*}
	\ol{\mu}(r) = \mu(y_r) - \mu(x_r).
\end{align*}
} 

%
\newcounter{bSpeculations} \setcounter{bSpeculations}{0}

\ifthenelse{\value{bSpeculations}=0}{
    }
{ 

\textbf{The rest of this section contains some speculations, with which maybe something can eventually be made.}

Fix $h_0 > 0$ and for any $t \ge 0$ and $a > 0$ define
\begin{align*}
	\Sigma_t(a)
		= \left\{
			\begin{array}{l}
				\mu \in C^1([0, \iny)) \colon \mu(0) = 0, \, \mu(h_0) = a, \\
				\int_h^{f(h)} \frac{dx}{\mu(x)} = t \text{ for all } h > 0
			\end{array}
		\right\}
		.
\end{align*}
Define $\Sigma_t(a)^+$ to be all $\mu$ in $\Sigma_t(a)$ such that $\mu > 0$ on (0, \iny).

Observe that if $\mu, \nu$ are in $\Sigma_t(a)$ or $\Sigma_t(a)^+$ then $\mu(f(h) = f'(h) \mu(h)$ and $\nu(f(h)) = f'(h) \nu(h)$ for all $h > 0$ so, since $\mu(h_0) = \nu(h_0)$, $\mu(f^n(h_0)) = \nu(f^n(h_0))$ for all integers, $n$.

Let $\mu, \nu$ be any two elements in $\Sigma_t^+$. Then for any $h > 0$,
\begin{align*}
	\int_h^{f(h)}
		\pr{\frac{1}{\mu(x)} - \frac{1}{\nu(x)}} = 0.
\end{align*}
Then $g = 1/\mu - 1/\nu$ lies in
\begin{align*}
	\Sigma'
		= \left\{
			\begin{array}{l}
				g \in C^1([0, \iny)) \colon g(0) = g(f^n(h_0)) = 0 \; \forall n \in \Z, \\
				\int_h^{f(h)} g(x) \, dx = 0 \text{ for all } h > 0
			\end{array}
		\right\}
		.
\end{align*}

Observe that $\Sigma'$ is, in fact, a vector space over $\R$ and that
\begin{align}\label{e:etaFuncEq}
	g(f(x)) = g(x)/f'(x) \text{ for all } x > 0.
\end{align}

Solving\MarginNote{Think about the $g(0) = 0$ restriction.}for $\nu$ in terms of $\mu$ and $g $ gives
\begin{align}\label{e:nu}
	\nu = \frac{\mu}{1 - g \mu},
\end{align}
so we can write
\begin{align*}
	\Sigma_t(a)^+
		= \set{ \frac{\mu}{1 - g \mu} \colon
			g \in \Sigma', \, \mu g < 1}.
\end{align*}
These considerations also show that $\Sigma_t(a)^+$ is a path-connected topological subspace of $C^1([0, \iny)$.

We would like to know whether $\Sigma_t(a)^+$ contains any concave functions and, if so, whether that function is unique.

Given that $f$ is concave, to obtain a concave $\mu$ satisfying \refE{muf} we seek a functional on $\Sigma_t(a)^+$ whose extremum is a concave function. Taking inspiration from the $f \equiv 1$ case, which results in periodic functions, we try,
\begin{align*}
	I(\nu)
		= \int_0^{h_0} (\nu'(x))^2 \, dx.
\end{align*}

Now, fix $\mu$ in $\Sigma_t(a)^+$. (We know some such $\mu$ exists by \refT{InverseMOC}.)
If $\nu$ is any element of $\Sigma_t(a)^+$ then from \refE{nu}, for some $g$ in $\Sigma'$,
\begin{align*}
	\nu'
		= \frac{(1 - g \mu) \mu' - \mu (- g \mu' - g' \mu)}{(1 - g \mu)^2}
		= \frac{\mu' + g' \mu^2}{(1 - g \mu)^2},
\end{align*}
so
\begin{align*}
	I(\nu)
		= \int_0^{h_0} \frac{(\mu' + g' \mu^2)^2}{(1 - g \mu)^4}.
\end{align*}

\Ignore{ 
and thus
\begin{align*}
	\nu''
		&= \frac{(1 - g \mu)^2 (\mu'' + 2 g' \mu \mu' + g'' \mu^2) - 2 (\mu' + g' \mu^2)(1 - g \mu)
				(-g \mu' - g' \mu)}
			{(1 - g \mu)^4} \\
		&= \frac{(1 - g \mu) (\mu'' + 2 g' \mu \mu' + g'' \mu^2) + 2 (\mu' + g' \mu^2)
				(g \mu' + g' \mu)}
			{(1 - g \mu)^4} \\
		&= \frac{\mu'' + 2 g' \mu \mu' + g'' \mu^2 - g \mu \mu'' - 2 g g' \mu^2 \mu' - g g'' \mu^3}
			{(1 - g \mu)^3} \\
 		&\qquad+ \frac{2 g (\mu')^2 + 2 g' \mu \mu' + 2 g g' \mu^2 \mu' + 2 (g')^2 \mu^3}
			{(1 - g \mu)^3} \\
		&= \frac{\mu'' + 4 g' \mu \mu' + g'' \mu^2 - g \mu \mu'' - g g'' \mu^3
				+ 2 g (\mu')^2 + 2 (g')^2 \mu^3}
			{(1 - g \mu)^3}.
\end{align*}
} 

Initially, we proceed formally. Fixing $\mu$ in $\Sigma_t(a)^+$,
\begin{align*}
	\nu_\eps
		=  \frac{\mu}{1 - \eps g \mu}
\end{align*}
traces out a curve in $\Sigma_t(a)^+$ (for sufficiently small $\eps$) passing through $\mu$. If $\mu$ is a stationary point of $I$ then $\pdx{I(\nu_\eps)}{\eps}|_{\eps = 0} = 0$. We evaluate,
\begin{align*}
	\pdx{}{\eps} &I(\nu_\eps)|_{\eps = 0} 
		= \int_0^{h_0} \diff{}{\eps} \pr{\frac{\mu' + \eps g' \mu^2}
				{(1 - \eps g \mu)^2}}^2|_{\eps = 0} \\
		&= \int_0^{h_0} 2 (\mu')^2
				\diff{}{\eps} \pr{\frac{\mu' + \eps g' \mu^2}
				{(1 - \eps g \mu)^2}}|_{\eps = 0} \\
		&= 2 \int_0^{h_0} (\mu')^2
			\frac{(1 - \eps g \mu)^2 g' \mu^2 - 2 (\mu' + \eps g' \mu^2) (1 - \eps g \mu) (- g \mu)}
			{(1 - \eps g \mu)^4}
			|_{\eps = 0} \\
		&= 2 \int_0^{h_0} (\mu')^2 (g' \mu^2 + 2 g \mu' \mu) \\
		&= 2 \int_0^{h_0} \mu^2 (\mu')^2 g' + 4 \int_0^{h_0} \mu (\mu')^3 g.
\end{align*}

But,
\begin{align*}
	\int_0^{h_0} \mu^2 (\mu')^2 g'
		= \brac{\mu^2 (\mu')^2 g}_0^{h_0}
			- 2 \int_0^{h_0} g \mu \mu' \brac{(\mu')^2 + \mu \mu''},
\end{align*}
and, formally, the boundary term vanishes (though we need to deal with with 0 endpoint rigorously) so
\begin{align*}
	\pdx{}{\eps} &I(\nu_\eps)|_{\eps = 0} 
		= - 4 \int_0^{h_0} g \mu \mu' \brac{(\mu')^2 + \mu \mu''} + 4 \int_0^{h_0} \mu (\mu')^3 g \\
		&
		= -4 \int_0^{h_0} \mu^2 \mu' \mu'' g.
\end{align*}

If there is enough flexibility in $g$ (I am not sure there is) it follows that
\begin{align}\label{e:muC}
	\mu^2 \mu' \mu'' = C
\end{align}
for some constant, $C$, at a stationary point. Or,
\begin{align*}
	\mu' \mu''
		= \frac{C}{\mu^2}.
\end{align*}

If $\mu'$ is strictly increasing this shows that $\mu$ is either convex or concave depending on the sign of $C$. Given $f$ concave, $\mu$ cannot be convex, so it would have to be concave. \textbf{The problem, of course, is that at the extremum, we need to show that $\mu'$ does not change sign to conclude anything about the sign of $\mu''$. This might be possible via independent means, however, as it seems like the extremum \textit{must} have $\mu'$ always nonnegative.}

\textbf{Also, even if this works, for bounded vorticity, where $\mu(x) = -x \log x$ for small values of $x$ is one valid MOC, $\mu$ does not satisfy \refE{muC}. This would mean that the extremum $\mu$, if it indeed exists, is not the same as the $\mu$ we would expect.}

\Ignore{ 

If there is enough flexibility in $g$ (I am not sure there is) it follows that
\begin{align}\label{e:muC}
	\mu \mu' \brac{(\mu')^2 - \mu \mu''} = C
\end{align}
for some constant, $C$, at a stationary point. Or,
\begin{align*}
	\mu^3 \mu' \pr{\frac{\mu'}{\mu}}'
		= \mu^3 \mu' (\log \mu)''
		= C
\end{align*}
so\begin{align*}
	(\log \mu)'' = \frac{C}{\mu^3 \mu'}.
\end{align*}

If $\mu'$ is strictly increasing this shows that $\log \mu$ is either convex or concave depending on the sign of $C$.
} 

Now, in finding an extremum of $I(\nu_\eps)$, we incorporated the constraint that $\nu_eps$ lies in $\Sigma_t(A)^+$ as long as $\mu$ lies in $\Sigma_t(A)^+$. We still need to enforce the constraint that $\mu$ lies in $\Sigma_t(A)^+$, however. Using \refE{mupfx}, \refE{muC} evaluated at $f(x)$ becomes
\begin{align*}
	\mu(f(x)) &\mu'(f(x)) \brac{(\mu'(f(x)))^2 - \mu(f(x)) \mu''(f(x))} \\
		&= f'(x) \mu(x) \pr{\mu'(x) + \mu(x) \frac{f''(x)}{f'(x)}} \\
		&\qquad \brac{\pr{\mu'(x) + \mu(x) \frac{f''(x)}{f'(x)}}^2 - f'(x) \mu(x) \mu''(f(x))}.
\end{align*}
Substituting \refE{mupp}, the quantity in brackets on the right-hand side expands to
\begin{align*}
	(\mu'(x))^2 &+ 2 \mu(x) \mu'(x) \frac{f''(x)}{f'(x)} + \mu(x)^2 \pr{\frac{f''(x)}{f'(x)}}^2 \\
		&- f'(x) \mu(x) \mu''(x) - \mu(x)\mu'(x) f''(x)
			- \mu(x)^2 f'''(x)
			+ f'(x) \mu(x)^2
\end{align*}
so
\begin{align*}
	\mu(x) &\pr{f'(x) \mu'(x) + \mu(x) f''(x)} \\
		&\qquad
			\times
			\left(
			\begin{array}{l}
				\displaystyle
					(\mu'(x))^2 + 2 \frac{f''(x)}{f'(x)} \mu(x) \mu'(x) + \pr{\frac{f''(x)}{f'(x)}}^2 \mu(x)^2 \\
					\\
				- f'(x) \mu(x) \mu''(x) - f''(x) \mu(x) \mu'(x) - f'''(x) \mu(x)^2 + f'(x) \mu(x)^2
			\end{array}
			\right) \\
		&= C.
\end{align*}

} 

\Ignore{ 
Now, the same argument applies if we integrate over $[f^{-1}(h_0), h_0]$ instead of $[0, h_0]$ and would lead to the conclusion that $I(\mu)$ is extremized when $\mu^2 \mu'' = C$, for in that case we have enough flexibility in the choice of $g$ (probably) to conclude this (using $\int_{f^{-1}(h_0)}^{h_0} g = 0$). Since $\mu > 0$, this says any extremum, if it exists, is either concave or convex.

Now, if we had enough flexibility in the choice of $g$ we could conclude that $I(\mu)$ is extremized when $\mu^2 \mu'' = C$ (using $\int_{f^{-1}(h_0)}^{h_0} g = 0$). Since $\mu > 0$, this says any extremum, if it exists, is either concave or convex. But if $f$ is concave then $\mu$ cannot be convex because $\mu'(f(x)) \le \mu'(x)$, as observed following \refE{mupfx}. It would follow that $\mu$ is convex.

But it is not clear that we have sufficient flexibility in the choice of $g$ to reach this conclusion, since we require that
\begin{align*}
	\int_h^{f(h)} g(x) \, dx = 0 \text{ for all } h \text{ in } (0, f^{-1}(h_0)).
\end{align*}
} 

\Ignore{ 

\refT{InverseMOC} shows that the MOC of a flow can be arbitrarily poor at any given fixed time. These considerations also lead to the questions,
\begin{quote}
	\textbf{Question 2}: Can we invert the relation in \refE{Gammat} for all times $t > 0$ simultaneously
	to obtain a MOC, $\mu$, in terms of $\Gamma_t$?
\end{quote}

\medskip

\begin{quote}
	\textbf{Question 3}: If we add constraints to $\mu$ coming from the nature of the vector field,
	such as those implied by the vector field being a solution to the Euler equations, for what
	MOC of the flow can we invert \refE{Gammat}, even if only for a single time?
\end{quote}

\medskip

To answer Question 2  we need to better understand the properties of $(\Gamma_t)_{t \ge 0}$ as a \textit{family} of MOC in order to be able to invert \refE{Gammat}. That is the subject of \refS{BasicMOC}, and we will see that we need to assume so much about the family to obtain an inverse that obtaining the inverse is quite easy. Issues relating to the uniqueness of such inverses will lead us to consider the case where $\mu$ is concave. And that will naturally lead us to Question 3, since in the context of solutions to the Euler equations, all of the MOC involved are concave, as we show in \refS{InvertingEulerMOC}. This then leads us to Question 4, which we address in \refS{InvertingEulerToGetVectorField}.

\Ignore{ 
Question 4 we also consider only in the context of solutions to the Euler equations, \textbf{add something here.}
|} 

} 

\Ignore{ 
\bigskip
-------------- SPECULATIVE STUFF ---------------

If $\Gamma$ is an acceptable family of MOC for a flow, then \refE{muh} holds, and taking its derivative with respect to $h$ gives
\begin{align*}
	\mu'(h)
		&= \frac{1}{\Gamma_t'(h)^2}
			\brac{\Gamma_t'(h) \prt_t \Gamma_t'(h)
				- \prt_t \Gamma_t(h) \Gamma_t''(h)} \\
		&= \frac{\prt_t \Gamma_t'(h)}{\Gamma_t'(h) }
				- \frac{\prt_t \Gamma_t(h) \Gamma_t''(h)}{\Gamma_t'(h)^2}.
\end{align*}

Now suppose that $\Gamma_t = f$ at some time $t = t_0 > 0$ and that we choose $f$ to be strictly increasing and concave. Then $(\Gamma_t'(h))^{-1} = (f'(h))^{-1}$ is increasing so to insure that $\mu'$ is decreasing, thus making $\mu$ concave, it is sufficient to have $\prt_t \Gamma_t'(h)$ and $-\prt_t \Gamma_t(h) \Gamma_t''(h)$ positive and decreasing in $h$.
} 

%
%
\section{MOC of the Eulerian velocity}\label{S:InvertingEulerMOC}

\noindent We now return to the topic of \refS{YudoTheorem}, where $\mu$ is the MOC of the solution (the velocity) of the Euler equations that is derived from $\theta(p)$, the $L^p$-norms of the solution's vorticity.

To avoid trivialities, we assume throughout that $\theta$ is never zero.

We will write \refE{mu} in the form,
\begin{align}\label{e:A}
	\mu(x)
		&= \inf_{\eps \in \Cal{A}} \set{x^{1 - 2 \eps} \al(\eps)}, \quad
		\Cal{A} = (0, 1/2],
\end{align}
where $\al(\eps) = \eps^{-1} \theta(\eps^{-1})$, as in \refE{thetaal}.

Since $\theta(p) = \smallnorm{\omega^0}_{L^p}$ for some vorticity, $\omega^0$, it inherits some important properties from basic results of measure theory, as in \refL{logThetaConvex}.

\begin{lemma}\label{L:logThetaConvex}
Suppose that $\theta(p)$ is the $L^p$-norm of some function, $f$, lying in $L^p$ for all $p$ in $[p_0, \iny)$ and with $\norm{f}_{L^\iny}$ possibly finite but nonzero. Then $\varphi(p) := p \log \theta(p)$ is convex and $C^\iny$ on $(p_0, \iny)$, and, unless $\theta$ is in $L^\iny$, $\varphi(p)$ is strictly increasing for sufficiently large $p$. Also, $\log \theta(\eps^{-1})$ is convex and $\log \al$ is strictly convex.
\end{lemma}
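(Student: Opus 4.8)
The plan is to extract everything from the one classical fact quoted in the text: that $p \mapsto p\log\theta(p)$ is convex, where $\theta(p) = \norm{f}_{L^p}$. I would first recall why this holds. Write $\varphi(p) = p\log\theta(p) = \log\int |f|^p$ (on the support of $f$, or rather $\log\left(\int |f|^p\,d\mu_f\right)$ where here there is no external force and $f$ is compactly supported, so finiteness is not an issue for $p \ge p_0$). The function $p \mapsto \int |f|^p$ is a moment-generating-type integral: differentiating under the integral sign twice (justified by dominated convergence since $f$ is bounded with bounded support, or by the general theory of such integrals when $\norm{f}_{L^\infty}$ may be infinite — one truncates), one gets that $\varphi''(p) \ge 0$ by Cauchy–Schwarz applied to the measure $|f|^p\,dx$ against the functions $1$ and $\log|f|$. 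This simultaneously gives $C^\infty$ smoothness on $(p_0,\infty)$, since the integrand $|f|^p = e^{p\log|f|}$ is smooth in $p$ and all $p$-derivatives are locally dominated. This is the core computation; I expect it to be the main obstacle only in the sense of carefully justifying differentiation under the integral when $f \notin L^\infty$ — there one integrates over $\{|f| > 1\}$ and $\{|f| \le 1\}$ separately and uses that on the first set $|f|^p \log^k|f|$ grows only polynomially-in-$\log$ faster than $|f|^{p'}$ for any $p' > p$.

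Next I would derive strict monotonicity of $\varphi$ for large $p$ when $\theta \notin L^\infty$. Since $\varphi$ is convex, $\varphi'$ is nondecreasing; it suffices to rule out $\varphi' \le 0$ eventually. But $\theta(p) = e^{\varphi(p)/p} \to \norm{f}_{L^\infty}$ as $p \to \infty$ (standard), and if $\norm{f}_{L^\infty} = \infty$ then $\theta(p) \to \infty$, forcing $\varphi(p) = p\log\theta(p) \to \infty$, which is incompatible with $\varphi$ being eventually nonincreasing. Hence $\varphi'(p) > 0$ for $p$ past some threshold; strictness comes from convexity plus the fact that $\varphi'$ cannot be identically zero on a half-line.

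Then I would transfer these facts through the changes of variable. Put $\eps = 1/p$, so $p \in [p_0,\infty)$ corresponds to $\eps \in (0, 1/p_0]$, and $\mathcal{A} = (0,1/2] \subseteq (0,1/p_0]$ since $p_0 < 2$. Define $\rho(\eps) = \log\theta(1/\eps) = \varphi(1/\eps)\cdot\eps$. For convexity of $\eps \mapsto \log\theta(\eps^{-1})$: compute $\rho(\eps) = \eps\,\varphi(1/\eps)$ and differentiate twice in $\eps$. One gets, after simplification, $\rho''(\eps) = \eps^{-3}\varphi''(1/\eps) \ge 0$ — the clean identity is $\frac{d^2}{d\eps^2}\big[\eps\,\varphi(1/\eps)\big] = \eps^{-3}\varphi''(1/\eps)$, which I would verify by a direct two-line computation. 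So $\rho$ is convex, and strictly so wherever $\varphi'' > 0$. Finally, $\log\al(\eps) = \log(\eps^{-1}\theta(\eps^{-1})) = -\log\eps + \rho(\eps)$. Here $-\log\eps$ is strictly convex on $(0,\infty)$ and $\rho$ is convex, so the sum $\log\al$ is strictly convex on $\mathcal{A}$. That gives the last assertion and completes the lemma. The only genuinely delicate point remains the differentiation-under-the-integral justification in the first paragraph; everything after that is bookkeeping with elementary convexity and two explicit second-derivative computations.
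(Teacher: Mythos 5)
Your proposal is correct and follows essentially the same route as the paper: reduce everything to the classical convexity of $\varphi(p)=p\log\theta(p)$ (which the paper simply cites as Exercise 4(b), Chapter 3 of Rudin, while you sketch the Cauchy–Schwarz/differentiation-under-the-integral proof), then apply the change of variables $\eps=1/p$ to obtain the identity $(\log\theta(\eps^{-1}))'' = \eps^{-3}\varphi''(\eps^{-1})$, and finally add $-\log\eps$ to get strict convexity of $\log\al$. The two proofs differ only in that you supply the proof of the quoted textbook fact rather than citing it.
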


\begin{proof}

\DetailSome{ 
We have,\MarginNote{The smoothness argument I would not include in submitted form, and perhaps none of the rest either.}
\begin{align*}
	\varphi(p)
		:= \theta(p)^p
			= p \int_0^\iny s^{p - 1} \la(s) \, d s,
\end{align*}
where $\la$ is the distribution function for $f$. The integral above is differentiable with respect to $p$ because, fixing $q$ in $(p_0, \iny)$ and letting $\eps < \min \set{1/2, q - p_0}$, for all $s \ge 1$,
\begin{align*}
	\abs{\diff{}{p} s^{p - 1} \la(s)}
		= \log s \, s^{p - 1} \la(s)
		\le s^p \la (s)
\end{align*}
for all $p$ in the interval $(q - \eps, q + \eps)$. But $s^p$ is in $L^1([1, \iny))$ since $f$ is in $L^{p + 1}(\R^2)$. For $s < 1$,
\begin{align*}
	\abs{\log s \, s^{p - 1} \la(s)} \le C_\eps s^{p_0 - 1} \la(s),
\end{align*}
which is in $L^1([0, 1))$ since $f$ is in $L^{p_0}(\R^2)$. Thus, we have bounded the derivative of the integrand on the interval $(q - \eps, q + \eps)$ by a fixed $L^1([0, \iny))$-function so we can bring the derivative under the integral sign (see, for instance, Theorem 2.27 of \cite{Folland}). This gives the differentiability of $p \log \theta(p) = \log \varphi(p)$ for $p$ in $(p_0, \iny)$. Induction gives $\theta$ in $C^\iny$.
} 

\DetailSomeElse{ 
That $\varphi(p)$ is convex is classical (see, for instance, Exercise 4(b) Chapter 3 of \cite{R1987}), and it follows from this that it is eventually strictly increasing, unless $f$ is in $L^\iny$.
}
{
That $\varphi(p)$ is $C^\iny$ and convex is classical (see, for instance, Exercise 4(b) Chapter 3 of \cite{R1987}), and it follows from this that it is eventually strictly increasing, unless $f$ is in $L^\iny$.
} 

Then
\begin{align*}
	\log \theta(\eps^{-1})
		&= \eps \pr{\eps^{-1} \log \theta(\eps^{-1})}
		= \eps \log \varphi(\eps^{-1})
\end{align*}
so
\begin{align}\label{e:logthetappvarphipp}
	\begin{split}
	(\log \theta&(\eps^{-1}))''
		= \pr{\log \varphi(\eps^{-1}) - \frac{1}{\eps} (\log \varphi)'(\eps^{-1})}' \\
		&= -\frac{1}{\eps^2} (\log \varphi)'(\eps^{-1}) + \frac{1}{\eps^3} (\log \varphi)''(\eps^{-1})
				+ \frac{1}{\eps^2} (\log \varphi)'(\eps^{-1}) \\
		&= \frac{1}{\eps^3} (\log \varphi)''(\eps^{-1}).
	\end{split}
\end{align}
Since $\log \varphi$ is convex it follows that $\log \theta(\eps^{-1})$ is convex. Then since $\log \al(\eps) = -\log \eps + \log \theta(\eps^{-1})$ and $- \log \eps$ is strictly convex, $\log \al$ is strictly convex.
\Ignore{ 
Then
\begin{align*}
	\log \al(\eps)
		&= \log (\eps^{-1} \theta(\eps^{-1}))
		= - \log \eps + \eps \pr{\eps^{-1} \log \theta(\eps^{-1})} \\
		&= - \log \eps + \eps \varphi(\eps^{-1})
\end{align*}
so
\begin{align*}
	(\log \al(\eps))''
		&= \pr{- \frac{1}{\eps} + \varphi(\eps^{-1}) - \frac{1}{\eps} \varphi'(\eps^{-1})}' \\
		&= \frac{1}{\eps^2} - \frac{1}{\eps^2} \varphi'(\eps^{-1}) + \frac{1}{\eps^2} \varphi''(\eps^{-1})
				+ \frac{1}{\eps^2} \varphi'(\eps^{-1}) \\
		&= \frac{1}{\eps^2} + \frac{1}{\eps^2} \varphi''(\eps^{-1}).
\end{align*}
Since $\varphi$ is convex it follows that $\log \al$ is strictly convex.
} 
\end{proof}

\Ignore{ 
As we observed in \refS{YudoTheorem}, $\beta_1$ is a strictly increasing concave function with $\beta_1(0) = 0$ satisfying the Osgood condition, and  $\beta_M(x)/x$ is a strictly decreasing continuous function
with $\lim_{x \to \iny} \beta_M(x)/x = 0$. All of these properties hold for $\mu$, as we show in \refT{muConcave}.
} 

The function, $\la \colon \R \to \R$ defined by
\begin{align}\label{e:la}
	\la(r) := r + \log (\mu(e^{-r}))
\end{align}
will play a large role in what follows as will the function
\begin{align}\label{e:ADef}
	A(x)
		:= \frac{x \mu'(x)}{\mu(x)}
		= x (\log \mu(x))'.
\end{align}
We first establish, in \refP{ConcavityImplications},  some properties of these functions that follow simply from $\mu$ being strictly increasing and concave (properties that we show hold for $\mu$ in \refT{muConcave}).

\begin{prop}\label{P:ConcavityImplications}
Assume that $\mu$ is a strictly increasing (strictly) concave twice continuously differentiable MOC. Then $\log \mu$ is strictly increasing and strongly strictly concave on $(0, \iny)$, $\la$ is (strictly) increasing on $\R$ with $\la' < 1$, $\mu(x)/x$ is (strictly) decreasing on $(0, \iny)$, and $\log \mu(x)/x$ is strictly increasing and strictly concave on $(0, a)$ for some $a > 0$. Also,
\begin{align}\label{e:Ax}
	0 < A(x)
		\le 1 \text{ for all } x > 0
\end{align}
with strict inequality if $\mu$ is strictly concave. Moreover, if $\mu$ satisfies the Osgood condition, \refE{muOsgood}, then
\begin{align}\label{e:limsupA}
	 \limsup_{x \to 0} A(x)
		= 1.
\end{align}
\end{prop}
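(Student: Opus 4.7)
The plan is to prove \refE{Ax} first, as the remaining claims hinge on it. I will begin by observing that $\mu' > 0$ on $(0, \iny)$: if $\mu'(x_0) = 0$ for some $x_0 > 0$ then concavity forces $\mu' \equiv 0$ on $[x_0, \iny)$, contradicting strict monotonicity. Hence $A(x) > 0$. For the upper bound $A(x) \le 1$, the key is the supporting-line inequality $\mu(y) \le \mu(x) + \mu'(x)(y - x)$, which evaluated at $y = 0$ yields $x\mu'(x) \le \mu(x)$ since $\mu(0) = 0$. Strict concavity of $\mu$ improves this to strict inequality, because $\mu(x) = \int_0^x \mu'(s)\,ds > x\mu'(x)$ when $\mu'$ is strictly decreasing.

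To prove \refE{limsupA}, I will argue by contradiction. If $\limsup_{x \to 0^+} A(x) < 1$, pick $\delta > 0$ and $a > 0$ with $(\log \mu)'(x) = A(x)/x \le (1 - \delta)/x$ on $(0, a)$; integrating from $x$ to $a$ gives $\mu(x) \ge C\, x^{1 - \delta}$ for some $C > 0$, whence $\int_0^a \mu(x)^{-1}\,dx \le C^{-1}\int_0^a x^{\delta - 1}\,dx < \iny$, contradicting the Osgood condition \refE{muOsgood}.

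From \refE{Ax}, claims (1), (2), (3) fall out quickly. For (1), $(\log \mu)'' = \mu''/\mu - (\mu'/\mu)^2 \le -(\mu'/\mu)^2 < 0$, using $\mu'' \le 0$ and $\mu' > 0$; strict monotonicity of $\log \mu$ is immediate. For (2), the chain rule applied to \refE{la} gives $\la'(r) = 1 - A(e^{-r})$, so $0 \le \la'(r) < 1$ on $\R$, with the left inequality strict when $\mu$ is strictly concave. For (3), $(\mu(x)/x)' = (x\mu'(x) - \mu(x))/x^2 = \mu(x)(A(x) - 1)/x^2$ is $\le 0$ by \refE{Ax}, strictly so if $\mu$ is strictly concave.

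The main obstacle is claim (4). Setting $F(x) = \log \mu(x)/x$, one finds $F'(x) = (A(x) - \log \mu(x))/x^2$; since $\mu(0) = 0$ forces $\log \mu(x) \to -\iny$ as $x \to 0^+$ while $A$ is bounded by \refE{Ax}, $F' > 0$ on some $(0, a)$, giving strict monotonicity. For strict concavity, a direct computation yields $F''(x) = (xA'(x) - 3A(x) + 2\log \mu(x))/x^3$. Expanding $xA'(x) = A(x)(1 - A(x)) + x^2\mu''(x)/\mu(x)$ and using $\mu'' \le 0$ gives $xA'(x) \le A(x)(1 - A(x)) \le 1$, so the numerator is dominated by $2\log \mu(x) \to -\iny$, and hence $F''(x) < 0$ on a possibly smaller neighborhood of the origin.
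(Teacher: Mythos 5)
Your proof is correct and follows essentially the same strategy as the paper: establish $0 < A \le 1$ via concavity (you via the supporting-line inequality, the paper via the mean value theorem — equivalent), derive the claims about $\la$, $\mu(x)/x$, and $\log\mu$ directly from \refE{Ax}, prove \refE{limsupA} by the identical contradiction argument via integrating $(\log\mu)'\le (1-\delta)/x$, and obtain concavity of $\log\mu(x)/x$ near the origin from the fact that $\log\mu(x)\to-\iny$. For the last item the paper works with the identity $xg''(x)=(\log\mu)''(x)-2g'(x)$ rather than your expansion of $xA'(x)$, but both computations rest on the same driving fact and give the same result.
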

\begin{proof}
That $\log \mu$ is strictly increasing and strongly strictly concave follows directly from the assumed properties of $\mu$.

\Ignore{ 
Assume that $f$ and $g$ are concave twice differentiable functions defined on appropriate domains with $f$ increasing. Then
\begin{align*}
	(f \circ g)''(x)
		&= \brac{f'(g(x)) g'(x)}'
		= f''(g(x)) (g'(x))^2 + f'(g(x)) g''(x),
\end{align*}
so $f \circ g$ is concave. If $f$ is strictly concave and $g$ strictly monotone or $g$ is strictly concave and $f$ strictly increasing then $f \circ g$ is strictly concave. Because of this, $\log \mu$ is strictly concave and is, of course, (strictly) increasing.
} 

Letting $r = - \log x$, we can write $\la(r)$ variously as
\begin{align}\label{e:larVarious}
	\la(r) = \log \pr{\frac{\mu(x(r))}{x(r)}} = \log(e^r \mu(e^{-r})) = r + \log \mu(e^{-r}).
\end{align}
Then
\begin{align}\label{e:lambdapr}
	\begin{split}
	\la'(r)
		&= \diff{}{x} \log \pr{\frac{\mu(x)}{x}} \diff{x}{r}
		= \diff{}{x} \log \pr{\frac{\mu(x)}{x}} (- e^{-r}) \\
		&= \frac{x}{\mu(x)} \frac{x \mu'(x) - \mu(x)}{x^2}  (-x)
		= \frac{1}{\mu(x)} \pr{\mu(x) - x \mu'(x)} \\
		&= 1 - \frac{x \mu'(x)}{\mu(x)}
		= 1 - A(x).
	\end{split}
\end{align}

Because $\mu$ is strictly increasing, $A(x) > 0$ and $\la' < 1$. 
Because $\mu$ is (strictly) concave, $\mu'$ is (strictly) decreasing so by the mean value theorem,
\begin{align*}
	\mu'(x) \le \frac{\mu(x) - \mu(0)}{x} = \frac{\mu(x)}{x}
\end{align*}
so $A \le 1$ and hence, and equivalently, $\la' \ge 0$ with strict inequalities when $\mu$ is strictly concave. This gives \refE{Ax} and shows that $\la$ is (strictly) increasing.

\Ignore{ 
Also,
\begin{align*}
	\la'(r)
		&=1 - A(x)
		= 1 - e^{-r} \frac{\mu'(e^{-r})}{\mu(e^{-r})}
\end{align*}
so
\begin{align*} 
	\mu'(e^{-r})
		&= e^r \mu(e^{-r}) (1 - \la'(r)).
\end{align*}
Taking another derivative gives
\begin{align*}
	\mu''(e^{-r}) (- e^{-r})
		= \mu(e^{-r}) (-\la''(r)) + \mu'(e^{-r}) (- e^{-r})(1 - \la'(r))
\end{align*}
so
\begin{align*}
	\mu''(e^{-r}) e^{-r}
		= \mu(e^{-r}) \la''(r) + \mu'(e^{-r})(1 - \la'(r)) e^{-r}.
\end{align*}
But $0 \le \la' < 1$ and $\mu' < 0$ so $\mu$ concave means that $\la$ is strongly strictly concave.

Because $\mu$ is strictly increasing, $\mu(e^{-r})$ is strictly decreasing, and because $\mu$ is concave, $\mu'$ is decreasing so $\mu'(e^{-r})$ is increasing. Thus, by \refE{mupmu1la}, $1 - \la'(r)$ must be strictly increasing; that is, $\la$ must be strictly concave.
} 

Now,
\begin{align*}
	\pr{\frac{\mu(x)}{x}}'
		&= \frac{x \mu'(x) - \mu(x)}{x^2}
		\le 0
\end{align*}
by \refE{Ax} with strict inequality when $\mu$ is strictly concave. Thus, $\mu(x)/x$ is (strictly) decreasing.

Finally, let $g(x) = \log \mu(x)/x$. Then $\log \mu(x) = x g(x)$ and
$
	(\log \mu(x))' = g(x) + x g'(x)
$
so $x g'(x) = (\log \mu(x))' - g(x)$. But $\mu(0) = 0$ so $g$ is negative on $(0, a)$ for sufficiently small $a > 0$. Hence, $g' > 0$ on $(0, a)$ since $\log \mu$ is increasing as we showed above. Then,
$
	(\log \mu(x))''
		= 2 g'(x) + x g''(x)
$
so
\begin{align*}
	x g''(x) = (\log \mu(x))'' - 2 g'(x)
\end{align*}
and we conclude that $g$ is strictly concave on $(0, a)$.

To prove \refE{limsupA}, suppose that
\begin{align*}
	0 \le \al := \limsup_{x \to 0} A(x) < 1.
\end{align*}
Then there exists $\eps > 0$ such that $A \le \al$ on $(0, \eps)$. Thus, 
$
	(\log \mu)'(x) \le \al/x
$
on $(0, \eps)$, and integrating from $x < \eps$ to $\eps$ gives
\begin{align*}
	\log \mu(\eps) - \log \mu(x) \le \al \pr{\log \eps - \log x}
\end{align*}
so
\begin{align*}
	\mu(x) \ge \frac{\mu(\eps)}{\eps^\al} x^\al.
\end{align*}
But this means that $\mu$ does not satisfy \refE{muOsgood}. Hence, \refE{limsupA} holds.
\end{proof}

\DetailSomeInline { 
\begin{remark}\label{R:ALimitMust}
\refP{ConcavityImplications} and its proof  tell us that if $\mu$ is concave then $L := \limsup_{x \to 0} A(x) \le 1$, but if the Osgood condition is to hold for $\mu$ we must have $L \ge 1$. Hence, any concave MOC satisfying the Osgood condition must have $L = 1$. As a consequence, if $\mu$ is derived from a MOC for the flow at time $t$, as in \refT{ConcaveCIG}, then we automatically have $L = 1$, since $\mu$ satisfies the Osgood condition (see \refTAnd{InverseMOC}{Combined}).
\end{remark}
} 

We show in \refT{muConcave} that when $\mu$ is given by \refE{A}, we can say more about the functions $\la$ and $A$, as well as about $\mu$ itself. The proof of this theorem relies on determining the value of $\eps$ that minimizes the expression in \refE{A}. This is natural, for as we will see in the next section, $\mu$ can be defined in terms of a Legendre transformation (see \refE{alLegendre}), and finding the minimizing $\eps$ is the usual way to calculate the Legendre transformation for strictly concave functions.

\begin{theorem}\label{T:muConcave}
	Assume that $\log \al$ is strictly convex and twice continuously differentiable.
	The function $\mu$ given by \refE{A} is continuous on $[0, \iny)$ with $\mu(0) = 0$,
	$\mu$ is strictly increasing and concave,
	and $\mu$ is twice continuously differentiable and positive on $(0, \iny)$;
	$\log \mu(x)$ and $\la(r) := r + \log (\mu(e^{-r}))$ are each strictly increasing and
	strictly concave on $(0, \iny)$ and $\R$, respectively;
	$\mu(x)/x$ is strictly decreasing with $\lim_{x \to \iny} \mu(x)/x = 0$;
	and $A$ is strictly decreasing
	with
	\begin{align}\label{e:limxmupmu}
		A(0) := \lim_{x \to 0^+} A(x) = 1.
	\end{align}
	
	Furthermore, if $p \log \theta(p)$ is convex and twice continuously differentiable
	then $\log \al$ is strictly convex and twice continuously differentiable,
	$\la$ is strongly strictly concave, and each of the following equivalent conditions hold:
	\begin{align}\label{e:YudoCond} 
		\begin{split}
		\la''(r) + (\la'(r))^2\ge 0 &\quad\text{ for all } r \text{ in } \R, \\
		x^2 \mu''(x) - x \mu'(x) + \mu(x) \ge 0 &\quad\text{ for all } x > 0, \\
		\mu''(x) \ge \pr{\frac{\mu(x)}{x}}'  &\quad\text{ for all } x > 0.
		\end{split}
	\end{align}
\end{theorem}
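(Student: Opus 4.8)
The plan is to find, for each $x > 0$, the minimizing $\eps = \eps(x)$ in \refE{A}, and then read off all the claimed properties from that explicit description. Write $F(x,\eps) = x^{1-2\eps}\al(\eps) = \exp\big((1-2\eps)\log x + \log\al(\eps)\big)$. Since $\log\al$ is strictly convex and $C^2$, the exponent $G(x,\eps) := (1-2\eps)\log x + \log\al(\eps)$ is strictly convex in $\eps$ for each fixed $x$, so $\prt_\eps G = -2\log x + (\log\al)'(\eps)$ has at most one zero. First I would check the boundary behavior: as $\eps \to 0^+$, $\log\al(\eps) = -\log\eps + \log\theta(\eps^{-1}) \to +\iny$, so $F \to +\iny$; at $\eps = 1/2$, $F(x,1/2) = \al(1/2)$ is finite. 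So for $x$ small enough that $-2\log x > \lim_{\eps\to 0^+}(\log\al)'(\eps)$ fails — more precisely, whenever the equation $(\log\al)'(\eps) = 2\log x$ has a solution in $(0,1/2]$ — the infimum is attained at an interior critical point $\eps(x)$; otherwise it is attained at the endpoint $\eps = 1/2$ (this endpoint regime corresponds to $x$ bounded away from $0$, and there $\mu(x) = \al(1/2) x^0 = \al(1/2)$ is constant — wait, that's wrong for the Osgood/concavity claims, so I should double-check: more carefully, for large $x$ the minimizer runs to $\eps = 1/2$ and $\mu(x)$ is eventually constant in $x$, which is still concave and still has $\mu(x)/x \to 0$; the strict monotonicity statements then need to be read as holding on the relevant range, or one argues $\theta$ nonconstant forces the interior regime near $0$). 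Implicit differentiation of $(\log\al)'(\eps(x)) = 2\log x$ gives $(\log\al)''(\eps(x))\,\eps'(x) = 2/x$, so $\eps'(x) = \frac{2}{x(\log\al)''(\eps(x))} > 0$ by strict convexity of $\log\al$; thus $\eps(x)$ is strictly increasing and $C^1$, which makes $\mu$ twice continuously differentiable on the interior regime by the envelope theorem.

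Next I would compute the derivatives of $\log\mu$ via the envelope theorem: since $\mu(x) = F(x,\eps(x))$ with $\prt_\eps G = 0$ at the minimizer, $(\log\mu)'(x) = \prt_x G(x,\eps(x)) = (1-2\eps(x))/x$, so
\begin{align*}
	A(x) = x(\log\mu(x))' = 1 - 2\eps(x).
\end{align*}
This is the key identity. Since $\eps(x) \in (0,1/2]$ is strictly increasing, $A(x) = 1 - 2\eps(x) \in [0,1)$ is strictly decreasing, and as $x \to 0^+$ we have $\eps(x) \to 0^+$ (because $(\log\al)'(\eps(x)) = 2\log x \to -\iny$ and $(\log\al)'$ is increasing with, one checks, $(\log\al)'(\eps) \to -\iny$ as $\eps \to 0^+$ — this uses the form $\log\al(\eps) = -\log\eps + \log\theta(1/\eps)$ and that $\log\theta$ grows sublinearly), hence $A(0^+) = 1$, giving \refE{limxmupmu}. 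From $A < 1$ we get $\la'(r) = 1 - A(x) > 0$ by \refE{lambdapr} (with $x = e^{-r}$), so $\la$ is strictly increasing; from $A > 0$ and $A$ strictly decreasing (hence $x\mu'/\mu < \mu/x \cdot$ ... ) one gets $\mu$ strictly concave — more directly, $\mu''$ can be computed by differentiating $(\log\mu)'(x) = (1-2\eps(x))/x$: $(\log\mu)''(x) = -2\eps'(x)/x - (1-2\eps(x))/x^2$, both terms negative, so $\log\mu$ is strongly strictly concave, and combined with $\mu' > 0$ this forces $\mu'' < 0$. Strict concavity of $\mu(x)/x$ and strict monotonicity follow from \refP{ConcavityImplications} once $\mu$ is known strictly concave; $\lim_{x\to\iny}\mu(x)/x = 0$ follows since $\mu(x)/x = \exp(\la(-\log x))/1$... more simply, $\mu(x)/x = x^{-2\eps(x)}\al(\eps(x))$ and as $x \to \iny$, $\eps(x) \to 1/2$ so this behaves like $x^{-1}\al(1/2) \to 0$ (or in the endpoint regime $\mu(x)/x = \al(1/2)/x \to 0$). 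For strict concavity of $\la$: differentiate $\la'(r) = 1 - A(e^{-r})$ to get $\la''(r) = A'(e^{-r})e^{-r} < 0$ since $A$ is strictly decreasing.

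For the final part: \refL{logThetaConvex} already gives that $p\log\theta(p)$ convex and $C^2$ implies $\log\al$ strictly convex and $C^2$, so the hypotheses of the first part apply. It remains to prove \refE{YudoCond} and the strong strict concavity of $\la$ — the latter I would get from the computation $\la''(r) = A'(e^{-r})e^{-r}$ together with $A'$ being strictly negative (not just $\le 0$), which needs $\eps'(x) > 0$ strictly, already established. For the three equivalent conditions in \refE{YudoCond}: the equivalences among them are pure algebra — starting from $\la(r) = r + \log\mu(e^{-r})$, set $x = e^{-r}$, compute $\la'(r) = 1 - x\mu'(x)/\mu(x)$ and $\la''(r)$ in terms of $\mu, \mu', \mu''$ and $x$, then $\la'' + (\la')^2 \ge 0$ rearranges to $x^2\mu'' - x\mu' + \mu \ge 0$, and dividing by $x^2$ and recognizing $(\mu/x)' = (x\mu' - \mu)/x^2$ gives the third form. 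The substantive content is proving one of them, and the cleanest route is via the minimizer: with $x = e^{-r}$, the identity $A(x) = 1 - 2\eps(x)$ gives $\la'(r) = 2\eps(x)$, and differentiating, $\la''(r) = -2\eps'(x)e^{-r}\cdot(\ldots)$ — carefully, $\la''(r) = \frac{d}{dr}[2\eps(e^{-r})] = -2\eps'(e^{-r})e^{-r} = -2 \cdot \frac{2}{(\log\al)''(\eps)} = \frac{-4}{(\log\al)''(\eps(x))}$, using $\eps'(x) = 2/(x(\log\al)''(\eps))$. Then
\begin{align*}
	\la''(r) + (\la'(r))^2 = \frac{-4}{(\log\al)''(\eps)} + 4\eps^2 = \frac{4}{(\log\al)''(\eps)}\Big((\eps)^2(\log\al)''(\eps) - 1\Big),
\end{align*}
so the inequality $\la'' + (\la')^2 \ge 0$ becomes $\eps^2(\log\al)''(\eps) \ge 1$, i.e. $(\log\al)''(1/p)\cdot p^{-2} \ge 1$ with $p = 1/\eps$. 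Using $\log\al(\eps) = -\log\eps + \log\theta(\eps^{-1})$ and the computation in \refE{logthetappvarphipp}-type manipulations relating $(\log\al)''$ to $(\log\varphi)''$ where $\varphi(p) = p\log\theta(p)$, this reduces to showing $(\log\varphi)''(p) \ge 0$... which is exactly convexity of $\log\varphi = \log(p\log\theta(p))$ — but that's stronger than what \refL{logThetaConvex} states (it gives $\varphi$ convex, hence $\log\varphi$ need not be convex).

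\emph{The main obstacle} I anticipate is precisely this last point: reducing \refE{YudoCond} to a clean convexity statement about $\theta$ and then verifying that statement follows from the measure-theoretic fact that $p\log\theta(p)$ is convex. The honest expectation is that the author exploits the exact form $\log\al(\eps) = -\log\eps + \eps\,\varphi(\eps^{-1})$ (from \refL{logThetaConvex}'s proof), computes $(\log\al)''(\eps) = \eps^{-2} + \eps^{-2}\varphi''(\eps^{-1})$ — wait, checking against \refE{logthetappvarphipp}: $(\log\al)''(\eps) = \frac{1}{\eps^2} + (\log\theta(\eps^{-1}))'' = \frac{1}{\eps^2} + \frac{1}{\eps^3}(\log\varphi)''(\eps^{-1})$, hmm, I'd need to recompute since $\log\al = -\log\eps + \log\theta(1/\eps)$ directly gives $(\log\al)'' = 1/\eps^2 + (\log\theta(1/\eps))''$ and the latter is $\eps^{-3}(\log\varphi)''(1/\eps)$ by \refE{logthetappvarphipp} — so $\eps^2(\log\al)''(\eps) = 1 + \eps^{-1}(\log\varphi)''(1/\eps) = 1 + p(\log\varphi)''(p)$ with $p = 1/\eps$. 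Then $\eps^2(\log\al)'' \ge 1 \iff (\log\varphi)''(p) \ge 0 \iff \log\varphi$ convex. So the claim \refE{YudoCond} is genuinely equivalent to convexity of $\log(p\log\theta(p))$, and the task is to show this follows from convexity of $p\log\theta(p)$ plus whatever extra (the eventual strict monotonicity of $\varphi$, positivity, etc.) is available — I would look for this to come from $\varphi$ convex, positive, and $\varphi'/\varphi$ manipulations, or possibly it needs an additional genuine hypothesis that I'd flag. I would spend the bulk of the write-up verifying the algebraic equivalences in \refE{YudoCond} rigorously (those are safe) and then present the reduction to $(\log\varphi)'' \ge 0$, deriving it from the convexity of $\varphi$ together with the sign information on $\varphi'$.
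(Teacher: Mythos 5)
Your overall strategy---find the minimizing $\eps$ in \refE{A} by first-order conditions, read off $A(x) = 1-2\eps(x)$ from the envelope theorem, and derive the concavity and monotonicity claims from that identity---is in substance the same as the paper's, just carried out in the variable $x$ rather than the paper's $r = -\log x$ (the paper works with $\la'(r) = 2\eps(r)$ and $\eps(r)$ strictly decreasing to $0$ as $r \to \iny$, which is the same statement). Two gaps to flag.

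First, the obstacle you anticipate at the end is spurious, and is caused by a notational trap in the paper itself. You reduced $\la'' + (\la')^2 \ge 0$ to $\eps^2(\log\al)''(\eps) \ge 1$, then used \refE{logthetappvarphipp} to rewrite $\eps^2(\log\al)''(\eps) = 1 + p\,(\log\varphi)''(p)$ with $p = 1/\eps$, and concluded that what is needed is convexity of $\log(p\log\theta(p))$, a stronger and in general false condition. But the derivation in \refE{logthetappvarphipp} tacitly uses the convention $\varphi(p) = \theta(p)^p$, so that $\log\varphi(p) = p\log\theta(p)$ there, even though the statement of \refL{logThetaConvex} sets $\varphi(p) := p\log\theta(p)$; that inconsistency is what misled you. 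Unwinding it with fresh notation, put $\Phi(p) := p\log\theta(p)$. Then $\log\al(\eps) = -\log\eps + \eps\,\Phi(1/\eps)$, a direct computation gives $(\log\al)''(\eps) = \eps^{-2} + \eps^{-3}\Phi''(1/\eps)$, hence $\eps^2(\log\al)''(\eps) = 1 + \eps^{-1}\Phi''(1/\eps)$, and the hypothesis $\Phi'' \ge 0$ closes the argument with nothing further needed. This is the paper's argument rearranged: the paper routes it through $\phi(\eps) = \log\theta(1/\eps)$ and the inequality $1/\la'' + 1/(\la')^2 \le 0$, which after clearing denominators is the same thing.

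Second, your step for concavity of $\mu$ itself has a logical gap. You correctly derive $(\log\mu)'' < 0$ from $(\log\mu)'(x) = (1-2\eps(x))/x$, but then assert that this, together with $\mu' > 0$, forces $\mu'' < 0$. It does not: log-concavity plus monotonicity do not imply concavity (for example $\mu(x) = e^{\sqrt{x}}$ is log-concave, increasing, and convex for $x>1$). The paper obtains concavity of $\mu$ directly from \refE{A}: for each fixed $\eps$ in $(0,1/2]$ the map $x \mapsto x^{1-2\eps}\al(\eps)$ is concave, and a pointwise infimum of concave functions is concave. Note also that the theorem claims only concavity, not strict concavity, of $\mu$; your $\mu'' < 0$ overclaims. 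Your care about the endpoint regime $\eps = 1/2$ and about verifying $(\log\al)'(\eps) \to -\iny$ as $\eps \to 0^+$ is well placed; the paper quietly assumes the interior regime throughout, which suffices because only the germ of $\mu$ at $x = 0$ matters for the applications.
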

\begin{proof}
The expression for $\mu$ in \refE{A} shows that it is continuous on $[0, \iny)$, positive on $(0, \iny)$, and strictly increasing. The function $x \mapsto x^{1 - 2 \eps}$ is concave for all $\eps$ in $[0, 1/2]$, so for any $\gamma$ in $[0, 1]$ and $x, y$ in $(0, \iny)$,
\begin{align*}
	\mu(\gamma x &+ (1 - \gamma) y)
		= C \inf_{\eps \in \Cal{A}} \set{(\gamma x + (1 - \gamma) y)^{1-2 \eps} \al(\eps)} \\
		&\ge C \inf_{\eps \in \Cal{A}} \set{(\gamma x^{1 - 2\eps} + (1 - \gamma) y^{1-2\eps}) \al(\eps)} \\
		&\ge C \gamma \inf_{\eps \in \Cal{A}} \set{x^{1 - 2\eps} \al(\eps)}
			+ C (1 - \gamma) \inf_{\eps \in \Cal{A}} \set{y^{1 - 2 \eps} \al(\eps)} \\
		&= \gamma \mu(x) + (1 - \gamma) \mu(y),
\end{align*}
where $\Cal{A} = (0, 1/p_0]$. It follows that $\mu$ and hence $\log \mu$ is concave.
Because $\mu(x)/x = \inf_{\eps \in \Cal{A}} \set{x^{- 2 \eps} \al(\eps)}$ it strictly decreases to $0$. (Some of these facts also follow from \refP{ConcavityImplications}, given the properties of $\mu$.)

\Ignore { 
Using the final form for $\mu(x)$ in \refE{muForm2},
\begin{align*}
	\mu(e^{-r})
		&= \frac{C}{4} \inf_{\eps \in A} \set{e^{-(1 - 2 \eps)r} 4^\eps \eps^{-1} \theta(\eps^{-1})} \\
		&= \frac{C}{4} \inf_{\eps \in A} \set{e^{2 \eps r} 4^\eps \eps^{-1} \theta(\eps^{-1})}
\end{align*}
so
\begin{align}\label{e:logermu}
	\log(\mu(e^{-r}))
		&= \log C +  \inf_{\eps \in A} \set{2 \eps r + \eps \log 4 - \log \eps + \log \theta(\eps^{-1})}.
\end{align}
Since $2 \eps r$ is concave (though not strictly), $\log(\mu(e^{-r}))$ is concave.
} 

We have, as in \refE{larVarious}, and using \refE{A},
\begin{align}\label{e:lambdar}
	\la(r)
		= \log \pr{\frac{\mu(x)}{x}}
		= \inf_{\eps \in \Cal{A}} \set{- 2 \eps \log x + \log \al(\eps)}
		=  \inf_{\eps \in \Cal{A}} \set{g(r, \eps)},
\end{align}
where
\begin{align*}
	g(r, \eps) = 2 \eps r + \log \al(\eps).
\end{align*}

Since $\log \al$ is strictly convex so is $g(r, \cdot)$. Thus, $g(r, \cdot)$ always achieves it minimum at a unique $\eps = \eps(r)$ with
\begin{align}\label{e:gminCalc}
	\prt_\eps g(r, \eps)|_{\eps = \eps(r)}
		&= 2r + \al'(\eps(r))/\al(\eps(r)) = 0.
\end{align}
Moreover, the function $g$ is twice continuously differentiable in both variables, so $\eps(r)$ is continuously differentiable by the implicit function theorem.

Writing \refE{gminCalc} as
\begin{align}\label{e:logalp2r}
	(\log \al)'(\eps(r)) = - 2r,
\end{align}
since $\log \al$ is strictly convex, $(\log \al)'$ strictly increases, so $(\log \al)'(\eps)$ strictly decreases as $\eps$ decreases. But as $r$ increases, $-2 r$ strictly decreases; hence, $\eps(r)$ is a strictly decreasing function of $r$, and hence also invertible. Moreover, because $(\log \al)'$ strictly increases, we must have $\eps(r) \to 0$ as $r \to \iny$. 

Also from \refE{gminCalc},
\begin{align}\label{e:alODE}
	\al'(\eps(r)) = - 2r \al(\eps(r)).
\end{align}
But,
\begin{align*}
	\la(r) = g(r, \eps(r))
\end{align*}
so
\begin{align}\label{e:laPrimeeps}
	\begin{split}
	\la'(r)
		&= \diff{}{r} g(r, \eps(r))
		= \prt_r g(r, \eps(r)) + \prt_\eps g(r, \eps(r)) \eps'(r) \\
		&= \prt_r g(r, \eps(r))
		= 2 \eps(r)
	\end{split}
\end{align}
by \refE{gminCalc} so $\la$ is strictly increasing. Since $\eps$ is strictly decreasing it follows that $\la$ is strictly concave. And because $\eps$ is continuously differentiable, $\la$, and hence $\mu$, are twice continuously differentiable.

\Ignore{ 
Also, since $x \mapsto \la(r(x))$ is strictly decreasing by \refT{muConcave}, $\la'(r(x)) = \diff{}{x}(\la(r(x))/r'(x) = - x \diff{}{x}(\la(r(x))/r'(x) > 0$, so $\la$ is strictly increasing and $\eps > 0$.
} 

\Ignore { 
(So we can also say, treating $\eps$ as a function of $x$ instead of $r$, that
\begin{align}\label{e:epsx}
	\eps(x)
		= \frac{1}{2} - \frac{x}{2} \frac{\mu'(x)}{\mu(x)}.
\end{align}
This can be a more convenient way of calculating $\eps(r) = \eps(r(x))$, but we still need to express $\eps$ in terms of $r$, not $x$.)
} 

\Ignore{ 
Because $\mu$ is concave, $\mu'$ is decreasing so by the mean value theorem,
\begin{align*}
	\mu'(x) \le \frac{\mu(x) - \mu(0)}{x} = \frac{\mu(x)}{x}
\end{align*}
so $A(x) \le 1$ and hence $\la'(r) \ge 0$. This shows both that $\la$ is increasing and that $\eps(r) \ge 0$.
But also,
\begin{align*}
	\la(r) = \log \pr{\frac{\mu(x(r))}{x(r)}} = \log(e^r \mu(e^{-r})) = r + \log \mu(e^{-r}),
\end{align*}
so $\la$ is concave and $\eps' \le 0$.

To obtain strict concavity of $\la$, we must rule out $\la''(r) = \eps'(r) = 0$. Differentiating both sides of \refE{alODE} with respect to $r$ gives
\begin{align*}
	\al''(\eps(r)) \eps'(r) = - 2 \al(\eps(r)) - 2r \al'(\eps(r)) \eps'(r).
\end{align*}
This shows that if $\eps'(r) = 0$ then $\al(\eps(r)) = 0$. But $\al$ never vanishes because we assumed that $\theta$ never vanishes. Hence, $\la$ is strictly concave and it follows from this that $\eps$ strictly decreases with $r$ and is invertible. Also, $\eps(r) > 0$ since if $\eps(r_0) = 0$ we would have $\eps(r) = 0$ for all $r \ge r_0$ and $\eps'$ would vanish.
} 

But \refEAnd{lambdapr}{laPrimeeps} give
\begin{align*}
	\eps(r) = \frac{1}{2} \pr{1 - A(x(r))},
\end{align*}
so $A(x(r)) \to 1$ as $r \to \iny$ or $A(x) \to 1$ as $x \to 0^+$, giving \refE{limxmupmu} (this also follows from \refE{limsupA}). This also shows that $A$ is strictly decreasing.

\Ignore{ 
It follows as in \refR{ConcavityImplications} that the strict concavity of $\la$ gives the strict concavity of $\log \mu$. It is also easy to see that the concavity of the strictly increasing $\mu$ along with the strict concavity of $\log \mu$ gives the strict concavity of $\mu$.
} 

Now assume that $p \log \theta(p)$ is convex. From the proof of \refL{logThetaConvex}, we see that $\log \al$ is strictly convex and so all of the conclusions above, in particular, that $\la$ is strictly concave and $\eps$ is invertible, hold. We also have, as in the proof of \refL{logThetaConvex}, that
\begin{align}\label{e:phieps}
	\phi(\eps) := \log \theta(1/\eps) = \log \eps + \log \al(\eps)
\end{align}
is convex.

Letting $\eta \colon (0, 1/p_0] \to (0, \iny)$ be the inverse of the map, $r \mapsto \eps(r)$, we have,
using \refE{alODE},
	\begin{align}\label{e:phipp}
		\begin{split}
		\phi''(\eps)
			&= - \frac{1}{\eps^2} + \pr{\frac{\al'(\eps)}{\al(\eps)}}' 
			= - \frac{1}{\eps^2} - 2 (\eta(\eps))' \\
			&= - \frac{1}{\eps^2} - \frac{2}{\eps'(\eta(\eps))}
			=  - \frac{1}{\eps^2} - \frac{4}{\la''(\eta(\eps))}.
		\end{split}
	\end{align}
	Or, expressed in the variable $r$ and using $\eps(r) = (1/2) \la'(r)$,
	\begin{align*}
		\phi''(\eps(r))
			= - \frac{4}{(\la'(r))^2} - \frac{4}{\la''(r)}.
	\end{align*}
	By \refL{logThetaConvex}, $\phi$ is convex and we conclude that
	$1/\la''(r) + 1/(\la'(r))^2 \le 0$ so that $\la''(r) < 0$ and
	
	\begin{align*}
		\frac{\la''(r) + (\la'(r))^2}{\la''(r) \la'(r)} \le 0.
	\end{align*}
	Then since $\la''(r) \la'(r) < 0$, $\refE{YudoCond}_1$ holds.
	
	It remains to show the equivalence of the three conditions in \refE{YudoCond}.
Let
$
	r = - \log x
$
as in the proof of \refP{ConcavityImplications}. Starting with \refE{lambdapr} one can show that
\begin{align}\label{e:lambdappr}
	\la''(r)
		= x^2 \frac{\mu''(x)}{\mu(x)} - x^2 \pr{\frac{\mu'(x)}{\mu(x)}}^2 + x \frac{\mu'(x)}{\mu(x)}.
\end{align}
Then from \refEAnd{lambdapr}{lambdappr},
\begin{align*}
	\la''(r) + (\la'(r))^2
		&=  x^2 \frac{\mu''(x)}{\mu(x)} - x^2 \pr{\frac{\mu'(x)}{\mu(x)}}^2 + x \frac{\mu'(x)}{\mu(x)}
			+ \pr{1 - \frac{\mu'(x)}{\mu(x)} x}^2 \\
		&= 1 - \frac{x \mu'(x)}{\mu(x)} + \frac{x^2 \mu''(x)}{\mu(x)}
		= \frac{x^2 \mu''(x) - x \mu'(x) + \mu(x)}{\mu(x)}.
\end{align*}
This gives the equivalence of $\refE{YudoCond}_1$ and $\refE{YudoCond}_2$, and a simple calculation shows that $\refE{YudoCond}_3$ is a re-expression of $\refE{YudoCond}_2$.
(Note that integrating $\refE{YudoCond}_3$ does not contradict \refE{Ax}, because the concavity of $\mu$ means that neither $\mu(x)/x$ nor $\mu'(x)$ converges to $0$ as $x \to 0$.)
\end{proof}

\Ignore{ 
Now suppose that $e^r \mu(e^{-r})$ is not strictly concave. Then letting $f(\eps) =  - \eps \log 4 - \log \eps + \log \theta(\eps^{-1})$,
\begin{align*}
	&\inf_{\eps \in \Cal{A}} \set{2 \eps (\la r + (1 - \la) s) + f(\eps)} \\
		&\qquad= \la \inf_{\eps \in \Cal{A}} \set{2 \eps r + f(\eps)}
		 	+ (1 - \la) \inf_{\eps \in \Cal{A}} \set{2 \eps s + f(\eps)}
\end{align*}
for some $\la$ in $(0, 1)$, $r < s$. Letting $\eps_1$, $\eps_2$, and $\eps_3$ be the three (not necessarily unique) values of $\eps$ at which the three infimums above are reached, we have
\begin{align*}
	2 \eps_1 &(\la r + (1 - \la) s) + f(\eps_1) \\
		&= \la \pr{2 \eps_2 r + f(\eps_2)}
		 + (1 - \la) \pr{2 \eps_3 s + f(\eps_3)}
\end{align*}
so 
\begin{align*}
	f(\eps_1)
		&= \la \pr{2 (\eps_2 - \eps_1) r + f(\eps_2)}
		 + (1 - \la) \pr{2 (\eps_3 - \eps_1) s + f(\eps_3)} \\
		&= 2 \pr{\la (\eps_2 - \eps_1) r + (1 - \la) (\eps_3 - \eps_1) s}
			+  \la f(\eps_2) + (1 - \la) f(\eps_3).
\end{align*}
When $\eps_1 = \eps_2 = \eps_3$ this reduces to $f(\eps_1) = \la f(\eps_1) + (1 - \la) f(\eps_1)$, an equality that holds. We claim that there is no other possibility.

Now suppose that $\eps_2 = \eps_3$. Then
\begin{align*}
	f(\eps_1)
		&= 2  (\eps_2 - \eps_1) \pr{\la r + (1 - \la) s}
			+  f(\eps_2).
\end{align*}
} 

\Ignore{ 
Now, $\beta_1$ being concave is equivalent to saying that it is
absolutely continuous for any open interval, $[a, b]$ for $0 < a < b < \iny$ and that
$\beta_1'$ is defined almost everywhere and is decreasing on the set on which it is defined. (See, for instance, exercise 42 Section 3.5 of \cite{Folland}.) Therefore,
from \refE{mu},
\begin{align*}
	\mu'(x)
		&= - \frac{C}{x^2} \beta_1(x^2/4) + \frac{C}{x} \beta_1'(x^2/4) \frac{2x}{4}
		= C \pr{\frac{\beta_1'(x^2/4)}{2} - \frac{\beta_1(x^2/4)}{x^2}}
\end{align*}
so
\begin{align*}
	\mu'(\sqrt{x}/2)
		= C \pr{\frac{\beta_1'(x)}{2} - \frac{\beta_1(x)}{x}},
\end{align*}
or,
\begin{align*}
	\frac{1}{4 \sqrt{x}} \mu''(\sqrt{x}/2)
		= C \pr{\frac{\beta_1''(x)}{2} - \frac{\beta_1'(x)}{x} + \frac{\beta_1(x)}{x^2}}.
\end{align*}

Because $\mu$ is concave, it can have at most one maximum. But $\mu(0) = 0$ and $\mu$ is positive so it starts by strictly increasing from 0 at the origin. If it has a maximum at, say, $a$, then it decreases for all $x > a$. But $\mu'$ is decreasing, so this would require that $\mu$ eventually become negative, which it cannot. We conclude, therefore, that $\mu$ strictly increases, reaching no maximum value (except possibly at $ x = \iny$).
} 

\DetailSomeInline { 
\begin{remark}
As\MarginNote{Should this be in the submitted form?}a curious aside, note that two independent solutions to $x^2 \mu''(x) - x \mu'(x) + \mu(x)  = 0$ are $\mu_0(x) = x$ and $\mu_1(x) = - x \log x$, the first two examples in \refE{mumDef}. The first, $\mu_0$, will never arise from \refE{A} unless $\theta(p) \to 0$ as $p \to \iny$, while $\mu_1$ is an upper bound (for small $x$) on the $\mu$ that results from setting $\theta = constant$ in \refE{A}. In this sense, $\mu_1$ is the smallest possible $\mu$ that can result from \refE{A}.
\end{remark}
} 

\Ignore{ 
Also, assuming that $\log \al$ is strictly convex and twice continuously differentiable, then $1 - \frac{x \mu'(x)}{\mu(x)} \to 0$ as $x \to 0^+$ by \refE{limxmupmu} and a \textit{necessary} condition for $\refE{YudoCond}_1$ to hold is that
\begin{align*}
	\limsup_{x \to 0^+} \frac{x^2 \mu''(x)}{\mu(x)} \le 0.
\end{align*}
} 

\Ignore{ 
\begin{remark}\label{R:OsgoodEquivalence}
	If $\mu$ is strictly increasing and concave with $\mu(0) = 0$ then, as in the proof of
	\refP{ConcavityImplications},
	\begin{align*}
		0 < \frac{\mu'(x)}{\mu(x)} x \le 1.
	\end{align*}
	Thus,
	\begin{align*}
		\int_0^1 \frac{dx}{x \mu'(x)}
			\ge \int_0^1 \frac{dx}{\mu(x)},
	\end{align*}
	and the first integral is infinite if $\mu$ satisfies the Osgood condition. On the other hand,
	if $\mu$ satisfies \refE{limxmupmu}, as it does when $\theta(p)$ is the $L^p$-norm of
	some initial vorticity, then there exists $a > 0$ such that $x \mu'(x) \ge
	(1/2) \mu(x)$ for all $x < a$, so
	\begin{align*}
		\int_0^a \frac{dx}{\mu(x)}
			\ge \frac{1}{2} \int_0^a \frac{dx}{x \mu'(x)}.
	\end{align*}
	Thus, if
	\begin{align}\label{e:OsgoodLike}
		\int_0^1 \frac{dx}{x \mu'(x)} = \iny
	\end{align}
	then $\mu$ satisfies the Osgood condition.
	
	That is, when $\theta(p)$ is the $L^p$-norm of some initial vorticity,
	\refE{OsgoodLike} and the Osgood condition, \refE{muOsgood},  are equivalent conditions.
\end{remark}
} 

%
\section{Yudovich velocity fields are Dini-continuous}\label{S:Dini}

\noindent We say that $\mu$ is a Dini MOC if the MOC $S_\mu \colon [0, \iny) \to [0, \iny)$ defined by
\begin{align}\label{e:Smu}
	S_\mu(x)
		= \int_0^x \frac{\mu(s)}{s} \, ds
\end{align}
exists (that is, if the integral is finite for any $x > 0$ and hence for all $x > 0$ and $S_\mu$ is as in \refD{MOC}). If a function has a Dini MOC we say that the function is Dini-continuous.

The function $S_\mu$ can be used to re-express $\refE{YudoCond}_3$ as $S_\mu''(x) \le \mu''(x)$, meaning that $S_\mu$ is more strictly convex than $\mu$.

It is shown in \cite{Sueur2010} that the sequence of example Yudovich velocity fields derived from \refE{YudovichExamples} are Dini-continuous. In fact, it follows from \refP{Dini} that all Yudovich velocity fields are Dini-continuous and the MOC $\mu$ and $S_\mu$ are essentially the same, as we show in \refP{Dini}. These are perhaps the most significant properties of Yudovich velocity fields.
(Also see \refR{RemarkDini}.)

It follows trivially from \refE{Ax} that $\mu \le S_\mu$ and $\mu' \le S_\mu'$ for any concave MOC, $\mu$. (Allowing that $S_\mu$ may be infinite and in that case defining $S_\mu'$ to be infinite.) In addition, for Yudovich velocity fields, $\mu'' \ge S_\mu''$, a consequence of \refL{logThetaConvex}, \refT{muConcave}, and \refP{Dini}.

\begin{prop}\label{P:Dini}
Assume that $\mu$ is a strictly increasing (strongly) strictly concave Osgood MOC and that \refE{limxmupmu} holds. Then $\mu$ is Dini-continuous while $S_\mu$ is strictly increasing and (strongly) strictly concave, $S_\mu$ lies in the same germ as $\mu$, and $S_\mu'$ lies in the same germ as $\mu'$ at the origin.
Moreover, the equivalent of $\refE{YudoCond}$  holds for $S_\mu$ if it holds for $\mu$.
\end{prop}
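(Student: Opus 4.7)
The plan is built around two elementary identities: $S_\mu'(x) = \mu(x)/x$ from the Fundamental Theorem of Calculus, and the limit $A(x) = x\mu'(x)/\mu(x) \to 1$ as $x \to 0^+$ supplied by \refE{limxmupmu}. I would first establish Dini continuity: given any $\eps$ in $(0,1)$, choose $\delta > 0$ from \refE{limxmupmu} so that $A(x) > 1 - \eps$ on $(0, \delta)$. Writing this as $(\log \mu)'(x) > (1-\eps)/x$ and integrating from $x$ up to $\delta$ yields $\mu(x) \le \mu(\delta)(x/\delta)^{1-\eps}$, whence $\mu(s)/s \le C s^{-\eps}$ near the origin and the integral defining $S_\mu$ converges.

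For the monotonicity and concavity of $S_\mu$, I would simply differentiate: $S_\mu'(x) = \mu(x)/x > 0$, so $S_\mu$ is strictly increasing, and $S_\mu''(x) = (x\mu'(x) - \mu(x))/x^2$. The proof of \refP{ConcavityImplications} shows this quantity is $\le 0$, with strict inequality when $\mu$ is strictly concave. For the strongly strictly concave case I would set $g(x) := x\mu'(x) - \mu(x)$, note $g(0^+) = 0$ and $g'(x) = x\mu''(x) < 0$, so $g < 0$ on $(0, \iny)$ and $S_\mu'' < 0$. The germ statements follow cleanly from \refE{limxmupmu}: $S_\mu'(x)/\mu'(x) = 1/A(x) \to 1$, and a single application of L'H\^opital (using $\mu(0^+) = S_\mu(0^+) = 0$ and $\mu' > 0$ throughout, since $\mu$ is strictly increasing with non-increasing $\mu'$) gives $S_\mu(x)/\mu(x) \to 1$.

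Finally, for the YudoCond transfer, let $F(x) := x^2 S_\mu''(x) - xS_\mu'(x) + S_\mu(x)$. Substituting $x^2 S_\mu''(x) = x\mu'(x) - \mu(x)$ and $xS_\mu'(x) = \mu(x)$ gives $F(x) = x\mu'(x) - 2\mu(x) + S_\mu(x)$, and direct differentiation produces
\begin{align*}
    xF'(x) = x^2\mu''(x) - x\mu'(x) + \mu(x),
\end{align*}
which is precisely the left-hand side of $\refE{YudoCond}_2$ for $\mu$ and is therefore non-negative. Hence $F$ is non-decreasing on $(0, \iny)$. The bounds $0 \le x\mu'(x) \le \mu(x)$ together with $\mu(0^+) = 0$ and the Dini continuity already shown force $F(0^+) = 0$, so $F \ge 0$ on $(0, \iny)$, which is $\refE{YudoCond}_2$ for $S_\mu$. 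The main conceptual point---and the step I expect to be the most satisfying to verify---is the way the YudoCond for $\mu$ turns out to be \emph{exactly} the identity needed to make $F$ monotone; once this is noticed, the rest is bookkeeping, and the only genuine computational obstacle is the Dini convergence, which the sub-power bound handles immediately.
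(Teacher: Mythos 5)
Your proof is correct and follows essentially the same path as the paper's: the monotonicity of $j(x) := S_\mu(x) + x\mu'(x) - 2\mu(x)$ (your $F$) under $\refE{YudoCond}_2$, the L'H\^opital computation for the germs, and $S_\mu'(x)=\mu(x)/x$ for the concavity are all exactly the paper's steps. The only minor deviation is the Dini bound, where you use the sub-power estimate $\mu(x) \le C x^{1-\eps}$ while the paper integrates $\mu(s)/s \le 2\mu'(s)$ directly to get $S_\mu(x) \le 2\mu(x)$; both are immediate from \refE{limxmupmu}.
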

\begin{proof}
Assume that \refE{limxmupmu} holds.
It follows that for some $x > 0$ we have $s \mu'(s) \ge (1/2) \mu(s)$ for all $s < x$. Hence,
\begin{align*}
	S_\mu(x)
		= \int_0^x \frac{\mu(s)}{s} \, ds
		\le 2 \int_0^x \mu'(s) ds
		= 2 \mu(x)
		< \iny.
\end{align*}
That is, $\mu$ must satisfy not only the Osgood condition, \refE{muOsgood}, but the Dini condition.
Then since $S_\mu(0)$ must be zero we can apply L'Hospital's rule to conclude that
\begin{align*} 
	\lim_{x \to 0} \frac{S_\mu(x)}{\mu(x)}
		&= \lim_{x \to 0} \frac{S_\mu'(x)}{\mu'(x)}
		= \lim_{x \to 0} \frac{\mu(x)}{x \mu'(x)}
		= \lim_{x \to 0} \frac{1}{A(x)}
		= 1.
\end{align*}
Also, $S_\mu'(x) = \mu(x)/x$ is strictly decreasing by \refP{ConcavityImplications} so $S_\mu$ is strictly concave. (And if $\mu$ is strongly strictly concave then so too is $S_\mu$.)

\Ignore{ 
If $\refE{YudoCond}_1$ holds then $\la$ is strictly concave, which is equivalent to $A$ being strictly decreasing by virtue of \refE{lambdapr}. Combined with \refE{limsupA} this gives that \refE{limxmupmu} holds and hence that all of the conclusions above continue to hold. Also, as observed above, $\refE{YudoCond}_1$ is equivalent to $S_\mu$ being more strictly convex than $\mu$.
} 

Now assume that $\refE{YudoCond}$ holds. Then the the equivalent of $\refE{YudoCond}_3$  holds for $S_\mu$ if and only if $S_{S_\mu}'' - S_\mu'' \le 0$, as was observed above for $\mu$.
Then
\begin{align*}
	S_{S_\mu}'' - &S_\mu'' \le 0
		\iff \pr{\int_0^x \frac{S_\mu(s)}{s} \, ds}'' - \pr{\int_0^x \frac{\mu(s)}{s} \, ds}'' \le 0 \\
		&\iff \pr{\frac{S_\mu(x)}{x}}' - \pr{\frac{\mu(x)}{x}}' \le 0 \\
		&\iff x S_\mu'(x) - S_\mu(x) - (x \mu'(x) - \mu(x)) \le 0 \\
		&\iff \mu(x) - S_\mu(x) - x \mu'(x) + \mu(x) \le 0 \\
		&\iff j(x) := S_\mu(x) + x \mu'(x) - 2 \mu(x) \ge 0.
\end{align*}
By \refE{limxmupmu}, $j(0) = 0$, and we have
\begin{align*}
	j'(x)
		&= \frac{\mu(x)}{x} + \mu'(x) + x \mu''(x) - 2 \mu'(x) \\
		&= \frac{x^2 \mu''(x) - x \mu'(x) + \mu(x)}{x}.
\end{align*}
Then $j'(x) \ge 0$ for all $x > 0$ if $\refE{YudoCond}_2$ holds, and it follows that $j(x) \ge 0$ for all $x > 0$.
\end{proof}

\begin{remark}\label{R:RemarkDini}

It is shown in \cite{Burch1978} by Charles Burch that if $v$ is a velocity field with a concave MOC, $\mu$, then $R v$ has a MOC, $\nu$, given by
\begin{align*}
	\nu(x) = c \pr{S_\mu(x) + x \int_x^\iny \frac{\mu(s)}{s^2} \, ds}.
\end{align*}
Here, $R$ can be a Riesz transform.
(This result appears in an earlier form as Lemma 1 of \cite{Shapiro1976} by Victor Shapiro. Also see \cite{KNV2007}.)

%
%
\Ignore{ 

In \cite{KNV2007}, this result is applied under the assumption that $\mu'(0)$ is finite (though $\mu''(0) = - \iny$), but it is still applicable, as we now show, for Yudovich velocities, where we always have $\mu'(0) = +\iny$.

For a Yudovich velocity, $v$ lies in $L^\iny([0, \iny) \times \R^2)$ so we can choose $\mu$ to be bounded. Then
\begin{align*}
	x \int_x^\iny \frac{\mu(s)}{s^2} \, ds
		&= x \int_1^\iny \frac{\mu(s)}{s^2} \, ds + x \int_x^1 \frac{\mu(s)}{s^2} \, ds \\
		&= Cx + x \int_x^1 \frac{\mu(s)}{s^2} \, ds.
\end{align*}
By \refP{ConcavityImplications}, $\mu(s)/s$ is decreasing, so
\begin{align*}
	x \int_x^1 \frac{\mu(s)}{s^2} \, ds
		\le x \frac{\mu(x)}{x}  \int_x^1 \frac{1}{s} \, ds
		= - \mu(x) \log x.
\end{align*}
Hence,
\begin{align*}
	\nu(x) \le C_0 S_\mu(x) + Cx - C \mu(x) \log x.
\end{align*}

By L'Hospital's rule and using \refE{limxmupmu},
\begin{align*}
	\lim_{x \to 0^+} (-\mu(x) \log x)
		&= -\lim_{x \to 0^+} \frac{\log x}{\mu(x)^{-1}}
		= \lim_{x \to 0^+} \frac{1/x}{\mu'(x)/\mu(x)^2} \\
		&= \lim_{x \to 0^+} \frac{\mu(x)}{A(x)}
		= 0.
\end{align*}
} 

For a Yudovich velocity, $v$ lies in $L^\iny([0, \iny) \times \R^2)$, so we can choose $\mu$ to be bounded, making $\nu(x)$ finite for all $x > 0$. Since $\nu'(x) = c \int_x^\iny \mu(s) s^{-2} \, ds$, $\nu$ is strictly increasing. That $\nu(0) = 0$ then follows directly if $\nu'(0) < \iny$ and by applying L'Hospital's rule, otherwise.
Noting that $\nu''(x) = - c \mu'(x)/x^2 < 0$, we see that $\nu$ is strictly concave. It is, in general, neither Osgood nor Dini, as we can see by looking at $\mu_2$ of \refE{mumDef}. (For bounded vorticity, however, which corresponds to $\mu_1$, $\nu$ is both Osgood and Dini.)
\end{remark}

\Ignore { 
\begin{remark}\label{R:muSandwich}
From \refE{limxmupmu} for any $\al < 1$ there exists $\eps > 0$ for which $\al <x (\log \mu)'(x) \le 1$ on $(0, \eps)$. Integrating leads to the inequality,
\begin{align*}
	\mu(\eps) \frac{x}{\eps}
		\le \mu(x)
		\le \mu(\eps) \pr{\frac{x}{\eps}}^\al.
\end{align*}
which shows that as $x \to 0$, $\mu(x)$, properly rescaled, becomes closer and closer to being linear.
\end{remark}
} 

\Ignore { 
Since
\begin{align*}
	\frac{d^2}{d r^2} \log(\mu(e^{-r}))
		&= - \diff{}{r} \brac{e^{-r} (\log \mu)'(e^{-r})} \\
		&= e^{-r} (\log \mu)'(e^{-r})
			+ e^{-2r} (\log \mu)''(e^{-r}),
\end{align*}
if $\mu$ is increasing and $\log \mu(e^{-r})$ is concave then $\log \mu$ is concave.
}

\Ignore{ 
\begin{remark}
We also have,
\begin{align*}
	\eps(r)
		&= \frac{1}{2} \la'(r)
			= \frac{1}{2} \brac{r + \log \mu(e^{-r})}'
			= \frac{1}{2} \brac{ 1 - \frac{\mu'(e^{-r})}{\mu(e^{-r})} e^{-r}} \\
		&= \frac{1}{2} \brac{1 - A(x)},	
\end{align*}
so $0 \le \eps(r) \le 1$. Of course, also, $\eps(r) \le 1/p_0$.
\end{remark}
} 

\Ignore{ 
Since $\mu(x) = x e^{\la(- \log x)}$, we have
\begin{align*}
	\mu'(x)
		&= e^{\la(- \log x)} + x \la'(- \log x) (-1/x) e^{\la(- \log x)} \\
		&= (1 - \la'(- \log x)) e^{\la(- \log x)}
\end{align*}
so $\mu$ strictly increasing implies that $\la' < 1$. 

Also,
\begin{align*}
	\mu''(x)
		&= (1 - \la'(- \log x)) \la'(- \log x)(-1/x) e^{\la(- \log x)} \\
		&\qquad
			- \la''(- \log x)(-1/x) e^{\la(- \log x)} \\
		&= \frac{1}{x} \brac{\pr{\la'(- \log x) - 1} \la'(- \log x) + \la''(- \log x)}e^{\la(- \log x)}.
\end{align*}
Thus $\la$ increasing and concave gives $\mu$ concave and adding strictness to either the increasing or concave nature of $\la$ gives $\mu$ strictly concave.
} 

\Ignore{ 
In light of \refT{muConcave}, if we wish to invert the relation in \refE{mufGamma} to obtain $\mu$ we must insure that $\mu$ has at least some of the key properties given by that lemma. But\MarginNote{Must we?}we must also insure that the function $\beta_1$ has these properties. Fortunately, this comes for free by virtue of \refL{betaFrommu}.

\begin{lemma}\label{L:betaFrommu}
	Assume that\MarginNote{Some condition don't hold, I think, as the Dini condition does not. Look into this.}
	the function $\mu$ satisfies \refE{mufGamma}
	and has all the properties stated in
	\refT{muConcave}. Then so too does the function $\beta_1$ given by inverting
	the relation in \refE{mu}.
\end{lemma}
\begin{proof}
By \refE{mu},
\begin{align}\label{e:muToBeta}
	\beta_1(x^2/4) = C x \mu(x).
\end{align}
Thus, $\beta_1$ is continuous on $[0, \iny)$, $\beta_1(0) = 0$, $\beta_1$ is strictly increasing, and
\begin{align*}
	\frac{1}{x^2/4} \beta_1(x^2/4)
		=  4 C \frac{\mu(x)}{x},
\end{align*}
which is strictly decreasing so $\beta_1(x)/x$ is strictly decreasing.

Taking the derivative of \refE{muToBeta},
\begin{align*}
	\frac{x}{2} \beta_1'(x^2/4) = C \mu(x) + C x \mu'(x),
\end{align*}
or,
\begin{align*}
	\beta_1'(x^2/4) = 2 C \pr{\frac{\mu(x)}{x}  + \mu'(x)},
\end{align*}
which is strictly positive for all $x > 0$, showing again that $\beta_1$ is strictly increasing. Also, $\mu(x)/x$ and $\mu'(x)$ are both strictly decreasing, so $\beta_1'$ is decreasing, showing that $\beta_1$ is concave. The other facts are proved similarly.
\end{proof}
} 

\Ignore{ 

Taking another derivative,
\begin{align*}
	\beta_1''(x^2/4)  \frac{x}{2}
		= 2 C \pr{\frac{- \mu(x)}{x^2} + \frac{\mu'(x)}{x}  + \mu''(x)}.
\end{align*}
But, $\mu''(x) < 0$ since $\mu$ is concave and $\mu(x)/x$ is strictly decreasing, so
\begin{align*}
	\diff{}{x} \frac{\mu(x)}{x}
		= \frac{- \mu(x)}{x^2} + \frac{\mu'(x)}{x} < 0,
\end{align*}
so we conclude that $\beta_1(x)$ is concave.
} 

%
%
\section{Inverting the defining relation for the MOC of the velocity}\label{S:InvertingEulerToGetVectorField}

\noindent 
Our intent in this section is start with a MOC, $\mu$, having all of the properties stated in \refT{muConcave} and invert the relation in \refE{A} to obtain a function $\al$ and hence $\theta$ that satisfies all of the properties of \refL{logThetaConvex}. Such a $\theta$ can then give the $L^p$-norms of a Yudovich vorticity as in \refD{YudovichVorticity}.

That $\mu$ is concave follows directly from \refE{A} and requires no special properties of $\theta$. When $\theta$ comes from the $L^p$-norms of a vorticity field, however, $p \log \theta(p)$ must be convex and $\log \al$ must be strictly convex (see \refL{logThetaConvex}). It is $\log \al$ being strictly convex that is critical to obtaining an inverse, as we can see by re-expressing \refE{A} in terms of a Legendre transformation.

\begin{definition}
	Let $f \colon I \to \R$ be a strictly convex function on an interval, $I$. We define its
	\textit{Legendre transformation}, $f^*$, by
	\begin{align*}
		f^*(x) = \sup_{\eps \in I} \set{x \eps - f(\eps)}.
	\end{align*}
	The domain of $f^*$ consists of all $x$ in $\R$ for which the supremum is finite.
\end{definition}

We can write \refE{A} in terms of a Legendre transformation. From \refE{lambdar},
\begin{align}\label{e:Leg}
	\begin{split}
	\log \pr{\frac{\mu(x)}{x}}
		&= \inf_{\eps \in \Cal{A}} \set{- 2 \eps \log x + \log \al(\eps)} \\
		&= - \sup_{\eps \in \Cal{A}} \set{2 \eps \log x - \log \al(\eps)}
		= - (\log \al)^*(2 \log x).
	\end{split}
\end{align}
Thus,
\begin{align*}
	\mu(x) = x e^{- (\log \al)^*(2 \log x)}.
\end{align*}

Because we have restricted the Legendre transformation to strictly convex functions, $f^*$ is also strictly convex, and the Legendre transformation is an involution ($(f^*)^* = f$). See, for instance, Section 14 of \cite{Anrnold1991}. Hence, letting $u = 2 \log x$, \refE{Leg} becomes
$
	(\log \al)^*(u) = - \la(- u/2).
$
Letting $\ol{\la}(s) = - \la(- s/2)$, we have
\begin{align*}
	\log \al(x)
		&= (\ol{\la})^*(x)
		= \sup_{\eps \in \R} \set{x \eps - \ol{\la}(\eps)}
		= \sup_{\eps \in \R} \set{(-x)(-\eps) - (-\la(-\eps/2))} \\
		&= \sup_{\eps \in \R} \set{(-2 x)\eps - (-\la(\eps))}
		= (-\la)^*(-2 x).
\end{align*}
Thus,
\begin{align}\label{e:alLegendre}
	\al(x) = e^{(-\la)^*(-2 x)}.
\end{align}
As long as $\la$ is strictly concave, so that $-\la$ is strictly convex, we can perform the inversion.

There are three limitations of using \refE{alLegendre} alone. First, $\la$ may be strictly concave only near the origin. Second, it is not clear what the domain of $\al$ is. In particular, we need the domain to include $0$: as we will see, $\mu$ satisfying the Osgood condition is required to insure this. Third, it is not clear from \refE{alLegendre} that $\refE{YudoCond}$ is enough to insure that $p \log \theta(p)$ is convex. For this reason, we give an explicit method for inverting \refE{A} in \refT{Invertmu}.

\begin{theorem}\label{T:Invertmu}
	Let $\mu$ be a strictly increasing $C^2$ MOC satisfying the Osgood condition,
	\refE{muOsgood}, with $\la \colon \R \to \R$
	and $\eps \colon \R \to \R$ given by
	\begin{align}\label{e:laDef}
		\la(r) = r + \log \mu(e^{-r}), \quad \eps(r) = \frac{1}{2} \la'(r).
	\end{align}
	Assume that there is a neighborhood $\Cal{N}$ of $r = \iny$ on which $\la(r)$ is strictly concave
	(or, equivalently, a neighborhood of the origin on which $A$, given by \refE{ADef}, is strictly decreasing).
	Then $\eps$ is invertible on $\Cal{N}$, and calling that inverse, $\eta$, and letting
	\begin{align}\label{e:alSolThm}
		\al(\eps)
			= C_0 \exp \left\{\la(\eta(\eps)) - 2 \eta(\eps) \eps \right\},
	\end{align}
	we have $\theta(p) = p^{-1} \al(p^{-1})$ in a neighborhood of $p = \iny$ for some $C_0 > 0$.
	The function $\log \al$ is strictly convex in a neighborhood of the origin.
	If $\refE{YudoCond}$ holds then $\log \theta(1/\eps)$ is convex in a neighborhood of the origin,
	while $p \log \theta(p)$ is convex in a neighborhood of $\iny$.
\end{theorem}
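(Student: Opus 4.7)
\noindent The plan is to invert \refE{A} as a Legendre-type transform applied to $\log \al$, then verify the claimed convexities by direct differentiation of \refE{alSolThm}.

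\textbf{Step 1 (invertibility and formula for $\al$).} Strict concavity of $\la$ on $\Cal{N}$ makes $\eps = \la'/2$ strictly monotone there, so it has a $C^1$ inverse $\eta$. As in the derivation of \refE{lambdar}, any $\al$ producing $\mu$ via \refE{A} must satisfy $\la(r) = \inf_{\eps \in \Cal{A}} g(r, \eps)$ with $g(r,\eps) := 2\eps r + \log \al(\eps)$, and the infimum is attained at the critical point $\eps(r)$ of $g(r, \cdot)$. Evaluating $g(r, \eps(r)) = \la(r)$ gives $\log \al(\eps(r)) = \la(r) - 2\eps(r) r$, and substituting $r = \eta(\eps)$ yields \refE{alSolThm}, with $C_0$ absorbing the freedom to shift $\log \al$ by a constant. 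The Osgood condition, after the substitution $x = e^{-r}$, reads $\int^\iny e^{-\la(r)} \, dr = \iny$, which prevents $\la'(r)$ from tending to a positive limit at infinity; combined with $\la'$ strictly decreasing on $\Cal{N}$, this forces $\la'(r) > 0$ on $\Cal{N}$ and $\la'(r) \to 0$ as $r \to \iny$. Thus $\eps(\Cal{N})$ is a right-neighborhood of $0$, $\al$ is defined on a neighborhood of $\eps = 0$, and $\theta(p) := p^{-1} \al(p^{-1})$ on a neighborhood of $p = \iny$.

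\textbf{Step 2 (strict convexity of $\log \al$).} Differentiating \refE{alSolThm} and using the critical-point identity $\la'(\eta(\eps)) = 2\eps$ to cancel the $\eta'$ contributions,
\[
(\log \al)'(\eps) = -2 \eta(\eps), \qquad (\log \al)''(\eps) = -2 \eta'(\eps).
\]
Differentiating $2\eps = \la'(\eta(\eps))$ produces $\eta'(\eps) = 2/\la''(\eta(\eps)) < 0$, so $(\log \al)'' > 0$ on the image of $\Cal{N}$ under $\eps$.

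\textbf{Step 3 (convexity under $\refE{YudoCond}$).} Since $\log \theta(1/\eps) = \log \eps + \log \al(\eps)$, Step 2 gives
\[
\pr{\log \theta(1/\eps)}''
        = - \frac{1}{\eps^2} - \frac{4}{\la''(\eta(\eps))}.
\]
With $r = \eta(\eps)$ and $2\eps = \la'(r)$, this equals $- \brac{(\la'(r))^2 + \la''(r)} / \brac{\eps^2 \la''(r)}$; because $\la''(r) < 0$, its sign is that of $(\la'(r))^2 + \la''(r)$, which is nonnegative by $\refE{YudoCond}_1$. Hence $\log \theta(1/\eps)$ is convex on the relevant neighborhood, and the identity \refE{logthetappvarphipp} (in which $\log \varphi = p \log \theta(p)$) transfers this to convexity of $p \log \theta(p)$ in a neighborhood of $p = \iny$, since the factor $\eps^{-3}$ is positive.

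\textbf{Main obstacle.} The work is mostly bookkeeping; the one place where care is required is in tracking that the strict concavity of $\la$, assumed only on $\Cal{N}$, combined with the Osgood condition, does produce a neighborhood of $0$ on which $\al$ is defined. Without the Osgood condition $\la'$ could tend to a positive limit, $\eps$ would not reach $0$ in the image, and the whole inversion would fail near $\eps = 0$---precisely the region in which we want $\theta$ defined.
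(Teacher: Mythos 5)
Your proposal follows the paper's proof quite closely: the critical-point inversion yielding \refE{alSolThm}, the computation $(\log\al)''(\eps) = -2\eta'(\eps) = -4/\la''(\eta(\eps)) > 0$, and the identity expressing $(\log\theta(1/\eps))''$ in terms of $(\la')^2 + \la''$ are exactly the steps the paper carries out via \refE{logalp2r}, \refE{alODE}, and \refE{varphippInTermsOfLambda}. Steps 2 and 3 are correct and essentially the paper's.

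There is, however, a gap in Step 1, and it is precisely at the point you flag as the ``one place where care is required.'' You claim that the Osgood condition $\int^\iny e^{-\la(r)}\,dr = \iny$, combined with $\la'$ strictly decreasing, forces $\la'(r) > 0$ on $\Cal{N}$ with $\la'(r)\to 0$. The Osgood condition does rule out $\la'(r)\to c > 0$ (then $\la(r)\sim cr$ and the integral converges), but it does \emph{not} rule out $\la'(r)\to L < 0$: in that case $\la\to -\iny$, so $e^{-\la}\to\iny$ and the integral diverges all the more readily, with no contradiction. (Concretely, $\mu(x) = x^2(1-\log x)$ near the origin is a strictly increasing $C^2$ MOC satisfying Osgood, with $\la(r) = -r + \log(1+r)$ strictly concave and $\la'\to -1$.) What actually closes the argument is the bound $A(x)\le 1$, equivalently $\la'\ge 0$, which follows from concavity of $\mu$ (\refP{ConcavityImplications}); together with $\limsup_{x\to 0}A(x)=1$ from \refE{limsupA} and the strict monotonicity of $A$, this gives $\eps(r) = \tfrac12\bigl(1-A(e^{-r})\bigr)\to 0^+$, i.e.\ \refE{limxmupmu}. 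Concavity of $\mu$ is the standing assumption of this section (the paragraph preceding the theorem says $\mu$ is to have all the properties of \refT{muConcave}), and the paper's proof leans on it implicitly through its citation of \refE{limsupA}; you should invoke that rather than Osgood alone.
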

\begin{proof}
First observe that $\la$ strictly concave in a neighborhood of infinity is equivalent to $A$ being strictly decreasing in a neighborhood of the origin by virtue of \refE{lambdapr}. Combined with \refE{limsupA} this gives that \refE{limxmupmu} holds.

Examining the proof of \refT{muConcave}, the starting point now being the function $\mu$ rather than the function $\al$, we see that the first equality in \refE{lambdar} along with \refE{laPrimeeps} define $\eps$ as a function of $r = - \log x$, where $\eps$ gives the location of the infimum in the defining relation, \refE{A}, between $\al$ and $\mu$. The invertibility of $\eps = \eps(r)$ required $\la$ to be strictly concave (this also gave $\eps' < 0$) and the condition on $\mu$ in \refE{limxmupmu} insures that $\eps(\iny) = 0$; together, these two facts give the invertibility of $\eps$ on $\Cal{N}$.

\Ignore{ 
Now, $\la'(r)$ and so $\eps(r)$ will have an inverse in a neighborhood of $\iny$ if and only $\la'(r)$ is strictly monotone in such a neighborhood. Since $\mu$ is twice continuously differentiable, this means that
\begin{align*}
	\la''(r)
		&= \diff{}{r} \pr{\diff{}{x} \log \pr{\frac{\mu(x)}{x}} \diff{x}{r}}
		= - \diff{}{r} \pr{x\diff{}{x} \log \pr{\frac{\mu(x)}{x}}} \\
		&= x\diff{}{x} \log \pr{\frac{\mu(x)}{x}}
		- x \frac{d^2}{d x^2} \log \pr{\frac{\mu(x)}{x}} \diff{x}{r} \\
		&= x\diff{}{x} \log \pr{\frac{\mu(x)}{x}}
			+ x^2 \frac{d^2}{d x^2} \log \pr{\frac{\mu(x)}{x}},
\end{align*}
where we used $dx /dr = - x$.

We now obtain a condition under which $\la'(r)$ and so $\eps(r)$ are monotonic functions of $r$ in some neighborhood, $\Cal{N}$, of $\iny$ and so are invertible in $\Cal{N}$. To see this, we calculate,
\begin{align*}
	\la''(r)
		&= \la''(r(x))
		= - \frac{\mu(x) (\mu'(x) + x \mu''(x)) - x (\mu'(x))^2}{(\mu(x))^2} (- e^{-r}) \\
		&= \frac{\mu'(x)}{\mu(x)} x + \frac{\mu(x) (x \mu''(x)) - x (\mu'(x))^2}{(\mu(x))^2} x \\
		&= \frac{\mu'(x)}{\mu(x)} x + \frac{\mu''(x)}{\mu(x)} x^2
			-  \pr{\frac{\mu'(x)}{\mu(x)}}^2 x^2 \\
		&= A(x) \pr{1 - A(x)} + \frac{\mu''(x)}{\mu(x)} x^2.
\end{align*}

Now,  $0 < A(x) < 1$ and hence $0 < A(x) (1 - A(x)) < 1$, and $\mu''(x) \le 0$, $\mu$ being concave, so it is not clear that $\la''(r(x))$ is signed near $x = 0$---that is, near $r = \iny$. Hence, we add this as a condition to allow us to make the inversion of $\eps$ on some neighborhood, $\Cal{N}$. Observe that this condition would hold if as $x \to 0^+$ the expression on the right-hand side above has a limit and that limit is nonzero.
} 

Using \refE{logalp2r}, we have
\begin{align*}
	\diff{}{r} (\log \al(\eps(r)))
		= (\log \al)'(\eps(r)) \eps'(r)
			= - 2 r \eps'(r).
\end{align*}
Hence,
\begin{align*}
	\log \al(\eps(r))
		&= - 2 \int r \eps'(r) \, dr
		= - 2 \brac{r \eps(r) - \int \eps(r) \, dr} \\
		&= 2 \brac{-r \eps(r) + \int \frac{1}{2} \la'(r) \, dr}
		= \la(r) - r \la'(r) + C
\end{align*}
so
\begin{align}\label{e:alphaEps}
	\al(\eps(r))
		= C \exp \brac{\la(r) - r \la'(r)}
		= C \exp \brac{\la(r) - 2 r \eps(r)},
\end{align}
and \refE{alSolThm} is just a re-expression of \refE{alphaEps}.

Because $\al(x) = x^{-1} \theta(x^{-1})$, we have
\begin{align*}
	\log \al(x)
	 	= - \log x + \log \theta \pr{\frac{1}{x}}
		= - \log x + x \varphi \pr{\frac{1}{x}},
\end{align*}
where $\varphi(p) = p \log \theta(p)$ as in \refL{logThetaConvex}, so
\begin{align*}
	(\log \al)' (x)
		&= - \frac{1}{x} + \varphi \pr{\frac{1}{x}} - \frac{1}{x} \varphi' \pr{\frac{1}{x}}, \\
	(\log \al)''(x)
		&= \frac{1}{x^2} + \frac{1}{x^3} \varphi'' \pr{\frac{1}{x}}.
\end{align*}
Hence, taking the derivative of \refE{logalp2r} gives
\begin{align*}
	-2
		&= (\log \al)''(\eps(r)) \eps'(r)
		= \eps'(r) \brac{\frac{1}{\eps(r)^2} + \frac{1}{\eps(r)^3} \varphi'' \pr{\frac{1}{\eps(r)}}}.
\end{align*}
Since $\eps(r)$ is strictly decreasing on $\Cal{N}$, $\log \al$ is strictly convex on $\Cal{N}$.
Also,
\begin{align}\label{e:varphippInTermsOfLambda}
	\begin{split}
	\varphi'' \pr{\frac{1}{\eps(r)}}
		&= -\eps(r) \brac{2 \frac{\eps(r)^2}{\eps'(r)} + 1}
		= -\frac{\la'(r)}{2} \brac{\frac{(\la'(r))^2}{\la''(r)} + 1} \\
		&= -\frac{\la'(r)}{2 \la''(r)} \brac{(\la'(r))^2 + \la''(r)}.
	\end{split}
\end{align}
This shows that $\varphi$ is convex if $\refE{YudoCond}$ holds, as long as $\la$ is strictly concave. The convexity of $\log \theta(1/\eps)$ then follow as in the proof of \refL{logThetaConvex}.
\DetailSome{ 
Let $\eta$ be the inverse function of $\eps$ on $\Cal{N}$, so that $\eta(\eps(r)) = r$. Then we can write \refE{alODE} as
\begin{align*}
	\al'(\eps) = - 2 \eta(\eps) \al(\eps)
		\text{ or }
		(\log \al)'(\eps) = - 2 \eta(\eps).
\end{align*}
Integrating from $\eps$ to $1/p_0$ gives
\begin{align*}
	\log \al(1/p_0) - \log \al(\eps)
		= - 2 \int_\eps^{1/p_0} \eta(s) \, ds
\end{align*}
so
\begin{align}\label{e:alSol}
	\al(\eps)
		= \al(1/p_0) \exp \pr{2 \int_\eps^{1/p_0} \eta(s) \, ds}.
\end{align}
(We have left undetermined the constant, $\al(1/p_0)$, which is of little importance.)

We can write \refE{alSol} differently by making the change of variables, $r = \eta(s)$ in the integral. Then $s = \eps(r)$ so $ds = \eps'(r) \, dr$ and the integral becomes
\begin{align*}
	\int _{\eta(\eps)}^{\eta(1/p_0)} r \eps'(r) \, dr
		&= r \eps(r)|_{\eta(\eps)}^{\eta(1/p_0)}
			- \int _{\eta(\eps)}^{\eta(1/p_0)} \eps(r) \, dr,
\end{align*}
and from \refE{laPrimeeps}
\begin{align*}
	\int _{\eta(\eps)}^{\eta(1/p_0)} \eps(r) \, dr
		= \frac{1}{2} \int _{\eta(\eps)}^{\eta(1/p_0)} \la'(r) \, dr
		= \frac{1}{2} \la(r)|_{\eta(\eps)}^{\eta(1/p_0)}.
\end{align*}
Thus,
\begin{align*}
	\int _{\eta(\eps)}^{\eta(1/p_0)} r \eps'(r) \, dr
		&= {\eta(1/p_0)} / p_0 - \eta(\eps) \eps
		- \frac{1}{2} \brac{\la(\eta(1/p_0)) - \la(\eta(\eps))}
\end{align*}
and \refE{alSol} becomes
\begin{align}\label{e:alSol2}
	\begin{split}
	\al(\eps)
		&= \al(p_0^{-1}) \exp \left\{2 \eta(p_0^{-1}) /p_0 - 2 \eta(\eps) \eps
			- \la(\eta(p_0^{-1})) + \la(\eta(\eps))\right\} \\
		&= C \exp \left\{\la(\eta(\eps)) - 2 \eta(\eps) \eps \right\}.
	\end{split}
\end{align}

	To demonstrate the convexity of
	$\log \al$ and $\log \theta(1/\eps)$, observe that
	\begin{align*}
		\log \al(\eps)
			= \log C_0 + \la(\eta(\eps)) - 2 \eta(\eps) \eps
	\end{align*}
	so
	\begin{align*}
		(\log \al(\eps))''
			&= (\la'(\eta(\eps)) \eta'(\eps))' - 2 (\eta'(\eps) \eps + \eta(\eps))' \\
			&= \la''(\eta(\eps))(\eta'(\eps))^2
				+ \la'(\eta(\eps)) \eta''(\eps)
				- 2 \eps \eta''(\eps) - 4 \eta'(\eps).
	\end{align*}
	But,
	\begin{align*}
		\la'(\eta(\eps)) = 2 \eps(\eta(\eps)) = 2 \eps,
	\end{align*}
	so
	\begin{align*}
		(\log \al(\eps))''
			= \la''(\eta(\eps))(\eta'(\eps))^2
				 - 4 \eta'(\eps)
			= \eta'(\eps) \brac{\la''(\eta(\eps)) \eta'(\eps) - 4}.
	\end{align*}
	But,
	\begin{align}\label{e:lappeta}
		\la''(\eta(\eps)) \eta'(\eps)
			= (\la'(\eta(\eps))'
			= (2 \eps)'
			= 2,
	\end{align}
	so
	\begin{align}\label{e:logalppetap}
		(\log \al(\eps))''
			= \la''(\eta(\eps))(\eta'(\eps))^2
				 - 4 \eta'(\eps)
			= - 2\eta'(\eps) < 0.
	\end{align}
	Thus, $\log \al(\eps)$ is strictly convex, since $\eta$ strictly decreases with $\eps$ in a
	neighborhood of the origin,
	this being equivalent to our assumption that there is a neighborhood $\Cal{N}$ of $r = \iny$
	on which $\eps$ is invertible.
	
	Now assume that $\refE{YudoCond}_1$ holds and define $\phi(\eps) = \log \theta(1/\eps)$ as
	in \refE{phieps}. Then using \refEAnd{lappeta}{logalppetap},
	\begin{align*}
		\phi''(\eps)
			&= - \frac{1}{\eps^2} - 2 \eta'(\eps)
			= -\frac{1}{\eps^2} - \frac{4}{\la''(\eta(\eps))}.
	\end{align*}
	The convexity of $\phi$ then follows from the calculations following \refE{phipp} in the proof of
	\refT{muConcave} and the convexity of $p \log \theta(p)$ from \refE{logthetappvarphipp}.
	
	\Ignore { 
	We showed above that the neighborhood, $\Cal{N}$, exists if and only if $\la(r)$
	is either strictly concave or strictly convex in a neighborhood of $\iny$. But since $\mu$
	inverts the relation in \refE{A}, $\log \mu(e^{-r})$ is concave by \refT{muConcave}, so
	$\la(r) = r + \log \mu(e^{-r})$ is concave.
	} 
	\Ignore { 
	To rule out convexity, we calculate,
	\begin{align*}
		\frac{d^2}{d r^2} &\log(e^r \mu(e^{-r}))
			= \frac{d^2}{d r^2} \brac{r + \log \mu(e^{-r})}
			= \frac{d^2}{d r^2} \log \mu(e^{-r}) \\
			&= - \diff{}{r} \brac{e^{-r} (\log \mu)'(e^{-r})}
			= e^{-r} (\log \mu)'(e^{-r})
				+ e^{-2r} (\log \mu)''(e^{-r})
	\end{align*}
	Now, if $\log (e^r \mu(e^{-r}))$ is strictly concave then the above equality is positive.
	} 
} 
\end{proof}

\Ignore{ 
\begin{cor}\label{C:CorInvertMu}
Assume that $\mu$ is a MOC as in \refD{MOC}. Let $a > 0$. The following are equivalent:
\begin{itemize}
	\item[(i)]
		The defining relation in (11.1) can be inverted on the interval $(0, a)$ to
		obtain $\al$ (equivalently, $\theta$) from $\mu$ with $\log \al$ twice continuously
		differentiable and strictly convex on $(0, a)$.
				
	\item[(ii)]
		The function $\mu$ is  is twice continuously differentiable, strictly increasing, concave
		on $(0, a)$, and satisfies \refE{limxmupmu}.

\end{itemize}
Moreover, $\theta$ will have the required properties of a function derived from the $L^p$-norms of an underlying vorticity field if and only if $\refE{YudoCond}_1$ or, equivalently, $\refE{YudoCond}_3$, hold and $\mu$ is $C^\iny$ on $(0, \iny)$.
\end{cor}
\begin{proof}
If \refE{A} can be inverted then \refT{muConcave} gives the stated properties of $\mu$ in (ii), showing that (i) implies (ii).

Conversely, from the proof of \refT{Invertmu} it is clear that the invertibility of $\eps$ and so of the relation in \refE{muForm2} to obtain $\theta$ in terms of $\mu$ is equivalent to the strict concavity of $\la$. But the strict concavity of $\la$ follows from \refP{ConcavityImplications}, showing that (ii) implies (i).

The last sentence in the statement of the corollary follows similarly, taking advantage as well of \refL{logThetaConvex}.
\end{proof}
} 

%
%
\Ignore{ 
\begin{remark}
From the expressions for $\mu'(x)$ and $\mu''(x)$ in the proof of \refC{CorInvertMu},
\begin{align*}
	\mu''(x)
		&= (1 - \la'(- \log x)) \la'(- \log x)(-1/x) e^{\la(- \log x)} \\
		&\qquad
			- \la''(- \log x)(-1/x) e^{\la(- \log x)} \\
		&= \frac{1}{x} \pr{-\frac{1 - \la'(- \log x)} \la'(- \log x) + \la''(- \log x)}e^{\la(- \log x)}.
\end{align*}
\end{remark}
} 

\Ignore{ 
\begin{remark}\label{R:InvermuSimpler}
The main point of \refT{Invertmu} is establishing the existence of an inverse to the relation in \refE{A} and giving the properties of $\al$ and $\theta$ that result. The existence of an inverse is already almost implicit in the proof of \refT{muConcave}, once we have \refE{logalp2r}. Also, an equivalent to \refE{alSolThm}, which can be directly obtained by integrating \refE{logalp2r} and using $(\la(r) - r \la'(r))' = - r \la''(r)$, is
\begin{align}\label{e:alphaEps}
	\al(\eps(r))
		= C \exp \brac{\la(r) - r \la'(r)}
		= C \exp \brac{\la(r) - 2 r \eps(r)}.
\end{align}
Letting $\varphi(p) = p \log \theta(p)$ as in \refL{logThetaConvex}, one can also derive
\begin{align*}
	\varphi'' \pr{\frac{1}{\eps(r)}}
		&= -\frac{\la'(r)}{2 \la''(r)} \brac{(\la'(r))^2 + \la''(r)}
\end{align*}
from \refE{alSolThm}. Note that the righthand side is nonnegative if $\refE{YudoCond}$ holds, since $\la$ is strictly increasing and strictly concave.
\end{remark}
} 

\DetailSome{ 
Because $\al(x) = x^{-1} \theta(x^{-1})$, we have
\begin{align*}
	\log \al(x)
	 	= - \log x + \log \theta \pr{\frac{1}{x}}
		= - \log x + x \varphi \pr{\frac{1}{x}}
\end{align*}
so
\begin{align*}
	(\log \al)' (x)
		= - \frac{1}{x} + \varphi \pr{\frac{1}{x}} - \frac{1}{x} \varphi' \pr{\frac{1}{x}}.
\end{align*}
Hence, using \refE{logalp2r},
\begin{align*}
	-2 r
		&= (\log \al)'(\eps(r))
		= - \frac{1}{\eps(r)} + \varphi \pr{\frac{1}{\eps(r)}} - \frac{1}{\eps(r)} \varphi' \pr{\frac{1}{\eps(r)}}.
\end{align*}
Taking the derivative with respect to $r$ gives
\begin{align*}
	-2
		&= \frac{\eps'(r)}{\eps(r)^2}
			- \frac{\eps'(r)}{\eps(r)^2} \varphi' \pr{\frac{1}{\eps(r)}} 
			+ \frac{\eps'(r)}{\eps(r)^2} \varphi' \pr{\frac{1}{\eps(r)}}
			+ \frac{\eps'(r)}{\eps(r)^3} \varphi'' \pr{\frac{1}{\eps(r)}} \\
		&= \frac{\eps'(r)}{\eps(r)^2}
			+ \frac{\eps'(r)}{\eps(r)^3} \varphi'' \pr{\frac{1}{\eps(r)}}
\end{align*}
and thus
\begin{align*}
	\varphi'' \pr{\frac{1}{\eps(r)}}
		&= -\eps(r) \brac{2 \frac{\eps(r)^2}{\eps'(r)} + 1}
		= -\frac{\la'(r)}{2} \brac{\frac{(\la'(r))^2}{\la''(r)} + 1} \\
		&= -\frac{\la'(r)}{2 \la''(r)} \brac{(\la'(r))^2 + \la''(r)},
\end{align*}
which we note is nonnegative if $\refE{YudoCond}$ holds, as long as $\la$ is \textit{strictly} concave.
} 

As an example, let us apply \refT{Invertmu} to the first Yudovich vorticity, where $\theta(p) = C$, which corresponds to $\mu_1$ of \refS{GammaSingleTime}. As we know from \refS{BoundedVorticity}, $\mu(x) = - x \log x$ for sufficiently small $x$, where here and in what follows we ignore immaterial constants.

Then from \refE{laDef},
\begin{align*}
	\la(r)
		&= \log \pr{e^r (- e^{-r} \log(e^{-r})}
		= \log r
\end{align*}
and so $\eps(r) = \frac{1}{2} \la'(r) = 1/(2r)$. Thus, $\eps$ is invertible for \textit{all} $r$ with inverse $\eta(\eps) = 1/(2 \eps)$. Thus, $\eta(\eps) \eps = \frac{1}{2}$ and \refE{alSolThm} gives
\begin{align*}
	\al(\eps)
		&= C_0 \exp \set{\log(2 \cdot 1/(2 \eps)) - 2 \frac{1}{2}}
		= \frac{C}{\eps}
\end{align*}
and thus $\theta(p) = p^{-1} (C p) = C$, recovering $\theta$ to within a multiplicative constant.

The higher Yudovich examples cannot be inverted exactly using \refT{Invertmu} because $\eps(r)$ becomes a transcendental function of $r$ that cannot be inverted in closed form. The following proposition is of some use in this regard, however.

\begin{prop}
	If $\eps$ is overestimated then the expression in \refE{alSolThm} underestimates $\al$.
	That is, assume that $\ol{\eps} \colon \R \to (0, 1/2)$ is $C^1$ and strictly decreasing with
	$\ol{\eps}(\iny) = 0$ and $\ol{\eps} \ge \eps$,and let
	\begin{align}\label{e:olalr}
		\ol{\al}(\ol{\eps}(r))
		= C \exp \brac{\la(r) - 2 r \ol{\eps}(r)}.
	\end{align}
	Then $\ol{\al}(x) \le \al(x)$ for all sufficiently large $x$.
\end{prop}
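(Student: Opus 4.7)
The plan is to pass from comparing $\ol{\al}$ and $\al$ at a common argument $x$ to comparing the exponents in the representations \refE{alphaEps} and \refE{olalr}. Given $x$ small enough that $x$ lies in the ranges of both $\eps$ and $\ol{\eps}$ (which holds for $x$ near $0$, corresponding to $r$ near $\iny$, since $\eps(\iny) = \ol{\eps}(\iny) = 0$ and both maps are strictly decreasing), set $r = \eta(x)$ and $\ol{r} = \ol{\eta}(x)$. The hypothesis $\ol{\eps} \ge \eps$, together with the strict monotonicity of both maps, immediately gives $\ol{r} \ge r$: indeed, $x = \ol{\eps}(\ol{r}) \le \ol{\eps}(r)$ forces $\ol{r} \ge r$ by the decrease of $\ol{\eps}$.

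Next, using \refE{alphaEps} at $r$ and the definition \refE{olalr} at $\ol{r}$ (both evaluated so that their inputs equal the common value $x$), we obtain
\begin{align*}
    \al(x) &= C \exp\bigl[\la(r) - 2 r x\bigr], \\
    \ol{\al}(x) &= C \exp\bigl[\la(\ol{r}) - 2 \ol{r} x\bigr].
\end{align*}
The inequality $\ol{\al}(x) \le \al(x)$ is therefore equivalent to
\begin{align*}
    \la(\ol{r}) - \la(r) \le 2x (\ol{r} - r).
\end{align*}

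The key step is to recognize the right-hand side as the tangent-line value of $\la$ at $r$. By \refE{laPrimeeps} (or, equivalently, \refE{laDef}), $\la'(r) = 2 \eps(r) = 2x$, so the asserted inequality reads
\begin{align*}
    \la(\ol{r}) - \la(r) \le \la'(r)\,(\ol{r} - r),
\end{align*}
which is exactly the standard tangent-line inequality for a concave function evaluated at $\ol{r} \ge r$. Since \refT{Invertmu} assumes $\la$ is strictly concave on a neighborhood $\Cal{N}$ of $r = \iny$, this inequality holds whenever both $r$ and $\ol{r}$ lie in $\Cal{N}$. Because $\ol{r} \ge r$ and $r = \eta(x) \to \iny$ as $x \to 0^+$ (equivalently, in the statement's asymptotic regime), this is automatic for $x$ in the range described by the proposition, completing the proof.

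The main (very minor) obstacle is merely bookkeeping about the domain: one needs to ensure that for the argument $x$ under consideration, both $r$ and $\ol{r}$ lie in the neighborhood of $\iny$ on which $\la$ is known to be concave and $\eps$ is invertible. This is handled by restricting to $x$ sufficiently close to the asymptotic regime, which matches the phrasing of the conclusion.
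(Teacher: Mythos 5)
Your proof is correct, and it takes a genuinely different route from the paper's. The paper compares $\ol{\al}$ and $\al$ at a common value of $r$ (i.e., at the arguments $\eps(r)$ and $\ol{\eps}(r)$, respectively) and first asserts that $r \mapsto \ol{\al}(\ol{\eps}(r))$ is increasing; from this it passes to $\ol{\al}(\eps(r)) \le \ol{\al}(\ol{\eps}(r))$, and then uses $\ol{\eps}(r) \ge \eps(r)$ in the exponent to conclude. You instead compare $\ol{\al}$ and $\al$ at a common argument $x$, pull back to $r = \eta(x)$ and $\ol{r} = \ol{\eta}(x)$ with $\ol{r} \ge r$, and reduce the claim to the tangent-line inequality $\la(\ol{r}) - \la(r) \le \la'(r)(\ol{r} - r)$, using $\la'(r) = 2\eps(r) = 2x$. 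This makes it completely explicit where the concavity of $\la$ enters (the paper's monotonicity claim about $\ol{\al} \circ \ol{\eps}$ is, as stated, not self-evident: its $r$-derivative is proportional to $2(\eps(r) - \ol{\eps}(r)) - 2r\ol{\eps}'(r)$, whose sign is not a priori determined by the hypotheses). Your reduction is cleaner and your bookkeeping about restricting to $r, \ol{r} \in \Cal{N}$ is the right way to interpret the "sufficiently large $x$" (more naturally read as "sufficiently small $x$", since $\eps \to 0$ as $r \to \iny$) in the statement.
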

\begin{proof}
	From \refE{olalr}, $\ol{\al}(\ol{\eps}(r))$ is an increasing function of $r$, so	
	\begin{align*}
		\ol{\al}(\eps(r))
			&\le \ol{\al}(\ol{\eps}(r))
			= C \exp \brac{\la(r) - 2 r \ol{\eps}(r)}
			\le C \exp \brac{\la(r) - 2 r \eps(r)} \\
			&= \al(\eps(r)).
	\end{align*}
\end{proof}

For example, suppose that for $\mu = \mu_m$, $m \ge 2$, we approximate $\eps(r)$ by $1/(2r)$, which is the exact $\eps(r)$ for the first Yudovich example. As can be easily verified, this is an overestimate of the true $\eps(r)$, and we will obtain $\ol{\al}(\eps) = \eps^{-1} \theta_m(\eps^{-1})$ as an underestimate of $\al(\eps)$. This is a kind of dual to the overestimate in \cite{Y1995} of what we call $\mu_m$ from $\theta_m$ using this same estimate for $\eps$.

\Ignore { 
We will find use for the following calculation in the next section:
\begin{align*}
	\frac{1}{1/\eps(r)} - \varphi''(1/\eps(r))
\end{align*}
} 

\Ignore{ 

Finally, we have the following corollary of \refP{Dini} and \refT{Invertmu}:

\begin{cor}\label{C:CorDini}
Let $u$ be a Yudovich velocity field lying in $\Y_\theta$ (defined in \refEAnd{Ytheta}{YNorm}) and let $\mu$ be the corresponding MOC given in \refE{mu}. Then to $S_\mu$ (defined in \refE{Smu}) there corresponds a Yudovich velocity field also lying in $\Y_\theta$ and having the same norm as $u$.
\end{cor}
\begin{proof}
	Let $S_\mu$ play the role of $\mu$ and $\beta$ the role of $\al$ in \refT{Invertmu} and let
	\begin{align*}
		\gamma(r) = r + \log S_\mu(e^{-r})
	\end{align*}
	play the role of $\la$ in that theorem.
	By \refP{Dini}, $S_\mu$ has all of the required properties to apply \refT{Invertmu} to obtain $\beta$,
	with $\beta$ satisfying
	\begin{align*}
		\beta(\gamma'(r)/2)
			= C \exp \brac{\gamma(r) - r \gamma'(r)}
	\end{align*}
	by \refE{alphaEps}. But also by \refP{Dini}, $\gamma$ is in the same germ as $\la$ and $\gamma'$
	is in the same germ as $\la'$ at $+\iny$ so $\beta$ is in the same germ as $\al$ at the origin and hence
	$\pi(p) = p^{-1} \al(p^{-1})$ is in the same germ as theta at $+\iny$. Let $\omega$ be any vorticity
	field with total mass $m = \norm{u}_{E_m}$ having \dots
\end{proof}
} 

\Ignore { 
\begin{remark}
Expressed in terms of $\la$, the Osgood condition \refE{muOsgood} is
\begin{align*}
	\int_0^\iny e^{-\la(r)} \, dr = \iny.
\end{align*}
From \refE{alphaEps},
\begin{align*}
	C e^{-\la(r)} = \frac{e^{-2 r \eps(r)}}{\al(\eps(r))}.
\end{align*}
But, \ldots
\begin{align*}
	\al(\eps(r))
		= C \exp \brac{\la(r) - r \la'(r)}
		= C \exp \brac{\la(r) - 2 r \eps(r)}.
		= \exp \brac{- r^2 \pr{\frac{\la(r)}{r}}'}.
\end{align*}

\end{remark}
} 

\Ignore{ 
Or, since $\la(r) = \log (\mu(x(r))/x(r)) = \log (- 2 \log x(r)) = \log (2 r)$ and $\eta(y) y = (2 y)^{-1} y = 1/2$ for all $y$, \refE{alSol2} gives
\begin{align*}
	\al(\eps)
		&= \al(p_0^{-1}) \exp \left\{1/2 - 1/2 - \log(p_0) + \log(\eps^{-1})\right\}
		= \al(1/p_0) (p_0 \eps)^{-1}
\end{align*}
as before.

Thus, from \refE{epsx},
\begin{align*}
	\eps(x)
		&= \frac{1}{2} - \frac{x}{2} \frac{(- x \log x)'}{(-x \log x)}
		= \frac{1}{2} - \frac{1}{2} \frac{(1 + \log x)}{\log x}
		= - \frac{1}{2 \log x}.
\end{align*}
(This agrees with our choice of $\eps_0$ in \refS{BoundedVorticity}, where $r$ played the role $x$ plays here and we were using $\beta_1$ instead of $\mu$, which accounts for the factor of $1/2$). In terms of $r$ this is $\eps(r) = 1/(2 r)$.

\Ignore{ 
Before continuing the analysis, let us apply this to the simplest Yudovich vorticity, where $\theta(p) = 1$. As we know from \refS{BoundedVorticity}, $\beta_1(x) = - x \log x$ so $\mu(x) = - 2 x \log x$ for sufficiently small $x$, where here and in what follows we ignore immaterial constants. Thus, $\la(r) = \log(- 2 \log x) = \log (2 r)$ so from\refE{laPrimeeps}, $\eps(r) = (1/2) (\log (2 r))' = 1/(2 r)$. (Thus, $\eps(r) =  -1/2 \log x$, which agrees with our choice of $\eps_0$ in \refS{BoundedVorticity}, where $r$ played the role $x$ plays here and we were using $\beta_1$ instead of $\mu$, which accounts for the factor of $1/2$).
} 

Then $\eps$ is invertible with $\eta(\eps) = 1/ (2 \eps)$ and \refE{alSol} gives
\begin{align*}
	\al(\eps)
		& = \al(1/p_0) \exp (\log (1/p_0) - \log \eps )
		= \al(1/p_0) (p_0 \eps)^{-1}.
\end{align*}
Thus,
\begin{align*}
	\theta(1/\eps)
		= \eps \al(\eps)
		= \frac{1}{p_0} \al(1/p_0),
\end{align*}
returning us to a constant $\theta$, representing bounded vorticity.
} 

\Ignore{ 
As this example shows, if we have an explicit expression for $\mu(x)$ and so for $\eps(r)$, it is easier to determine directly whether $\eps$ is invertible than it is to test the condition in \refE{ACond}. In fact, however, the condition in \refE{ACond} fails in this example, because $A(x) \to 1$ while $(\mu''(x)/\mu(x)) x^2 \to 0$ as $x \to 0^+$.
}

%
%
\Ignore{ 
\section{Some potentially useful observations}

\noindent Let $\mu(x) = x (\log x)^2$. Then $\mu$ is Dini but not Osgood. That is,
\begin{align*}
	\int_0^1 \frac{\mu(x)}{x} \, dx < \iny
\end{align*}
while
\begin{align*}
	\int_0^1 \frac{dx}{\mu(x)} < \iny.
\end{align*}
} 

\Ignore{ 

We can see \refE{A} as defining an operator, $A$, from $C((0, 1/p_0]; [0, \iny))$ to $C([0, \iny); [0, \iny))$ with $A(\al) = \mu$. Since $\al$ is derived from the $L^p$-norm, $\theta(p)$, of $\omega$, it follows that $\log \theta(p)$ is convex, $\theta$ is continuous (\textbf{in fact, I think, smooth}), and, with the one exception when $\omega$ is in $L^\iny$, $\theta(\iny) = \iny$. \textbf{Think about how the convexity of $\theta$ affects $\al$.}

For any $\mu$ in $C([0, \iny); [0, \iny))$ define
\begin{align}\label{e:B}
	a(\eps) = \sup \set{x^{2 \eps - 1} \mu(x) \colon x \text{ in } [0, \iny)}.
\end{align}
This defines an operator from, $B$, from $C([0, \iny); [0, \iny))$ to $C((0, 1/p_0]; [0, \iny))$ with $B(\mu) = a$.

Now let us assume that $\mu$ satisfies all of the properties of \refT{muConcave}. The question is, how close $B$ comes to being an inverse of $A$ for such a $\mu$.

We make some easy observations concerning $a$. First, $a$ is strictly decreasing. Second, since $\mu(x)/x \to 0$ as $x \to \iny$, the supremum in \refE{B} occurs in a right neighborhood of the origin. Third, if $\lim_{x \to 0^+}$ $\mu(x)/x = \iny$ then $a(0) = \iny$, and $a$ strictly decreases from $\iny$. Fourth, $a$ is convex, as can be seen using an argument similar to that in the proof of \refT{muConcave} showing that $\mu$ is concave.

Now suppose that $\mu$ is defined in terms of $\al$ as in \refE{A}. Then
\begin{align*}
	a(\eps)
		&= B(\mu) (\eps)
		= B A (\eps)
		= B \sup \set{x^{2 \eps - 1} \mu(x) \colon x \text{ in } [0, \iny)} \\
		&= \sup \set{x^{2 \eps - 1} \inf \set{x^{1 - 2 \delta} \al(\delta) \colon \delta
			\text{ in } (0, 1/p_0]} \colon x \text{ in } [0, \iny)} \\
		&= \sup \set{\inf \set{x^{2(\eps -\delta)} \al(\delta) \colon \delta
			\text{ in } (0, 1/p_0]} \colon x \text{ in } [0, \iny)} \\
		&\le \sup \set{\al(\eps) \colon x \text{ in } [0, \iny)}
		= \al(\eps),
\end{align*}
so $a \le \al$.

it must be that as $\eps \to 0^+$ that the supremum in \refE{B} occurs closer and closer to $x = 0$, and that

----------

Disregarding the insignificant constant $C/4$ and factor $4^{-\eps}$ in \refE{muForm2}, we can write that equation in the form
\begin{align}\label{e:mug}
	\mu(x)
		&= \inf \set{g_x(\eps) \colon \eps \text{ in } (0, 1/p_0]},
		\quad g_x(\eps) :=  x^{1 - 2 \eps} \al(\eps),
\end{align}
where $\al(\eps) = \eps^{-1} \theta(\eps^{-1})$.
Our intent is start with a MOC, $\mu$, having all of the properties stated in \refT{muConcave} and inverting (non-uniquely) the relation in \refE{mug} to obtain a function $\theta$ that is admissible in the sense of \refD{Admissible}.

The simplest possibility is that $g_x$ has a single local (and hence absolute) minimum for each value of $x$.

Then
\begin{align*}
	g_x'(\eps)
		&= \diff{}{\eps} \brac{e^{(\log x) (1 - 2 \eps)} \al(\eps)} \\
		&= -2 \log x \, x^{1 - 2 \eps} \al(\eps))
			+ x^{1 - 2 \eps} \al'(\eps)
\end{align*}
so $g_x'(\eps) = 0$ when
\begin{align*}
	\al'(\eps) = 2 \log x \, \al(\eps).
\end{align*}

Now let $\eps(x)$ be the solution to the last equality at $x$; that is,
\begin{align}\label{e:alPrimex}
	\al'(\eps(x)) = 2 \log x \, \al(\eps(x)).
\end{align}
Then $g_x'(\eps(x)) = 0$ and 
$
	\mu(x) = g_x(\eps(x)).
$

If $\eps$ is an invertible function with inverse $\eta$ then since
\begin{align*}
	\al'(\eps(x))
		&= \frac{(\al(\eps(x)))'}{\eps'(x)}
		= (\al(\eps(x)))' x'(\eps)
\end{align*}
we can write \refE{alPrimex} in the variable $\eps$ as
\begin{align}\label{e:ODEa}
	\al'(\eps) = \frac{2 \log \eta(\eps)}{\eta'(\eps)} \al(\eps).
\end{align}

Now, $\mu(x) = g_x(\eps(x))$, so
\begin{align*}
	\mu'(x)
		&= \pdx{g_x}{\eps} (\eps(x)) \eps'(x)
			+ \pdx{g_x}{x} (\eps(x))
		= \pdx{g_x}{x} (\eps(x)),
\end{align*}
since $\eps(x)$ is a local minimum of $g_x$ by assumption. But
\begin{align*}
	\pdx{g_x}{x}(\eps)
		&= (1 - 2 \eps) x^{-2 \eps} \al(\eps)	
\end{align*}
so
\begin{align*}
	\mu'(x)
		= (1 - 2 \eps(x)) x^{-2 \eps(x)} \al(\eps(x)).
\end{align*}
Hence,
\begin{align}\label{e:almueta}
	\al(\eps)
		= \frac{\eta(\eps)^{2 \eps}}{1 - 2 \eps} \mu'(\eta(\eps)).
\end{align}

Now, \refEAnd{ODEa}{almueta} give two equations between $\al$ and $\eta$. The ODE, \refE{ODEa}, is, in principle, integrable to solve for $\al(\eps)$ in terms of $\eta(\eps)$. If $F(\eps)$ is an antiderivative of $2 \log \eta(\eps)/\eta'(\eps)$ then $\al(\eps) = C e^{F(\eps)}$. Thus,
\begin{align*}
	F(\eps) = C + 2 \eps \log \eta(\eps) - \log(1 - 2 \eps) + \log \mu'(\eta(\eps))
\end{align*}
and taking a derivative with respect to $\eps$,
\begin{align*}
	2 \frac{\log \eta(\eps)}{\eta'(\eps)}
		= 2 \log \eta(\eps) - \frac{2 \eps}{1 - 2 \eps} +\frac{\mu''(\eta(\eps)) \eta'(\eps)}{\mu'(\eta(\eps))}.
\end{align*}
Multiplying both sides by $\eta'(\eps)$,
\begin{align*}
	2 \log \eta(\eps)
		= 2 \log \eta(\eps) \, \eta'(\eps)
			- \frac{2 \eps}{1 - 2 \eps} \eta'(\eps)
			+\frac{\mu''(\eta(\eps))}{\mu'(\eta(\eps))}  \eta'(\eps)^2.
\end{align*}
Since $\mu$ is strictly increasing and concave, the last term is negative. Therefore,
\begin{align*}
	\log \eta(\eps)
		< \pr{\log \eta(\eps) 
			- \frac{2 \eps}{1 - 2 \eps}} \eta'(\eps).
\end{align*}

\Ignore{ 
If $\eta(\eps) \ge 1$ then
\begin{align*}
	1 <  \pr{1 
			- \frac{2 \eps}{(1 - 2 \eps)} \log \eta(\eps)} \eta'(\eps).
\end{align*}
Now, we must have $\eps(0) = 0$ [\textbf{verify this}] and $\eps(x)$ invertible means that both $\eps(x)$ and $\eta(\eps)$ are increasing functions,  so that $\eta'(\eps) \ge 0$. It follows that
\begin{align*}
	\frac{2 \eps}{(1 - 2 \eps)}  \log \eta(\eps) \ge 1
\end{align*}
or
\begin{align*}
	\eta(\eps) \ge e^{1/(2 \eps) - 1}.
\end{align*}
But $\eta(0) = 0$ and $\eta$ is continuous, so this means that \ldots
} 

Now, we must have $\eps(0) = 0$ [\textbf{verify this}] and $\eps(x)$ invertible means that both $\eps(x)$ and $\eta(\eps)$ are increasing functions,  so that $\eta(0) = 0$ and  $\eta'(\eps) \ge 0$. Since it is the only the value of $\al(\eps)$ near $\eps = 0$ that is important, assume that  $\eps$ is small enough that $\eta(\eps) < 1$. Then
\begin{align*}
	\pr{1 - \frac{2 \eps}{(1 - 2 \eps) \log \eta(\eps)}} \eta'(\eps)
		< 1.
\end{align*}
} 

\Ignore{ 

For instance, if we let $\eps(x) = -1/\log x$ (for $x$ sufficiently small), as Yudovich did in a slightly different context, then
\begin{align*}
	\al'(\eps(x))
		&= \frac{(\al(\eps(x)))'}{\eps'(x)}
		= \frac{(2 \log x + \log 4) \al(\eps(x))}{(x (\log x)^2)^{-1}} \\
		&= (2 x (\log x)^3 + (\log 4) x (\log x)^2) \al(\eps(x)).
\end{align*}

Since $\log x = -1/\eps$ and $x = e^{-1/\eps}$, we have
\begin{align*}
	(\log \al(\eps))'
		&= \frac{\al'(\eps(x))}{\al(\eps(x))}
		= - 2 \frac{e^{-1/\eps}}{\eps^3} + (\log 4) \frac{e^{-1/\eps}}{\eps^2}.
\end{align*}

Making the change of variables, $y = -1/\eps$, we have
\begin{align*}
	\int &\brac{- 2 \frac{e^{-1/\eps}}{\eps^3} + (\log 4) \frac{e^{-1/\eps}}{\eps^2}} \, d \eps
		= 2 \int y e^y \, dy - (\log 4) \int e^y \, dy \\
		&= 2 e^y (y - 1) - (\log 4) e^y
		= e^y(2y - 2 - \log 4) \\
		&= - e^{-1/\eps} (2/\eps + 2 + \log 4).
\end{align*}
Thus,
\begin{align*}
	\log \al(\eps)
		&= - e^{-1/\eps} (2/\eps + 2 + \log 4) + C
		= x (-2 \log x + 2 + \log 4) + C
\end{align*}
so
\begin{align*}
	\al(\eps(x))
		= C x^{-2x} e^{2x} 4^x.
\end{align*}
Then,
\begin{align*}
	\mu(x)
		&= g_x(\eps(x))
		=  x^{1 - 2 \eps(x)} 4^{-\eps(x)} \al(\eps(x)))
		= C x x^{2/\log x} 4^{1/\log x} x^{-2x} e^{2x} 4^x \\
		&= C x^{1 -2x} e^{2x} 4^x
\end{align*}
} 

\AdditionalConstraint {
%
%
\section{Dealing with the additional constraint on $\mu$}\label{S:muInversionAdditional}

\noindent 
In \refS{InvertingEulerMOC} we found that $\mu$ was required to satisfy the additional constraint in $\refE{YudoCond}$, which goes beyond $\mu$ being strictly increasing and concave. With the weaker condition that $\la$ is strictly concave or, equivalently, that $A$ is strictly decreasing the inversion can still be accomplished, but $p \log \theta(p)$ need not be convex.

As noted in the first paragraph of the proof of \refT{Invertmu}, if we can obtain a $\mu$ such that $\la$ is concave near the origin then \refE{limxmupmu} holds. So we will focus on obtaining a concave $\la$ or the stronger condition in $\refE{YudoCond}$.

\Ignore{ 
By \refE{lambdapr}, $\la''(r) = e^{-r} A'(e^{-r})$, so whether or not the stronger condition in $\refE{YudoCond}$ holds, $A$ will be decreasing. Combined with \refE{limsupA}, which requires only that $\mu$ be concave, it follows that \refE{limxmupmu} would hold.
} 

In the context of attempting to extend \refT{ConcaveCIG} to account for this additional constraint on $\mu$, it is best to express them in terms of the generating function, $h$.

We assume in this section that we have started with a strongly strictly concave acceptable MOC, $f$, and that we have applied \refT{ConcaveCIG} if necessary to obtain a strictly larger concave function, $\ol{f}$, which we relabel $f$, along with a generating function $h$, the function $g = \log h'$,  a concave CIG, $(f^t)_{t \in \R}$, embedding $f$, and the strongly strictly concave MOC, $\mu$, on the vector field.

\Ignore{ 
It might appear that \refE{limxmupmu} alone constrains how poor the MOC $f$ can be, but that is not that case. For writing \refE{limxmupmu} in the form $x (\log \mu)'(x) \le 1$ it follows much as in \refR{muSandwich} that given any $\eps > 0$ we have, for all $x$ in $(\eps, f(\eps))$,
\begin{align*}
	\mu(x) \le \frac{\mu(\eps)}{\eps} x
\end{align*}
so that
\begin{align*}
	\frac{\eps}{\mu(\eps)} \frac{1}{x}
		\le \frac{1}{\mu(x)}.
\end{align*}
Thus,
\begin{align*}
	\int_\eps^{f(\eps)} \frac{\eps}{\mu(\eps)} \frac{1}{x} \, d x
		\le \int_\eps^{f(\eps)} \frac{1}{\mu(x)} \, dx
		= 1
\end{align*}
from which it follows that $\log (f(\eps)/\eps) \le \mu(\eps)/\eps$ for all $\eps > 0$. Replacing $\eps$ by $x$ and integrating gives
\begin{align*}
	\int_0^1 \log \pr{\frac{f(x)}{x}} \, dx
		\le \int_0^1 \frac{\mu(x)}{x} \, dx
		< \iny
\end{align*}
by \refR{Dini}. But for sufficiently small $a > 0$ such that $f(a) < 1$, $x < f(x) < 1$ so
\begin{align*}
	0
		&= \int_0^a \log 1 \, dx
		< \int_0^a \log \pr{\frac{f(x)}{x}} \, dx
		< \int_0^a \log \pr{\frac{1}{x}} \, dx
		< \iny.
\end{align*}
That is, for \textit{any} acceptable MOC, $\int_0^1 \log (f(x)/x) \, dx < \iny$, so \textbf{f being concave is not  restriction to the growth of f.} Well no duh!
} 

Using \refE{mupphp} and the calculation that led to \refE{loglogpIneq}, the inequality in $\refE{YudoCond}_3$ becomes
\begin{align}\label{e:logloghBound}
	0 \ge (\log h')''(x)
		&\ge \frac{h(x) \mu'(h(x)) - \mu(h(x))}{h'(x) (h(x))^2}
		= \pr{\log h}''(x).
\end{align}
Thus, from \refT{GenSoFar},
\begin{align*}
	&0 < g' = (\log h')' \le (\log h)', \\
	&(\log h)'' \le (\log h')'' = g'' < 0.
\end{align*}

By L'Hospital's rule,
\begin{align*}
	\lim_{x \to 0^+} \frac{\mu(x)}{x}
		= \lim_{x \to 0^+} \mu'(x).
\end{align*}
Here we used $\mu(0) = 0$ and the fact that the latter limit exists (though it might be infinite), as $\mu'$ is decreasing. Also, the limit cannot be zero since $\mu \not \equiv 0$.

Because $\mu$ is strictly concave, $\lim_{x \to 0^+} \mu'(x) = \iny$ unless $\mu$ is sublinear or linear near the origin. Since even unbounded vorticity leads to a superlinear near the origin, we will assume that
the two limits above are infinite. Then using \refEAnd{muhhp}{muphhphpp},
\begin{align*}
	L_1
		&:= \lim_{x \to 0^+} \mu'(x)
		=  \lim_{x \to -\iny}  \mu'(h(x))
		=  \lim_{x \to -\iny} \frac{h''(x)}{h'(x)}
		=  \lim_{x \to -\iny} (\log h')'(x), \\
	L_2
		&:= \lim_{x \to 0} \frac{\mu(x)}{x}
		= \lim_{x \to 0^+} \frac{\mu(h(x))}{h(x)}
		= \lim_{x \to -\iny} \frac{h'(x)}{h(x)}
		= \lim_{x \to -\iny} (\log h)'(x).
\end{align*}
Then since $L_1 = L_2 = \iny$, we can apply L'Hospital's rule to give
\begin{align*}
	A(0)
		&= \lim_{x \to 0} \frac{x \mu'(x)}{\mu(x)}
		= \lim_{x \to 0} \frac{\mu'(x)}{\mu(x)/x}
		= \lim_{x \to -\iny} \frac{(\log h')'(x)}{(\log h)'(x)}
		= \lim_{x \to -\iny} \frac{(\log h')''(x)}{(\log h)''(x)}
\end{align*}
\textit{if the last limit exists}.

If \refE{logloghBound} holds then
\begin{align*}
	\frac{(\log h')''(x)}{(\log h)''(x)}
		\le 1
\end{align*}

----------------

and the last limit exists then it must be 1, but even under this assumption we should not expect the last limit to exist, so we need to examine this issue further.

From the observations above, even without assuming \refE{limxmupmu}, we have
\begin{align*}
	0
		\le F
		:= g'
		= (\log h')'
		< (\log h)'
		=: G
\end{align*}
with $G(x) \to \iny$ as $x \to -\iny$. Strict inequality holds here because $\mu$ is \textit{strongly strictly} concave.

By the extended mean value theorem, for any $a, x$ in $\R$, $a < x$, there exists $\xi$ in $(a, x)$ such that
\begin{align*}
	\frac{F(x) - F(a)}{G(x) - G(a)} = \frac{F'(\xi)}{G'(\xi)}.
\end{align*}
First suppose that \refE{logloghBound} holds so that $F'(\xi)/G'(\xi) \ge 1$. \textbf{Isn't this $\le$ because $F'$ and $G'$ are negative?} Since $F(x), G(x)$ are finite, as $a$ decreases, the left hand side gets arbitrarily close to $F(a)/G(a) < 1$. From this, \refE{limxmupmu} follows (a fact we already know, as noted in
\textbf{This remark no longer exists and in any case the comment is wrong.}

-------

There are two possibilities: 1) for all $a$ there exists a $\xi < a$ such that $F'(\xi)/G'(\xi) < 1$ or 2) there is some $a_0$ in $\R$ for which $F'(\xi)/G'(\xi) \ge 1$ for all $\xi < a$.

\Ignore{ 
--------------------
We have,
\begin{align*}
	A'(x)
		&= \pr{\frac{x \mu'(x)}{\mu(x)}}'
		= \frac{\mu(x) \brac{x \mu''(x) + \mu'(x)} - x (\mu'(x))^2}{(\mu(x))^2} \\
		&= \frac{x\brac{\mu(x) \mu''(x) - (\mu'(x))^2} + \mu(x) \mu'(x)}{(\mu(x))^2} \\
		&= \frac{\mu'(x)}{\mu(x)} \brac{x \frac{\mu''(x)}{\mu'(x)} - \frac{\mu'(x)}{\mu(x)}}.
\end{align*}
Since $\mu > 0$ and $\mu' > 0$,
\begin{align*}
	A'(x) < 0
		\iff \brac{x \frac{\mu''(x)}{\mu'(x)} - \frac{\mu'(x)}{\mu(x)}} < 0.
\end{align*}
Writing the last bracketed expression as
\begin{align*}
	&\brac{\frac{\mu''(x)}{\mu'(x)} - \frac{\mu'(x)}{\mu(x)}}
			+ (x - 1) \frac{\mu''(x)}{\mu'(x)} \\
		&\qquad
		= \frac{\mu''(x) \mu(x) - (\mu'(x))^2}{(\mu(x))^2} \frac{\mu(x)}{\mu'(x)}
			+ (x - 1) \frac{\mu''(x)}{\mu'(x)} \\
		&\qquad
		= \pr{\frac{\mu'(x)}{\mu(x)}}' + (x - 1) \frac{\mu''(x)}{\mu'(x)}
		= (\log \mu)''(x) + (x - 1) \frac{\mu''(x)}{\mu'(x)} \\
		&\qquad
		= (\log \mu)''(x) + (x - 1) (\log \mu')'(x)
\end{align*}
} 

--------------------

\begin{prop}
	If
	\begin{align}\label{e:limxfppfp}
		\lim_{x \to 0} \frac{x f''(x)}{f'(x)}, \, \lim_{x \to 0} \frac{x \mu'(x)}{\mu(x)}
	\end{align}
	both exist with the first limit nonzero then
	\begin{align*}
		\lim_{x \to 0} \frac{x \mu'(x)}{\mu(x)}
			= 1.
	\end{align*}
\end{prop}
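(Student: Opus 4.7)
The plan is to exploit the fundamental identity $\mu(f(x)) = f'(x)\mu(x)$ provided by equation $\refE{muFunc}$ (which holds whenever $f$ is the MOC of the flow generated by the vector field with MOC $\mu$, at some fixed time). Taking the logarithmic derivative of this identity yields
\begin{align*}
	\frac{\mu'(f(x))}{\mu(f(x))} f'(x) = \frac{f''(x)}{f'(x)} + \frac{\mu'(x)}{\mu(x)}.
\end{align*}
Multiplying through by $x$ and recognizing that $x \mu'(f(x)) f'(x)/\mu(f(x)) = \bigl(xf'(x)/f(x)\bigr)\cdot\bigl(f(x)\mu'(f(x))/\mu(f(x))\bigr)$ transforms this into the single normalized relation
\begin{align}\label{e:PropIdentity}
	B(x)\,A(f(x)) = C(x) + A(x),
\end{align}
where $A(x) = x\mu'(x)/\mu(x)$, $B(x) = xf'(x)/f(x)$, and $C(x) = xf''(x)/f'(x)$. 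This is the workhorse of the proof.

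Next I would compute $\lim_{x\to 0^+} B(x)$ by L'H\^opital's rule. Because $f$ is concave with $f(0) = 0$, the inequality $xf'(x) \le \int_0^x f'(t)\,dt = f(x)$ holds and forces both the numerator $xf'(x)$ and denominator $f(x)$ to tend to $0$ as $x\to 0^+$. Hence the $0/0$ form applies, and provided the limit of $g'/h'$ exists we obtain
\begin{align*}
	\lim_{x \to 0^+} B(x) = \lim_{x \to 0^+} \frac{f'(x) + xf''(x)}{f'(x)} = 1 + \lim_{x\to 0^+} C(x) = 1 + M,
\end{align*}
where $M := \lim_{x \to 0} xf''(x)/f'(x) \ne 0$ is the hypothesized limit.

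Finally, let $L := \lim_{x\to 0^+} A(x)$ denote the second assumed limit. Since $f$ is continuous with $f(0) = 0$, $f(x) \to 0$ as $x \to 0^+$, and so $A(f(x)) \to L$ as well. Passing to the limit in \refE{PropIdentity} gives $(1+M)L = M + L$, which simplifies to $ML = M$; dividing by the nonzero $M$ yields $L = 1$, as desired. The main subtlety is ensuring the applicability of L'H\^opital to $B$; everything else is a direct consequence of the transport-type identity $\refE{muFunc}$ and algebra.
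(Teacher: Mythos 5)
Your proposal is correct and takes essentially the same route as the paper: both rest on the functional identity $\mu(f(x)) = f'(x)\mu(x)$ and its derivative, use L'H\^opital to evaluate $\lim_{x\to 0} x f'(x)/f(x) = 1 + M$, exploit $f(x)\to 0$ to identify $\lim A(f(x)) = L$, and then solve the resulting linear equation $(1+M)L = M + L$ for $L = 1$. The only difference is cosmetic bookkeeping: you take the logarithmic derivative of the identity up front and normalize by $x$, whereas the paper substitutes $x\mapsto f(x)$ into the definition of $L$ and then expands via $\refE{mupfx}$, arriving at the equivalent equation $L = \frac{1}{1+a}L + \frac{a}{1+a}$.
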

\begin{proof}
Assume that both limits in \refE{limxfppfp} exist and that
\begin{align*}
	a := 	\lim_{x \to 0} \frac{x f''(x)}{f'(x)} \ne 0.
\end{align*}
Since $f'(0) = \iny$\MarginNote{Do we point this out somewhere?} we can apply L'Hospital's rule to give
\begin{align*}
	\lim_{x \to 0} \frac{x f'(x)}{f(x)}
		= \lim_{x \to 0} \frac{x f''(x) + f'(x)}{f'(x)}
		= 1 + a.
\end{align*}
Thus, also
\begin{align*}
	\lim_{x \to 0} \frac{f(x) f''(x)}{(f'(x))^2}
		= \lim_{x \to 0} \frac{x f''(x)}{f'(x)}
			\lim_{x \to 0} \frac{f(x)}{x f'(x)}
		= \frac{a}{1 + a}.
\end{align*}
Therefore, using \refE{mupfx},
\begin{align*}
	L
		&:= \lim_{x \to 0} \frac{x \mu'(x)}{\mu(x)}
		= \lim_{x \to 0} \frac{f(x) \mu'(f(x))}{\mu(f(x))}
		= \lim_{x \to 0} \frac{f(x) (\mu'(x) + \mu(x) \frac{f''(x)}{f'(x)})}{f'(x) \mu(x)} \\
		&= \lim_{x \to 0} \brac{\frac{f(x)}{x f'(x)} \frac{x \mu'(x)}{\mu(x)}
			+ \frac{f(x) f''(x)}{(f'(x))^2}}
		= \frac{1}{1 + a} L + \frac{a}{1 + a},
\end{align*}
so $(1 + a) L = L + a$, or, $L = 1$, since $a \ne 0$.
\end{proof}

\textbf{The conclusion of the \textit{proof} of this theorem is not quite what I want, though it is not too bad. But still need a condition so that $\lim x \mu'(x)/\mu(x)$ exists}
\begin{theorem}\label{T:ConcaveCIGExtraProp}
	Let $f$ be any $C^k$ concave acceptable MOC for a flow, $k \ge 3$,
	that satisfies \refE{limxfppfp} with $-1 < a < 0$. Then the function $\mu$ in \refT{ConcaveCIG}
	can be made to satisfy
	\refE{limxmupmu}.
\end{theorem}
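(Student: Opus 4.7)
The plan is to reduce the theorem to the preceding proposition, which already shows that if both limits in \refE{limxfppfp} exist and the first is nonzero, then the second must equal $1$ --- which is exactly \refE{limxmupmu}. Since the hypothesis supplies $a \in (-1, 0)$ with $a \ne 0$, it suffices to show that the construction of $\mu$ in \refT{ConcaveCIG} can be arranged so that $L := \lim_{x \to 0^+} x \mu'(x)/\mu(x)$ exists.

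To establish existence of $L$ I would work in the variables of \refT{GenSoFar}. Writing $g = \log h'$ for the generating function $h$ of the underlying CIG, the identities in \refE{hLogConnection} give $\mu'(h(x)) = g'(x)$ and $(\log h)'(x) = h'(x)/h(x)$, whence
\begin{align*}
	A(h(x)) = \frac{h(x) \mu'(h(x))}{\mu(h(x))} = \frac{g'(x)}{(\log h)'(x)}.
\end{align*}
Differentiating the embedding relation $f(h(x-1)) = h(x)$ twice yields the functional-difference equation
\begin{align*}
	g'(x) - g'(x-1) = \frac{f''(h(x-1))}{f'(h(x-1))}\, h'(x-1),
\end{align*}
so the hypothesis $x f''(x)/f'(x) \to a$ gives $g'(x) - g'(x-1) \sim a \, (\log h)'(x-1)$ as $x \to -\infty$. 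A Stolz--Cesaro-type argument, exploiting the concavity of $g$ and $g' \ge 0$, then produces $g'(x)/g'(x-1) \to 1 + a \in (0,1)$, so $g'(x) \to \infty$ geometrically with rate $\beta := -\log(1+a) > 0$. Laplace's method applied to $h(x) = \int_{-\infty}^x e^{g(s)}\, ds$ gives $h(x) \sim e^{g(x)}/g'(x)$, and hence $(\log h)'(x) \sim g'(x)$, so $A(h(x)) \to 1$. Finally, one checks that the averaging $\bar g(x) = \int_x^{x+1} g(s)\, ds$ from the proof of \refT{ConcaveCIG} preserves this behavior --- since $g'(s) \sim C e^{-\beta s}$, the difference $\bar g'(x) = g(x+1) - g(x)$ inherits the same exponential rate up to a positive constant, and the same Laplace estimate applies to $\bar h(x) = \int_{-\infty}^x e^{\bar g(s)}\, ds$ and thus to $\mu = 2 \bar \mu$.

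The main obstacle is the passage from the difference asymptotic $g'(x) - g'(x-1) \sim a (\log h)'(x-1)$ to the continuous-variable statement $g'(x) \sim C e^{-\beta x}$, together with the derivative estimate $g''(x)/(g'(x))^2 \to 0$ needed to control the remainder in Laplace's method. Both require quantitative control over the oscillation of $g'$ on unit intervals, and this is precisely where the assumption $a > -1$ is used: it keeps $1 + a > 0$ so that the geometric rate is well-defined and $g'$ is eventually monotone on the scale of unit shifts. Weakening to $a \le -1$ would allow $g'$ to change sign or behave irregularly across unit intervals, and the averaging step would no longer preserve the asymptotic structure. The author's parenthetical remark suggests that making this passage fully rigorous, without imposing further conditions on higher derivatives of $f$, is the delicate point that prevents a complete result.
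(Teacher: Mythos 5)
Your route differs fundamentally from the paper's, and that difference is worth flagging before assessing the gap. The paper's proof of this (explicitly flagged by the author as speculative and ``not quite what I want'') rescales $\widetilde\mu(x) = (1+a)\,\mu(x/(1+a))$ and asserts that $\lim_{x\to 0^+} x\widetilde\mu'(x)/\widetilde\mu(x) = (1+a)/(1+a) = 1$. But this calculation does not hold: differentiating gives $\widetilde\mu'(x) = \mu'(x/(1+a))$, and substituting $u = x/(1+a)$ yields
\begin{align*}
	\frac{x\,\widetilde\mu'(x)}{\widetilde\mu(x)}
		= \frac{(1+a)u\,\mu'(u)}{(1+a)\mu(u)}
		= A(u),
\end{align*}
so the rescaling merely reparametrizes $A$ and cannot change its limit (or lack of one) at the origin. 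Your reduction --- establish that $L := \lim_{x\to 0^+} A(x)$ exists, then invoke the preceding proposition with $a \ne 0$ to conclude $L = 1$ --- is conceptually sounder and goes directly to the point the author's own margin note identifies as missing (``still need a condition so that $\lim x\mu'(x)/\mu(x)$ exists'').

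That said, your argument for existence of $L$ has a real circularity, which you correctly sense. The Laplace step $(\log h)'(x) \sim g'(x)$, i.e.\ $h(x) \sim e^{g(x)}/g'(x)$ applied to $h(x) = \int_{-\infty}^x e^{g(s)}\,ds$, requires $g''(x)/(g'(x))^2 \to 0$ as $x \to -\infty$. Translating via $g' = \mu' \circ h$, $g'' = (\mu'' \circ h)\, h'$, and $h' = \mu \circ h$, this is exactly $\mu(y)\mu''(y)/(\mu'(y))^2 \to 0$ as $y \to 0^+$ --- an a priori asymptotic normalization of the second-order behavior of the very $\mu$ being constructed, and nothing in the difference asymptotic $g'(x) - g'(x-1) \sim a\,(\log h)'(x-1)$ supplies it. Worse, the Stolz--Cesaro step that is supposed to yield $g'(x)/g'(x-1) \to 1+a$ already needs to replace $(\log h)'(x-1)$ by $g'(x-1)$ on the right-hand side, which is precisely the Laplace conclusion. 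So the chain of implications is not yet an argument; you would need an independent control on the oscillation of $g'$ at unit scale (equivalently, on $\mu\mu''/(\mu')^2$ near zero), and whether the averaging $\bar g(x) = \int_x^{x+1} g$ in \refT{ConcaveCIG} can be shown to furnish such control is exactly the open issue. The hypothesis $-1 < a < 0$ in the statement gives $1 + a \in (0,1)$, which is necessary for any geometric-decay ansatz to make sense, but by itself it does not close the loop.
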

\begin{proof}
	Let $\mu$ be as in \refT{ConcaveCIG} and let
	\begin{align*}
		\widetilde{\mu}(x)
			= (1 + a) \mu \pr{x/(1 + a)}.
	\end{align*}
	Then
	\begin{align*}
		\lim_{x \to 0} \frac{ x \widetilde{\mu}'(x)}{\widetilde{\mu}(x)}
			= \frac{1 + a}{1 + a}
			= 1.
	\end{align*}
	Then
	\begin{align*}
		\int_x^{f^{t}(x)} \frac{ds}{\widetilde{\mu}(s)}
			= \frac{1 + a}{1 + a} \int_{x/(1 + a)}^{f^{t}(x)/(1 + a)} \frac{dy}{\mu(y)}
			= t.
	\end{align*}
	Thus, if $(\widetilde{f}^t)_{t \in \R}$ is the corresponding function $CIG$ then
	\begin{align*}
		\widetilde{f}^t(x/(1 + a))
			&= f^t(x)/(1 + a),
	\end{align*}
	or,
	\begin{align*}
		\widetilde{f}^t(y) = \frac{f^t((1 + a) y)}{1 + a}.
	\end{align*}
\end{proof}

--------------

We have,
\begin{align*}
	\la(r)
		= r + \log \mu(e^{-r})
		= - \log x + \log \mu(x)
		= \log \pr{\frac{\mu(x)}{x}}
		=: \beta(x).
\end{align*}
where $r = - \log x$, or, $x = e^{-r}$. Thus, $dx / dr = - e^{-r} = - x$ and
\begin{align*}
	\la'(r)
		&= \diff{\beta(x)}{x} \diff{x}{r}
		= - x \beta'(x).
\end{align*}
Since $\la$ is strictly increasing, $\beta$ is strictly decreasing. Taking another derivative,
\begin{align*}
	\la''(r)
		&= - \beta'(x) \diff{x}{r} - x \beta''(x) \diff{x}{r}
		= x \beta'(x) + x^2 \beta''(x)
		< 0
\end{align*}
since $\la$ is strongly strictly concave (assuming that $p \log \theta p$ is convex). Thus,
\begin{align*}
	\frac{x \beta''(x)}{\beta'(x)} < -1.
\end{align*}

Also,
\begin{align*}
	\beta(h(x))
		&= \log\pr{ \frac{\mu(h(x))}{h(x)}}
		= \log \pr{\frac{h'(x)}{h(x)}}
		= \log (\log h)'(x).
\end{align*}

-------------

From \refE{lambdappr}, $\la$ is strictly concave if and only if
\begin{align}\label{e:muppmupmumore}
	\mu''(x)
		< \frac{\mu'(x)}{\mu(x)} \brac{\mu'(x) - \frac{\mu(x)}{x}},
\end{align}
where the right-hand side is negative when $\mu$ is strictly concave, interestingly enough. In any case, we can also write this as
\begin{align*}
	\mu''(x)
		< \frac{x \mu'(x)}{\mu(x)} \pr{\frac{\mu(x)}{x}}'.
\end{align*}
Then, if $\refE{YudoCond}_3$ is satisfied, which is a stronger condition than $\la$ being strongly convex, we would have
\begin{align}\label{e:AIneq}
	\pr{\frac{\mu(x)}{x}}'
		\le \mu''(x)
		<  A(x)\pr{\frac{\mu(x)}{x}}'
		< 0,
\end{align}
where $A(x) = x \mu'(x)/\mu(x)$ as in \refE{Ax}.

This leads to the following proposition:
\begin{prop}
	If $\refE{YudoCond}$ is satisfied then
	\begin{align*}
		\lim_{x \to 0} A(x) = 1
			\iff \lim_{x \to 0} \frac{\mu''(x)}{(\mu(x)/x)'}
			\text{ exists}
	\end{align*}
	and if the latter limit exists then it must equal 1.
\end{prop}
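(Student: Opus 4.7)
The plan is to exploit the chain of inequalities $A(x) < B(x) \le 1$ from \refE{AIneq}, where $B(x) := \mu''(x)/(\mu(x)/x)'$, together with a monotonicity identity for $A$ that follows from the same chain.

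First, I would derive the identity $xA'(x) = (1 - A(x))(A(x) - B(x))$. The cleanest route uses the substitution $r = -\log x$ from \refE{la}. By \refE{lambdapr} one has $\la'(r) = 1 - A(x)$, and direct differentiation gives $xA'(x) = \la''(r)$. Using \refE{lambdappr} together with $x\mu'(x) - \mu(x) = (A(x) - 1)\mu(x)$ to rewrite $B(x)$, a short calculation yields $B(x) - A(x) = -\la''(r)/(1 - A(x))$, which rearranges to the desired identity. Since $1 - A(x) \ge 0$ and $A(x) - B(x) < 0$ by \refE{AIneq}, this shows $A'(x) \le 0$ on $(0, \iny)$; hence $A$ is non-increasing in $x$, so $\lim_{x \to 0^+} A(x)$ exists and lies in $(0, 1]$.

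For the forward implication, assuming $\lim_{x \to 0^+} A(x) = 1$, the squeeze theorem applied to $A(x) < B(x) \le 1$ immediately gives $\lim_{x \to 0^+} B(x) = 1$. For the reverse implication, assume $L := \lim_{x \to 0^+} B(x)$ exists. The squeeze chain yields $\limsup_{x \to 0^+} A(x) \le L \le 1$. Invoking \refE{limsupA}, which holds whenever $\mu$ satisfies the Osgood condition, one has $\limsup_{x \to 0^+} A(x) = 1$; hence $L = 1$. The monotonicity established in the preceding paragraph then upgrades this to $\lim_{x \to 0^+} A(x) = 1$, completing both the equivalence and the assertion that the common limit equals~$1$.

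The main obstacle is the reverse direction: knowing $\lim B = L$ only forces $\limsup A \le L$, and without a monotonicity argument one cannot promote this to convergence of $A$. The identity $xA'(x) = (1 - A)(A - B)$ is the pivotal tool, and verifying it cleanly via the change of variables $r = -\log x$ is the single non-routine step; once it is in hand, the rest follows by squeezing and one appeal to \refE{limsupA}.
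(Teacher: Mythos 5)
Your proof is correct, but it takes a genuinely different route from the paper's. The paper applies L'Hospital's rule to $A(x) = \mu'(x)\big/\bigl(\mu(x)/x\bigr)$, observing that $B(x) := \mu''(x)/(\mu(x)/x)'$ is the corresponding quotient of derivatives and that $\mu'(x)$ and $\mu(x)/x$ both diverge as $x \to 0^+$; this yields the implication ``$\lim B$ exists $\implies \lim A$ exists and equals $\lim B$,'' after which the common value is pinned down as $1$. You replace L'Hospital with the algebraic identity $xA'(x) = (1-A(x))(A(x)-B(x))$, obtained by rewriting $\la''(r)$ of \refE{lambdappr} in terms of $A$ and $B$; the two inequalities in \refE{AIneq} then make the right-hand side nonpositive, so $A$ is monotone and $\lim_{x\to 0^+} A(x)$ exists outright, without any hypothesis on $B$. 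Combined with $\limsup_{x\to 0^+} A(x) = 1$ from \refE{limsupA}, this gives $\lim A = 1$ unconditionally, and the squeeze $A < B \le 1$ then forces $\lim B = 1$. Your approach buys two things: it makes explicit \emph{why} $\lim A$ exists (the paper obtains this only via L'Hospital, and only under the assumption that $\lim B$ exists), and it exposes that under \refE{AIneq} both sides of the ``iff'' actually hold, making the equivalence somewhat degenerate. It also sidesteps a weak point in the paper's write-up: the claim that ``the first inequality in \refE{AIneq}'' yields $1 \le \lim A$ does not follow directly (that inequality only gives $B \le 1$); your appeal to \refE{limsupA} supplies the missing ingredient cleanly, and indeed the paper must implicitly be using the same fact. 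The paper's L'Hospital route is marginally shorter once the divergence of $\mu'(x)$ and $\mu(x)/x$ is granted, but both proofs ultimately rest on the same two ingredients: the sandwich $A < B \le 1$ from \refE{AIneq} and the Osgood-driven $\limsup A = 1$ from \refE{limsupA}.
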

\begin{proof}
Since $\mu''(x) \to \iny$,\MarginNote{I don't know that I point this out anywhere.}by L'Hospital's rule,
\begin{align*}
	\lim_{x \to 0} \frac{\mu''(x)}{(\mu(x)/x)'}
		= \lim_{x \to 0} \frac{x \mu'(x)}{\mu(x)}
		= \lim_{x \to 0} A(x),
\end{align*}
if the first limit exists. Then by the first inequality in \refE{AIneq}
\begin{align*}
	1
		\le \lim_{x \to 0} A(x).
\end{align*}
But $A(x) \le 1$ for all $x > 0$ so
\begin{align*}
	\lim_{x \to 0} \frac{\mu''(x)}{(\mu(x)/x)'}
		= \lim_{x \to 0} A(x)
		= 1.
\end{align*}

On the other hand, if $A(x) \to 1$ then $\mu''(x)/(\mu(x)/x)' \to 1$ by \refE{AIneq}.
\end{proof}

Or, dividing both sides of \refE{muppmupmumore} by $\mu'(x)$,
\begin{align}\label{e:laConcavemu}
	(\log \mu')'(x)
		= \frac{\mu''(x)}{\mu'(x)}
		< \frac{\mu'(x)}{\mu(x)} - \frac{1}{x}
		= (\log \mu)'(x) - \frac{1}{x}.
\end{align}
Substituting $h(x)$ for $x$, since $\log \mu(h(x)) = \log h'(x) = g(x)$, we have
\begin{align*}
	g'(x)
		= (\log \mu(h(x))'
		= (\log \mu)'(h(x)) h'(x)
\end{align*}
so the right-hand side of \refE{laConcavemu} becomes $g'(x)/h'(x) - 1/h(x)$. The left-hand side of \refE{laConcavemu} becomes
\begin{align*}
	(\log \mu')'(h(x))
		= \frac{\mu''(h(x))}{\mu'(h(x))}
		= \frac{(\log h')''(x)/h'(x)}{(\log h')'(x)}
		= \frac{g''(x)}{g'(x) h'(x)},
\end{align*}
where we used \refEAnd{mupphp}{hLogConnection}. Thus, \refE{laConcavemu} becomes
\begin{align*}
	\frac{g''(x)}{g'(x) h'(x)}
		< \frac{g'(x)}{h'(x)} - \frac{1}{h(x)},
\end{align*}
or,
\begin{align*}
	g''(x)
		&< (g'(x))^2 - \frac{h'(x)}{h(x)} g'(x)
		= (g'(x))^2 - (\log h)'(x) g'(x) \\
		&= g'(x) \pr{(g'(x) - (\log h)'(x)},
\end{align*}
or,
\begin{align*}
	(\log g')'(x) < g'(x) - (\log h)'(x).
\end{align*}

} 

\Ignore{ 
Then from \refEAnd{muhhp}{muphhphpp}, we have
\begin{align*}
	\lim_{x \to 0} \frac{x \mu'(x)}{\mu(x)}
		&= \lim_{x \to -\iny} \frac{h(x) \mu'(h(x))}{\mu(h(x))}
		= \lim_{x \to -\iny} \frac{h(x) h''(x)/h'(x)}{h'(x)} \\
		&= \lim_{x \to -\iny} \frac{h(x)}{h'(x)} \frac{h''(x)}{h'(x)}
		= \lim_{x \to -\iny} \frac{(\log h')'(x)}{(\log h)'(x)}.
\end{align*}
} 

\Ignore{ 

Of we can use the equivalent condition in $\refE{YudoCond}_1$, noting that from \refE{la},
\begin{align*}
	\la(- \log h(r))
		&= - \log h(r) + \log \mu(h(r)) \\
		&= - \log h(r) + \log h'(h^{-1}(h(r))
		= g(r) - \log h(r).
\end{align*}
Thus,
\begin{align*}
	- \la'(-\log h(r)) \frac{h'(r)}{h(r)}
		&= g'(r) - \frac{h'(r)}{h(r)},
\end{align*}
or,
\begin{align*}
	\la'(-\log h(r))
		&= 1 - \frac{g(r) h(r)}{h'(r)}.
\end{align*}
Thus,
\begin{align*}
	- \la''(-\log h(r)) \frac{h'(r)}{h(r)}
		= - \frac{h'(r) (g'(r) h(r) + g(r) h'(r)) - g(r) h(r) h''(r)}{(h'(r))^2}.
\end{align*}
Then
\begin{align*}
	&\brac{\la''(- \log h(r)) + (\la'(- \log h(r)))^2} \frac{h'(r)}{h(r)} \\
		&\qquad
		= \frac{h'(r) (g'(r) h(r) + g(r) h'(r)) - g(r) h(r) h''(r)}{(h'(r))^2} \\
			&\qquad\qquad \pr{1 - \frac{g(r) h(r)}{h'(r)}}^2 \frac{h'(r)}{h(r)}
\end{align*}
} 
{}

%
%
\section{Recovering $\omega^0$ from its $L^p$-norms}\label{S:Recoveringomega0}

\noindent It remains to invert the second map in \refE{Maps2}; namely, $\omega^0 \mapsto \theta(p)$. We should expect to invert this map neither uniquely nor exactly. Lack of uniqueness arises  because any rearrangement or sign change of $\omega^0$ yields the same $\theta$. Since, however, we are interested in square-symmetric vorticities, as in \refD{SquareSymmetric}, the lack of uniqueness is not a problem.

The inability to invert exactly is a more complex issue. To see what is involved, let $\la$ be the distribution function for $\omega^0$; that is, $\la(x) = $ measure of $\set{t \colon \smallabs{\omega^0(t)} > x}$. It is classical that
\begin{align}\label{e:Mellin}
	\theta(p)^p
		= \smallnorm{\omega^0}_{L^p}^p
			= p \int_0^\iny x^{p - 1} \la(x) \, d x
			= p \Cal{M} \la(p),
\end{align}
where $\Cal{M}$ is the Mellin transform. If $\omega^0$ lies in $L^{p_0} \cap L^p$ for all $p \ge p_0$ then $\la(p)$ decays faster than any polynomial in $p$ and it is easy to see from \refE{Mellin} (or directly from the definition of the $L^p$-norm) that $\varphi(p) := p \log \theta(p)$ is complex-analytic in the right-half plane, $\RE p > p_0$. Of necessity, then, $\varphi$ must at least be real-analytic (and real-valued) on $(p_0, \iny)$ to perform the inversion exactly, and we should not expect this to be the case.

Instead, we must look for a way to make an approximate inversion. Toward this end, we will take an approach using \refE{Mellin} that is, in a sense, a generalization of one proof of Stirling's approximation.

To motivate this approximation, we first show how to obtain an approximation for $\varphi$ from $\la$. Assume that a smooth $\la$ is given and let
\begin{align}\label{e:Ip}
	I_p
		= \int_0^\iny x^{p - 1}p \la(x) \, dx
		= \int_0^\iny e^{(p - 1) \log x - \rho(x)} \, dx,
\end{align}
where $\rho = - \log \la$.
Being a distribution function, $\la$ is decreasing, hence $\rho$ is increasing. Suppose also that $\rho$ is convex. Then $f(x) = x^{p - 1} \la(x)$  has a unique maximum at $x = x_p$, where
\begin{align}\label{e:xpDef}
	\rho'(x_p) = (p - 1)/x_p.
\end{align}
Moreover, $x_p$ must increase to $\iny$ as $p \to \iny$.

For $x$ near $x_p$, we thus have
\begin{align}\label{e:CoreApproximation}
	\begin{split}
	(p - 1) & \log x - \rho(x)
		\approx
		(p - 1) \brac{ \log x_p +\frac{1}{x_p}(x - x_p) - \frac{1}{2 x_p^2}(x - x_p)^2} \\
		&\qquad\qquad
		-\brac{\rho(x_p) + \rho'(x_p)(x - x_p) + \frac{\rho''(x_p)}{2}(x - x_p)^2} \\
		&= (p - 1) \log x_p - \rho(x_p)
			- \frac{1}{2} \brac{\frac{p - 1}{x_p^2} + \rho''(x_p)} (x - x_p)^2.
	\end{split}
\end{align}
This approximation should be a good one for all sufficiently large $p$ if
\begin{align}\label{e:rhoAssumption}
	\lim_{x \to \iny} \frac{\rho'''(x)}{\rho''(x)}
		= 0,
\end{align}
an assumption we now add.

Differentiating \refE{xpDef} with respect to $p$ and using the chain rule gives
\begin{align*}
	\rho''(x_p)
		= \frac{1}{x_p \diff{x_p}{p}} - \frac{p - 1}{x_p^2},
\end{align*}
so
\begin{align*}
	(p - 1) & \log x - \rho(x)
		\approx
		(p - 1) \log x_p - \rho(x_p)
			- \frac{(x - x_p)^2}{2 x_p \diff{x_p}{p}}.
\end{align*}

Thus,
\begin{align*}
	I_p
		&\approx
		\int_0^\iny e^{((p - 1) \log x_p - \rho(x_p))}
			\exp\pr{- \frac{1}{2 x_p \diff{x_p}{p}} (x - x_p)^2} \, dx.
\end{align*}
Assuming the Gaussian in the integrand is sufficiently sharp, we have
\begin{align*}
	I_p
		&\approx x_p^{p - 1} e^{-\rho(x_p)} \int_{-\iny}^\iny
			\exp\pr{- \frac{1}{2 x_p \diff{x_p}{p}} (x - x_p)^2} \, dx \\
		&= x_p^{p - 1} e^{-\rho(x_p)}\pr{\frac{1}{2 x_p \diff{x_p}{p}}}^{-1/2} \sqrt{\pi}
		= \sqrt{2 \pi} x_p^{p - \frac{1}{2}} e^{-\rho(x_p)} \pr{\diff{x_p}{p}}^{\frac{1}{2}}.
\end{align*}
(Even if the Gaussian is not sharp, this approximation is at most a factor of two overestimate.)

\Ignore{ 
Thus,
\begin{align*}
	I_p
		&\approx
		\int_0^\iny e^{((p - 1) \log x_p - \rho(x_p))}
			\exp\pr{- \frac{1}{2 x_p \diff{x_p}{p}} (x - x_p)^2} \, dx \\
		&\approx x_p^{p - 1} e^{-\rho(x_p)} \int_{-\iny}^\iny
			\exp\pr{- \frac{1}{2 x_p \diff{x_p}{p}} (x - x_p)^2} \, dx \\
		&= x_p^{p - 1} e^{-\rho(x_p)}\pr{\frac{1}{2 x_p \diff{x_p}{p}}}^{-1/2} \sqrt{\pi}
		= \sqrt{2 \pi} x_p^{p - \frac{1}{2}} e^{-\rho(x_p)} \diff{x_p}{p}.
\end{align*}
} 

Since $\theta(p)^p = p I_p$, we have
\begin{align}\label{e:thetaapprox}
	\begin{split}
	\theta(p)
		&\approx p^{\frac{1}{p}}\brac{\sqrt{2 \pi} x_p^{p - \frac{1}{2}} e^{-\rho(x_p)} \pr{\diff{x_p}{p}}^{1/2}}^
			{\frac{1}{p}} \\
		&= (2 \pi)^{\frac{1}{2p}} x_p^{1 - \frac{1}{2p}} e^{-\rho(x_p)/p} \brac{\diff{x_p}{p}}^{\frac{1}{2p}}.
	\end{split}
\end{align}

Also,
\begin{align*}
	\diff{\rho(x_p)}{p}
		= \rho'(x_p) \diff{x_p}{p}
		= \frac{p - 1}{x_p} \diff{x_p}{p}
		= (p - 1) \diff{}{p} \log x_p.
\end{align*}
Integrating by parts gives
\begin{align*}
	\rho(x_p)
		&= \int (p - 1) \diff{}{p} \log x_p \, dp 
		= C + (p - 1) \log x_p - \int \log x_p \, dp.
\end{align*}
Substituting this into \refE{thetaapprox} gives
\begin{align*}
	\theta(p)
		&\approx (2 \pi)^{\frac{1}{2p}}  x_p^{1 - \frac{1}{2p}}
			\exp \pr{-\frac{C}{p} - \frac{p - 1}{p} \log x_p + \frac{1}{p} \int \log x_p \, dp}
			\brac{\diff{x_p}{p}}^\frac{1}{2p} \\
		&= (2 \pi)^{\frac{1}{2p}}  x_p^{-\frac{1}{p}}
			e^{-\frac{C}{p}} \exp \pr{\frac{1}{p} \int \log x_p \, dp} \brac{\diff{x_p}{p}}^\frac{1}{2p}
\end{align*}
and hence,
\begin{align*}
	\varphi(p)
		= p \log \theta(p)
		\approx -C - \log x_p + \int \log x_p \, dp + \frac{1}{2} \log \brac{\diff{x_p}{p}}.
\end{align*}

This approximation will hold if its derivative,
\begin{align}\label{e:UglyDiffEq}
	-\diff{\log x_p}{p} + \log x_p + \frac{1}{2} \frac{\diffn{x_p}{p}{2}}{\diff{x_p}{p}}
		\approx (p \log \theta(p))' = \varphi'(p),
\end{align}
approximately holds.

\Ignore{ 
From \refE{xpDef}, $x_p \rho'(x_p) = p$. Differentiating implicitly twice leads to
\begin{align}\label{e:d2xpdxp}
	\frac{\diffn{x_p}{p}{2}}{\diff{x_p}{p}}
		= \frac{-2 + x_p \frac{\rho'''(x_p)}{\rho''(x_p)}}{\frac{\rho'(x_p)}{\rho''(x_p)} + x_p}
		\to 0
\end{align}
as $p \to \iny$ by \refE{rhoAssumption}.
} 
From \refE{xpDef}, $\log x_p = \log (p - 1) - \log \rho'(x_p)$, and differentiating gives
\begin{align*}
	\diff{\log x_p}{p}
		= \frac{1}{p - 1} - \frac{\rho''(x_p)}{\rho'(x_p)} \diff{x_p}{p}.
\end{align*}
But $x_p$ is increasing and hence so is $\log x_p$, and $\rho$ is increasing and convex, so all derivatives above are nonnegative. We conclude that $\smallabs{\diff{\log x_p}{p}} < \frac{1}{p - 1}$ and hence vanishes as $p \to \iny$. We also add the assumption that
\begin{align}\label{e:xpAssumption}
	\frac{\diffn{x_p}{p}{2}}{\diff{x_p}{p}} \to 0 \text{ as } p \to \iny.
\end{align}

Therefore, for sufficiently large $p$, we have
\begin{align*}
	x_p \approx e^{\varphi'(p)} =: \beta(p).
\end{align*}

Then \refE{xpDef} becomes
$
	\rho'(\beta(p))
		\approx p/\beta(p)
$
so that (estimating $p - 1$ by $p$)
\begin{align}\label{e:rhobetapDiff}
	\diff{}{p} \rho(\beta(p))
		= \rho'(\beta(p)) \beta'(p)
		\approx p (\log \beta)'(p)
		= p \varphi''(p).
\end{align}
The function $\rho$ increases, so the requirement that $\varphi$ be convex enters here.

Integrating from a sufficiently large $q$ to $p > q$ gives
\begin{align}\label{e:rhobetap}
	\begin{split}
	\rho(\beta(p))
		&\approx \rho(\beta(q)) + \int_q^p s \varphi''(s) \, ds \\
		&= \rho(\beta(q)) + p \varphi'(p) - q \varphi'(q)
			- \int_q^p \varphi'(p) \, dp \\
		&= C_q + p \varphi'(p) - \varphi(p).
	\end{split}
\end{align}

Now assume that \refE{rhobetap} holds exactly, and hence so does \refE{rhobetapDiff}; differentiating it implicitly gives
\begin{align*}
	\rho''(\beta(p)) &[\beta'(p)]^2
		= p \varphi'''(p) + \varphi''(p) - \rho'(\beta(p)) \beta''(p) \\
		&= p \varphi'''(p) + \varphi''(p) - \rho'(\beta(p)) \brac{[\varphi''(p)]^2 + \varphi'''(p)} e^{\varphi'(p)} \\
		&= \brac{p - \rho'(\beta(p)) e^{\varphi'(p)}} \varphi'''(p) + \varphi''(p) - \rho'(\beta(p))
			[\varphi''(p)]^2 e^{\varphi'(p)}.
\end{align*}
But,
\begin{align*} 
	\rho'(\beta(p)) e^{\varphi'(p)}
		= \frac{p \varphi''(p)}{\beta'(p)} e^{\varphi'(p)}
		= \frac{p \varphi''(p)}{\varphi''(p) e^{\varphi'(p)}} e^{\varphi'(p)}
		= p,
\end{align*}
so
\begin{align}\label{e:rhobetapDiffDiff}
	\rho''(\beta(p)) &[\beta'(p)]^2
		= \varphi''(p) - p [\varphi''(p)]^2.
\end{align}
Thus, to insure that $\rho$ is convex (which we assumed to obtain a unique solution to \refE{xpDef}) we must add the condition that $\varphi''(x) \le \frac{1}{x}$ for all sufficiently large $x$. (Then also $\abs{\varphi''(p)} =\smallabs{\diff{\log x_p}{p}} < \frac{1}{p}$, as above.) Hence, with this condition, \refE{xpDef} continues to hold (exactly).

\Ignore{ 
Then the equality in \refE{d2xpdxp} (which only required \refE{xpDef} to hold) gives
\begin{align*}
	\frac{\rho'''(x_p)}{\rho''(x_p)}
		&= \frac{2}{x_p} + \frac{x_p''}{x_p x_p'} \brac{\frac{\rho'(x_p)}{\rho''(x_p)} + x_p} \\
		&= \frac{2}{\beta(p)} + \frac{\beta''(p)}{\beta(p) \beta'(p)}
			\brac{\frac{\rho'(\beta(p))}{\rho''(\beta(p))} + \beta(p)}.
\end{align*}
But $\beta'(p) = \varphi''(p) e^{\varphi'(p)}$ and by \refEAnd{rhobetapDiffAlt}{rhobetapDiffDiff},
\begin{align*}
	\frac{\rho'(\beta(p))}{\rho''(\beta(p))}
		&= \frac{\rho'(\beta(p)) \beta'(p)}{\rho''(\beta(p)) [\beta'(p)]^2} \beta'(p)
		= \frac{p \beta'(p)}{\varphi''(p) - p [\varphi''(p)]^2} \\
		&= \frac{p \varphi''(p) e^{\varphi'(p)}}{\varphi''(p) - p [\varphi''(p)]^2}
		= \frac{p e^{\varphi'(p)}}{1 - p \varphi''(p)}.
\end{align*}
Then using $\beta''(p) = ([\varphi''(p)]^2 + \varphi'''(p)) e^{\varphi'(p)}$, we have
\begin{align*}
	\frac{\rho'''(x_p)}{\rho''(x_p)}
		&= 2 e^{-\varphi'(p)}  + \frac{[\varphi''(p)]^2 + \varphi'''(p)}{\varphi''(p)}
			\brac{\frac{p}{1 - p \varphi''(p)} + 1} \\
		&= 2 e^{-\varphi'(p)}  + \brac{\varphi''(p) + \frac{\varphi'''(p)}{\varphi''(p)}}
			\frac{p  +1 -  p \varphi''(p))}{1 - p \varphi''(p)}
\end{align*}
} 

Differentiating \refE{rhobetapDiffDiff} logarithmically gives
\begin{align*}
	\frac{\rho'''(\beta(p))}{\rho''(\beta(p))} \beta'(p)
		&= \diff{}{p} \log \brac{\varphi''(p) - p [\varphi''(p)]^2} - 2 \diff{}{p} \brac{\log \beta'(p)} \\
		&= \frac{\varphi'''(p) - 2 p \varphi'''(p) \varphi''(p) - [\varphi''(p)]^2} {\varphi''(p) - p [\varphi''(p)]^2}
			- 2 \frac{\beta''(p)}{\beta'(p)}.
\end{align*}
Thus,
\begin{align}
	\begin{split}
	&\frac{\rho'''(\beta(p))}{\rho''(\beta(p))} 
		= \frac{\frac{\varphi'''(p)}{\varphi''(p)} - 2 p \varphi'''(p) - \varphi''(p)}
			{\varphi''(p) - p [\varphi''(p)]^2}
			e^{- \varphi'(p)} \\
		&\qquad\qquad\qquad
			- 2 \frac{[\varphi''(p)]^2 + \varphi'''(p) \varphi''(p)}{[\varphi''(p)]^2} e^{- \varphi'(p)} \\
		&\qquad
		= \frac{\frac{\varphi'''(p)}{\varphi''(p)} - 2 p \varphi'''(p) - \varphi''(p)}
			{\varphi''(p) - p [\varphi''(p)]^2}
			e^{- \varphi'(p)}
			- 2 \brac{1 + \frac{\varphi'''(p)}{\varphi''(p)}} e^{- \varphi'(p)}.
	\end{split}
\end{align}
This places a condition on $\varphi$ that insures that the condition \refE{rhoAssumption} on $\rho$ holds. Or we could place the following conditions on $\varphi$, the first of which strengthens the condition imposed earlier that $\varphi''(x) \le \frac{1}{x}$ for all sufficiently large $x$:
\begin{enumerate}
	\item
		$\frac{1}{x} - \varphi''(x)$ is bounded away from zero for all sufficiently large $x$;
		
		\smallskip
		
	\item
		$\frac{x \varphi'''(x)}{\varphi''(x)} e^{- \varphi'(x)}, \;
		\frac{\varphi'''(x)}{[\varphi''(x)]^2} e^{- \varphi'(x)} \to 0$ as $x \to \iny$.
\end{enumerate}

To ensure that the assumption in \refE{xpAssumption} holds, we calculate,
\begin{align*}
	\frac{\diffn{x_p}{p}{2}}{\diff{x_p}{p}}
		&= \frac{\beta''(p)}{\beta'(p)}
		= \frac{[\varphi''(p)]^2 + \varphi'''(p)}{\varphi''(p)}
		= \varphi''(p) + \frac{\varphi'''(p)}{\varphi''(p)}.
\end{align*}
Condition (1) directly gives $\varphi''(p) \to 0$, and integrating Condition (1) gives $e^{-\varphi'(p)} >  Cp^{-1}$. It then follows from Condition (2) that $\varphi'''(p)/\varphi''(p) \to 0$.

What we have done is to give a rough derivation of the following:
\begin{quote}
If $\varphi$ is convex and satisfies the two conditions above then \refE{rhobetap} can be used to approximately determine $\rho$, and hence a square-symmetric $\omega^0$, from $\varphi$.
\end{quote}

We note that each of the examples of Yudovich in \refE{YudovichExamples} satisfy both of these conditions, and \refE{rhobetap} can be used to determine $\omega^0$ approximately. For instance, when $m = 1$, $\varphi(p) = p \log \log p$, $\varphi'(p) = \log \log p + \frac{1}{\log p}$, $p \varphi'(p) - \varphi(p) = \frac{p}{\log p}$, and $\beta(p) = \log p \, e^{1/\log p}$. For large $p$, then, we have $\beta(p) \approx \log p$ so $\beta^{-1}(x) \approx e^x$. Then from \refE{rhobetap},
\begin{align*}
	\rho(x)
		&\approx C + \beta^{-1}(x) \varphi'(\beta^{-1}(x)) - \varphi(\beta^{-1}(x))
		\approx C + \frac{\beta^{-1}(x)}{\log \beta^{-1}(x)}
		= C + \frac{e^x}{x}.
\end{align*}
Thus, $\la(x) \approx e^{-e^x/x}$, and since $e^{x/2} < e^x/x < e^x$ for all $x > 1$, this $\la$ corresponds to $\omega^0$ square-symmetric with
\begin{align*}
	\omega^0(x) = f(x_1) \log (2 \log (1/x_1)) \CharFunc_{(0, r)}
		= f(x_1) \pr{\log 2 + \log \log (1/x_1)} \CharFunc_{(0, r)}
\end{align*}
in the first quadrant for some $0 < r < e^{-1}$, where $\frac{1}{2} < f(x) < 1$. This is in agreement with \refL{lnLpnorm}.

\DetailSome{ 
From \refE{Ip} and making the change of variables, $x = \beta(q)$, we have
\begin{align*}
	I_{p - 1}
		&\approx \int_{p_0}^\iny e^{(p - 1) \log \beta(q) - \rho(\beta(q))} \beta'(q) \, dq \\
		&= \int_{p_0}^\iny e^{(p - 1) \log \beta(q) - \rho(\beta(q)) + \varphi'(q)} \varphi''(q) \, dq.
\end{align*}
In making this change of variables, the lower bound of integration would be $\beta^{-1}(0)$, but in general, $\beta$ is bounded below. Hence, we choose a large enough $p_0$ and the above formula asymptotically holds for sufficiently large $p$. But assuming that \refE{rhobetap} holds exactly,
\begin{align*}
	(p - 1) &\log \beta(q) - \rho(\beta(q)) + \varphi'(q) \\
		&= (p - 1) \varphi'(q)- (C + q \varphi'(q) - \varphi(q)) + \varphi'(q) \\
		&= -C + (p - q) \varphi'(q) + \varphi(q)
\end{align*}
so
\begin{align}\label{e:Ip1Est}
	I_{p - 1}
		&\approx C \int_{p_0}^\iny e^{(p - q) \varphi'(q) + \varphi(q)} \varphi''(q) \, dq.
\end{align}
Since $\varphi(p) = \log (\theta(p)^p) = \log(p I_{p - 1})$, we have
\begin{align}\label{e:varPhiEst}
	\varphi(p)
		&\approx C + \log p + \log \int_{p_0}^\iny e^{(p - q) \varphi'(q) + \varphi(q)} \varphi''(q) \, dq.
\end{align}

Returning to the example of Yudovich in \refE{YudovichExamples} with $m = 1$, \refE{Ip1Est} yields
\begin{align*}
	I_{p - 1}
		\approx C \int_{\log p_0}^\iny u^{p - 1} e^{-e^u/u} \pr{1 - \frac{1}{u}} \, du
		\approx C \int_{\log p_0}^\iny u^{p - 1} e^{-e^u/u} \, du,
\end{align*}
in agreement with our estimate above that $\rho(x) \approx e^x/x$.
} 

\begin{remark}
Condition (1) is fairly natural, as the need for $\varphi$ to be convex derives, ultimately, from \Holders inequality. Condition (2) arose from trying to insure that the approximation in \refE{CoreApproximation} is accurate. Assuming Condition (1) holds,
$1/\varphi''(x) > x$, so
\begin{align*}
	\frac{x \abs{\varphi'''(x)}}{\abs{\varphi''(x)}} > x^2 \abs{\varphi'''(x)}, \quad
		\frac{\varphi'''(x)}{[\varphi''(x)]^2} > x^2 \abs{\varphi'''(x)}.
\end{align*}
Integrating Condition (1) gives $e^{-\varphi'(x)} > C x^{-1}$, but for the Yudovich examples in \refE{YudovichExamples}, $e^{-\varphi'(x)}$ decreases much more slowly ($(\log x)^{-1}$ for $m = 1$). Thus, Condition (2) can be roughly viewed as saying that $\abs{\varphi'''(x)}$ strays not too far from $x^{-2}$.
Finally, both conditions can be expressed in terms of $\la$, and hence in terms of $\mu$, by using \refE{varphippInTermsOfLambda}, leading to a condition on the third derivative of $\mu$. The resulting forms of the conditions are not, however, immediately enlightening.
\end{remark}

\section*{Acknowledgements}

The author would like to thank Anna Mazzucato for useful discussions concerning \refT{InverseMOC} and Franck Sueur for a number of helpful comments on Part II. The author deeply appreciates the efforts of an anonymous referee, whose detailed comments improved this paper substantially. The author was supported in part by NSF grants DMS-0842408 and DMS-1009545 during the period of this work.

\bibliography{Refs}
\bibliographystyle{plain}

\end{document}